\newtheorem{theorem}{Theorem}
\newtheorem{proposition}{Proposition}
\newtheorem{lemma}[proposition]{Lemma}
\newtheorem{corollary}[proposition]{Corollary}
\theoremstyle{definition}
\newtheorem{definition}[proposition]{Definition}
\theoremstyle{remark}
\newtheorem{remark}{Remark}
\newcommand{\C}{\mathbb C}
\newcommand{\D}{\mathbb D}
\newcommand{\N}{\mathbb N}
\newcommand{\Q}{\mathbb Q}
\newcommand{\R}{\mathbb R}
\newcommand{\Z}{\mathbb Z}
\newcommand{\ZZ}{\mathbb Z[i]}
\newcommand{\UU}{\mathbb U}
\newcommand{\ttt}{\theta}
\newcommand{\LL}{\Lambda}
\newcommand{\fff}{\rightarrow}
\newcommand{\CC}{\mathcal C}
\newcommand{\DD}{\mathcal D}
\newcommand{\SL}{\operatorname{SL}}
\newcommand{\GL}{\operatorname{GL}}
\newcommand{\dd}{\operatorname{d}}
\newcommand{\eps}{\varepsilon}
\newcommand{\cc}{\bold{C}}
\begin{document}

\title{Gauss lattices and complex continued fractions}
\maketitle

\begin{center}
\author{\large Chevallier Nicolas}\\
\medskip
\address{nicolas.chevallier@uha.fr}
\end{center}


\begin{abstract}
	Our aim is to construct a complex continued fraction algorithm finding all the best Diophantine approximations to a complex number. Using the sequence of minimal vectors in a two dimensional lattice over the ring of Gaussian integers, we obtain  an algorithm defined on a submanifold of the space of unimodular two dimensional Gauss lattices. This submanifold is transverse to the diagonal flow. Thanks to the correspondence between minimal vectors and best Diophantine approximations, the algorithm we propose finds all the best approximations to a complex number by construction. A byproduct of the algorithm is the best constant  for the complex version of Dirichlet's theorem about approximations of complex numbers by quotients of two Gaussian integers.
\end{abstract}


\section{Introduction}
Let us start with a very brief and partial account of the history of complex continued fractions (see \cite{Os}, \cite{OsSt} or \cite{Ro} for  detailed historical accounts). Since the pioneering works \cite{Mi}, \cite{HuA} and \cite{HuJ} of N. Michelangeli in 1887,  Adolf Hurwitz in 1888 and  Julius Hurwitz in   1895, complex continued fractions have been considered by many authors during the  20th century and at the beginning of the 21st century.  
Adolf Hurwitz considered continued fractions with partial quotients in a ``system'' $S\subset \C$  and the work of Julius Hurwitz used the $(1+i)\ZZ$ subring of the ring of Gaussian integer, see also \cite{Lu}. 
An important contribution of Adolf Hurwitz concerned the continued fractions associated with  the ring of Gaussian integers using the nearest Gaussian integer (we refer to it as Adolf Hurwitz continued fraction). In his 1888 article, Adolf Hurwitz showed the non-trivial fact that  the sequence of moduli of the denominators of such a continued fraction is increasing. 
In 1973,  R. Lakein \cite{Lak}  studied complex continued fractions associated with the rings of integers of the quadratic number fields $\Q[\sqrt{-1}]$, $\Q[\sqrt{-3}]$, $\Q[\sqrt{-7}]$ and $\Q[\sqrt{-11}]$ (the imaginary quadratic fields with Euclidean rings of integers). 
For instance, he proved that the convergents associated with the Adolf Hurwitz  continued fraction algorithm are best approximations for all complex numbers not in a countable family of lines and circles. 
About at the same time, in 1975, A. Schmidt proposed an algorithm based on the concept of Farey sets very different from the A. Hurwitz continued fraction algorithm, see \cite{Sch}. In 1985, A. Tanaka proposed a complex continued fractions algorithm (\cite{Ta}) which turned out to be a new version of the Julius Hurwitz continued fraction algorithm, see \cite{Os}. 
More recently, D. Hensley  produced  complex numbers, solutions of  irreducible quartic equations over $\Q[i]$, with a bounded, not ultimately periodic sequence  of ``Adolf Hurwitz'' partial quotients, see \cite{Hen}. In 2014,
S. G. Dani and A. Nogueira, \cite{DaNo}, proposed a general approach to complex continued fractions associated with the ring of Gaussian integers. Their approach has been taken up by other authors.  
In 2019,  H. Ei, S. Ito, H. Nakada and R. Natsui studied the construction of the
natural extension of the Hurwitz complex continued fraction map, see \cite{EiNa}. Beside they proved a ``Legendre's theorem'' for Adolf Hurwitz continued fractions.
Also, in 2018, the PHD thesis of G. G. Robert \cite{Ro} gave an almost complete overview of known results and many interesting new results. \smallskip

In this work we propose a lattice approach to complex continued fractions associated with the ring of Gaussian integers. In particular, we address the question of finding all the best approximations of a complex number.

The lattice approach goes back to C. Hermite (\cite{Her}) and G. Voronoï (\cite{Vor}). In a sequence of works, among which \cite{Lag1,Lag2,Lag3}, J. C. Lagarias studied the best simultaneous Diophantine approximations and clearly stated the connections with the shortest vectors in lattices  \cite{Lag1}, see also \cite{Che}.  \smallskip

Our starting point comes from the ordinary real continued fractions.  
In the space of dimension two unimodular lattices, $\SL(2,\R)/\SL(2,\Z)$, let us consider the subset of lattices  whose two minima with respect to the sup norm are equal. It is known that the first return map on this subset induced by the left action of the diagonal flow $g_t=\begin{pmatrix}
	e^t&0\\0&e^{-t}
\end{pmatrix}$, $t\in\R$, is a two-fold extension of the natural extension of the Gauss map $x\fff\{1/x\}$ (see  \cite{CheChe} or \cite{EiWa} where another version of the natural extension is given).  In the complex case, we shall  use the exact same idea where the space $\SL(2,\R)/\SL(2,\Z)$ is replaced by   the space $\SL(2,\C)/\SL(2,\ZZ)$ of unimodular lattices in $\C^2$. Like in the real case, we shall exploit two basic correspondences: 
\begin{itemize}	
	\item The correspondence between pairs of consecutive minimal vectors in a lattice and the intersection of the orbits of the flow $g_t$  with the transversal
	\[
	T=\{\LL\in \SL(2,\C)/\SL(2,\ZZ):\lambda_1(\LL)=\lambda_2(\LL)\}
	\]
	where the minima are associated with the sup norm in $\C^2$  (see Lemma \ref{lem:consecutivetransversal}, notice that $\lambda_i(\LL)$, $i=1,2$ are the complex minima of the lattice $\LL$, see definition \ref{def:minima} in the appendix). Actually, we shall use a slightly smaller transversal (see section \ref{sec:transversal}).
	\item 
	The correspondence between best approximations and minimal vectors (see Proposition \ref{prop:best}).
\end{itemize}
More precisely, for each lattice $\LL$ over the ring of Gaussian integers in $\C^2$, let us consider the set of {\sl minimal vectors}  in $\LL$, i.e., the nonzero vectors $u=(u_1,u_2)\in \LL$ such that for any nonzero $z=(z_1,z_2)\in \LL$, 
\[
|z_1|\leq |u_1| \text{ and } |z_2|\leq |u_2| \Rightarrow |z_1|= |u_1|
\text{ and } |z_2|= |u_2|.
\]
Let us order these minimal vectors according to the moduli of their second coordinate.  It is not difficult to prove that
when $u=(u_1,u_2)$ and $v=(v_1,v_2)$ are two consecutive minimal vectors in this sequence, then the interior of the cylinder
\[
C(u,v)=\{(z_1,z_2):|z_1|\leq\max(|u_1|,|v_1|),\,|z_2|\leq \max(|u_2|,|v_2|)|\}
\]
does not contain any nonzero element of $\LL$  (see Lemma \ref{lem:consecutive}).
Then, we can find  a real number $t$ such that the action of $g_t$ transforms the cylinder $C(u,v)$ into a cylinder of same width and height. 
For this value $t$, the action of $g_t$ on $u$ and $v$ gives two new vectors $u'$ and $v'$ with sup norms $|u'|_{\infty}=|v'|_{\infty}=\lambda_1(g_t\LL)=\lambda_2(g_t\LL)$.  
Thus, the new lattice $g_t\LL$ is in the transversal $T$.

For a  lattice of the shape
\[
\LL_{\ttt}=\begin{pmatrix}
	1& -\ttt\\
	0&1
\end{pmatrix}
\Z[i]^2,
\]  
the sequence of minimal vectors gives all the best approximation vectors of the complex number $\ttt$, see Proposition \ref{prop:best}. 
Thanks to the aforementioned  work of R. Lakein, we know  that  the convergents associated with $\ttt$ by Adolf Hurwitz's continued fraction expansion are best approximations for almost all $\ttt\in\C$. Thus, the sequence of convergents of Adolf Hurwitz's continued fraction expansion of $\ttt$ is given by a subsequence of the sequence of minimal vectors of the lattice $\LL_{\ttt}$. So we can consider the transversal together with the first return map like a complex continued fraction map in the space of lattices $\SL(2,\C)/\SL(2,\ZZ)$. 

An important difference with the real case is that two consecutive minimal vectors are no longer necessarily primitive. Our first significant result is 
\begin{theorem}\label{thm:index} If $u$ and $v$ are two consecutive minimal vectors of a  lattice over the ring of Gaussian integers $\LL$ in $\C^2$, then the sublattice $\Z[i]u+\Z[i]v$ is of index $1$ or $2$ in $\LL$.  Furthermore, when $\ZZ u+\ZZ v$ is of index two,
	\[
	\LL=\langle u,v\rangle_J\overset{def}=\{gu+hv:(g,h)\in\ZZ^2\cup J^2\}
	\]
	where $J=\tfrac1{1+i}\ZZ\setminus \ZZ$.
\end{theorem}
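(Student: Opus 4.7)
My plan is to first show that $(1+i)\LL\subseteq M$, where $M=\ZZ u+\ZZ v$, and then to use the minimality of $u,v$ to cut the possibilities for $\LL/M$ inside $\mathbb F_2^{2}$ down to at most one nonzero coset. After reordering so that $|u_1|\geq|v_1|$ and $|v_2|\geq|u_2|$, Lemma~\ref{lem:consecutive} says the open cylinder $C^{\circ}=\{|z_1|<|u_1|,\,|z_2|<|v_2|\}$ meets $\LL$ only at $0$; moreover, since $u,v$ are $\C$-linearly independent, minimality of $v$ forces the strict inequalities $|u_1|>|v_1|$ and $|v_2|>|u_2|$.

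I would begin with a Minkowski estimate. The cylinder $C$ has volume $\pi^{2}|u_1|^{2}|v_2|^{2}$, so $\pi^2|u_1|^2|v_2|^2\leq 2^4\operatorname{covol}\LL$; combined with $|\det(u,v)|\leq|u_1||v_2|+|u_2||v_1|\leq 2|u_1||v_2|$ and $|\det(u,v)|^{2}=[\LL:M]\operatorname{covol}\LL$, this yields $[\LL:M]\leq 64/\pi^{2}<7$. The only Gaussian primes that can occur in $\LL/M$ are therefore $(1+i)$ and $(2\pm i)$. To rule out $(2\pm i)$-torsion, if $\pi \bar x=0$ in $\LL/M$ for $\pi=2\pm i$, write $\pi x=au+bv$ and reduce $a,b$ modulo $\pi$ to representatives $a',b'\in\{0,\pm 1,\pm i\}$; the vector $x':=(a'/\pi)u+(b'/\pi)v$ is a nonzero element of $\LL$ satisfying
\[
|x'_{j}|\leq\tfrac{1}{\sqrt 5}(|u_{j}|+|v_{j}|)\leq\tfrac{2}{\sqrt 5}\max(|u_{j}|,|v_{j}|)<\max(|u_{j}|,|v_{j}|),
\]
contradicting $C^{\circ}\cap\LL=\{0\}$. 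Hence $\LL/M$ is $(1+i)$-primary.

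Next I would rule out an element $\bar x\in\LL/M$ with annihilator $(2)=(1+i)^{2}$. Then $(1+i)\bar x$ is a nonzero element of the $(1+i)$-torsion subgroup, identified with $\tfrac{1}{1+i}M/M\cong\mathbb F_2^{2}$, whose three nonzero cosets are represented by $\tfrac{1+i}{2}u,\ \tfrac{1+i}{2}v,\ \tfrac{1+i}{2}(u+v)$ modulo $M$. The first two directly give an element of $\LL$ with each coordinate scaled by $1/\sqrt 2$, contradicting the minimality of $u$ or $v$. In the remaining case, the identities $(1+i)\cdot\tfrac 12\equiv(1+i)\cdot\tfrac i2\equiv\tfrac{1+i}{2}\pmod{\ZZ}$ force $\bar x\equiv(\gamma,\delta)\pmod{\ZZ^2}$ with $\gamma,\delta\in\{1/2,i/2\}$, and the corresponding representative $w=\gamma u+\delta v$ satisfies
\[
|w_{j}|\leq\tfrac{1}{2}(|u_{j}|+|v_{j}|)<\max(|u_{j}|,|v_{j}|),
\]
again placing it in $C^{\circ}$, contradiction. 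So $\LL\subseteq\tfrac{1}{1+i}M$ and $[\LL:M]\in\{1,2,4\}$; the case $4$ would imply $\tfrac{u}{1+i}\in\LL$, which is again incompatible with the minimality of $u$, leaving $[\LL:M]\in\{1,2\}$.

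In the index-$2$ case, the unique nonzero coset of $\LL/M\subseteq\tfrac{1}{1+i}M/M$ is one of $\tfrac{u}{1+i}+M$, $\tfrac{v}{1+i}+M$, or $\tfrac{u+v}{1+i}+M$; the first two are ruled out by the same minimality argument, so $\LL=M\cup\bigl(\tfrac{u+v}{1+i}+M\bigr)=\{gu+hv:(g,h)\in\ZZ^{2}\cup J^{2}\}$ using $J=\tfrac{1-i}{2}+\ZZ$. The main obstacle is the annihilator-$(2)$ subcase above in which $(1+i)\bar x$ is the ``good'' coset $\tfrac{u+v}{1+i}+M$: a naive triangle inequality on a fundamental-domain representative gives only $|\alpha|\leq 1/\sqrt 2$, which is too weak, and one really has to use the $\ZZ$-module constraint to pin the coefficients down to $\{1/2,i/2\}$ (rather than $(1+i)/2$) and then leverage the strict minimality inequalities $|u_{1}|>|v_{1}|,|v_{2}|>|u_{2}|$ to push the representative into the open cylinder.
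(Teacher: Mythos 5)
Your proof is correct and rests on the same two ingredients as the paper's (Proposition \ref{prop:index}): the Minkowski bound $[\LL:\ZZ u+\ZZ v]\le 64/\pi^2<7$, and the geometric obstruction that $\overset{o}C(u,v)\cap\LL=\{0\}$. The algebraic framework differs. The paper invokes its triangular-basis theorem for $\ZZ$-sublattices (Theorem \ref{thm:base}); since $u$ is primitive, $\LL/(\ZZ u+\ZZ v)\cong\ZZ/(c)$ is cyclic with $|c|^2\in\{1,2,4,5\}$, and the cases $|c|^2\in\{4,5\}$ are eliminated by reducing the off-diagonal coefficient $b$ modulo $c$ and checking a handful of subcases, with a small ad hoc twist when $|b|=\sqrt2$ (for $|c|=\sqrt5$) or $|cg-b|=\sqrt2$ (for $|c|=2$). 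You instead apply primary decomposition to $\LL/(\ZZ u+\ZZ v)$ as a finite $\ZZ$-module: the prime $\pi=2\pm i$ is killed in one line by reducing \emph{both} coefficients modulo $\pi$ to the residue system $\{0,\pm1,\pm i\}$ and estimating $|x'|_{u,v}\le 2/\sqrt5<1$, which is a bit cleaner than the paper's three-subcase treatment of $|c|=\sqrt5$; the $(1+i)$-primary part is then bounded by ruling out annihilator-$(2)$ elements and the cosets of $\tfrac{u}{1+i},\tfrac{v}{1+i}$. You correctly flag the delicate step and resolve it: when $(1+i)\bar x$ hits the coset of $\tfrac{u+v}{1+i}$, a crude norm bound on a representative in $\tfrac12\ZZ u+\tfrac12\ZZ v$ is too weak, and you need the congruence $(1+i)\gamma\equiv\tfrac1{1+i}\pmod{\ZZ}$ to pin $\gamma,\delta\in\{\tfrac12,\tfrac i2\}\pmod{\ZZ}$ (excluding $\tfrac{1+i}2$), after which the strict inequalities $|u_1|>|v_1|$, $|v_2|>|u_2|$ push the representative into $\overset{o}C(u,v)$. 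The final identification $\LL=\langle u,v\rangle_J$ matches the paper.
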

Observe that, in case of index two, the lattice $\LL$ is like a centered cubic lattice, where the index two ideal $2\Z$ in $\Z$ is replaced by the index two ideal $(1+i)\ZZ$ in $\ZZ$. 

Our second  result is about the geometry of numbers for two dimensional lattices over the ring of Gaussian integers. It is the counterpart in the complex case of the easy result: 

If $u=(u_1,u_2)$ and $v=(v_1,v_2)$ are two linearly independent vectors in $\R^2$ then the interior of the rectangle $R(u,v)=\{(x_1,x_2):|x_1|\leq\max(|u_1|,|v_1|),\,|x_2|\leq \max(|u_2|,|v_2|)|\}$ contains no nonzero vector of the lattice $\Z u+\Z v$ iff $u$,$v$ and $u\pm v$ are not in the interior of $R(u,v)$.   

\begin{theorem}\label{thm:geonumber1}
	Let $u=(u_1,u_2)$ and $v=(v_1,v_2)$ be two  vectors in $\C^2$ such that $|u_1|>0$, $|u_1|\geq |v_1|$,  $|v_2|>0$ and $|v_2|\geq |u_2|$. 
	\begin{enumerate}
		\item 
		Zero is the only element of $\ZZ u+\ZZ v$ in the interior of the cylinder
		\[
		C(u,v)=\{(z_1,z_2):|z_1|\leq |u_1|,\,|z_2|\leq |v_2|\}
		\] iff $gu+hv\notin \overset{o}C(u,v)$ for all nonzero $g$, $h\in\ZZ^2$ with $|g|\times|h|\leq \sqrt 2$. 
		\item Zero is the only element of 
		$
		\langle u,v\rangle_J
		$  in the interior of the cylinder
		$
		C(u,v)
		$ iff  $gu+hv\notin \overset{o}C(u,v)$ for all  $(g,h)\in J^2$ with $|g|=|h|= \tfrac1{\sqrt 2}$.
	\end{enumerate}
\end{theorem}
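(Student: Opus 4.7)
The plan is to prove both directions of each statement in turn; the forward direction is trivial in both parts, since if $0$ is the unique lattice point of the relevant lattice in $\overset{o}C(u,v)$, the listed check pairs cannot produce a point there.

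For the reverse direction of (1), I would argue by contradiction. Assume no nonzero $(g,h)\in\ZZ^2$ with $|g|\,|h|\leq\sqrt{2}$ makes $gu+hv\in\overset{o}C$, yet some nonzero $\alpha=gu+hv\in\overset{o}C$ does exist; choose such a pair $(g,h)$ minimising $|g|^2+|h|^2$. Three preliminary reductions follow at once. First, $(g,h)$ must be primitive: dividing by a common factor $d\in\ZZ$ with $|d|>1$ yields $(g/d,h/d)$ still in $\overset{o}C$ (each coordinate shrinks) and strictly smaller, contradicting minimality. Second, neither coordinate vanishes, since $h=0$ gives $|gu_1|<|u_1|$ and so $|g|<1$, i.e.\ $g=0$. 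Third, if $|h|=1$ then $h$ is a unit and replacing $(g,h)$ by $(h^{-1}g,1)$ multiplies $\alpha$ by the modulus-one number $h^{-1}$ — which preserves the cylinder — so we may assume $h=1$; the inequality $|gu_1+v_1|<|u_1|$ combined with $|v_1|\leq|u_1|$ yields $|g|<2$, hence $|g|^2\leq 2$, making $(g,1)$ a check pair, in contradiction to the hypothesis. The case $|g|=1$ is symmetric.

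There remains the case $|g|,|h|\geq\sqrt{2}$. Primitivity rules out $|g|=|h|=\sqrt{2}$ (both would be divisible by $1+i$), so WLOG $|g|\geq 2$. Here I would apply Gaussian Euclidean division to produce $q\in\ZZ$ with $|g-qh|\leq |h|/\sqrt{2}$, so that the pair $(g-qh,h)$ has strictly smaller squared norm than $(g,h)$. To complete the contradiction it suffices to show, for an appropriate choice of $q$, that the new lattice point $\beta=\alpha-qhu=(g-qh)u+hv$ still lies in $\overset{o}C$. Writing the two cylinder inequalities $|\beta_1|<|u_1|$, $|\beta_2|<|v_2|$ as conditions on $qh\in\C$ turns them into the requirement that $qh$ lie in the intersection of the two open disks $B(\alpha_1/u_1,\,1)$ and $B(\alpha_2/u_2,\,|v_2/u_2|)$, both of which contain $0$ in their interior. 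The principal obstacle — and the real technical heart of the proof — is to verify that this disk intersection contains a Gaussian integer $q$ close enough to $g/h$ for the Euclidean estimate to apply; this uses crucially the normalisations $|v_1|\leq|u_1|$, $|u_2|\leq|v_2|$ together with $\alpha\in\overset{o}C$.

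For part (2) the argument is parallel but restricted to the $J^2$-sector. The key identity is that multiplication by $1+i$ is a bijection $J\to\ZZ\setminus(1+i)\ZZ$ scaling moduli by $\sqrt{2}$; so a hypothetical bad pair $(g,h)\in J^2$ with $gu+hv\in\overset{o}C(u,v)$ lifts to $((1+i)g,(1+i)h)\in\ZZ^2$ inside the dilated cylinder $\sqrt{2}\cdot\overset{o}C(u,v)$, and the sixteen check pairs with $|g|=|h|=1/\sqrt{2}$ correspond exactly to the sixteen pairs of units in $\ZZ^2$ for the lifted problem. The same minimisation-and-Euclidean-reduction loop used in (1) then transports to this setting, the principal step again being the cylinder-preservation lemma for the analogous Euclidean reduction.
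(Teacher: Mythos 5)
Your forward directions and the three preliminary reductions (primitivity, no vanishing coordinate, the units $|g|=1$ or $|h|=1$ handled by rotation) are correct and match what the paper does implicitly. But you have correctly identified, and then left unproven, the step on which the whole argument rests: the ``cylinder-preservation lemma.'' That step in fact \emph{fails} for the reduction you propose, and the failure occurs precisely in the range where it is needed.

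Concretely: suppose $\alpha = gu+hv\in\overset{o}C(u,v)$ and you try to pass to $\beta = \alpha - qhu = (g-qh)u + hv$ with $q\in\ZZ$, $q\neq 0$. The first cylinder inequality $|\beta_1|<|u_1|$ is, as you note, equivalent to $qh\in B(\alpha_1/u_1,\,1)$. But $|\alpha_1/u_1|<1$, so the open disk $B(\alpha_1/u_1,1)$ is contained in $\{|z|<2\}$, while $|qh|=|q|\,|h|\geq |h|$ for nonzero $q$. Thus whenever $|h|\geq 2$ no admissible $q\neq 0$ exists, and the Euclidean descent stalls. The symmetric reduction $(g,h)\mapsto(g, h-qg)$ fails for the same reason when $|g|\geq 2$. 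So the descent cannot proceed on the residual set $|g|,|h|\geq 2$ (which primitivity does not exclude: e.g.\ $(g,h)=(2,-1+2i)$, for which $||g|-|h||<1$ so the pair is geometrically admissible). Even when $|h|=\sqrt2$, the argument only works if the center $\alpha_1/u_1$ happens to point toward one of the four points $\pm h,\pm ih$; this is not guaranteed and would itself need a proof using the second disk.

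The paper's proof reflects the fact that there is no clean Euclidean reduction here: after symmetry reductions it establishes the bound $|g|,|h|\leq 6$ by two auxiliary lemmas (one of them Lemma 14, proved by elementary but delicate plane geometry), and then runs a computer enumeration to find and eliminate a finite list of ``critical pairs'' (Lemmas 15 and 17). A one-step descent of the kind you propose would have made that enumeration unnecessary; its absence in the paper is a strong hint that the lemma you need does not hold. The same objection applies, after multiplying by $1+i$, to your treatment of part (2). To repair the argument you would need a genuinely different reduction or a case-by-case analysis along the lines of the paper's Proposition 11.
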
  
The proof of this theorem depends only on elementary geometry, but is not as simple as in the real case. We use a computer to rule out many cases.
We shall also give a variant of this result with strict inequality and  a slightly more precise corollary, see section \ref{sec:geonumber}.

Next theorem explains how to compute inductively the sequence of minimal vectors of a lattice over  the ring of Gaussian integers in $\C^2$.
Let us equip $\C^2$  with the {\sl lexicographic preorder}
\[
(x_1,x_2)\prec(y_1,y_2) 
\]
iff $|x_2|<|y_2|$ or $|x_2|=|y_2|$ and $|x_1|\leq |y_1|$.

\begin{theorem}[Continued fraction algorithm]\label{thm:continuedfraction}
	Let $u=(u_1,u_2)$ and $v=(v_1,v_2)$ be  two consecutive minimal  vectors in a unimodular lattice $\LL$ with $|u_2|<|v_2|$. Let  $w_1=\tfrac{v_1}{u_1}$ and $w_2=\tfrac{u_2}{v_2}$. If $w_1\neq 0$ then there exists  $v'\in\LL$  a minimal vector such that  $v$ and $v'$ are two consecutive minimal vectors and
	\begin{itemize}
		\item If $\det_{\C}(u,v)=1$, then $v'$ is  any vector that is minimal for the preoder $\prec$ in the set
		\begin{align*}
			E_1=\left\{z=-au+gv: a\in\{1,1+i\},\, g\in\Z[i],\,|\tfrac{a}{w_1}-g|<1\right\}.
		\end{align*} 
		Moreover with $u'=v=(u'_1,v'_2w'_2)$ and $v'=-au+gv=(u'_1w'_1,v'_2)$, we have 
		\begin{align}
			w'_1=g-\frac{a}{w_1}, \hspace{1cm} w'_2=\frac{1}{g-aw_2}.	
		\end{align}
		\item If $\det_{\C}(u,v)=1+i$, then $v'$ is  any vector that is minimal for the preoder $\prec$  in the set
		\begin{align*}
			E_2=\left\{z=-\tfrac{1}{1+i}(u+v)+gv:  g\in\Z[i],\,|\tfrac1{(1+i)w_1}+\tfrac1{(1+i)}-g|<1\right\}.
		\end{align*}  
		Moreover with $u'=v=(u'_1,v'_2w'_2)$ and $v'=-au+gv=(u'_1w'_1,v'_2)$, we have 
		\begin{align}
			w'_1=g-\tfrac1{(1+i)w_1}-\tfrac1{(1+i)}, \hspace{1cm} w'_2=\frac{1}{g-\tfrac1{(1+i)}w_2-\tfrac1{(1+i)}}.	
		\end{align} 
	\end{itemize}
\end{theorem}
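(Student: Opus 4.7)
My plan is to parameterize the next minimal vector $v'$ by applying Theorem~\ref{thm:index} to the pair $(v,v')$, then translate the geometric constraints (minimality of $v'$, consecutiveness with $v$) into the algebraic conditions defining $E_1$ or $E_2$, and finally derive the formulas for $w'_1,w'_2$ by direct substitution.

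\textbf{Parameterization.} Since $u,v$ are consecutive minimal vectors with $|u_2|<|v_2|$, the preliminary facts that $|u_1|\geq|v_1|$ and that the open cylinder $C(u,v)$ meets $\LL$ only at $0$ hold by Lemma~\ref{lem:consecutive}. I would apply Theorem~\ref{thm:index} to $(v,v')$: the sublattice $\ZZ v+\ZZ v'$ has index $1$ or $2$ in $\LL$, i.e.\ $|\det_\C(v,v')|^2\in\{1,2\}$. In the case $\det_\C(u,v)=1$, where $\{u,v\}$ is a $\ZZ$-basis of $\LL$, writing $v'=-au+gv$ uniquely with $a,g\in\ZZ$ gives $\det_\C(v,v')=a$; hence $|a|^2\in\{1,2\}$, and multiplying $v'$ by a Gaussian unit (which preserves the lattice, minimality and cylinder relations) normalizes $a$ to $1$ or $1+i$. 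In the case $\det_\C(u,v)=1+i$, where $\LL=\langle u,v\rangle_J$, writing $v'=bu+cv$ with $(b,c)\in\ZZ^2\cup J^2$ gives $\det_\C(v,v')=b(1+i)$; a short case analysis on ``$(b,c)\in\ZZ^2$ or $\in J^2$'' combined with the norm condition and unit normalization forces $b=-\tfrac1{1+i}$ and $c=g-\tfrac1{1+i}$ for some $g\in\ZZ$, giving $v'=-\tfrac1{1+i}(u+v)+gv$.

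\textbf{Geometric conditions and $\prec$-minimality.} Consecutiveness of $v,v'$ with $|v_2|<|v'_2|$ first forces $|v'_1|<|v_1|$ (else $v$ would dominate $v'$). Substituting $v_1=u_1w_1$ and dividing converts this into $|g-a/w_1|<1$ in case~$1$ and $|g-\tfrac1{(1+i)w_1}-\tfrac1{1+i}|<1$ in case~$2$: exactly the conditions defining $E_1$ and $E_2$. Next, I would argue that $v'$ must be $\prec$-minimal in $E_i$: if some $z\in E_i$ had $|z_2|<|v'_2|$, then either $|z_2|<|v_2|$ (which puts $z$ in the open cylinder $C(u,v)$, contradicting Lemma~\ref{lem:consecutive} since $|z_1|<|v_1|\leq|u_1|$), or $|v_2|<|z_2|<|v'_2|$ (contradicting consecutiveness); ties in $|v'_2|$ are resolved by $|v'_1|$, which is why the preorder is lexicographic.

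\textbf{Converse, main obstacle, and formulas.} The hard part will be the reverse direction: showing that any $\prec$-minimum $v'$ of $E_i$ is in fact a minimal vector of $\LL$ consecutive to $v$. To verify minimality in $\LL$, I would apply Theorem~\ref{thm:geonumber1} to $(v,v')$ in the appropriate sublattice (index~$1$ or~$2$, possibly with its $J$-extension), which reduces the problem to a finite check on short combinations $g'v+h'v'$ with $|g'||h'|\leq\sqrt2$ (or $|g'|=|h'|=1/\sqrt2$ in the $J$-case); these are then excluded by combining the emptiness of the interior of $C(u,v)$ with the $\prec$-minimality of $v'$ in $E_i$, together with $w_1\neq0$. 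Once minimality and consecutiveness are established, the formulas for $w'_1,w'_2$ follow by direct substitution: writing $u'=v=(v_1,v_2)$ and $v'=(v'_1,v'_2)$, one has $w'_2=v_2/v'_2$ and $w'_1=v'_1/v_1$, and plugging in $v'=-au+gv$ (resp.\ $v'=-\tfrac1{1+i}(u+v)+gv$) with $w_1=v_1/u_1$ and $w_2=u_2/v_2$ yields the stated expressions.
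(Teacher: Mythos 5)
Your parameterization step and the final formula derivation are correct and match the paper. The forward direction (``if $v'$ is the next minimal vector, it is a $\prec$-minimum of $E_i$'') is also essentially sound. But the ``converse, main obstacle'' section has a genuine gap, and you have missed the paper's much shorter route. You propose to verify minimality of a $\prec$-minimum of $E_i$ by applying Theorem~\ref{thm:geonumber1} to $(v,v')$ and then excluding the short combinations $g'v+h'v'$ with $|g'||h'|\le\sqrt2$. This will not go through as written: writing $g'v+h'v'=-ah'u+(g'+gh')v$, the coefficient of $u$ is $ah'$, which is generally \emph{not} in $\{1,1+i\}$, so the combination need not lie in $E_i$ and the $\prec$-minimality of $v'$ in $E_i$ says nothing about it. You would end up re-proving a chunk of the paper's geometry-of-numbers theorem by hand.

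The ingredient you are missing is Lemma~\ref{lem:lexico}: any $\prec$-minimum of $\LL\cap\overset{o}{C_1}(|v_1|)$ is automatically a minimal vector of $\LL$. The paper's proof is simply this. By Minkowski, $\LL\cap\overset{o}{C_1}(|v_1|)\ne\{0\}$ (you also do not address existence). Take a $\prec$-minimum $v'$ there; Lemma~\ref{lem:lexico} makes it a minimal vector, and consecutiveness with $v$ follows since any minimal vector strictly between would lie in $\overset{o}{C_1}(|v_1|)$ and be $\prec$-smaller. Theorem~\ref{thm:index} (plus Proposition~\ref{prop:index2} in the index-$2$ case, which your ``short case analysis'' tacitly relies on but does not invoke) then forces $v'$, after normalization by a unit, into $E_1$ or $E_2$. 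Since $E_i\subset\LL\cap\overset{o}{C_1}(|v_1|)$ and $v'\in E_i$, this $v'$ is $\prec$-minimal in $E_i$; any other $\prec$-minimum of $E_i$ then has the same coordinate moduli, hence is an equivalent minimal vector and equally consecutive to $v$. No appeal to Theorem~\ref{thm:geonumber1} is needed. The splitting into a forward and a converse direction, and the planned detour through the geometry-of-numbers result, is where the argument both overcomplicates and stalls.
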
 
The  set $E_1$ in the above theorem has  eight elements at most  and $E_2$ has  four elements at most because there are at most four Gaussian integers $g$ such that $|g-w|<1$ for a given complex number $w$. Therefore, the map 
\[
T_G:(w_1,w_2)\fff (w'_1,w'_2)
\] is easy to compute. 
This map is the core of the first return map in the transversal, see Theorem \ref{thm:firstreturn}. In fact, the minimal vectors $u'$ and $v'$ can be easily computed  because   $u'_1=u_1w_1$, $v'_2=v_2/w'_2$ and  as explained before, it is possible to bring  the lattice $\LL$ in the transversal using the flow $g_t$ and the two consecutive minimal vectors $u'$ and $v'$.

In the first case  of Theorem \ref{thm:continuedfraction}, the new consecutive minimal vectors $u',v'$ have index $1$ or $2$ (determinant $1$ or $1+i$) because by Theorem \ref{thm:index} two consecutive minimal vectors have index $1$ or $2$.
In the second case of Theorem \ref{thm:continuedfraction}, the new consecutive minimal vectors $u',v'$ have index $1$ because $u,v$ have index $2$ and it is not possible that two consecutive pairs of minimal vectors $u,v$ and $u'=v,v'$ have both index $2$, see Proposition \ref{prop:index2}. It is worth noticing that the proof of this latter proposition uses Theorem \ref{thm:geonumber1}.

The map $T_G$ might have some  links with the natural extension of the Adolf Hurwitz map studied in \cite{EiNa}. 
Indeed, $T_G$ could be the natural extension of the unknown algorithm that computes all the best approximations of complex numbers while the map defined by H. Ei, S. Ito, H. Nakada and R. Natsui is  the natural extension of the Hurwitz map which gives only a subsequence of the sequence of best approximations (see again \cite{Lak}).

In the first case of  Theorem \ref{thm:continuedfraction} with $a=1$, the condition $|\tfrac{a}{w_1}-g|<1$ is the condition considered by Dani and Nogueira to define an approximation sequence, see \cite{DaNo}. In Theorem \ref{thm:continuedfraction}, the second variable controls the choice among the possible Gaussian integers $g$.

It is not that easy to have an explicit description of the transversal or of the domain of definition of $T_G$. However, with a good choice of the parametrization, this domain becomes a finite union of products of subsets in the complex plane whose boundaries are arcs of circle, see Figure 3 in  subsection \ref{subsec:constraints}. The domain of definition can be found thanks to Theorem \ref{thm:geonumber1}, see section \ref{sec:geonumber} where a  description of the domain is given. 
We also give the invariant measure of the first return map of the flow $g_t$ in the transversal. The open transversal is parametrized with three parameters $\ttt,w_1,w_2$ where $\ttt\in [0,\pi/2]$ and $(w_1,w_2)$ is in an open set included in $\D^2=\{z\in\C:|z|<1\}^2$.
\begin{theorem}[Invariant measure]\label{thm:densityinvariantmeasure1}
	Using the parametrization  of the transversal  (see section \ref{sec:transversal}), the measure $\nu$ induced by the 
	Haar measure  in $\SL(2,\C)/\SL(2,\ZZ)$ and the flow $g_{t}$, has the
	density
	\[
	h(\theta,w_{1},w_{2})=\frac{32}{|1-w_{1}w_{2}|^{4}}
	\]
	with respect to the Lebesgue measure of $[0,\pi/2]\times\mathbb{D}^{2}$.	
\end{theorem}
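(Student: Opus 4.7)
The plan is to compute the Haar measure on $\SL(2,\C)$ in local coordinates adapted to the transversal and to the flow $g_t$, and then to disintegrate along the $t$-direction. The non-trivial factor $|1-w_1w_2|^{-4}$ will come from a single Jacobian computation; the overall constant $32$ will emerge from the normalization of the Haar measure and from the way the units of $\ZZ$ cut the full phase circle down to the fundamental domain $[0,\pi/2]$ for $\theta$.

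First I coordinatize $\SL(2,\C)$ by the three complex entries $u_1,v_1,u_2$ of a unimodular matrix $M=\begin{pmatrix} u_1 & v_1 \\ u_2 & v_2 \end{pmatrix}$, with $v_2=(1+v_1u_2)/u_1$ fixed by unimodularity. In these coordinates the bi-invariant Haar measure on $\SL(2,\C)$ takes the form
\[
d\mu = C\,\frac{|du_1|^2\,|dv_1|^2\,|du_2|^2}{|u_1|^2}
\]
for a normalization constant $C$ (here $|dz|^2$ is Lebesgue measure on $\C$). Switching to $(u_1,w_1,w_2)$ with $w_1=v_1/u_1$ and $w_2=u_2/v_2$, unimodularity gives $v_2=[u_1(1-w_1w_2)]^{-1}$ and $u_2=w_2/[u_1(1-w_1w_2)]$. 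The Jacobian of $(u_1,w_1,w_2)\mapsto(u_1,v_1,u_2)$ is lower triangular with diagonal $1,\,u_1,\,[u_1(1-w_1w_2)^2]^{-1}$, hence has complex determinant $(1-w_1w_2)^{-2}$; squaring moduli gives
\[
d\mu = C\,\frac{|du_1|^2\,|dw_1|^2\,|dw_2|^2}{|u_1|^2\,|1-w_1w_2|^4}.
\]

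Next I decompose $u_1$ into its flow direction and its phase. The transversal condition $|u_1|=|v_2|$, combined with $|u_1 v_2|=|1-w_1w_2|^{-1}$, forces $|u_1|=|1-w_1w_2|^{-1/2}$ on $T$. More generally, writing $u_1=e^{t}\,|1-w_1w_2|^{-1/2}\,e^{i\theta}$ expresses $u_1$ in terms of the signed flow distance $t\in\R$ from $T$ and a phase $\theta$. The standard polar-coordinates Jacobian gives $|du_1|^2=|u_1|^2\,dt\,d\theta$, the two factors of $|u_1|^2$ cancel, and
\[
d\mu = C\,\frac{dt\,d\theta\,|dw_1|^2\,|dw_2|^2}{|1-w_1w_2|^4}.
\]
By definition of the transversal measure as the disintegration of Haar along the flow, this immediately yields $d\nu = C\,d\theta\,|dw_1|^2\,|dw_2|^2/|1-w_1w_2|^4$ on $T$ in the coordinates $(\theta,w_1,w_2)$.

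It remains to pin $C$ down to the value $32$. For each unit $\eta\in\{1,i,-1,-i\}$ the change of basis $(u,v)\mapsto(\eta u,\eta^{-1}v)$ is in $\SL(2,\ZZ)$; it acts on the parametrization by $\theta\mapsto\theta+\arg\eta$ and $(w_1,w_2)\mapsto(\eta^{-2}w_1,\eta^{2}w_2)$, which is precisely what cuts the natural phase circle down to $\theta\in[0,\pi/2]$. The value $32$ is then fixed by reconciling this fundamental-domain count with the paper's chosen normalization of Haar on $\SL(2,\C)/\SL(2,\ZZ)$ and, possibly, with the two-fold multiplicity arising from the index-two case in Theorem \ref{thm:index}. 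I expect the main technical chore to be precisely this bookkeeping of the constant; the characteristic density shape $|1-w_1w_2|^{-4}$ is forced already by the Jacobian computation above.
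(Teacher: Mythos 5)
Your approach is essentially the paper's in different notation: the paper's $\SL(2,\C)$-invariant $3$-form $\omega$, written in the chart $(u_1,v_1,u_2)$ with $v_2=(1+v_1u_2)/u_1$, is exactly $-\tfrac{2}{u_1}\,du_1\wedge dv_1\wedge du_2$, so your starting expression $C\,|du_1|^2|dv_1|^2|du_2|^2/|u_1|^2$ is the paper's $\alpha=-i\,\omega\wedge\overline\omega$ in disguise. Your lower-triangular Jacobian $(1-w_1w_2)^{-2}$ for $(u_1,v_1,u_2)\mapsto(u_1,w_1,w_2)$ and your polar factorization of $u_1$ modulo $dw$'s are both correct, and they do reproduce the density shape $|1-w_1w_2|^{-4}$ on the index-one component $T_1$, disintegrated against the flow exactly as the paper does.

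There are, however, two gaps. The larger one: your calculation only covers $T_1$, and the closing speculation about a ``two-fold multiplicity arising from the index-two case'' is a misdirection rather than a plan. On $T_2$ the consecutive minimal vectors $u,v$ are not a $\ZZ$-basis; the $\SL(2,\C)$ representative is $\Phi_2$, whose columns are $u$ and $\tfrac{1}{1+i}(u+v)$, so the matrix entries $M_{12}$, $M_{22}$ are not $u_1w_1$, $v_2$ in the paper's variables, your ratios $M_{12}/M_{11}$ and $M_{21}/M_{22}$ are not the paper's $(w_1,w_2)$, and the transversal condition is not $|M_{11}|=|M_{22}|$ on the matrix. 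The paper treats this as a separate pullback through $\Phi_2=\Phi^0_2\cdot N$ with $N=\begin{pmatrix}1&\tfrac{1}{1+i}\\0&\tfrac{1}{1+i}\end{pmatrix}$: the factor $(\det N)^2=(1+i)^{-2}$ from the right multiplication cancels the $(1+i)^2$ arising from $v_2'=(1+i)v_2$ in $\Phi_2^0$, giving the \emph{identical} density with no factor of two; you would need to perform the analogous computation in your chart or verify by an explicit change of variables that your $(\tilde w_1,\tilde w_2)$-density transforms into $32/|1-w_1w_2|^4$. The smaller gap: you leave the constant $C$ open, but the theorem asserts the specific value $32$, which is fixed by the paper's chosen normalization $\alpha=-i\,\omega\wedge\overline\omega$ and comes out of the bookkeeping $|-2|^2\cdot 2^3=32$ (modulus squared of the coefficient of $\omega$, times three factors $du_j\wedge\overline{du_j}=-2i\,du_{j1}\wedge du_{j2}$); this is part of the statement and cannot be left as ``reconciliation.''
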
 
The constant $32$ depends on our choice of the normalization of the Haar measure.

A byproduct of our work is the exact value of the one dimensional complex Dirichlet constant. To the best of our knowledge this constant was unknown.
\begin{theorem}[Complex Dirichlet constant]\label{thm:Dirichlet}
	For every complex number $z$ and for every  real number $Q\geq 1$,  there exist   Gaussian integers $p$ and $q$ such that
	\begin{align*}
		\left\{
		\begin{array}[{l}c]{l}
			0<|q|< Q,\\
			|qz-p|\leq \frac{\sqrt 2}{ 3-\sqrt 3}\times\frac{1}{Q},
		\end{array}	
		\right.
	\end{align*}
	where	$\frac{\sqrt 2}{ 3-\sqrt 3}=\frac{1}{\sqrt{6-3\sqrt 3}}=1.115355\dots$ .
	Furthermore the set of complex numbers $z$ for which the constant  $\frac{\sqrt 2}{ 3-\sqrt 3}$ can be improved, is of zero Lebesgue measure.
\end{theorem}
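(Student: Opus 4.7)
The plan is to recast Dirichlet's inequality as a uniform upper bound on the sup-norm first minimum of unimodular $\ZZ$-lattices in $\C^{2}$ and then to solve the resulting constrained lattice optimisation using Theorems \ref{thm:index} and \ref{thm:geonumber1}. To $z \in \C$ I attach the unimodular lattice $\LL_{z} = \{(qz - p, q) : (p,q) \in \ZZ^{2}\}$, so that $0 < |q| < Q$ and $|qz - p| \leq C/Q$ express exactly that $\LL_{z}$ meets the cylinder $\{|x_{1}| \leq C/Q,\ 0 < |x_{2}| < Q\}$ in a nonzero vector. Applying $g_{t}$ with $e^{t} = Q/\sqrt{C}$ rescales both coordinate bounds simultaneously to $\sqrt{C}$, so the Dirichlet inequality with constant $C$ for every $(z, Q)$ amounts to
\[
\sup_{\LL} \lambda_{1}(\LL)^{2} \leq C,
\]
with $\LL$ running over all unimodular $\ZZ$-lattices in $\C^{2}$.

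Next I would reduce this supremum to the transversal $T$. By Mahler's compactness, the sets $\{\LL : \lambda_{1}(\LL) \geq \varepsilon\}$ are compact and $\lambda_{1}$ is continuous, so the supremum is attained at some $\LL_{*}$. The global maximality forces $\lambda_{1}(g_{t}\LL_{*}) \leq \lambda_{1}(\LL_{*})$ for all $t$; since each minimal vector $u$ contributes a branch $\max(e^{t}|u_{1}|, e^{-t}|u_{2}|)$ that is strictly monotone near $t=0$ whenever $|u_{1}| \neq |u_{2}|$, two linearly independent minimal vectors are required to keep $\lambda_{1}(g_{t}\LL_{*})$ below its value at $t=0$, and hence $\lambda_{2}(\LL_{*}) = \lambda_{1}(\LL_{*})$, i.e.\ $\LL_{*} \in T$. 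On $T$, pick consecutive minimal vectors $u = (u_{1}, u_{2})$ and $v = (v_{1}, v_{2})$ with $|u_{1}| = |v_{2}| = \lambda$ and $|u_{2}|, |v_{1}| \leq \lambda$. Theorem \ref{thm:index} gives $|\det_{\C}(u, v)|^{2} \in \{1, 2\}$. After rotating each coordinate axis independently (a symmetry of the sup-norm and of the $\ZZ$-structure), I may write $u_{1} = v_{2} = \lambda$ real positive and $u_{2} = pe^{i\alpha}$, $v_{1} = qe^{i\beta}$ with $p, q \in [0, \lambda]$, so that the determinant condition becomes
\[
\lambda^{4} - 2\lambda^{2}pq\cos(\alpha + \beta) + p^{2}q^{2} \in \{1, 2\}.
\]

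The heart of the proof is to maximise $\lambda$ over $(\lambda, p, q, \alpha, \beta)$ subject to this relation and the minimality conditions of Theorem \ref{thm:geonumber1}: for every nonzero $(g, h) \in \ZZ^{2}$ with $|g||h| \leq \sqrt{2}$ (index-one case), respectively every $(g, h) \in J^{2}$ with $|g| = |h| = 1/\sqrt{2}$ (index-two case), one requires $\max(|gu_{1} + hv_{1}|, |gu_{2} + hv_{2}|) \geq \lambda$. Expanding $|\xi + \eta|^{2} = |\xi|^{2} + |\eta|^{2} + 2\operatorname{Re}(\xi\bar\eta)$ converts each such disjunction into an explicit trigonometric inequality: the four pairs with $|g| = |h| = 1$ give conditions of the form $\cos\beta \geq -q/(2\lambda)$ or $\cos\alpha \geq -p/(2\lambda)$ (and the three siblings obtained by $\cos\beta \to -\cos\beta$ and $\cos\beta \to \pm\sin\beta$); pairs with $|g|$ or $|h|$ equal to $\sqrt{2}$ contribute conditions involving $\cos(\beta \pm \pi/4)$; and the index-two pairs give analogous constraints with half-integer coefficients. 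I would first locate the candidate extremum by imposing simultaneous saturation of several symmetric constraints --- the identity $C^{2} = (2+\sqrt{3})/3 = \tfrac{4}{3}\cos^{2}(\pi/12)$ suggests a twelve-fold angular structure --- and then verify optimality by Lagrange multipliers or an exhaustive case check, finally comparing the index-one and index-two suprema. The principal obstacle is the combinatorial complexity of this analysis: even after collapsing by units and signs, roughly a dozen disjunctive constraints remain in each index case, and I would follow the paper's practice in Theorem \ref{thm:geonumber1} of pruning the easy subcases with computer assistance.

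For the second assertion I would invoke ergodicity of the diagonal flow $g_{t}$ on $\SL(2, \C)/\SL(2, \ZZ)$ with respect to the Haar probability measure, together with the equidistribution of horospherical orbits under $g_{t}$. Since $\LL \mapsto \lambda_{1}(\LL)^{2}$ is continuous and attains its maximum $C$ on the closed set of extremal lattices in $T$, Lebesgue-almost every $z$ produces an orbit $(g_{t}\LL_{z})_{t \geq 0}$ that equidistributes in the moduli space and in particular is dense, giving $\limsup_{t \to \infty} \lambda_{1}(g_{t}\LL_{z})^{2} = C$. Translating back through $e^{t} = Q/\sqrt{C}$ yields $\limsup_{Q \to \infty} Q \min_{0 < |q| < Q,\ p}|qz - p| = \sqrt{C}$ for almost every $z$, so no $C' < C$ can replace $C$ for such $z$, which is the zero Lebesgue measure assertion. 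The strict inequality $|q| < Q$ in the first half requires separate care only at the exceptional $(z, Q)$ for which $g_{t}\LL_{z}$ meets an extremal lattice exactly; for those one switches to a second minimal vector of the relevant lattice whose second coordinate is strictly below $\sqrt{C}$.
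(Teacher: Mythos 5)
Your high-level strategy matches the paper's almost exactly: encode $(z,Q)$ as the lattice $\LL_{z}$ pushed by $g_{t}$; reduce to $\sup_{\LL}\lambda_{1}(\LL,|\cdot|_{\infty},\C)^{2}$, which by passing to two consecutive minimal vectors on the transversal equals the paper's $C_{S}=\sup|u_{1}||v_{2}|$; bound this supremum using Theorem~\ref{thm:geonumber1} (in the paper, via Corollary~\ref{cor:ConstraintsLarge}); and handle the almost-everywhere optimality by ergodicity of the diagonal flow. Your compactness/variational argument showing the extremiser lies in $T$ is sound and a nice way to phrase what the paper gets via consecutive minimal vectors. The reformulation via $|\det_\C(u,v)|^2\in\{1,2\}$ and the trigonometric expansion of the constraints are also correct.

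However, there is a genuine gap at the centre of Step~1: you set up the constrained optimisation but never actually solve it. Identifying that $C^{2}=\tfrac{4}{3}\cos^{2}(\pi/12)$ is a useful heuristic, but "impose simultaneous saturation of several symmetric constraints … then verify optimality by Lagrange multipliers or an exhaustive case check" is not a proof; it does not establish that the supremum over either the index-$1$ region or the index-$2$ region is $\tfrac{\sqrt 2}{3-\sqrt 3}$. That verification is precisely what the paper spends its Step~1 on, using Corollary~\ref{cor:ConstraintsLarge} to carve $\D^{2}$ into the four explicit $(Red, Green, Blue)$ sub-regions (plus the index-$2$ region $(\mathcal C\setminus\D(-i,\sqrt2))\times\overline{\mathcal T}$) and then maximising $|1-w_{1}w_{2}|^{-1}$ on each by a careful arc-by-arc argument (Lemma~\ref{lem:circle} on two circles meeting in at most two points does most of the work). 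Your $(\lambda,p,q,\alpha,\beta)$ parametrisation is equivalent in content to the paper's $(w_{1},w_{2})$ coordinates but somewhat less convenient, since the constraint region from Theorem~\ref{thm:geonumber1} is naturally described by disks in the $w$-variables; re-deriving it in your variables would only add work. Until this analysis is done, the value of the constant remains an assertion.

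For Step~2 you invoke almost-everywhere equidistribution (or at least density) of the forward $g_{t}$-orbits of horospherical lattices $\LL_{z}$. This is a correct and known statement, but it is strictly stronger than what the paper uses and is not an immediate consequence of ergodicity (the family $\{\LL_{z}\}$ has Haar measure zero). The paper instead gives a self-contained argument: it constructs an explicit extremal lattice $\LL_{0}$ on the transversal with $\lambda_{1}(\LL_{0})^{2}=C_{S}$, observes that avoiding the ball $B_{\infty}(0,r_{0}-\eps/2)$ is an open condition (Lemma~\ref{lem:open}), thickens the putative bad set of $\LL_{z}$'s by the stable subgroup $\mathcal H_{\leq}$ to obtain a set of positive Haar measure invariant under the relevant dynamics, and then derives a contradiction from Birkhoff applied to the ergodic flow $g_{t}$. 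If you wish to cite horospherical equidistribution as a black box, that is acceptable, but you should be explicit that this is an external input rather than a corollary of ergodicity; alternatively, you should reproduce a thickening argument of the paper's type to bring the a.e.\ statement about $z$ into the ambit of the Birkhoff theorem.

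Finally, a small point of accuracy: your phrase "the Dirichlet inequality with constant $C$ for every $(z,Q)$ amounts to $\sup_{\LL}\lambda_{1}(\LL)^{2}\leq C$" overstates an equivalence. The direction "$\sup\lambda_{1}^{2}\leq C\Rightarrow$ Dirichlet" is the one you need for the upper bound (with the small-$Q$ edge case $1\leq Q\leq C$ handled separately, where $|q|=1$ suffices); the converse is exactly what the ergodicity argument supplies, so the "amounts to" should be split into these two halves as the paper does.
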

In fact, the optimality of the constant is slightly stronger.\\
{\sl
	{\bf Theorem 5 bis.} 
	For almost all $\ttt\in\C$, all $C<\frac{\sqrt 2}{ 3-\sqrt 3}$ and all $T\geq 1$, there exists $Q\geq T$ such that the system 
	\begin{align*}
		\left\{
		\begin{array}[{l}c]{l}
			0<|q|< Q,\\
			|qz-p|\leq C\times\frac{1}{Q},
		\end{array}	
		\right.
	\end{align*}
	has no solution with $p,q\in \ZZ$.
}\medskip

The essential ingredient of the proof of these latter theorems is Corollary \ref{cor:ConstraintsLarge} of Theorem \ref{thm:geonumber1} about the geometry of numbers (or the explicit description of the transversal). With this description we can show that the best Dirichlet constant is bounded above by $\frac{\sqrt 2}{ 3-\sqrt 3}$. To see that this constant is the best possible constant for almost all $\ttt\in\C$ we use an additional tool, the ergodicity of the diagonal flow $g_t$.\smallskip

It should be noticed that  the complex version of Hurwitz best constant $\tfrac1{\sqrt 5}$ is  known. In 1925, Lester Ford  \cite{Fo} proved that for all irrational complex numbers $z$ there exist infinitely many Gaussian integers $p$ and $q\neq 0$ such that $|z-\tfrac{p}{q}|<\tfrac1{\sqrt 3 |q|^2}$.  The constant $\tfrac1{\sqrt 3}$ is the best possible. Ford's proof did not use continued fractions and in 1975  R. Lakein gave a new proof of this result using complex continued fractions (see \cite{Lak2}). \medskip

The paper is organized as follows. We begin by some preliminaries on lattices over the ring of Gaussian integers, minimal vectors, the sequence of minimal vectors associated with a lattice and the relation between minimal vectors and best Diophantine approximations. 
Next, we prove the theorem about the index of consecutive  minimal vectors. In the next section, we prove the geometry of numbers' result, a more explicit version of this result (see Corollary \ref{cor:ConstraintsLarge}) and an  example showing that two linearly independent minimal vectors can both be successors of the same minimal vector. Thanks to Theorem \ref{thm:geonumber1}, we prove that two consecutive pairs of consecutive minimal vectors cannot have both index $2$, see Proposition \ref{prop:index2}.

Next, we define the transversal and a  parametrization of the transversal, then  we give explicit formulas for the  first return map in the transversal, see Theorem \ref{thm:continuedfraction} and \ref{thm:firstreturn}.
Then, we prove Theorem \ref{thm:densityinvariantmeasure1} about the density of the measure induced by the flow. Finally, we prove Dirichlet's theorem.
We finish the paper by two more small sections and an appendix. In the first of these sections we explain how the Gauss reduction algorithm of basis in two dimensional lattices can be used to find two consecutive minimal vectors.
The second section is devoted to a few open questions. 
The appendix is devoted to some basic facts about lattices over the ring of Gaussian integers. 
\smallskip

{\bf Acknowledgements.} The author  thanks Yann Bugeaud and Yitwah Cheung for their helpful comments and the referee for the very careful reading and for pointing out  missing pairs in the ``critical set'' defined in Lemma \ref{lem:computer1}.

\section{Preliminaries}
\subsection{Notations} We collect the notations that we shall use.
\begin{itemize}
	\item $|z|$ is the modulus of the complex number $z$ and $\arg z\in [0,2\pi[$ its argument.
	\item If $E$ is a subset in $\C$, $\overline E$ and $\overset{o}E$ denote its closure and its interior.
	Although we are working with complex numbers there should not be any confusion between closure and conjugate. Most of the time ``the bar" will be used for the closure.

	\item $\mathbb{D}$ denote the open unit disk in $\mathbb{C}$.
	$D(a,r)$ denote the closed disk of center $a\in\mathbb{C}$ and radius $r$ and $\D(a,r)$ the open disk of center $a$ and radius $r$.
	
	\item $\mathbf C(a,r)$ denote the circle of center $a\in\mathbb{C}$ and radius $r$.
	
	\item $|(z_1,z_2)|_{\infty}=\max(|z_1|,|z_2|)$ is the sup norm on $\C^2$ and $B_{\infty}(x,r)$ is the closed ball of radius $r$ and center $x$ in $\C^2$ associated with the sup norm.

	\item Let $a$ and $b$ be two non-negative real numbers and $u=(u_{1},u_{2})$ and $v=(v_{1},v_{2})$ be vectors in $\C^2$. We define the cylinders
	\begin{align*}
		C(a,b)&=\{(x,y)\in\C^2:|x|\leq|a|,|y|\leq|b|\},\\
		C(u)&=C(|u_1|,|u_2|),\\
		C(u,v)&=C(\max(|u_{1}|,|v_1|),\max(|u_2|,|v_{2}|)),\\
		C_1(a)&=\{(z_1,z_2)\in\C^2:|z_1|\leq a\},\\
		C_2(a)&=\{(z_1,z_2)\in\C^2:|z_2|\leq a\}. 
	\end{align*} 
	\item When $C(u,v)$ has nonempty interior,  $C(u,v)$ is the unit ball of a norm  $|.|_{u,v}$ defined on $\C^2$. Observe that for any $x=(x_1,x_2)\in \C^2$, 
	\[
	|x|_{u,v}=\max(\tfrac{|x_1|}{\max(|u_{1}|,|v_1|)},\tfrac{|x_2|}{\max(|u_2|,|v_{2}|)}).
	\]
	
	\item $\mathbb{U}_{n}=\{z\in\mathbb{C}:z^{n}=1\}$ is the group $n$-th roots of unity in $\mathbb{C}$.
	\item $\mathbb{D}_{8}$ is  the group of isometries acting on $\mathbb{C}$ generated by the
	multiplications by elements in $\mathbb{U}_{4}$ and by  conjugation.
	
	\item 
	$
	(x_1,x_2)\prec(y_1,y_2) 
	$
	iff $|x_2|<|y_2|$ or $|x_2|=|y_2|$ and $|x_1|\leq |y_1|$ is the lexicographic preorder on $\C^2$.
	
	\item When $A$ is a subset of $\C$ or $\C^2$, $A^*=A\setminus\{0\}$.
	
	\item $\ZZ=\Z+i\Z$, $I=(1+i)\mathbb{Z}[i]$ and  $J=\frac1{1+i}(\mathbb{Z}[i]\setminus I)$.
	\item For $u,v\in\C^2$, $\langle u,v\rangle_J\overset{def}=\{gu+hv:(g,h)\in\ZZ^2\cup J^2\}$
	
	\item We shall use also the following sets
	\begin{align*}
		\mathcal{C}= & \{z\in\mathbb{C}: |z|<1,\,\arg z\in[0,\tfrac{\pi}4 ]\}\\
		\mathcal{D}=  &  \{w\in\mathbb{C}:|z|<1,\, \operatorname{d}(w_{2},1)>
		1,\,\operatorname{d}(w_{2},1-i)> 1 \},\\
		\mathcal{T}=  &  \{w\in\mathbb{C}:|z|<1,\, \operatorname{d}(w_{2}%
		,1)>\sqrt2,\,\operatorname{d}(w_{2},-i)> \sqrt2 \},\\
		F=  &  \{(1,1),(1,-i),(1,1-i),(1,1+i),(1+i,1)\}.\\
		S=& [-\tfrac{1}{2},\tfrac{1}{2}[+[-\tfrac{1}{2},\tfrac{1}{2}[i
	\end{align*}
	\item For $\ttt\in\C$,
	\[
	M_{\ttt}=\begin{pmatrix}
		1&-\ttt \\
		0&1
	\end{pmatrix},
	\hspace{1cm} \LL_{\ttt}=M_{\ttt}\ZZ^2.
	\]
	\item When  $A$ is a commutative ring with unit $1_A$, $\SL(2,A)$ is the set of $2\times 2$ matrices with entries in $A$ and determinant $1_A$.
	
	\item $\lambda_1(\LL,\|.\|,\C)$ and $\lambda_2(\LL,\|.\|,\C)$ are the two complex minima of a Gauss lattice $\LL$ in $\C^2$ associated with the norm $\|.\|$, see definition \ref{def:minima}.
	\item The space of unimodular lattices in $\C^2$
	\[
	\Omega_1=\SL(2,\C)/\SL(2,\ZZ).
	\]
	\item The transversal $T$ is defined in subsection \ref{subsec:opentrans} and $T'$, $T_1$, $T_2$ are defined in subsection \ref{subsec:fulltrans}.
	\item The negligible set $\mathcal N$ is defined in subsection \ref{subsec:fulltrans}.
	\item The parametrizations $\Psi_k(\ttt,w_1,w_2)$ are defined in Proposition \ref{prop:para} in subsection \ref{subsec: param}.
	\item The sets $W_1$ and $W_2$ are defined in subsection \ref{subsec:detopentrans}.
	\item The sets $W'_1$ and $W'_2$, the map $T_G$ and the coefficients $a_k(w_1,w_2)$ are defined in subsection \ref{subsec:firstreturn}.
\end{itemize}

\subsection{The set of unimodular Gauss lattices in $\mathbb{C}^{2}$}

\begin{definition}
	Let $E$ be a finite dimensional $\mathbb{C}$-vector space. A subset
	$\Lambda$ in $E$ is a Gauss lattice if it is a $\mathbb{Z}[i]$-submodule of $E$, if it is a discrete subset of
	$E$ and if it generates the vector space $E$.
\end{definition}

Let $\Omega_{1}$ be the set of Gauss lattices $\Lambda$ in $\mathbb{C}^{2}$
that admits a basis $(u,v)$ with determinant in $\mathbb{U}_{4}=\{\pm1,\pm
i\}$. By definition, $\Lambda=M\mathbb{Z}[i]^{2}$ where $M$ is the matrix with
columns $u$ and $v$. Changing $u$ to $\pm u$ or to $\pm iu$, we can assume
that $M\in\operatorname{SL}(2,\mathbb{C})$. Next proposition is clear.

\begin{proposition}
	The map
	\[
	M\operatorname{SL}(2,\mathbb{Z}[i])\in\operatorname{SL}(2,\mathbb{C}%
	)/\operatorname{SL}(2,\mathbb{Z}[i])\rightarrow M\mathbb{Z}[i]^{2}\in
	\Omega_{1}
	\]
	is well defined and is bijective.
\end{proposition}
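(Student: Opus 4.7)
The plan is to verify the three standard things: well-definedness, injectivity, and surjectivity of the indicated map, using only the fact that $\SL(2,\ZZ)$ is exactly the group of $\ZZ$-module automorphisms of $\ZZ^2$ with determinant $1$, together with the units $\UU_4 = \ZZ^\times$.

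First, for well-definedness, I would take two matrices $M_1,M_2\in\SL(2,\C)$ with $M_1\SL(2,\ZZ) = M_2\SL(2,\ZZ)$, so that $M_2 = M_1 P$ for some $P\in \SL(2,\ZZ)$. Since $P$ sends $\ZZ^2$ bijectively onto itself (being in $\GL(2,\ZZ)$), we obtain $M_2\ZZ^2 = M_1 P \ZZ^2 = M_1\ZZ^2$; hence the image lattice depends only on the coset. It also lies in $\Omega_1$ by construction.

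Next, for injectivity, suppose $M_1\ZZ^2 = M_2\ZZ^2$ with $M_1,M_2\in\SL(2,\C)$. Then $Q\overset{\mathrm{def}}= M_1^{-1}M_2$ preserves $\ZZ^2$ as a set, hence as a $\ZZ$-module, so $Q\in\GL(2,\ZZ)$. Its determinant is a unit of $\ZZ$, i.e.\ lies in $\UU_4$, but at the same time $\det Q = \det M_2/\det M_1 = 1$. Thus $Q\in\SL(2,\ZZ)$, which yields $M_1\SL(2,\ZZ) = M_2\SL(2,\ZZ)$.

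For surjectivity, let $\LL\in\Omega_1$. By the definition of $\Omega_1$ it has a $\ZZ$-basis $(u,v)$ with $\det_{\C}(u,v)\in\UU_4$, so there exists $\zeta\in\UU_4$ such that the matrix $M$ with columns $\zeta u$ and $v$ satisfies $\det M = 1$. Because multiplication by the unit $\zeta$ is a $\ZZ$-module automorphism of $\ZZ e_1$, the pair $(\zeta u, v)$ is still a $\ZZ$-basis of $\LL$, so $\LL = M\ZZ^2$ with $M\in\SL(2,\C)$, giving a preimage. None of these steps is a real obstacle; the only subtle point to state cleanly is the identification $\SL(2,\ZZ) = \{P\in\GL(2,\ZZ):\det P=1\}$ used in both well-definedness and injectivity, which is immediate from $\ZZ^\times = \UU_4$.
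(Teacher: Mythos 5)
Your proof is correct and is essentially the argument the paper has in mind: the paper states the proposition without proof (``Next proposition is clear''), having already noted in the preceding paragraph that one may adjust a basis vector by a unit in $\UU_4$ to land in $\SL(2,\C)$, which is precisely your surjectivity step, while your well-definedness and injectivity arguments are the standard observations that $\SL(2,\ZZ)$ consists of exactly the determinant-one $\ZZ$-linear automorphisms of $\ZZ^2$.
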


Thanks to the proposition, we can identify $\Omega_{1}$ and $\operatorname{SL}%
(2,\mathbb{C})/\operatorname{SL}(2,\mathbb{Z}[i])$ and use results from
ergodic theory. For $t\in\mathbb{R}$, consider the matrices
\[
g_{t}=%
\begin{pmatrix}
	e^{t} & 0\\
	0 & e^{-t}%
\end{pmatrix}
.
\]
The flow $(g_{t})_{t\in\R}$ acts on $\Omega_{1}$ by left multiplication :
\[
g_t\LL=\{g_tx:x\in\LL\}=g_tM \ZZ^2\cong g_tM\SL(2,\ZZ).
\]

\subsection{Minimal vectors}
The notion of minimal vector goes back to Voronoï, see \cite{Vor}. He used minimal vectors to find units in cubic fields. The Voronoï's algorithm has been generalized by Buchmann to find units in some quartic and quintic fields, see \cite{Bu1,Bu2}
\begin{definition}
	Let $\Lambda$ be a Gauss lattice in $\mathbb{C}^{2}$.
	
	\begin{itemize}
		\item A nonzero vector $u=(u_{1},u_{2})\in\Lambda$ is a \textit{minimal}
		vector in $\Lambda$ if for every nonzero $v\in\Lambda$, $v\in C(u)=\{(z_1,z_2):|z_1|\leq|u_1|,|z_2|\leq |u_2|\}\Rightarrow
		|v_{1}|=|u_{1}|$ and $|v_{2}|=|u_{2}|$.
		
		\item Two minimal vectors $u=(u_{1},u_{2})$ and $v=(v_{1},v_{2})$ are
		\textit{equivalent} if $C(u)=C(v)$.
		
		\item Two minimal vectors $u=(u_{1},u_{2})$ and $v=(v_{1},v_{2})$  are 
		\textit{consecutive} iff $|u_2|<|v_2|$ and there is no minimal vector $w=(w_1,w_2)$ such $|u_{2}|<|w_2|<|v_{2}|$.
	\end{itemize}
\end{definition}

\begin{remark}
	Following Buchmann (\cite{Bu1,Bu2}), we could have define the minimal vectors using the 	preoder 
	$u \ll v$ iff $|u_1|\leq |v_1|$ and $|u_2|\leq|v_2|$ for $u,v$ be in $\C^2$. With this preorder, the minimal vectors of a Gauss lattice $\LL$ in $\C^2$ are the minimal elements in $(\LL\setminus\{0\} ,\ll)$. Observe  that the lexicographic order $\prec$ is also used by Buchmann in the same papers. 
\end{remark}

\begin{remark}
	If $u=(u_1,u_2)$ and $v=(v_1,v_2)$ are two minimal vectors in a lattice $\LL\subset\C^2$ and if $|v_2|>|u_2|$ then by definition, $|u_1|>|v_1|$. Therefore, there exist complex numbers $w_1$ and $w_2$ unique such that $u=(u_1,v_2w_2)$ and $v=(u_1w_1,v_2)$. Moreover $|w_1|,|w_2|<1$.
\end{remark}	

We collect a few easy lemmas about minimal vectors.

\begin{lemma}\label{lem:consecutive}
	Two minimal vectors $u=(u_{1},u_{2})$ and $v=(v_{1},v_{2})$ in a Gauss lattice  $\LL\subset\C^2$ are
	consecutive iff $|u_{2}|<|v_{2}|$ and the only lattice point in the
	interior of $C(u,v)$ is zero.
\end{lemma}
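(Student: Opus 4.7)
The plan is to unpack the two definitions and show the equivalence directly, relying only on the discreteness of $\Lambda$ and the defining property of a minimal vector.

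First I would make the simple but crucial observation that if $u$ and $v$ are both minimal with $|u_2|<|v_2|$, then necessarily $|u_1|>|v_1|$: otherwise $v$ would lie in $C(u)$ without matching $|u_1|$ and $|u_2|$, contradicting minimality of $u$. Consequently $C(u,v)=\{(z_1,z_2):|z_1|\le|u_1|,\,|z_2|\le|v_2|\}$ and its interior is $\{|z_1|<|u_1|,\,|z_2|<|v_2|\}$.

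For the direction ``consecutive $\Rightarrow$ no nonzero lattice point in the interior'', I would argue by contraposition: suppose some nonzero $w=(w_1,w_2)\in\Lambda$ lies in $\overset{o}{C}(u,v)$, so $|w_1|<|u_1|$ and $|w_2|<|v_2|$. If $|w_2|\le|u_2|$, then $w\in C(u)$ with $|w_1|<|u_1|$, contradicting the minimality of $u$. Hence $|u_2|<|w_2|<|v_2|$. Now I would produce a minimal vector strictly between $u$ and $v$ by extraction: the set $E=\{z\in\Lambda\setminus\{0\}:|z_1|<|u_1|,\;|u_2|<|z_2|\le|w_2|\}$ is finite (discreteness) and nonempty (contains $w$). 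Pick $z^\ast\in E$ minimizing $|z_2|$ and then, among such, minimizing $|z_1|$. I would verify $z^\ast$ is a minimal vector: for any nonzero $z'\in\Lambda$ with $|z'_1|\le|z^\ast_1|$ and $|z'_2|\le|z^\ast_2|$, one has $|z'_1|<|u_1|$ so the minimality of $u$ rules out $|z'_2|\le|u_2|$, forcing $z'\in E$, and then the choice of $z^\ast$ gives $|z'_2|=|z^\ast_2|$ and $|z'_1|=|z^\ast_1|$. This $z^\ast$ is a minimal vector with $|u_2|<|z^\ast_2|<|v_2|$, contradicting that $u,v$ are consecutive.

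For the converse, again by contraposition, assume $u,v$ are not consecutive: some minimal vector $w$ satisfies $|u_2|<|w_2|<|v_2|$. I would show $w\in\overset{o}{C}(u,v)$ by proving $|w_1|<|u_1|$. Indeed, if $|w_1|\ge|u_1|$, then $u\in C(w)$ with $|u_2|<|w_2|$, which violates the minimality of $w$. Combined with $|w_2|<|v_2|$, this places $w$ in the open cylinder, contradicting the assumption.

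The only step that requires any care is the extraction of the minimal vector $z^\ast$ in the forward direction, and even there the main point is just to order by $|z_2|$ first and $|z_1|$ second so that the resulting element satisfies the defining property of a minimal vector. Everything else reduces to comparing coordinates against the defining inequalities.
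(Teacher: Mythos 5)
Your proof is correct and follows essentially the same route as the paper's: both directions reduce to the same coordinate comparisons, and the forward direction in both proofs extracts a minimal vector from the nonzero lattice points in $\overset{o}{C}(u,v)$ by minimizing $|z_2|$ first and then $|z_1|$ (the paper's lexicographic preorder $\prec$). You simply spell out the verification that the extracted $z^{\ast}$ is a minimal vector, a step the paper states without proof.
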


\begin{proof}
	Let  $u=(u_{1},u_{2})$ and $v=(v_{1},v_{2})$ be two minimal vectors with $|u_2|<|v_2|$. If the set $\overset{o}{C}(u,v)\cap \LL\setminus\{0\} $ is nonempty, then it is finite and there is a $w=(w_1,w_2)$ minimal in this set for the lexicographic preorder $\prec$. On the one hand, $w$ is minimal in $\LL$. On the other hand, $|w_1|<|u_1|$ and $|w_2|<|v_2|$ and since $u$ is a minimal vector we have $|w_2|>|u_2|$. Hence $u$ and $v$ are not consecutive.
	
	Conversely, if $u$ and $v$ are not consecutive there is a minimal vector $w$ with $|u_2|<|w_2|<|v_2|$. Since $w$ is minimal $|u_1|>|w_1|$, hence $w\in  \overset{o}{C}(u,v)\cap \LL$. 
\end{proof}

Next lemma is clear.

\begin{lemma}
	Let $\Lambda$ be a Gauss Lattice in $\mathbb{C}^{2}$ and let $u$ be a minimal
	vector in $\Lambda$.
	
	\begin{itemize}
		\item All minimal vectors $v\in\Lambda$ such that $u$ and $v$ are consecutive,
		are  equivalent.
		
		\item If $u^{\prime}$ and $v$ are minimal vectors such that $u$ is equivalent
		to $u^{\prime}$, and $u$ and $v$ are consecutive, then $u^{\prime}$ and $v$
		are consecutive.
	\end{itemize}
\end{lemma}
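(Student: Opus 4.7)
The plan is to unpack the definitions of \emph{minimal}, \emph{equivalent}, and \emph{consecutive} directly. Both statements rely on the observation that the property ``$u$ and $v$ are consecutive'' is expressed purely in terms of the moduli $|u_2|$, $|v_2|$, and the minimality of $u$ and $v$, together with a minimality argument that controls the first coordinate of $v$ once the second is fixed.

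For the first bullet, I take two minimal vectors $v=(v_1,v_2)$ and $v'=(v'_1,v'_2)$ that are each consecutive to $u$, and aim to show $C(v)=C(v')$. First I compare the second coordinates: if $|v_2|<|v'_2|$, then $v$ itself is a minimal vector with $|u_2|<|v_2|<|v'_2|$, contradicting the consecutiveness of $u$ and $v'$ (and symmetrically if $|v'_2|<|v_2|$). Hence $|v_2|=|v'_2|$. Next I compare the first coordinates: if $|v_1|<|v'_1|$, then $v\in C(v')$ while $|v_1|\neq|v'_1|$, contradicting the minimality of $v'$; by symmetry $|v_1|=|v'_1|$. Therefore $C(v)=C(v')$, i.e., $v$ and $v'$ are equivalent.

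For the second bullet, the assumption $u\sim u'$ gives $C(u)=C(u')$, and in particular $|u'_2|=|u_2|$. Combining this with the consecutiveness of $u$ and $v$, one has $|u'_2|=|u_2|<|v_2|$; and since the condition ``no minimal vector $w$ with $|u_2|<|w_2|<|v_2|$'' is identical after replacing $|u_2|$ by $|u'_2|$, the same absence of intermediate minimal vectors holds for $u'$ and $v$. Both $u'$ and $v$ are minimal by hypothesis, so $u'$ and $v$ are consecutive.

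There is no real obstacle here, since each step is a direct contradiction with one of the defining conditions; the only point requiring a word of care is the transition from $|v_2|=|v'_2|$ to $C(v)=C(v')$ in the first bullet, where one must use minimality of \emph{both} $v$ and $v'$ (each applied to the other) rather than any additional structure of the lattice.
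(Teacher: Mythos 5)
Your proof is correct and is exactly the direct unpacking of the definitions that the paper implicitly relies on when it says "Next lemma is clear." Both bullets follow from the definitions of minimal, equivalent, and consecutive in the way you describe, and the careful point you flag (using minimality of both $v$ and $v'$ to pass from $|v_2|=|v'_2|$ to $C(v)=C(v')$) is indeed the only step that needs a word of justification.
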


Next lemma is useful to construct minimal vector in lattice.
\begin{lemma}\label{lem:lexico}
	Let $\LL$ be a Gauss lattice in $\C^2$ and let $r$ be a positive real number. Let $C$ be the infinite cylinder $C_1(r)=\{(z_1,z_2):|z_1|\leq r\}$ or its interior.
	\begin{itemize} 
		\item The set $C\cap\LL\setminus\{0\}$ is nonempty and admits a minimal element for the lexicographic order.
		\item If $u$ is a minimal element for the lexicographic order in the set $C\cap\LL\setminus\{0\} $ then $u$ is minimal in $\LL$.
	\end{itemize}
\end{lemma}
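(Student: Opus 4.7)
Both bullets are essentially routine once the nonemptiness of $C\cap\LL\setminus\{0\}$ is established, so I would begin there. Let $\pi_1:\C^2\to\C$ denote the projection onto the first coordinate. The additive subgroup $\pi_1(\LL)\subset\C$ is either non-discrete or discrete. If non-discrete, it contains a sequence of distinct points converging to some point, hence by taking differences it admits nonzero elements of arbitrarily small modulus; this yields $v\in\LL\setminus\{0\}$ with $|v_1|<r$. Otherwise $\pi_1(\LL)$ is a discrete subgroup of $\C\cong\R^2$, and so has $\Z$-rank at most $2$; since $\LL$ has $\Z$-rank $4$, a rank count forces $\LL\cap(\{0\}\times\C)$ to have $\Z$-rank at least $2$ and in particular to contain a nonzero vector $v=(0,v_2)$, for which $|v_1|=0<r$. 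In both cases $v$ lies in $\overset{o}{C}_1(r)\cap\LL\setminus\{0\}$, so both the closed and the open variant of $C$ meet $\LL\setminus\{0\}$.

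To produce a minimum for $\prec$, I would pick any such $v_0$, set $R=|v_{0,2}|$, and use that $\LL\cap\{|z_1|\le r,\,|z_2|\le R\}$ is a discrete set intersected with a compact one, hence finite. It contains $v_0$, so $|v_2|$ attains a minimum $m_2$ on $C\cap\LL\setminus\{0\}$ (any $v$ with $|v_2|>R$ is automatically beaten by $v_0$). The same compactness argument applied to $\{v\in C\cap\LL:|v_2|=m_2\}$ shows that this set is also finite; pick $u$ in it minimizing $|v_1|$. By construction $u$ is a minimum, a fortiori a minimal element, of $\prec$ on $C\cap\LL\setminus\{0\}$.

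For the second bullet, suppose $u$ is minimal for $\prec$ in $C\cap\LL\setminus\{0\}$ and let $v\in\LL\setminus\{0\}$ lie in $C(u)$, so that $|v_1|\le|u_1|$ and $|v_2|\le|u_2|$. Since $|u_1|\le r$ (strictly, if $C$ is the open cylinder), the same bound holds for $|v_1|$, hence $v\in C\cap\LL\setminus\{0\}$. The two inequalities directly yield $v\prec u$ from the definition of the preorder, so minimality of $u$ forces $u\prec v$ as well. Comparing the two relations gives $|u_1|=|v_1|$ and $|u_2|=|v_2|$, which is exactly the minimality of $u$ in $\LL$. The only delicate point of the whole proof is the nonemptiness step, and it is dispatched by the one-line discreteness dichotomy on $\pi_1(\LL)$.
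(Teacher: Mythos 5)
Your proof is correct, and it differs from the paper's in one genuine way: the nonemptiness of $C\cap\LL\setminus\{0\}$. The paper dispatches this in one line by applying Minkowski's convex body theorem to a sufficiently tall truncation of the cylinder $C_1(r)$ (which has unbounded $\R^4$-volume as the height grows, so eventually exceeds $2^4\det_\R(\LL)$). You instead run a discreteness dichotomy on $\pi_1(\LL)$: if the projection is non-discrete you get small nonzero first coordinates directly, and if it is discrete a rank count (rank $4$ for $\LL$ versus rank $\leq 2$ for a discrete subgroup of $\C$) forces a nontrivial kernel $\LL\cap(\{0\}\times\C)$. Both routes are valid; Minkowski is the standard lattice-theoretic tool and is shorter, while your argument is more elementary in that it needs only basic facts about discrete subgroups of $\R^2$ and rank additivity in exact sequences, at the cost of a case split. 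Once nonemptiness is in hand, your construction of the $\prec$-minimum via two successive finiteness arguments and your verification of the second bullet (showing any $w\in C(u)\cap\LL\setminus\{0\}$ lies in $C$, then extracting $|u_1|=|w_1|$, $|u_2|=|w_2|$ from $w\prec u$ and $u\prec w$) are the same as the paper's. One small point worth making explicit: in the non-discrete case you pass from a small nonzero element of $\pi_1(\LL)$ to a nonzero vector of $\LL$; this is immediate since $\pi_1(\LL)$ is by definition the image, but it is the step being silently used.
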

\begin{proof}
	Since $r>0$, by Minkowski convex body theorem $C\cap\LL\setminus\{0\} $ is nonempty. Let $C_2(\rho)=\{(z_1,z_2):|z_2|\leq \rho \}$. If $v=(v_1,v_2)$ is in $C\cap\LL\setminus\{0\} $, then $C\cap\LL\setminus\{0\} \cap C_2(|v_2|)$ is finite and nonempty and so $C\cap\LL\setminus\{0\} \cap C_2(|v_2|)$ must contain a minimal element $u$ for the lexicographic preorder. 	This element $u$ is also minimal in $C\cap\LL\setminus\{0\} $ for the lexicographic preorder.
	
	If $w=(w_1,w_2)\in C(u)\cap \LL\setminus\{0\} $ then $w\prec u$ and $w\in C$. Since $u$ is minimal for the lexicographic order we also have $u\prec w$ which implies $|u_2|=|w_2|$ and  $|w_1|=|v_1|$, hence $u$ is minimal in $\LL$
\end{proof}

\subsection{The sequence of minimal vectors}
Given a Gauss lattice $\Lambda$ in $\mathbb{C}^{2}$, the set of minimal vectors
can be arranged in a sequence $(X_{n}(\Lambda))_{n\in I_{\LL}}=(z_{1,n},z_{2,n})_{n\in
	I_{\LL}}$ where $I_{\LL}$ is an interval in $\mathbb{Z}$ such that the sequence
$(|z_{2,n}|)_{n\in I_{\LL}}$ is increasing and each minimal vector is equivalent to a
minimal vector of the sequence. This sequence might be finite, infinite one
sided or two sided. Two minimal vectors are consecutive if and only if they are
equivalent to two consecutive terms of the sequence $(X_{n}(\Lambda))_{n\in
	I_{\LL}}$. For all $n\in I_{\LL}$, let denote
$r_{n}(\Lambda)=|z_{1,n}|$ and $q_{n}(\Lambda)=|z_{2,n}|$. 
The three following results are standard in the frame work of best Diophantine approximations and continued fractions.  The second inequality of the first item gives an upper bound of the Dirichlet complex constant. The lemma will not be used in the sequel.

\begin{lemma}\label{lem:pigeon}
	Let $\Lambda$ be a lattice in $\mathbb{C}^{2}$ and let $(X_{n}(\Lambda))_{n\in
		I_{\LL}}$ be the sequence of minimal vectors of $\Lambda$.
	
	\begin{enumerate}
		\item If $n$ and $n+1\in I_{\LL}$, then $\tfrac{1}{2}|\det_{\mathbb{C}}%
		(\Lambda)|\leq q_{n+1}(\Lambda)r_{n}(\Lambda)\leq\tfrac{4}{\pi}|\det
		_{\mathbb{C}}(\Lambda)|$.
		
		\item If $n$ and $n+14\in I_{\LL}$, then $q_{n+14}(\Lambda)\geq Cq_{n}(\Lambda)$
		where $C=\tfrac12(1+\cos(\tfrac{2\pi}{7}))>1.1234$
		
		\item If $n$ and $n+56\in I_{\LL}$, then $r_{n+70}(\Lambda)\leq\tfrac12
		r_{n}(\Lambda)$.
	\end{enumerate}
\end{lemma}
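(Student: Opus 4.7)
The plan is to handle part (1) by direct geometric arguments (Minkowski's theorem together with a determinant expansion), while parts (2) and (3) will rest on pigeonhole arguments applied to the arguments of the coordinates of successive minimal vectors.

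For part (1), the upper bound comes from applying Minkowski's convex body theorem in $\C^2\cong\R^4$ to the cylinder $C(X_n,X_{n+1})$. By Lemma \ref{lem:consecutive} its interior contains no nonzero point of $\LL$. Since $C(X_n,X_{n+1})$ is the product of two discs of radii $r_n$ and $q_{n+1}$, its $\R^4$-volume is $\pi^2 r_n^2 q_{n+1}^2$, while the $\R^4$-covolume of $\LL$ equals $|\det_{\C}\LL|^2$. Minkowski's theorem then gives $\pi^2 r_n^2 q_{n+1}^2\le 16|\det_{\C}\LL|^2$, hence $r_n q_{n+1}\le\tfrac{4}{\pi}|\det_{\C}\LL|$. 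For the lower bound, I first check that $X_n$ and $X_{n+1}$ are $\C$-linearly independent: otherwise one would be a $\C$-multiple of the other, and since $|z_{2,n+1}|>|z_{2,n}|$ the multiplier would have modulus strictly greater than $1$, forcing $|z_{1,n+1}|>r_n$, which contradicts the minimality of $X_n$ applied to $X_{n+1}\in C(X_n)$. Consequently $\det_{\C}(X_n,X_{n+1})$ is a nonzero $\ZZ$-multiple of $\det_{\C}\LL$, so $|\det_{\C}(X_n,X_{n+1})|\ge |\det_{\C}\LL|$. Expanding the $2\times 2$ determinant and applying the triangle inequality yields $|\det_{\C}(X_n,X_{n+1})|\le |z_{1,n}z_{2,n+1}|+|z_{2,n}z_{1,n+1}|\le 2 r_n q_{n+1}$, producing the lower bound.

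For part (2), I argue by contradiction: suppose $q_{n+14}<Cq_n$. The second coordinates of the fifteen minimal vectors $X_n,\dots,X_{n+14}$ then all lie in the annulus of radii $q_n$ and $Cq_n$. Since multiplication by a unit in $\UU_4$ preserves minimality and fixes $|z_{2,j}|$, I can replace each $X_j$ by $u_j X_j$ with $u_j\in\UU_4$ chosen so that $\arg z_{2,j}\in[0,\pi/2)$. The pigeonhole principle on $15$ points in $[0,\pi/2)$ then yields two indices $j<k$ with $|\arg z_{2,k}-\arg z_{2,j}|\le \pi/28$. The law of cosines converts this angular proximity, together with $|z_{2,j}|,|z_{2,k}|\in[q_n,Cq_n]$, into an explicit upper bound for $|z_{2,k}-z_{2,j}|$, and the value of $C$ is calibrated so that this bound is strictly less than $q_n$. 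Combined with the a priori estimate $|z_{1,k}-z_{1,j}|\le 2r_n$ for the first coordinate, the lattice vector $X_k-X_j$ ends up sitting in a cylinder too small to accommodate any minimal vector of index $\ge n$; invoking Theorem \ref{thm:index} to handle the possibility of index $2$ produces the required contradiction.

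For part (3), the philosophy is symmetric: a pigeonhole argument on the arguments of the first coordinates of successive minimal vectors (which also live in a disk, of radius $r_n$) provides a decrease statement for the sequence $(r_m)$, and chaining this bound with part (1) (which couples $r_m$ to $q_{m+1}$) propagates the factor $\tfrac{1}{2}$ over the $70$ steps, the hypothesis $n+56\in I_{\LL}$ ensuring that enough minimal vectors are available. The hardest point in both (2) and (3) will be the bookkeeping of why a short lattice vector $X_k-u_j X_j$ contradicts minimality: having one small coordinate is not enough, since minimal vectors with smaller second coordinate already exist for $m<n$, so the unit rotations must be chosen to ensure that \emph{both} coordinates of the difference shrink simultaneously. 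This is the delicate combinatorial ingredient; the remainder of the argument is trigonometry and elementary geometry of numbers.
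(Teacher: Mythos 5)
Part (1) of your proposal is correct and matches the paper's argument essentially verbatim: Minkowski applied to the cylinder $C(X_n,X_{n+1})$ for the upper bound, and the fact that $\ZZ X_n+\ZZ X_{n+1}$ is a sublattice together with the triangle inequality applied to the $2\times 2$ determinant for the lower bound. Your explicit check of $\C$-linear independence is a sound addition.

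Parts (2) and (3) have a genuine gap, and you have actually put your finger on it yourself without resolving it. In (2), after you rotate each $X_j$ by $u_j\in\UU_4$ so that $\arg(u_jz_{2,j})\in[0,\pi/2)$, your freedom in $\UU_4$ is exhausted: the first coordinates $u_jz_{1,j}$ are now fixed by that same choice and scattered arbitrarily in the disk $D(0,r_n)$. The pigeonhole on $15$ angles in $[0,\pi/2)$ controls only $|z_{2,k}-z_{2,j}|$; the estimate $|z_{1,k}-z_{1,j}|\le 2r_n$ that you fall back on puts $Y=u_kX_k-u_jX_j$ into $C(2r_n,q_n)$, which does \emph{not} violate the minimality of $X_n$ (whose cylinder is $C(r_n,q_n)$). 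Theorem~\ref{thm:index} plays no role in closing this; the difficulty is not about index $2$ but about the first coordinate being uncontrolled. The paper's proof avoids this by not rotating first: it keeps all $4\times 15=60$ points of $\UU_4\{X_n,\dots,X_{n+14}\}$ and pigeonholes them into $7\times 8=56$ translates of the semi-open box $D(0,r_n/2)\times\overset{o}{D}(0,q_n/2)$, where the $7$-disk covering controls the first coordinate and the Fejes T\'oth $8$-disk covering (which is what calibrates $C=\tfrac12+\cos\tfrac{2\pi}{7}$) controls the second, simultaneously. A one-dimensional angular pigeonhole cannot supply the two-dimensional control needed; once you acknowledge you must ``ensure both coordinates shrink simultaneously'' you are forced back to some product covering, and then the $\pi/28$ computation from $15$ points is not the relevant count. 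If you insist on fixing the rotations first, you would have to pigeonhole the $15$ remaining points against a product of angular bins for $z_{2,j}$ and disks for $z_{1,j}$ (e.g.\ $2\times 7=14$ cells), a genuinely different bookkeeping that you have not carried out.

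Part (3) is sketched too loosely to assess, but I will flag one concrete mismatch: the chain you propose (``chaining with part (1)'') is not what makes the index bound work. The paper's argument is a double pigeonhole on the \emph{first} coordinates $z_{1,j}$ of $X_n,\dots,X_{n+56}$ (first by angular sector, then by distance within a sector) producing a lattice vector $X=X_k-X_j$ with $|x_1|\le r_n/2$ and $|x_2|\le 2q_{n+56}$, and then uses the growth from part (2) to locate the minimal vector dominated by $X$ among $X_n,\dots,X_{n+70}$. Part (1) does not enter; part (2) does. So, in addition to the gap carried over from (2), the structure of your plan for (3) diverges from what is needed.
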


\begin{proof}
	1. Making use of Minkowski convex body Theorem with the cylinder $C(X_{n}
	(\Lambda),X_{n+1}(\Lambda))$ and the lattice $\Lambda$, we obtain that $(\pi
	q_{n+1}(\Lambda)r_{n}(\Lambda))^{2}\leq16|\det_{\mathbb{R}}(\Lambda)|$, thus
	$q_{n+1}(\Lambda)r_{n}(\Lambda)\leq\tfrac{4}{\pi}|\det_{\mathbb{C}}(\Lambda
	)|$. Since the minimal vectors $X_{n}(\Lambda)=(z_{1,n},z_{2,n})$ and
	$X_{n+1}(\Lambda)=(z_{1,n+1},z_{2,n+1})$ are linearly independent, 
	$|\det_{\mathbb{R}}(X_{n}(\Lambda),X_{n+1}(\Lambda))|$ is a positive integer
	multiple of $|\det_{\mathbb{R}}(\Lambda)|$. It follows that $|\operatorname{det}_{\mathbb{C}}(X_{n}(\Lambda),X_{n+1}(\Lambda
	))|\geq|\operatorname{det}_{\mathbb{C}}(\Lambda)|$ and then
	\[
	2q_{n+1}(\Lambda)r_{n}(\Lambda)\geq|z_{1n}z_{2,n+1}|+|z_{2n}z_{1,n+1}%
	|\geq|\operatorname{det}_{\mathbb{C}}(X_{n}(\Lambda),X_{n+1}(\Lambda
	))|\geq|\operatorname{det}_{\mathbb{C}}(\Lambda)|.
	\]
	2. This is a standard application of the pigeonhole principle. Given
	$r>0$ and $C'<C$, $7$ closed disks of radius  $\tfrac12 r$ are enough to
	cover a disk of radius $r$ and $8$ open disks of radius $\tfrac12 r$ are
	enough to cover a closed disk of radius $C'r$. The first covering
	result is very well known and easy, the second is due to G. Fejes Toth,
	\cite{Th}. It follows that $7\times8=56$ translates of the semi-open box
	$B_{1}=D(0,\tfrac12 r_{n}(\Lambda))\times\overset{o}D(0,\tfrac12 q_{n}(\Lambda))$
	can cover the box $B_{2}=C(r_{n}(\Lambda),C^{\prime}q_{n}(\Lambda))$ for any
	$C^{\prime}<C$. Now if $q_{n+14}(\Lambda)<Cq_{n}(\Lambda)$ then all the
	$4\times15=60$ points of the set $\mathbb{U}_{4}\{X_{n}(\Lambda),\dots
	,X_{n+14}(\Lambda)\}$ are in the box $B_{2}=C(r_{n}(\Lambda),C^{\prime}%
	q_{n}(\Lambda))$ with $C^{\prime}=\tfrac{q_{n+14}(\Lambda)}{q_{n}(\Lambda)}$,
	so at least two of them are in the same translate of the box $B_{1}$. It
	follows that their difference is in the box $2B_{1}$ which contradicts that
	$X_{n}(\Lambda)$ is a minimal vector. \newline3. We use twice the pigeonhole
	principle. We can split $\mathbb{C}$ in height angular sector $C_{1}%
	,\dots,C_{8}$ such that if $z$ and $z^{\prime}$ are in the same angular sector
	then $|z-z^{\prime}|\leq\max(|z|,|z^{\prime}|)$. Consider the $57$ minimal vectors $X_{n}%
	(\Lambda)=(z_{1n},z_{2n}),\dots,X_{n+56}(\Lambda)=(z_{1,n+56},z_{2,n+56})$. There is a sector $C_{i}$ that contains at least
	 seven of the $z_{1j}$, say for the $j\in J$. Since $r_{j}(\Lambda)\leq
	r=r_{n}(\Lambda)$ for $j\in J$ and $\operatorname{card} J\geq7$, there exists
	$k\neq j$ in $J$ such that $|z_{1k}-z_{1j}|\leq\tfrac12 r$. Therefore, the
	vector $X=X_{k}(\Lambda)-X_{j}(\Lambda)=(x_{1},x_{2})$ is such that
	$|x_{1}|\leq\tfrac12 r$ and $|x_{2}|=|z_{2k}-z_{2j}|\leq2\max(|z_{2k}%
	|,|z_{2j}|)\leq 2q_{n+56}(\Lambda)$. The cylinder $C(X)$ contains a minimal vector
	$X_{i}$ which is one of $X_{n}(\Lambda),\dots,X_{n+56+14}(\Lambda)$ so we are done.
\end{proof}

\subsection{Minimal vectors and Diophantine approximations}
\begin{definition}
	Let $\ttt$ be a complex number. A pair $(p,q)\in\Z[i]$ is a best approximation vector of $\ttt$ if $|q|>0$ and for all $(a,b)\in\Z[i]^2$,
	\begin{align*}
		\left\{
		\begin{array}[c]{l}
			0<|b|<|q|\Rightarrow |p-q\ttt|<|a-b\ttt|\\
			0<|b|\leq|q|\Rightarrow |p-q\ttt|\leq|a-b\ttt|
		\end{array}
		\right..
	\end{align*}
\end{definition}

\begin{proposition}\label{prop:best}
	Let $\ttt$ be a complex number and consider  the lattice $\LL_{\ttt}$ defined by
	\[
	\LL_{\ttt}=\begin{pmatrix}
		1&-\ttt \\
		0&1
	\end{pmatrix}\Z[i]^2=M_{\ttt}\Z[i]^2.
	\]
	Then $X=\begin{pmatrix}
		x\\y
	\end{pmatrix}=M_{\ttt}\begin{pmatrix}
		p\\q
	\end{pmatrix}\in\LL_{\ttt}
	$ is a minimal vector with $y\neq 0$ iff $(p,q)$ is a best Diophantine approximation vector of $\ttt$.
\end{proposition}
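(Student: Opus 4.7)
The plan is to unfold the definitions on both sides, with care taken for the degenerate case where the second coordinate of a competing lattice vector is zero.

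Write a general vector of $\LL_\ttt$ as $V=M_\ttt\begin{pmatrix}a\\b\end{pmatrix}=(a-b\ttt,\,b)$ for $(a,b)\in\ZZ^2$. In particular $X=(p-q\ttt,\,q)$, so the condition $y\neq 0$ is $q\neq 0$, and the premises in the definitions of ``minimal vector'' and ``best approximation'' translate immediately between the $(u_1,u_2)$-form on the lattice side and the $(p-q\ttt,q)$-form on the approximation side. So modulo the annoying case $b=0$, both implications are just a rewriting.

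The key auxiliary observation, which I would prove first, is that in either hypothesis one has $|p-q\ttt|<1$. On the lattice side, this comes from applying the minimality condition to the nonzero lattice vector $(1,0)=M_\ttt\binom{1}{0}\in\LL_\ttt$: since its second coordinate $0$ is strictly smaller than $|q|>0$, the conclusion of minimality forces the premise to fail at the first coordinate, giving $|p-q\ttt|<1$. On the approximation side, one chooses $p_0\in\ZZ$ nearest to $q\ttt$; then $|p_0-q\ttt|\le \tfrac{\sqrt 2}{2}$, and applying the ``$|b|\le|q|$'' clause of best approximation with $(a,b)=(p_0,q)$ yields $|p-q\ttt|\le \tfrac{\sqrt 2}{2}<1$.

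Forward direction. Assume $X$ is minimal with $q\neq 0$. Given $(a,b)\in\ZZ^2$ with $0<|b|\le|q|$, the vector $V=(a-b\ttt,b)$ is nonzero in $\LL_\ttt$; if $|a-b\ttt|\le|p-q\ttt|$ then minimality forces $|b|=|q|$, so the strict inequality $|b|<|q|$ excludes the case $|a-b\ttt|\le|p-q\ttt|$, giving $|p-q\ttt|<|a-b\ttt|$. Similarly, when $|b|=|q|$, if $|a-b\ttt|<|p-q\ttt|$ then the pair $(|a-b\ttt|,|b|)$ witnesses failure of minimality; hence $|p-q\ttt|\le|a-b\ttt|$.

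Backward direction. Assume $(p,q)$ is a best approximation with $q\neq 0$, so in particular $|p-q\ttt|<1$ by the observation above. Let $V=(a-b\ttt,b)\in\LL_\ttt$ be nonzero with $|a-b\ttt|\le|p-q\ttt|$ and $|b|\le|q|$. If $b=0$ then $V=(a,0)$ with $a\neq 0$, so $|a-b\ttt|=|a|\ge 1>|p-q\ttt|$, contradicting $|a-b\ttt|\le|p-q\ttt|$. Hence $b\neq 0$, and the first best-approximation clause rules out $|b|<|q|$, giving $|b|=|q|$; the second clause then gives $|p-q\ttt|\le|a-b\ttt|$, so in fact $|a-b\ttt|=|p-q\ttt|$, which is exactly the conclusion of the minimality property for $X$.

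The only non-routine point is the degenerate case $b=0$ in the converse and, symmetrically, the handling of $v=(1,0)$ in the direct sense; both are resolved by the uniform bound $|p-q\ttt|<1$, which is what I would isolate as a preliminary lemma before running the two implications.
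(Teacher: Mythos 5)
Your proof is correct and takes essentially the same route as the paper: both directions are direct definition-unfolding through the cylinder $C(X)$, and the only nontrivial point is ruling out the competing lattice vectors with $b=0$ in the converse, which both you and the paper handle via the bound $|p-q\ttt|\le\tfrac{\sqrt 2}{2}<1$ coming from a nearest Gaussian integer. One small remark: the observation you extract on the lattice side (applying minimality of $X$ to the vector $(1,0)$ to get $|p-q\ttt|<1$) is never actually needed — the forward direction only quantifies over pairs $(a,b)$ with $b\neq 0$, so no $b=0$ case arises there; the $\sqrt 2/2$ bound is the only one doing work, and only in the backward direction.
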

In the multidimensional real  setting, Lagarias proved  that a shortest vector of the lattice $g_t\LL_{\ttt}$ is associated with a best Diophantine  approximation of $\ttt$, see \cite{Lag1}. 
His result was stated for the Euclidean norm instead of the sup norm. That is why some best approximations are not associated with a shortest vector even in one-dimensional case.
\begin{proof}
	Suppose that $X=\begin{pmatrix}
		x\\y
	\end{pmatrix}$ is a minimal vector with $y\neq 0$. If $a$ and $b$ are Gaussian integers with $0<|b|<|y=q|$, then $Y=\begin{pmatrix}
		a-b\ttt \\
		b
	\end{pmatrix}\notin C(X)$ which implies $|a-b\ttt|>|p-q\ttt|$. If $|b|=|q|$ and if $Y\in C(X)$ then $|a-b\theta|=|p-q\theta|$.
	
	Conversely, if $(p,q)$ is a best Diophantine approximation vector of $\ttt$, then for any $(a,b)\in\Z[i]^2$, $Y=\begin{pmatrix}
		a-b\ttt \\
		b
	\end{pmatrix}\in C(X)$ implies
	\[\left\{\begin{array}{ll}
		|a-b\ttt|\leq |p-q\ttt|\\ |b|\leq|q|
	\end{array}\right..
	\]
	If $b\neq 0$ this in turn implies $|a-b\ttt|=|p-q\ttt|$ and $|b|=|q|$ by definition of best approximation vectors. If $b=0$ and $a\neq 0$ then $|a|\geq 1>\tfrac{\sqrt 2}{2}\geq |p-q\ttt|$, hence $Y\notin C(X)$.
\end{proof}

\section{Proof of Theorem \ref{thm:index}, index of lattices spanned by two consecutive minimal vectors}\label{sec:conscecutive}

Let $I$ be the ideal in $\mathbb{Z}[i]$ generated by $1+i$, i.e.
$I=(1+i)\mathbb{Z}[i]$ and let $J=\frac1{1+i}(\mathbb{Z}[i]\setminus I)$.

Theorem \ref{thm:index} is a consequence of the following proposition.

\begin{proposition}
	\label{prop:index} Let $\Lambda$ be a Gauss lattice in $\mathbb{C}^{2}$.
	Suppose that $u=(u_{1},u_{2})$ and $v=(v_{1},v_{2})$ are two linearly independent
	minimal  vectors in $\Lambda$ and such that $\overset{o}C(u,v)\cap \LL=\{0\}$. Call $L$ the lattice spanned by $u$ and $v$. Then
	
	\begin{enumerate}
		\item $\tfrac14 \det_{\mathbb{R}}(\Lambda)\leq|u_{1}|^{2}|v_{2}|^{2}\leq
		\tfrac{16}{\pi^{2}}\det_{\mathbb{R}}(\Lambda)$,
		
		\item $L$ has index $1$ or $2$: $[\Lambda:L]= \frac{|\det_{\mathbb{R}}%
			(L)|}{|\det_{\mathbb{R}}(\Lambda)|}=1$ or $2$.
		
		\item If $L$ has index $2$, then
		\[
		\Lambda=\{au+bv:(a,b)\in\mathbb{Z}[i]^{2}\cup J^{2}\}
		\]
		and $(U=u,V=\frac{1}{1+i}(u+v))$ and $ (U'=\frac{1}{1+i}(u+v),V'=v)$ are two bases of $\Lambda$.
	\end{enumerate}
	When $u$ and $v$ are two consecutive minimal vectors, we shall say that $[L:\LL]$ is the \sl{ index } of the two consecutive minimal vectors $u$ and $v$.
\end{proposition}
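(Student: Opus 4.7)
My plan is to establish the three items in order, exploiting the minimality of $u$ and $v$ and working modulo the sublattice $L$.

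For (1), I would first note that since $u,v$ are minimal and linearly independent, one cannot have $|u_{1}|<|v_{1}|$ together with $|u_{2}|<|v_{2}|$ (otherwise $u$ would sit strictly inside $C(v)$ with smaller coordinate moduli, contradicting minimality of $v$), so after possibly swapping I may assume $|u_{1}|\geq|v_{1}|$ and $|v_{2}|\geq|u_{2}|$. Then $C(u,v)$ is a centrally symmetric convex body of real volume $\pi^{2}|u_{1}|^{2}|v_{2}|^{2}$ whose interior meets $\Lambda$ only at zero; Minkowski's convex body theorem yields the upper bound. For the lower bound, combine $|\det_{\C}(u,v)|\leq|u_{1}v_{2}|+|u_{2}v_{1}|\leq 2|u_{1}||v_{2}|$ with $\det_{\R}(L)=|\det_{\C}(u,v)|^{2}\geq\det_{\R}(\Lambda)$.

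For (2), item (1) gives $[\Lambda:L]=\det_{\R}(L)/\det_{\R}(\Lambda)\leq 64/\pi^{2}<7$. The key structural input is the observation that $\Lambda\cap\C u=\ZZ u$ and $\Lambda\cap\C v=\ZZ v$: any putative $cu\in\Lambda$ with $c\in\Q(i)\setminus\ZZ$ can be reduced modulo $\ZZ$ to a nonzero $c'u\in\Lambda$ with $|c'|\leq \tfrac{\sqrt 2}{2}<1$, and then $c'u$ lies in $\overset{o}C(u,v)$, contradicting the hypothesis. Consequently the projection $\pi_{1}\colon\Lambda/L\to\C/\ZZ$, $au+bv\mapsto a\bmod\ZZ$, is injective, so $\Lambda/L$ is cyclic as a $\ZZ$-module, $\Lambda/L\cong\ZZ/(d)$, and the constraint $N(d)\leq 6$ (together with $N(d)$ being a Gaussian norm) forces $N(d)\in\{1,2,4,5\}$.

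The main obstacle is ruling out $N(d)=4$ and $5$. The case $N(d)=5$ (so $d=2+i$ up to units) is direct: every nonzero element of $\tfrac{1}{2+i}\ZZ/\ZZ$ has modulus $1/\sqrt 5$ in the fundamental domain $S$, so a generator $w=a_{0}u+b_{0}v$ of $\Lambda/L$ has $|a_{0}|+|b_{0}|=2/\sqrt 5<1$, which forces $|w_{1}|\leq(|a_{0}|+|b_{0}|)|u_{1}|<|u_{1}|$ and similarly $|w_{2}|<|v_{2}|$, placing $w$ in $\overset{o}C(u,v)$ — contradiction. For $N(d)=4$, $d=2$ up to units and the units in $\ZZ/(2)$ are $\{1,i\}$; after multiplying $v$ by a suitable unit of $\ZZ$ (which does not alter $L$) and rescaling $w$ by a unit, one may assume the generator is $w=(u+v)/2$. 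The constraint $w\notin\overset{o}C(u,v)$ then forces $v_{1}=u_{1}$ or $u_{2}=v_{2}$; applying the same constraint to $w-u=(v-u)/2$ (resp.\ $w-v=(u-v)/2$) forces in each case an additional alignment ($u_{2}=-v_{2}$ with $|u_{2}|=|v_{2}|$, resp.\ $v_{1}=-u_{1}$) that collapses $w$ into a point of the form $(u_{1},0)$ (resp.\ $(0,u_{2})$), contradicting the minimality of $u$ (resp.\ $v$). This delicate case analysis, leveraging minimality rather than just linear independence, is the heart of the proof.

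For (3), once $[\Lambda:L]=2$ is known, $\Lambda/L\cong\ZZ/(1+i)\cong\mathbb F_{2}$ is generated by some $w\equiv\tfrac{1}{1+i}(\epsilon_{1}u+\epsilon_{2}v)\bmod L$ with $(\epsilon_{1},\epsilon_{2})\in\{0,1\}^{2}\setminus\{(0,0)\}$. The possibilities $(1,0)$ and $(0,1)$ are excluded because $|\tfrac{1}{1+i}|=\tfrac{\sqrt 2}{2}<1$ would place $\tfrac{u}{1+i}$ or $\tfrac{v}{1+i}$ in $\overset{o}C(u,v)$. Hence $w\equiv\tfrac{u+v}{1+i}\bmod L$, giving $\Lambda=L+\ZZ\cdot\tfrac{u+v}{1+i}=\{gu+hv:(g,h)\in\ZZ^{2}\cup J^{2}\}$; the two basis claims follow at once from $v=(1+i)\tfrac{u+v}{1+i}-u$ and $u=(1+i)\tfrac{u+v}{1+i}-v$.
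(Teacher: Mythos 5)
Your proof is correct, and it takes a genuinely different route from the paper's for part (2), which is the crux of the statement.

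Both proofs open with the same Minkowski/determinant estimate, yielding $[\Lambda:L]\le 64/\pi^2<7$, and both observe that the index is the norm of a Gaussian integer, so it lies in $\{1,2,4,5\}$. After that the methods diverge. The paper invokes its Theorem~\ref{thm:base} (a Hermite-normal-form result for $\ZZ$-modules, proved in the appendix) to write $u=aU$, $v=bU+cV$ in a basis $(U,V)$ of $\Lambda$, identifies $[\Lambda:L]=|c|^2$, and then rules out $|c|\in\{2,\sqrt5\}$ by a careful case analysis on the residue $b\bmod c$, producing in each subcase a lattice vector in $\overset{o}C(u,v)$. You instead exploit the primitivity of $u$ and $v$ (which follows exactly as you say from the empty-interior hypothesis) to show the coordinate projections $\pi_1,\pi_2\colon\Lambda/L\to\Q(i)/\ZZ$ are injective, so $\Lambda/L$ embeds in the cyclic $\ZZ$-module $\frac{1}{d}\ZZ/\ZZ$ and is therefore cyclic. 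Reducing the coefficients $a_0,b_0$ of a generator to the fundamental domain $S$ then gives sharp numerical control: for $N(d)=5$ every nonzero reduced coset representative has modulus exactly $1/\sqrt5$, so $|a_0|+|b_0|=2/\sqrt5<1$ forces the generator into $\overset{o}C(u,v)$ at once --- this is noticeably slicker than the paper's three-subcase treatment of $|c|=\sqrt5$. Your $N(d)=4$ case (normalize to $w=(u+v)/2$, then feed $w$, $w-u$, $w-v$ into the constraint to force alignments that collapse $w$ onto an axis, contradicting minimality) is of comparable length and difficulty to the paper's $|c|=2$ case, though organized differently. What your approach buys is independence from the appendix basis theorem and a cleaner conceptual picture (cyclicity via the two projections); what the paper's buys is directness and avoidance of the fundamental-domain bookkeeping. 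Part (3) is argued essentially the same way in both. One tiny point worth making explicit in a final write-up: the passage from ``$\pi_1$ injective'' to ``$\Lambda/L$ cyclic'' uses that every finite $\ZZ$-submodule of $\Q(i)/\ZZ$ is cyclic (it is contained in $\frac{1}{d}\ZZ/\ZZ\cong\ZZ/(d)$ for $d$ a generator of the annihilator, and submodules of cyclic modules over a PID are cyclic); as stated ``$\C/\ZZ$'' is slightly imprecise since you really mean $\Q(i)/\ZZ$.
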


\begin{proof}
	Since $u$ and $v$ are  minimal vectors, we can suppose $|u_{2}|\leq|v_{2}|$ and
	$|v_{1}|\leq|u_{1}|$ w.l.o.g..
	By Minkowski convex body theorem,
	\[
	\operatorname{Vol}(C(u,v))=\pi^{2}|u_{1}|^{2}|v_{2}|^{2}\leq2^{4}%
	|\operatorname{det} _{\mathbb{R}}(\Lambda)|=2^{4}|\operatorname{det}%
	_{\mathbb{C}}(\Lambda)|^{2}.
	\]
	Now $|\det_{\mathbb{C}}(\Lambda)|\leq|\det
	_{\mathbb{C}}(L)|\leq2|u_{1}||v_{2}|$, hence
	\[
	|\operatorname{det}_{\mathbb{R}}(L)|\leq4 |u_{1}|^{2}|v_{2}|^{2}%
	=4\frac{\operatorname{Vol}(C(u,v))}{\pi^{2}} \leq\frac{64|\det_{\mathbb{R}%
		}(\Lambda)|}{\pi^{2}}.
	\]
	Therefore,
	\[
	\frac{|\det_{\mathbb{R}}(L)|}{|\det_{\mathbb{R}}(\Lambda)|}\leq\frac{64}%
	{\pi^{2}}=6.48\dots
	\]
	Therefore, $[\Lambda:L]\leq6$. Since this index is the square of the modulus of
	a Gaussian integer, it is the sum of two squares and cannot be $3$ or $6$.
	
	By Theorem \ref{thm:base} about basis in $\mathbb{Z}[i]$-modules, there
	exist a basis $U,V$ of $\Lambda$ and Gaussian integers $a,b$ and $c$ such that
	\begin{align*}
		\left\{
		\begin{array}
			[c]{l}%
			u=aU\\
			v=bU+cV.
		\end{array}
		\right.
	\end{align*}
	We have $V=-\tfrac{b}{c}U+\tfrac{1}{c}v$. Since $u$  is primitive in
	$\Lambda$, $a$ must be a unit in $\ZZ$. By
	changing $U$ to $a^{-1}U$, we can suppose $a=1$. 
	
	Since $[\Lambda:L]=|c|^{2}$, the only possible values for $|c|^{2}$ are
	$1,2,4$ or $5$. We have to exclude the values $4$ and $5$.
	
	Suppose that $|c|=2$. Again by changing $V$ to $zV$ where $z$ is a unit, we can
	suppose $c=2$ w.l.o.g. There exists a Gaussian integer $g$ such that $|g-\tfrac
	{b}{c}|\leq\tfrac1{\sqrt2}$. Since $|cg-b|\leq\sqrt2$, $|cg-b|=0,1$ or
	$\sqrt2$. If $cg-b=0$ then $V+gU=\tfrac{1}{c}v\in\Lambda$, but this is not
	possible for $v$ is primitive. If $|cg-b|=1$, consider the vector $w=V+gU=\tfrac{cg-b}%
	{c}u+\tfrac{1}{c}v\in\Lambda$. Since $u$ and $v$ are minimal, either $|u_1|>|v_1|$ and $|v_2|>|u_2|$ or $|u_1|=|v_1|$ and $|v_2|=|u_2|$. In the first case, by convexity, $w$ would be in the interior of
	the cylinder $C(u,v)$ which is not possible by assumption. In the second case, the linear independence implies $(cg-b)u\neq v$, so that one of the coordinates of $(cg-b)u$ and of $v$ are note equal, and therefore the corresponding coordinate of $w$ would be strictly smaller which contradicts the minimality of $u$ and $v$. If $|cg-b|=\sqrt2$, then
	the inverse $z$ of $\tfrac{cg-b}{c}$ is a Gaussian integer and the vector
	$w^{\prime}=zw-u=\tfrac{z}{c}v$ is in $\Lambda$. But this is impossible for
	$|\tfrac{z}{c}|<1$ and $v$ is primitive.
	
	Suppose that $|c|= \sqrt5$. There is $8$ possible values for $c$. By changing $V$ to $zV$
	where $z$ is a unit, or by considering the image of $\Lambda$ by the map
	$(z_{1},z_{2})\rightarrow(\overline{z_{1}},\overline{z_{2}})$, we can suppose
	that $c=2-i$. We can also suppose that $|b|\leq\tfrac1{\sqrt2}|c|$ by changing
	$V$ to $V+gU$ where $g$ is a Gaussian integer such that $|\tfrac{b}{c}%
	-g|\leq\tfrac1{\sqrt2}$. So $|b|\leq\tfrac{\sqrt 5}{\sqrt 2}$. Now $|b|^{2}$ is an
	integer, hence $|b|^{2}=0,1$ or $2$. The case $b=0$ is not possible for $v$ is
	minimal. If $|b|=1$, then $|\tfrac{b}{c}|+|\tfrac{1}{c}|<1$ and
	$V=-\tfrac{b}{c}u+\tfrac{1}{c}v$ would be in the interior of $C(u,v)$.
	
	It remains to consider the cases $b=1+i,1-i,-1-i$ and $-1+i$. Since $b=z(1+i)$ with $z\in\UU_{4}$, the vector
	\[
	w=V+ziu=-z(1+i)\frac{2+i}{5}u+\frac{2+i}{5}v+ziu=z\frac{-1+2i}{5}u+\frac{2+i}{5}v
	\]
	is in $\Lambda$ and in the interior of $C(u,v)$ for the sum of the moduli of
	the coefficients of $u$ and $v$ is $<1$. So $|c|=\sqrt 5$ is not possible and we conclude that $|c|=1$ or $\sqrt2$.
	
	If $|c|=1$, $L=\Lambda$.
	
	Suppose that $|c|=\sqrt2$. We have
	\[
	\left\{
	\begin{array}
		[c]{l}%
		u=U\\
		v=bU+cV
	\end{array}
	\right.
	\]
	and by changing $V$ to $zV$ for some $z\in\mathbb{U}_{4}$, we can suppose that
	$c=1+i$. There is a Gaussian integer $g$ such that $b=g(1+i)$ or $g(1+i)+1$. Changing $V$ to $V+gU$, we can suppose that $b=0$ or $1$. Again $b\neq 0$ since $v$ is primitive, hence $b=1$. Solving in $U,V$, we obtain
	\[
	\left\{
	\begin{array}
		[c]{l}%
		U=u\\
		V=\frac{1}{c}(-u+v)
	\end{array}
	\right.
	\]
	and for all $g,h\in\mathbb{Z}[i]$
	\[
	gU+hV=\frac{cg-h}{c}u+\frac{h}{c}v.
	\]
	On the
	other hand, $c\in I$, hence either $cg-h$ and $h$ are both in $I$ or $cg-h$ and $h$ are both in
	$\mathbb{Z}[i]\setminus I$, which implies that
	\[
	\Lambda=\{gU+hV:(g,h)\in\mathbb{Z}[i]^{2}\}\subset\{g'u+h'v:(g',h')\in\mathbb{Z}%
	[i]^{2}\cup J^{2}\}.
	\]
	The reverse inclusion also holds because if $(g',h')=\tfrac{1}{c}(p,q)$ with $p,q\in\ZZ\setminus I$, then $g'u+h'v=\tfrac{1}{c}(p+q)U+qV\in\LL$.
\end{proof}

\section{Geometry of numbers, proof of Theorem \ref{thm:geonumber1}}\label{sec:geonumber}
Our aim is to prove Theorem \ref{thm:geonumber1}. In fact we shall prove the following two theorems, the first  is just a reformulation of Theorem \ref{thm:geonumber1} using the norm $|.|_{u,v}$ instead of the cylinder $C(u,v)$. The norm is defined by  $|x|_{u,v}=\max(\tfrac{|x_1|}{\max(|u_{1}|,|v_1|)},\tfrac{|x_2|}{\max(|u_2|,|v_{2}|)})$ for  $x=(x_1,x_2)\in \C^2$.

\begin{theorem}[Theorem 2a]\label{thm:geonumber2}
	Let $u=(u_1,v_2w_2)$ and $v=(u_1w_1,v_2)$ be two  vectors in $\C^2$ with $|u_1|,|v_2|>0$ and  $|w_1|,|w_2|\leq 1$. 
	\begin{enumerate}
		\item 
		
		If $|gu+hv|_{u,v}\geq 1$ for all nonzero $g$, $h \in\ZZ$ with $|g|\times|h|\leq \sqrt 2$, then $|z|_{u,v}\geq 1$ for all nonzero $z\in\ZZ u+\ZZ v$. 
		\item 
		If  $|gu+hv|_{u,v}\geq 1$ for $(g,h)\in J^2$ with $|g|=|h|=\tfrac1{\sqrt 2}$, then $|z|_{u,v}\geq 1$ for all nonzero $z\in\langle u,v\rangle_J$.
	\end{enumerate}
\end{theorem}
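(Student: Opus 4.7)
Since $|w_1|, |w_2| \leq 1$, we have $\max(|u_1|,|v_1|) = |u_1|$ and $\max(|u_2|,|v_2|) = |v_2|$, so the cylinder norm is $|x|_{u,v} = \max(|x_1|/|u_1|, |x_2|/|v_2|)$. Evaluating at $z = gu + hv = (u_1(g+hw_1),\, v_2(gw_2+h))$ gives
\[
|gu + hv|_{u,v} = \max\bigl(|g + hw_1|,\, |gw_2 + h|\bigr) =: \phi(g,h).
\]
The statement becomes: if $\phi \geq 1$ on the stated critical set, then $\phi \geq 1$ on all nonzero elements of the ambient module. We may assume $w_1 w_2 \neq 1$, for otherwise $v = w_1 u$ and $u,v$ do not span a rank-$2$ module.

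\textbf{Strategy for Part 1.} Argue by contradiction: suppose $(g,h) \in \ZZ^2 \setminus \{0\}$ satisfies $\phi(g,h) < 1$. If $g = 0$ then $\phi(0,h) = |h| \geq 1$, a contradiction, and likewise for $h=0$. So $g,h$ are both nonzero, and the hypothesis rules out $|g|\cdot|h| \leq \sqrt{2}$. Use the invariance of $\phi$ under diagonal multiplication of $(g,h)$ by $\UU_4$ and under the involution $(g,h,w_1,w_2) \leftrightarrow (h,g,w_2,w_1)$ to reduce the cases to a fundamental domain. Setting $A = g + hw_1$ and $B = gw_2 + h$ (so $|A|, |B| < 1$) and inverting yields $g = (A - w_1 B)/(1 - w_1 w_2)$ and $h = (B - w_2 A)/(1 - w_1 w_2)$, whence
\[
|g| \leq \frac{1 + |w_1|}{|1 - w_1 w_2|}, \qquad |h| \leq \frac{1 + |w_2|}{|1 - w_1 w_2|}.
\]
Combined with $|g|,|h| \geq 1$ and $|g|\cdot|h| > \sqrt{2}$, these bounds confine $(g,h)$ to a finite list, depending on the subregion of $\overline{\D}^2$ containing $(w_1,w_2)$.

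\textbf{Main obstacle and Part 2.} For each remaining candidate $(g,h)$ with $|g|\cdot|h|>\sqrt{2}$, the task is to produce a pair $(g',h')$ in the critical set with $\phi(g',h')<1$, contradicting the hypothesis. Natural candidates arise from Gauss-style reduction inside $\ZZ^2$ (subtracting a short Gaussian-integer multiple of one generator from the other); however, verifying that $\phi$ stays $<1$ after reduction requires partitioning $(w_1,w_2)$-space into regions bounded by the circular arcs $|g+hw_1|=1$ and $|gw_2+h|=1$. The main obstacle is the combinatorial explosion of such regions, and this is where the author invokes a computer to dispatch the residual cases exhaustively (as the acknowledgement to the referee about ``missing pairs'' attests). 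Part 2 follows the same template: the minimal elements of $J$ are $\pm\tfrac{1\pm i}{2}$ of modulus $1/\sqrt{2}$, so the $J^2$ critical set comprises (up to $\UU_4$) the pairs with $|g|=|h|=1/\sqrt{2}$; Part 1 already controls the $\ZZ^2$-component of $\langle u,v\rangle_J$, leaving only $J^2$-pairs of larger modulus, which succumb to the analogous reduction and computer-verified case analysis.
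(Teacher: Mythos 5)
Your setup (the cylinder-norm formula, the reduction to $g,h\neq 0$, and the inversion bounds on $|g|,|h|$) is a reasonable start and partly parallels the paper's, but there is a genuine gap that the remainder of the sketch does not close: the bounds
\[
|g| \leq \frac{1 + |w_1|}{|1 - w_1 w_2|}, \qquad |h| \leq \frac{1 + |w_2|}{|1 - w_1 w_2|}
\]
do \emph{not} confine $(g,h)$ to a finite list uniformly in $(w_1,w_2)$, since $|1-w_1w_2|$ can be arbitrarily small as $w_1w_2 \to 1$ with $|w_1|,|w_2|\to 1$. Saying the finite list ``depends on the subregion of $\overline{\D}^2$'' does not help, because any partition into finitely many regions will contain a region meeting $\{w_1w_2=1\}$, on which the bound is unbounded. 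So neither the case analysis nor the computer search is a priori finite, and the proof is not complete even modulo computation.

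The paper closes exactly this gap by a step you are missing: it first \emph{uses the hypothesis} to constrain $(w_1,w_2)$. After reducing by the $\mathbb{D}_8$-symmetry (Proposition \ref{prop:sym}) to $w_1\in\overline{\CC}$ (a sector, not just up to $\UU_4$-scaling and swap), the inequalities $|gu-hv|_{u,v}\geq 1$ at a handful of critical pairs force $w_2$ into the crescent $\overline{\DD}$ (resp.\ $\overline{\mathcal T}$), uniformly bounding $(w_1,w_2)$ away from the dangerous locus. Then Lemma \ref{lem:33} — which is the real workhorse and which has no counterpart in your sketch — shows that under these constraints $|g|,|h|\geq 3$ already forces $|gu-hv|_{u,v}>1$; combined with the ratio estimate (your point 4 is close to the paper's) this yields the uniform bound $|g|,|h|\leq 6$. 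Only then does the paper enumerate by computer (Lemmas \ref{lem:computer1} and \ref{lem:computer2}), over an explicit finite set. Your strategy would need an analogue of Lemma \ref{lem:33} (or of the constraint $w_2\in\overline\DD$) to turn the $(w_1,w_2)$-dependent bound into a uniform one; as written, the ``residual cases'' are not a finite set and the Gauss-reduction idea by itself does not produce the needed critical pair in general.
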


The next theorem deals with strict inequality and is useful to determine the open transversal. 

\begin{theorem}[Theorem 2b]\label{thm:geonumber3}
	Let $u=(u_1,v_2w_2)$ and $v=(u_1w_1,v_2)$ be two  vectors in $\C^2$ with  $|u_1|,|v_2|>0$ and $|w_1|,|w_2|<1$. 
	\begin{enumerate}
		\item 
		
		If $|gu+hv|_{u,v}>1$ for all nonzero $g,h \in\ZZ$ with $|g|\times|h|\leq \sqrt 2$ then $|z|_{u,v}> 1$ for all nonzero $z\in(\ZZ u+\ZZ v)\setminus  \UU_4 u\cup \UU_4 v$. 
		\item 
		If $|gu+hv|_{u,v}> 1$ for the four vectors $(g,h)=(\tfrac1{1+i},\tfrac{\alpha}{1+ i})$,  $\alpha\in\UU_{4}$, then $|z|_{u,v}> 1$ for all nonzero $z\in\langle u,v\rangle_J \setminus  \UU_4 u\cup \UU_4 v$.
	\end{enumerate}
\end{theorem}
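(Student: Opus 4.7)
The plan is to derive Theorem \ref{thm:geonumber3} from Theorem \ref{thm:geonumber2} by tracking when equality can occur in the latter's conclusion. I would first note that, because $|w_1|,|w_2|<1$, every vector $\alpha u$ or $\alpha v$ with $\alpha\in\UU_4$ satisfies $|\alpha u|_{u,v}=\max(1,|w_2|)=1$ and $|\alpha v|_{u,v}=\max(|w_1|,1)=1$, so these lie on the boundary of $C(u,v)$; they are the unavoidable boundary vectors that must be excised from the conclusion.

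I would then split the nonzero lattice points $z=gu+hv\in\ZZ u+\ZZ v$ into three groups. \emph{Axial}: $g=0$ or $h=0$, where a direct computation gives $|z|_{u,v}=\max(|g|,|h|)\ge 1$, with equality precisely when $z\in\UU_4 u\cup\UU_4 v$ and otherwise $|z|_{u,v}\ge \sqrt 2$. \emph{Critical non-axial}: $g,h\in\ZZ^*$ with $|g|\,|h|\le\sqrt 2$, where the strict hypothesis gives $|z|_{u,v}>1$ directly. \emph{Non-critical}: $g,h\in\ZZ^*$ with $|g|\,|h|>\sqrt 2$, where Theorem \ref{thm:geonumber2} (whose hypothesis is a fortiori implied by the strict one) yields only $|z|_{u,v}\ge 1$, and the inequality must be upgraded to strict.

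For the last group I would argue by contradiction: assume $|z|_{u,v}=1$ for some non-critical $(g,h)$ with $g,h\ne 0$, and produce a critical pair $(g',h')\in\ZZ^*\times\ZZ^*$ with $|g'u+h'v|_{u,v}\le 1$, contradicting the strict hypothesis. This parallels the reduction step underlying Theorem \ref{thm:geonumber2}: starting from a boundary-attaining non-critical pair, one would apply elementary Gaussian-integer moves of type $(g,h)\mapsto(g-kh,h)$ or $(g,h)\mapsto(g,h-kg)$ to obtain lattice points with strictly smaller $|g|\,|h|$ still in $\overline{C(u,v)}$, terminating at a critical pair. The main obstacle is precisely to verify, configuration by configuration, that at each stage some such move actually lowers the coefficient pair without escaping $\overline{C(u,v)}$; since Theorem \ref{thm:geonumber2} is established through a computer-assisted enumeration of boundary configurations, the natural path is to rerun the same enumeration with strict inequalities carefully tracked throughout.

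Part (2) follows the same scheme on the enlarged lattice $\langle u,v\rangle_J$: any element outside $\ZZ u+\ZZ v$ has the form $\tfrac{1}{1+i}(gu+hv)$ with $g,h\in\ZZ\setminus I$, i.e.\ $(\tfrac{g}{1+i},\tfrac{h}{1+i})\in J^2$, and the same reduction applied inside the $J^2$-coset reduces everything to the four strict inequalities $|gu+hv|_{u,v}>1$ for $(g,h)=(\tfrac{1}{1+i},\tfrac{\alpha}{1+i})$, $\alpha\in\UU_4$, supplied by the hypothesis.
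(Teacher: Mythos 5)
Your ultimate plan—rerun the computer-assisted enumeration tracking strict inequalities—is the right one and is exactly what the paper does: Proposition \ref{prop:conditions} states and proves Theorem~2a and Theorem~2b in parallel, carrying the weak ``$\geq 1$'' and strict ``$>1$'' versions side by side with ``(resp.\ $>1$)'' throughout. Your decomposition into axial / critical / non-critical pairs is also consistent with the paper's handling (axial vectors contribute $|z|_{u,v}=\max(|g|,|h|)$, so the excluded set $\UU_4u\cup\UU_4v$ is precisely where this equals $1$, and any other axial vector gives $\geq\sqrt2>1$).

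However, the middle of your proposal attributes to the paper a reduction by ``Gaussian-integer moves'' of type $(g,h)\mapsto(g-kh,h)$ that decrease $|g||h|$ while staying in $\overline{C(u,v)}$, and suggests proving Theorem~2b by contradiction via such a descent. That is not how the proof of Theorem~2a (hence 2b) is organized, and there is no reason the descent should terminate at a critical pair while staying in $\overline{C(u,v)}$. The paper's argument for non-critical pairs is direct, not a descent: the critical-pair hypothesis forces $w_2\in\overline{\DD}$ (resp.\ $\overline{\mathcal T}$), and then the distance formula $|gu-hv|_{u,v}=\max(|h|\dd(w_1,g/h),|g|\dd(w_2,h/g))$ plus Lemma~\ref{lem:33} and point~(4) kill all pairs with large moduli or skewed ratios outright; the computer in Lemma~\ref{lem:computer1} establishes the \emph{strict} inequality $|h|\dd(g/h,\CC)>1$ or $|g|\dd(h/g,\DD)>1$ for all remaining non-critical pairs outside the explicit set $G_1$; and the pairs in $G_1\setminus F$ are eliminated by hand using $w_1\neq 0,1$ and $w_2\neq -i,-1$. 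You acknowledge at the end that the descent is the weak point and pivot to ``rerun the enumeration,'' which is correct—but that pivot makes the framing ``derive Theorem~2b from Theorem~2a'' largely cosmetic, since the work you must then do is precisely the joint proof of both statements, not a shortcut from one to the other. So: the plan is viable and matches the paper's approach once the descent detour is dropped, but as written the descent step is a gap you neither verify nor need.
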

The proof of these two theorems are very similar and based on many case distinctions. The first case distinction is made on the location of $ w_1 $ in the unit disk. 

Let $\mathcal{C}= \{z\in\mathbb{C}: |z|<1,\,\arg z\in[0,\tfrac{\pi}4 ]\}$. The first case distinction is 
$w_1\in\bar \CC$ (the closure of $\CC$) or in $i\bar \CC$ or in $-\bar \CC$ or in $-i\bar \CC$ or in the conjugates of one of  these sets. Thanks to the following subsection, Symmetric of a lattice, these eight cases reduce to the single case $w_1\in\bar \CC$.

The same reduction will also be helpful for computing the Dirichlet constant in the Theorem \ref{thm:Dirichlet}. 
\subsection{Symmetric of a lattice, reduction to the case $w_1\in\bar \CC$ }
Let denote $\mathbb{U}_{n}=\{z\in\mathbb{C}%
:z^{n}=1\}$ the group $n$-th roots of unity in $\mathbb{C}$ and let denote $\mathbb{D}%
_{8}$ the group of isometries acting on $\mathbb{C}$ generated by the
multiplications by elements in $\mathbb{U}_{4}$ and by  conjugation.

\begin{proposition}\label{prop:sym}
	Let $u=(u_{1},v_{2}w_{2})$ and $v=(u_{1}w_{1},v_{2})$ be
	in $\mathbb{C}^{2}$. Assume that $|w_{1}|,|w_{2}|\leq 1$ and $|u_{1}|,|v_{2}|>0$. Let $\varphi$ be in $\D_8$. Consider $u'=(u'_1,v'_2\tfrac{1}{\varphi(1)^2}
	\varphi(w_2))$ and $v'=(u'_1\varphi(w_1),v'_2)$ where  $|u'_{1}|,|v'_{2}|>0$. Then
	\begin{enumerate}
		\item 	
		For all nonzero complex numbers $a$ and $b$,
		\[
		|au-bv|_{u,v}=|\varphi(1)\varphi(a)u'-\varphi(b)v'|_{u',v'}.
		\]
		\item When $|w_1|,|w_2|<1$, the vectors $u$ and $v$ are consecutive minimal vectors in $\ZZ u+\ZZ v$ (resp. in $\langle u,v\rangle_J$) iff  $u'$ and $v'$  are consecutive minimal vectors in $\ZZ u'+\ZZ v'$ (resp. in $\langle u',v'\rangle_J$)
	\end{enumerate}
\end{proposition}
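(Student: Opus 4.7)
The plan is to use the decomposition $\varphi = \varphi(1)\sigma$ where $\sigma \in \{\operatorname{id},\,\bar{\cdot}\}$ is a ring automorphism of $\mathbb{C}$ and an isometry, while $\varphi(1)\in\UU_{4}$. Both $\sigma$ and multiplication by $\varphi(1)$ preserve $\ZZ$ and also the set $J$ (an element of $J$ has both real and imaginary parts equal to half-odd integers, a property stable under conjugation and under multiplication by any fourth root of unity). Consequently $\chi\colon (a,b)\mapsto (\varphi(1)\varphi(a),\,\varphi(b))$ is a bijection of $\ZZ^{2}$ and also of $\ZZ^{2}\cup J^{2}$.

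For part (1), I would compute both sides explicitly. Since $|w_{1}|,|w_{2}|\leq 1$, the norm $|\cdot|_{u,v}$ is given by $|x|_{u,v}=\max(|x_{1}|/|u_{1}|,\,|x_{2}|/|v_{2}|)$, and
\[
au-bv=\bigl(u_{1}(a-bw_{1}),\,v_{2}(aw_{2}-b)\bigr),
\]
so $|au-bv|_{u,v}=\max(|a-bw_{1}|,\,|aw_{2}-b|)$. Writing $A=\varphi(1)\varphi(a)=\varphi(1)^{2}\sigma(a)$ and $B=\varphi(b)=\varphi(1)\sigma(b)$, and using the multiplicativity and $\R$-linearity of $\sigma$, a direct check gives
\[
Au'-Bv'=\bigl(u'_{1}\,\varphi(1)^{2}\sigma(a-bw_{1}),\ v'_{2}\,\varphi(1)\,\sigma(aw_{2}-b)\bigr).
\]
Since $|\varphi(1)|=1$ and $\sigma$ is an isometry, the two coordinate moduli equal $|a-bw_{1}|$ and $|aw_{2}-b|$, yielding the claimed equality of norms.

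For part (2), assume $|w_{1}|,|w_{2}|<1$; then $|u_{2}|<|v_{2}|$ and $|u'_{2}|<|v'_{2}|$. By Lemma \ref{lem:consecutive}, $u$ and $v$ are consecutive minimal vectors in $\ZZ u+\ZZ v$ iff $|au-bv|_{u,v}\geq 1$ for every nonzero $(a,b)\in\ZZ^{2}$. Part (1) combined with the bijectivity of $\chi$ on $\ZZ^{2}$ shows this is equivalent to the analogous condition for $u',v'$, i.e.\ to consecutive minimality in $\ZZ u'+\ZZ v'$. The $\langle u,v\rangle_{J}$ case is identical, using the bijectivity of $\chi$ on $\ZZ^{2}\cup J^{2}$.

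The only delicate point is the bookkeeping of the factor $1/\varphi(1)^{2}$ in the definition of $u'$: it is exactly what absorbs the extra factor of $\varphi(1)$ arising from the fact that $\sigma$, rather than $\varphi$ itself, is multiplicative. Once that algebraic identity is in place, part (2) is purely formal.
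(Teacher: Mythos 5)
Your part (1) is correct and matches the paper's computation modulo presentation: you use the decomposition $\varphi=\varphi(1)\sigma$ with $\sigma\in\{\operatorname{id},\,\overline{\cdot}\,\}$, while the paper works through the identity $\varphi(xy)=\varphi(1)^{-1}\varphi(x)\varphi(y)$, but these are the same calculation.

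There is a genuine gap in part (2), at the step asserting that ``by Lemma~\ref{lem:consecutive}, $u$ and $v$ are consecutive minimal vectors in $\ZZ u+\ZZ v$ iff $|au-bv|_{u,v}\geq 1$ for every nonzero $(a,b)\in\ZZ^{2}$.'' Lemma~\ref{lem:consecutive} gives only the forward direction: it \emph{presupposes} that $u,v$ are minimal vectors, and then characterizes consecutiveness by $\overset{o}{C}(u,v)\cap\LL^{*}=\emptyset$. The converse implication is false. The open-cylinder condition only forbids nonzero $z=au-bv$ with $|a-bw_{1}|<1$ \emph{and} $|aw_{2}-b|<1$ simultaneously; it does not forbid a $z$ with $|a-bw_{1}|=1$ (so $|z_{1}|=|u_{1}|$, on the boundary of $C(u,v)$) and $|aw_{2}-b|<|w_{2}|$ (so $|z_{2}|<|u_{2}|$), and such a $z$ destroys the minimality of $u$. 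Concretely, take $w_{1}=(1-\sqrt{0.84})+0.4i$ (so $|1-w_{1}|=1$ and $|w_{1}|<1$) and $w_{2}=0.7$: one checks that $|au-bv|_{u,v}\geq 1$ holds for every nonzero $(a,b)\in\ZZ^{2}$ (verify the finitely many pairs with $|a||b|\leq\sqrt{2}$ and invoke Theorem~\ref{thm:geonumber2}), yet $z=u-v$ lies in $C(u)$ with $|z_{2}|=0.3|v_{2}|<0.7|v_{2}|=|u_{2}|$, so $u$ is not minimal. Thus the right-hand side of your claimed equivalence holds while the left-hand side fails, and your argument would wrongly conclude that $u',v'$ are consecutive minimal vectors.

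The repair is already contained in your own derivation for part (1): you found that the two coordinates of $\varphi(1)\varphi(a)u'-\varphi(b)v'$ are $u'_{1}\varphi(1)^{2}\sigma(a-bw_{1})$ and $v'_{2}\varphi(1)\sigma(aw_{2}-b)$, which shows that $|a-bw_{1}|$ and $|aw_{2}-b|$ are \emph{each} preserved under $(a,b)\mapsto(\varphi(1)\varphi(a),\varphi(b))$, not merely their maximum. This coordinate-wise preservation is precisely what the paper uses: it shows that $u$ is minimal iff $u'$ is, and $v$ minimal iff $v'$ is, \emph{separately}, by transporting the two modular conditions defining membership in $C(u)$ resp.\ $C(v)$ one coordinate at a time. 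Only after minimality has been transferred does Lemma~\ref{lem:consecutive} legitimately transfer consecutiveness via the norm identity. You should therefore invoke the coordinate-wise identity rather than collapsing to the $\max$ before arguing part (2).
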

Let us  explain how the first assertion in the proposition allows us to reduces the proofs of Theorems \ref{thm:geonumber2} and \ref{thm:geonumber3} to the case  $w_1\in\overline{\CC}$. When $\varphi\in \D_8$,
the three maps $\varphi$, $\psi:z\in\C \fff \psi(z)=\varphi(1)\varphi(z)$
and $\varphi':z\in\C \fff \varphi'(z)=\tfrac{\varphi(z)}{\varphi(1)^2}$ are isometries and bijection on the ring of Gaussian integers and on $J$. Using part 1 of the proposition, we see that, if for some vectors $u,v$ and a subset  $F$ of $\mathcal R$ where $\mathcal R= (\ZZ\setminus\{0\})^2$ or $(\ZZ\setminus\{0\})^2\cup J^2$, one has
\[
\forall (g,h)\in F, |gu-hv|_{u,v}\geq 1\Rightarrow \forall (g,h)\in \mathcal R^2 \text{ with } fg\neq 0,|gu-hv|_{u,v}\geq 1,
\]
then one has the same implication with $u',v'$ and 
\[
F'=\{(\psi(g),\varphi(h)):(g,h)\in F\}
\]
instead of $F$. Since the images of $\overline{\CC}$ by the maps $\varphi \in\D_8$ cover the closed unit disk, we have only to deal with $w_1\in\overline{\CC}$.

Before proving the proposition, we need a simple formula.

\begin{lemma}
	\label{lem:sym2} For all  $\varphi\in\mathbb{D}_{8}$ and
	 all $x,y\in\C$
	\[
	\varphi(xy)=\frac{1}{\varphi(1)}\varphi(x)\varphi(y)
	\]	
\end{lemma}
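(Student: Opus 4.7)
The plan is to classify elements of $\mathbb{D}_{8}$ according to whether they contain a conjugation or not, and then to verify the identity in each class by direct computation. Since $\mathbb{D}_{8}$ is generated by the multiplications by elements of $\mathbb{U}_{4}$ together with the conjugation $z\mapsto\bar z$, and since $\overline{\zeta z}=\bar\zeta\bar z$ allows us to push all conjugations to the right, every $\varphi\in\mathbb{D}_{8}$ has exactly one of the two forms
\[
\varphi(z)=\zeta z \quad\text{or}\quad \varphi(z)=\zeta\bar z,
\]
with $\zeta\in\mathbb{U}_{4}$; in both cases $\varphi(1)=\zeta$.

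In the linear case $\varphi(z)=\zeta z$, one has $\varphi(xy)=\zeta xy$ and $\varphi(x)\varphi(y)=\zeta^{2}xy$, so
\[
\frac{\varphi(x)\varphi(y)}{\varphi(1)}=\frac{\zeta^{2}xy}{\zeta}=\zeta xy=\varphi(xy).
\]
In the antilinear case $\varphi(z)=\zeta\bar z$, one has $\varphi(xy)=\zeta\overline{xy}=\zeta\bar x\bar y$ and $\varphi(x)\varphi(y)=\zeta^{2}\bar x\bar y$, and the same division by $\zeta=\varphi(1)$ yields the claim.

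There is essentially no obstacle: the content of the lemma is just that $z\mapsto\varphi(z)/\varphi(1)$ is a ring endomorphism of $\C$ (either the identity or complex conjugation), hence multiplicative. The only slight care needed is the preliminary classification of $\mathbb{D}_{8}$, which follows from the short exact sequence $1\to\mathbb{U}_{4}\to\mathbb{D}_{8}\to\Z/2\Z\to 1$ whose non-trivial coset is represented by conjugation; once this normal form for $\varphi$ is in hand, both cases are one-line checks.
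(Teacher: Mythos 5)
Your proof is correct and uses essentially the same approach as the paper: both classify $\varphi\in\mathbb{D}_{8}$ as either $z\mapsto\zeta z$ or $z\mapsto\zeta\bar z$ with $\zeta\in\mathbb{U}_{4}$ and then verify the identity directly. The paper simply states this is obvious once the normal form is recognized, while you spell out the two one-line computations and the reason the normal form is exhaustive.
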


\begin{proof}
	The formula is obvious since the maps $\varphi\in\D_8$ are of the shape $\varphi(z)=\alpha z$ or $\alpha \bar z$ with $\alpha\in\UU_{4}$.
\end{proof} 

\begin{proof}[Proof of the proposition]
	1. For all $a,b\in\C$ and all $\varphi\in\D_8$, we have
	\begin{align*}
		au-bv&=(u_1(a-bw_1),v_2(aw_2-b))
	\end{align*}
	and using the above lemma, we obtain
	\begin{align*}	\varphi(1)\varphi(a)u'-\varphi(b)v'&=(u'_1(\varphi(1)\varphi(a)-\varphi(b)\varphi(w_1)),v'_2(\varphi(1)\varphi(a)\frac{1}{\varphi(1)^2}\varphi(w_2)-\varphi(b)))\\
		&=(u'_1(\varphi(1)\varphi(a)-\varphi(1)\varphi(bw_1)),v'_2(\varphi(aw_2)-\varphi(b)))\\
		&=(u'_1\varphi(1)\varphi(a-bw_1),v'_2\varphi(aw_2-b)).	
	\end{align*}
	Therefore,
	\begin{align*}
		|au-bv|_{u,v}      =|\psi(a)u^{\prime}-\varphi(b)v^{\prime}|_{u',v'}.
	\end{align*}
	
	2. The vector $u$ is minimal iff for all nonzero $au-bv\in\ZZ u+\ZZ v$ (resp. $\in\langle u,v\rangle_J$)
	\[
	\left\{
	\begin{array}[c]{l}
		|bw_{1}-a|\leq 1 \\
		|aw_{2}-b|\leq |w_2|
	\end{array}
	\right.
	\Rightarrow 
	\left\{
	\begin{array}[c]{l}
		|bw_{1}-a|= 1 \\|aw_{2}-b|= |w_2|
	\end{array}
	\right.
	\]
	and $u'$ is minimal iff for all nonzero $\varphi(1)\varphi(a)u'-\varphi(b)v'\in \ZZ u'+\ZZ v'$ (resp. $\in\langle u',v'\rangle_J$)
	\[
	\left\{
	\begin{array}[c]{l}
		|\varphi(1)\varphi(bw_{1}-a)|\leq 1 \\|\varphi(aw_{2}-b)|\leq |\tfrac{1}{\varphi(1)^2}
		\varphi(w_2)|
	\end{array}\right.
	\Rightarrow
	\left\{
	\begin{array}[c]{l}
		|\varphi(1)\varphi(bw_{1}-a)|= 1 \\|\varphi(aw_{2}-b)|= |\tfrac{1}{\varphi(1)^2}
		\varphi(w_2)|
	\end{array}\right.
	\]
	Therefore $u$ is minimal iff $u'$ is minimal.
	We see that $v$ is minimal iff $v'$ is minimal as well. Furthermore, by Lemma \ref{lem:consecutive}, $u$
	and $v$ are consecutive iff  $|au-bv|_{u,v}\geq 1$ for all nonzero $au-bv\in\ZZ u+\ZZ v$ (resp. $\in\langle u,v\rangle_J$). The formula $|au-bv|_{u,v}      =|\psi(a)u^{\prime}-\varphi(b)v^{\prime}|_{u',v'}$ implies that $u$ and $v$ are consecutive iff $u'$ and $v'$ are.
\end{proof}

\subsection{Proof of Theorem \ref{thm:geonumber2} and \ref{thm:geonumber3} when $w_1\in \overline{\CC}$}
We shall need the following sets
\begin{align*}
	\DD=  &  \{z\in\C:|z|<1,\, \dd(z,1)>
	1,\,\dd(z,1-i)> 1 \},\\
	\mathcal{T}=  &  \{z\in\C:|z|<1,\, \dd(z%
	,1)>\sqrt2,\,\dd(z,-i)> \sqrt2 \},\\
	F=  &  \{(1,1),(1,-i),(1,1-i),(1,1+i),(1+i,1)\}.\\
\end{align*}

Theorem \ref{thm:geonumber2} and \ref{thm:geonumber3} are  obvious consequences of the following proposition where we assume  $w_{1}\in\overline{\mathcal{C}}$. 

\begin{proposition}\label{prop:geonumber}
	\label{prop:conditions} Let $u=(u_{1},v_{2}w_{2})$ and $v=(u_{1}w_{1},v_{2})$ in $\mathbb{C}^{2}$ be such that $w_{1}\in\overline{\mathcal{C}}$  and $|w_{1}|,|w_{2}|\leq 1$ (resp. $<1$) and $|u_{1}|$, $|v_{2}|>0$.
	
	\begin{enumerate}
		\item[0.] If $w_1=0$ then $|gu-hv|_{u,v}\geq 1$ for all nonzero Gaussian integers $g,h$, and $|u-hv|_{u,v}=1$ for at least one $h\in\UU_4$, and $|\tfrac{1}{1+i}u-\tfrac{b}{1+i}v|<1$ for at least one $b\in\UU_4$.
		\item[1.] Suppose that $w_1\neq 0$ and that $|gu-hv|_{u,v}\geq 1 $ (resp. $>1$) for all  $(g,h)\in F$. Then  $|gu-hv|_{u,v}\geq 1 $ (resp. $>1$)
		for all nonzero $g,h$ in $\mathbb{Z}[i]$. If moreover, $w_1\neq 1$, then $w_{2}\in\overline{\DD}$.		
		\item[2.] Suppose that $w_1\neq 0$ and that $|gu-hv|_{u,v} \geq 1 $ (resp. $>1$) for all  $(g,h)\in\{(\tfrac1{1+i},\tfrac{\alpha}{1+i}):\alpha\in\UU_{4}\}$.
		Then 
		$|gu-hv|_{u,v}\geq 1 $ (resp. $>1$) for all nonzero $g,h$ both in $\Z[i]$ or both in $J$. If moreover, $w_1\neq 1$ and  $w_2\neq -1$, then $w_1\in \overline{\CC}\setminus \D(-i,\sqrt 2)$ and $w_2\in\overline{ \mathcal T}$. 
	\end{enumerate}
\end{proposition}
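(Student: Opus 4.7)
The plan is to prove the three items separately; items 1 and 2 share a common strategy.

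For item 0 ($w_1=0$), the vector $v=(0,v_2)$ has zero first coordinate, so $|gu-hv|_{u,v}=\max(|g|,|gw_2-h|)\geq|g|\geq 1$ for any $g\in\ZZ^*$. The two covering claims reduce to elementary facts about $\overline{\mathbb D}$: the closed disks $D(\alpha,1)$ for $\alpha\in\UU_4$ cover $\overline{\mathbb D}$, and the open disks $\D(\alpha,\sqrt 2)$ strictly contain $\overline{\mathbb D}$ since the farthest point of $\overline{\mathbb D}$ from $\UU_4$ lies at distance $\sqrt{2-\sqrt 2}<\sqrt 2$.

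For items 1 and 2, rewrite $|gu-hv|_{u,v}=\max(|g-hw_1|,|gw_2-h|)$ and use homogeneity (if $(g,h)=\alpha(g_0,h_0)$ then $|gu-hv|_{u,v}=|\alpha|\cdot|g_0u-h_0v|_{u,v}$) to reduce to coprime pairs $(g,h)$. The gap estimate $|g-hw_1|\geq |g|-|h||w_1|\geq |g|-|h|$, together with its symmetric version, automatically handles pairs with $||g|-|h||\geq 1$. Among the remaining pairs with $||g|-|h||<1$, those with $|g||h|\leq\sqrt 2$ split into twelve equivalence classes under the diagonal $\UU_4$-action $(g,h)\mapsto(\zeta g,\zeta h)$; five lie in $F$ (hypothesis) and the other seven are disposed of by direct computation using $w_1\in\overline\CC$ (i.e., $\Re w_1\geq\Im w_1\geq 0$ and $|w_1|\leq 1$). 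Sample computations are $|1-iw_1|^2=1+2\Im w_1+|w_1|^2\geq 1$ for $(1,i)$; $|1+w_1|^2\geq 1+2\Re w_1\geq 1$ for $(1,-1)$; and $|1+i-iw_1|^2=(1+\Im w_1)^2+(1-\Re w_1)^2\geq 1$ for $(1+i,i)$. The other four classes yield to similar two-line computations. For item 2, the critical classes in $J^2$ reduce (up to $\UU_4$) to $\{(\tfrac{1}{1+i},\tfrac{\alpha}{1+i}):\alpha\in\UU_4\}$, all of which appear in the hypothesis.

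Pairs with $|g||h|>\sqrt 2$ but $||g|-|h||<1$ are a priori infinite; however, combining the strict inequalities $|g-hw_1|<1$ and $|gw_2-h|<1$ yields $|g(1-w_1w_2)|<1+|w_1|\leq 2$, so $|g|<2/|1-w_1w_2|$ (and likewise for $|h|$) whenever $w_1w_2\neq 1$. The boundary case $w_1w_2=1$ (forcing $|w_1|=|w_2|=1$ and $w_2=1/w_1$) is handled by writing such $(g,h)$ as $\alpha(g_0,h_0)$ with $(g_0,h_0)$ already covered by the gap estimate. The finite residual list is then eliminated case by case, which is where the author's computer verification enters.

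Finally, for the location of $w_2$ under item 1 with $w_1\neq 1$: an elementary computation shows that for every $w_1\in\overline\CC\setminus\{0,1\}$ one has both $|1-w_1|<1$ and $|1-(1-i)w_1|<1$ (the latter because $\overline\CC\cap\overline{\mathbb D}\setminus\{0,1\}$ lies in the interior of the disk $\D(\tfrac{1+i}{2},\tfrac{1}{\sqrt 2})$). Combined with the $F$-hypothesis for $(1,1)$ and $(1,1-i)$, these strict inequalities force $|w_2-1|\geq 1$ and $|w_2-(1-i)|\geq 1$, i.e., $w_2\in\overline\DD$. The analogue for item 2 uses the critical pairs in $\{(\tfrac{1}{1+i},\tfrac{\alpha}{1+i})\}$ together with the hypothesis $w_2\neq -1$ to derive both $w_2\in\overline{\mathcal T}$ and the constraint $w_1\in\overline\CC\setminus\D(-i,\sqrt 2)$. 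The main obstacle is the medium-pair enumeration in step three: while each case admits a short verification, the total count is substantial and motivates the computer-assisted approach acknowledged in the paper.
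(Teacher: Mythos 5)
Your item 0 and your derivation of $w_2\in\overline{\DD}$ from the pairs $(1,1)$ and $(1,1-i)$ are correct and essentially match the paper's argument. Your handling of the twelve small classes ($|g||h|\leq\sqrt 2$, $||g|-|h||<1$) via direct computation using $\Re w_1\geq \Im w_1\geq 0$ is sound. But the crucial step --- reducing to a \emph{finite} and \emph{explicit} set of ``medium'' pairs --- has a genuine gap. You assume $|gu-hv|_{u,v}<1$, deduce $|g|<2/|1-w_1w_2|$, and then claim a finite residual list. However, this bound on $|g|$ depends on $w_1,w_2$ and is not uniform: as $w_1w_2\to 1$ it degenerates, and you do not establish any uniform positive lower bound on $|1-w_1w_2|$ over the relevant parameter region. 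Indeed, such a lower bound ($\sqrt{6-3\sqrt 3}$, derived later in the paper for the Dirichlet constant) is essentially a \emph{consequence} of the constraints this proposition is supposed to establish, so invoking it here without proof would be circular. Your remark about the ``boundary case $w_1w_2=1$'' only deals with an isolated degenerate configuration, not the near-boundary regime where the bound blows up. Moreover, there are genuinely infinitely many coprime Gaussian pairs $(g,h)$ with $||g|-|h||<1$ (e.g.\ $(n,n+i)$ for $n\in\N$), so the residual list really is not finite under your scheme.

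The paper closes exactly this hole with Lemma \ref{lem:33}, a purely geometric argument which plays no role in your proposal. Assuming the three constraints from $(1,1),(i,1),(1,1-i)$, it shows that $\dd(w_1,z)\leq\tfrac13$ and $\dd(w_2,1/z)\leq\tfrac13$ (with $z=g/h$) are jointly impossible whenever $|g|,|h|\geq 3$. The argument splits on whether $\dd(w_1,i)\geq 1$, uses the location of $w_2$ forced by the constraints, and has nothing to do with bounding $|1-w_1w_2|$. This gives a bound $|g|,|h|\leq 6$ that is \emph{independent} of $w_1,w_2$, after which the finite computer check (Lemmas \ref{lem:computer1} and \ref{lem:computer2}) becomes meaningful. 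Without a substitute for this lemma, your reduction to a finite enumeration does not go through.

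A few minor remarks: your homogeneity reduction to coprime pairs is valid, and in the non-strict case the multiplicative factor $|\alpha|\geq\sqrt 2$ for non-unit $\alpha$ actually gives $>1$ automatically, so it is a clean reduction. Your claim that $\overline{\CC}\setminus\{0,1\}\subset\D(\tfrac{1+i}{2},\tfrac1{\sqrt2})$, used to derive $w_2\in\overline{\DD}$, is correct, and coincides with the paper's observation. The covering statements in item 0 are also correct (farthest point of $\overline{\mathbb D}$ from $\UU_4$ is at distance $\sqrt{2-\sqrt 2}<1<\sqrt 2$).
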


The following simple formula will be useful.

\begin{lemma}[Distance formula]
	\label{lem:formule} Let $u=(u_{1},v_{2}w_{2})$ and $v=(u_{1}w_{1},v_{2})$ be
	in $\mathbb{C}^{2}$. Assume that $|w_{1}|,|w_{2}|\leq 1$ and $|u_{1}|,|v_{2}|>0$.
	Then for all nonzero complex numbers $a$ and $b$,
	\begin{align*}
		|au-bv|_{u,v}=\max(|b|\dd(w_{1},\tfrac{a}{b}),|a|\dd(w_{2},\tfrac{b}{a})).
	\end{align*}	
\end{lemma}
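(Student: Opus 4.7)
The plan is to unwind all definitions and observe that the lemma is a short direct computation. First I would note that, under the hypotheses $|w_1|\le 1$ and $|w_2|\le 1$ with $|u_1|,|v_2|>0$, the first coordinate of $v$ satisfies $|u_1 w_1|\le |u_1|$ and the second coordinate of $u$ satisfies $|v_2 w_2|\le |v_2|$. Hence in the definition of the norm $|\cdot|_{u,v}$, the maxima simplify to $\max(|u_1|,|v_1|)=|u_1|$ and $\max(|u_2|,|v_2|)=|v_2|$, so that for every $x=(x_1,x_2)\in\C^2$,
\[
|x|_{u,v}=\max\!\left(\tfrac{|x_1|}{|u_1|},\tfrac{|x_2|}{|v_2|}\right).
\]

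Next I would compute the two coordinates of $au-bv$ explicitly from $u=(u_1,v_2w_2)$ and $v=(u_1w_1,v_2)$, obtaining
\[
au-bv=\bigl(u_1(a-bw_1),\ v_2(aw_2-b)\bigr).
\]
Dividing by $|u_1|$ and $|v_2|$ respectively gives
\[
|au-bv|_{u,v}=\max\bigl(|a-bw_1|,\ |aw_2-b|\bigr).
\]

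To finish, I would factor out $|b|$ from the first term and $|a|$ from the second, using that $a,b\neq 0$. This yields $|a-bw_1|=|b|\cdot|a/b-w_1|=|b|\,\operatorname{d}(w_1,a/b)$ and $|aw_2-b|=|a|\cdot|w_2-b/a|=|a|\,\operatorname{d}(w_2,b/a)$, which substituted into the previous display gives exactly the claimed identity. There is no real obstacle here; the only thing to be careful about is the initial reduction $\max(|u_1|,|v_1|)=|u_1|$ and $\max(|u_2|,|v_2|)=|v_2|$, which relies crucially on $|w_1|,|w_2|\le 1$, so I would point that out explicitly before doing the algebra.
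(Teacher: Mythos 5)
Your proof is correct and follows essentially the same route as the paper's: use $|w_1|,|w_2|\le 1$ to reduce $|\cdot|_{u,v}$ to $\max(|x_1|/|u_1|,|x_2|/|v_2|)$, compute the coordinates of $au-bv$, and factor out $|b|$ and $|a|$.
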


\begin{proof}%
	Since $|w_1|,|w_2|\leq 1$, for any $x=(x_1,x_2)\in \C^2$, $|x|_{u,v}=\max(\tfrac{|x_1|}{|u_1|},\tfrac{|x_2|}{|v_2|})$. Therefore
	\begin{align*}
		|au-bv|_{u,v}  &  =\max(\tfrac1{|u_1|}|au_{1}-bu_{1}w_{1}|,\tfrac1{|v_2|}|av_{2}w_{2}-bv_{2}|)\\
		&  =\max(|b||w_{1}-\tfrac{a}{b}|,|a||w_{2}-\tfrac{b}{a}|).
	\end{align*}	
\end{proof}

\begin{proof}[Proof of the proposition]
	The proof needs only elementary geometry but is rather long, the strategy works as follows. We assume that $w_1\in \overline\CC$, $|w_2|\leq 1$ and 
	$|gu-hv|_{u,v}\geq 1 $ for all nonzero $(g,h)\in F$ or for all $(g,h)\in J^2$  with $|g|=|h|=\tfrac1{\sqrt 2}$. We want to show that $|gu-hv|_{u,v}\geq 1 $ for all nonzero  Gauss  integers or for all $(g,h)\in \Z[i]^2\cup J^2$.
	\begin{enumerate}
		\item We first get rid of the four particular cases $w_1=0$,  $w_1=1$, $w_2=-i$ and $w_2=-1$.		
		\item We show that $|gu-hv|_{u,v}\geq 1$ for $(g,h)=(1,1)$ and $(1,1-i)$ implies that $w_2\in\overline\DD$ 		
		\item We show that $|gu-hv|_{u,v}\geq 1$ for $(g,h)=(\tfrac1{1+i},\tfrac1{1+i})$ and $(\tfrac{i}{1+i},\tfrac{1}{1+i})$ implies that $w_2\in \overline{\mathcal T}$.
		\item Let $a$ be a positive real number. We show that, if $g$ and $h$ are two  nonzero complex numbers such that $|g|,|h|\geq \tfrac1a$ and,  $\tfrac{|g|}{|h|}$ or $\tfrac{|g|}{|h|}> 1+a$, then $|gu-hv|_{u,v}> 1$. 
		\item We show that if $|gu-hv|_{u,v
		}\geq 1$ for  $(g,h)\in\{(1,1),(i,1),(1,1-i)\}$, then $|gu-hv|_{u,v
		}> 1$ for all complex numbers $g$ and $h$ such that $|g|$ and $|h|\geq 3$ (Lemma \ref{lem:33}).
		\item Thanks to points (4) and (5), we shall see that we are reduced to deal with the pairs $(g,h)$ with $|g|$ and $|h|\leq 6$. Since $g$ and $h$ are in $\Z[i]$ or in $J$, we are left with finitely many pairs $(g,h)$. Then we are able to conclude the proof with a computer. 
	\end{enumerate}
	1) The four particular cases $w_1=0$, $w_1=1$ $w_2=-i$ and $w_2=-1$. 
	
	{\it Case $w_1=0$.} For all nonzero $a,b\in\C$, 
	\[
	|au-bv|_{u,v}=\max(|a|,|a||w_2-\tfrac{b}{a}|).
	\]
	Therefore, $|au-bv|_{u,v}\geq 1$ for all nonzero $a\in \Z[i]$. Furthermore, since the four closed disks $D(b,1)$, $b\in\UU_{4}$, cover the closed disk $D(0,1)$, we have $|1\times u-bv|_{u,v}=1$ for at least one $b\in\UU_4$, and $|\tfrac{1}{1+i}u-\tfrac{b}{1+i}v|_{u,v}<1$ for at least one $b\in\UU_4$.\smallskip
	
	In the three other cases we don't have to consider the strict inequalities because $|w_1|$ or $|w_2|=1$.\smallskip
	
	{\it Case $w_1=1$.} For all nonzero $a,b\in\C$, 
	\[
	|au-bv|_{u,v}=\max(|a-b|,|a||w_2-\tfrac{b}{a}|).
	\]
	Therfore, $|au-bv|_{u,v}\geq 1$ for all $a\neq b$ both in $\Z[i]$ or both in $J$. If $a=b$, then $|au-bv|_{u,v}=|a||u-v|_{u,v}$, hence $|au-av|_{u,v}\geq 1$ for all nonzero $a\in \Z[i]$ iff $|u-v|_{u,v}\geq 1$, and $|au-av|_{u,v}\geq 1$ for all  $a\in J$ iff $|\tfrac{1}{1+i}u-\tfrac{1}{1+i}v|_{u,v}\geq 1$. So the proposition holds when $w_1=1$.\smallskip

	{\it Cases $w_2=\alpha=-1$ or $-i$.} For all nonzero $a,b\in\C$, 
	\[
	|au-bv|_{u,v}=\max(|b||w_1-\tfrac{a}{b}|,|a\alpha-b|).
	\]
	Therefore, $|au-bv|_{u,v}\geq 1$ for all $b\neq \alpha a$ both in $\Z[i]$ or both in $J$. If $b=\alpha a$, then $|au-bv|_{u,v}=|a||1\times u-\alpha v|_{u,v}$, hence $|au-\alpha av|_{u,v}\geq 1$ for all nonzero $a\in \Z[i]$ iff $|1\times u-\alpha v|_{u,v}\geq 1$ which always holds when $\alpha=-1$ because $|1\times u-\alpha v|_{u,v}=|w_1+1|$ and  $w_1\in\overline{\CC}$. Likewise, $|au-\alpha av|_{u,v}\geq 1$ for all  $a\in J$ iff $|\tfrac{1}{1+i} u-\tfrac{\alpha}{1+i}v|_{u,v}\geq  1$. So the proposition holds when $w_2=\alpha$.\smallskip

	We now suppose that $w_1\neq 0$, $w_1\neq 1$, $w_2\neq -i$ and $w_2\neq -1$.\smallskip

	2) 
	By the distance formula (Lemma \ref{lem:formule}), for all nonzero $g$ and $h$, 
	\[
	|gu-hv|_{u,v}=\max(|h|\operatorname{d}(w_{1},\tfrac{g}{h}), |g|\operatorname{d}%
	(w_{2},\tfrac{h}{g})).
	\]
	Now since $w_{1}\in\overline{\mathcal{C}}\setminus\{0,1\}$, we have
	\[
	|1|\operatorname{d}(w_{1},\tfrac{1}{1})<1 \text{ and } |1-i|\operatorname{d}%
	(w_{1},\tfrac{1}{1-i})<1.
	\]
	Therefore, if $|gu-hv|_{u,v}\geq 1$ (resp. $>1$) for $(g,h)=(1,1)$ and $(1,1-i)$,  then
	
	\[
	|1|\operatorname{d}(w_{2},\tfrac{1}{1})\geq 1 \text{ and } |1|\operatorname{d}(w_{2}%
	,\tfrac{1-i}{1})\geq 1,
	\]
	(resp. $>1$) which in turn implies $w_2\in\overline\DD$ (resp. $w_2\in\DD$).
	
	3) Since $w_1\in \overline{\mathcal{C}}\setminus\{0,1\}$, we have
	\[
	|\tfrac{1}{1+i}|\dd(w_1,1=\tfrac{\tfrac{1}{1+i}}{\tfrac{1}{1+i}})<1 \text{ and } |\tfrac{1}{1+i}|\dd(w_1,i=\tfrac{\tfrac{1}{1+i}}{\tfrac{-i}{1+i}})<1.
	\]
	Therefore, if $|gu-hv|_{u,v}\geq 1$  (resp. $>1$) for $(g,h)=(\tfrac{1}{1+i},\tfrac{1}{1+i})$ and $(g,h)=(\tfrac{1}{1+i},\tfrac{-i}{1+i})$, then 
	\[
	\dd(w_2,1)\geq \sqrt 2 \text{ and } \dd(w_2,-i)\geq \sqrt 2
	\]
	(resp. $>\sqrt 2$) which in turn implies $w_2\in\overline{ \mathcal T}$ (resp. $w_2\in\mathcal T$).
	
	4) Let $a$ be a positive real number. Let $g,h$ be two nonzero complex numbers with $|g|,|h|\geq \tfrac1a$.
	Since $|w_{1}|$ and $|w_{2}|\leq 1$, if $\tfrac{|g|}{|h|}$ or $\tfrac
	{|h|}{|g|}> 1+a$, then by the distance formula (Lemma \ref{lem:formule}),
	\begin{align*}
	|gu-hv|_{u,v}&=\max(|h|\dd(w_1,\tfrac{g}{h}),|g|\dd(w_2,\tfrac{h}{g}))\\
	&\geq \max(|h|(\tfrac{|g|}{|h|}-1),|g|(\tfrac{|h|}{|g|}-1))> 1.
	\end{align*}
	
	5)
	\begin{lemma}\label{lem:33} Suppose $w_1\in \overline{\mathcal{C}}\setminus\{0,1\}$ and $w_2\in D(0,1)$. If $|gu-hv|_{u,v}\geq 1$ for  $(g,h)\in\{(1,1),(i,1),(1,1-i)\}$ then $|gu-hv|_{u,v}> 1$ for all complex numbers $g$ and $h$ such that $|g|$ and $|h|\geq 3$.
	\end{lemma}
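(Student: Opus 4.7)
Set $r=g/h$ and apply the distance formula (Lemma~\ref{lem:formule}):
\[
|gu-hv|_{u,v}=\max\bigl(|h|\,\dd(w_{1},r),\ |g|\,\dd(w_{2},1/r)\bigr).
\]
Step (4) of the present proof, applied with $a=1/3$ (valid because $|g|,|h|\geq 3=1/a$), already yields the desired strict inequality whenever $|g|/|h|>4/3$ or $|h|/|g|>4/3$. Hence we may assume $|r|\in[3/4,4/3]$.

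Suppose, for contradiction, that $|gu-hv|_{u,v}\leq 1$. Then $\dd(w_{1},r)\leq 1/|h|\leq 1/3$ and $\dd(w_{2},1/r)\leq 1/|g|\leq 1/3$. For every $w_{1}\in\overline{\CC}\setminus\{0,1\}$ one has $\dd(w_{1},1)<1$ and $\sqrt{2}\,\dd(w_{1},(1+i)/2)<1$ (the same elementary check used in part~2 of the proof of Proposition~\ref{prop:geonumber}); so, combined with the distance formula, the hypotheses at $(1,1)$ and $(1,1-i)$ force
\[
\dd(w_{2},1)\geq 1,\qquad \dd(w_{2},1-i)\geq 1,
\]
while the hypothesis at $(i,1)$ gives $\dd(w_{1},i)\geq 1$ or $\dd(w_{2},-i)\geq 1$. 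The triangle inequality then yields
\[
\dd(1/r,1)\geq 2/3,\qquad \dd(1/r,1-i)\geq 2/3,
\]
together with $\dd(r,i)\geq 2/3$ or $\dd(1/r,-i)\geq 2/3$.

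The remaining work is purely geometric: show that no $r$ with $|r|\in[3/4,4/3]$ and $\dd(r,\overline{\CC})\leq 1/3$ satisfies these four exclusion zones simultaneously. The key tools are (i)~the inversion identity $|1/r-1/w_{1}|=|w_{1}-r|/(|r|\,|w_{1}|)$, which controls the location of $1/r$ in terms of that of $w_{1}$, and (ii)~the explicit intersection of the circles $|z-1|=2/3$ and $|z-(1-i)|=2/3$ at $z=1\pm\sqrt{7}/6-i/2$, which pins down the only boundary candidate for $1/r$ lying outside both disks $D(1,2/3)$ and $D(1-i,2/3)$. A case split on $\arg w_{1}\in[0,\pi/4]$ now produces the contradiction: for $\arg w_{1}$ near $0$ the image $1/r$ lies inside $D(1,2/3)$; for $\arg w_{1}$ near $\pi/4$ we use $\dd(e^{-i\pi/4},1-i)=\sqrt{2}-1$ and the inversion identity to force $1/r$ into $D(1-i,2/3)$; and the narrow residual slice around $w_{1}=e^{i\pi/4}$, $|r|\approx 4/3$ is removed by the $(i,1)$-branch (either $\dd(r,i)\geq 2/3$ pushes $r$ off that slice, or $\dd(1/r,-i)\geq 2/3$ supplies the missing exclusion).

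\emph{Main obstacle.} The serious difficulty is this final uniform geometric verification. The admissible $r$-region is small but nondegenerate, and the tightest configurations — near $w_{1}\in(0,1]$ and near $w_{1}=e^{i\pi/4}$ — are where two of the four distance bounds simultaneously come close to saturation; these are exactly the configurations that dictate the specific choice of the three test pairs $(1,1),(i,1),(1,1-i)$ in the hypothesis of the lemma.
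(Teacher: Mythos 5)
Your setup is correct and follows the paper's own strategy up to the translation of the hypotheses into constraints on $r=g/h$ and $1/r$: the appeal to step (4) of the proof of Proposition~\ref{prop:conditions} to reduce to $|r|\in[3/4,4/3]$, the deductions $\dd(w_2,1)\geq 1$ and $\dd(w_2,1-i)\geq 1$ from the test pairs $(1,1)$ and $(1,1-i)$, and the disjunctive bound coming from $(i,1)$ are all sound. The difficulty is that the part you yourself flag as the ``main obstacle'' --- the final uniform geometric verification --- is precisely the content of the lemma, and you leave it as a sketch with steps that do not go through as stated. For instance, the claim that for $\arg w_1$ near $0$ the image $1/r$ lies inside $D(1,2/3)$ is not a direct consequence of $\dd(w_1,r)\leq 1/3$ alone; one needs the modulus constraint $|r|\in[3/4,4/3]$ and, crucially, a lower bound on $|w_1|$ (the paper derives $|w_1|\geq 5/12$ before any case split). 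Likewise, for $\arg w_1$ near $\pi/4$, the inversion bound gives only $|1/r-e^{-i\pi/4}|\leq \tfrac{1}{3|r|}\leq 4/9$, and since $\dd(e^{-i\pi/4},1-i)+4/9=(\sqrt2-1)+4/9>2/3$, this by itself does not force $1/r$ into $D(1-i,2/3)$; additional constraints must enter, and the ``narrow residual slice'' you defer to the $(i,1)$-branch is neither delimited nor disposed of.

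The paper avoids these difficulties with a different case split, on $\dd(w_1,i)\gtrless 1$, which lines up exactly with the disjunction supplied by the pair $(i,1)$. In the case $\dd(w_1,i)\geq 1$ it proves a dedicated intermediate lemma (for $w$ in the first quadrant with $5/12\leq|w|\leq 1$ and $\dd(w,i)\geq 1$ one has $\dd(w,1)<2/3$), which yields $|r-1|<1$ and hence $\Re(1/r)>1/2$; this half-plane constraint, together with $1/r\notin\D(\tfrac32 i,\tfrac32)$ (coming from $\Im r\geq -1/3$) and the two disk exclusions for $w_2$, gives the contradiction $\dd(1/r,w_2)>1/3$ because $\sqrt 3/2-1/2>1/3$. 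In the case $\dd(w_1,i)<1$, the pair $(i,1)$ forces $\dd(w_2,-i)\geq 1$, and the contradiction is immediate from $|1/r|\geq 3/4$ together with $1/r\notin\D(\pm\tfrac32 i,\tfrac32)\cup\D(-\tfrac32,\tfrac32)$. To close your argument you would need either to reproduce an estimate of this kind or to carry your $\arg w_1$ case split through with explicit inequalities; as written, the lemma is not yet proven.
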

	\begin{proof}[Proof of Lemma \ref{lem:33}]
		We
		proceed by contradiction and assume that $|gu-hv|_{u,v}\leq1$ for some complex numbers $g$ and $h$ with $|g|$ and $|h|\geq 3$. Set
		$z=\tfrac{g}{h}$ and $z^{\prime}=\tfrac{1}{z}$. By the distance formula (Lemma \ref{lem:formule}),
		\[
		\operatorname{d}(w_{1},z) \text{ and } \operatorname{d}(w_{2},z^{\prime}%
		)\leq\tfrac13
		\]
		hence $|z|,|z^{\prime}|\leq \tfrac43$. It follows that $|z|,|z^{\prime}|\geq 
		\tfrac34$ and then that $|w_{1}|,|w_{2}|\geq  \tfrac34-\tfrac13=\tfrac5{12}$.
		
		Also observe that since $w_{1}\in\overline{\mathcal{C}}$, $\Re z$ and $\Im z$ are
		$\geq-\tfrac13$ which implies that the inverse $z^{\prime}$ of $z$ is neither
		in the open disk $\D(-\tfrac32,\tfrac32)$ nor in the open disk $\D(\tfrac
		32i,\tfrac32)$.
		
		We divide the proof in two cases:
		
		\begin{enumerate}
			\item $\operatorname{d}(w_{1},i)\geq1$,
			
			\item $\operatorname{d}(w_{1},i)<1$.
		\end{enumerate}
		
		The first case uses the following intermediate lemma.
		
		\begin{lemma}
			Let $w\in\C$ be such that $\Re w,\,\Im w\geq0$,
			$\tfrac5{12}\leq |w|\leq1$,   and $\operatorname{d}(w,i)\geq1$ then
			$\operatorname{d}(w,1)<\tfrac23$.
		\end{lemma}
		
		\begin{proof}[Proof of the intermediate lemma]
			We want to show that the function $f(z)=|z-1|^{2}%
			-\tfrac49$ is $< 0$ when $|z|\leq 1$, $\Re z\geq 0$, $\Im z\geq 0$  and $z$ is neither in the interiors of 
			$D(i,1)$ nor in the interior of $D(0,\tfrac5{12})$. It is easy to see that the maximum of $f$ on this region is
			reached at a point which belongs to the circle $C$ of radius $1$ and  center
			$i$. The circle $C$ has polar equation $r=2\sin\theta$. Since for
			$z=re^{i\theta}\in C$,
			\[
			f(re^{i\theta})=\tfrac59+r^{2}-2r\cos\theta=\tfrac59+4\sin^{2}\theta
			-2\sin2\theta=g(\theta),
			\]
			it is enough to show that $g(\theta)< 0$ for $\theta\in[\arcsin\tfrac
			5{24},\tfrac{\pi}6]$. Now $g^{\prime}(\theta)=4\sin2\theta-4\cos2\theta$ is
			negative if $\theta<\tfrac{\pi}8$ and non-negative otherwise, hence it is
			enough to check that $g$ is $< 0$ at the extremities of the interval. Since
			\begin{align*}
				g(\tfrac{\pi}6)  &  =\tfrac59+1-\sqrt3< 0,\\
				g(\arcsin\tfrac5{24})  &  =\tfrac59+(\tfrac5{12})^{2}- 4\tfrac5{24}%
				\sqrt{1-(\tfrac5{24})^{2}} \leq-0.08\\
			\end{align*}
			we are done.
		\end{proof}
		
		\textit{End of  proof of Lemma \ref{lem:33}.}
		
		Case 1: $\dd(w_1,i)\geq 1$. By the above lemma, $|z-1|\leq|z-w_{1}|+|w_{1}-1|< \tfrac13+\tfrac23=1$, hence its
		inverse $z^{\prime}$ has a real part  $>1/2$. We also already know that $z^{\prime}$ is not
		in the open disk $\D(\tfrac32i,\tfrac32)$.
		
		Since $w_{1}$ is in $\overline{\mathcal{C}}$ and $w_1\neq 0$ , $\operatorname{d}(w_{1},1)<1$. By
		assumption $|u-v|_{u,v}\geq 1$, therefore by the distance formula (Lemma \ref{lem:formule}) with
		$a=1$ and $b=1$, we obtain $\operatorname{d}(w_{2},1)\geq 1$. Since $w_{1}\in
		\overline{\mathcal C}\setminus\{0,1\}\subset \D(\tfrac{1+i}{2},\tfrac1{\sqrt2})$, $\operatorname{d}%
		(w_{2},1-i)\geq 1$ again by Lemma \ref{lem:formule} with $a=1$ and $b=1-i$.
		
		Finally $z^{\prime}$ and $w_{2}$ satisfy the inequalities
		\[
		z'\notin\D(\tfrac32i,\tfrac32),\,\Re z^{\prime}>\frac12,\,\text{ and }\operatorname{d}(w_{2},1)\geq1,\,\operatorname{d}%
		(w_{2},1-i)\geq 1  
		\]
		contradicting $\operatorname{d}(z^{\prime},w_{2})\leq\frac13$ because $\tfrac{\sqrt3}2-\tfrac12> \tfrac13$ (see Figure 1).		
		\begin{figure}
		\includegraphics[width=10cm]{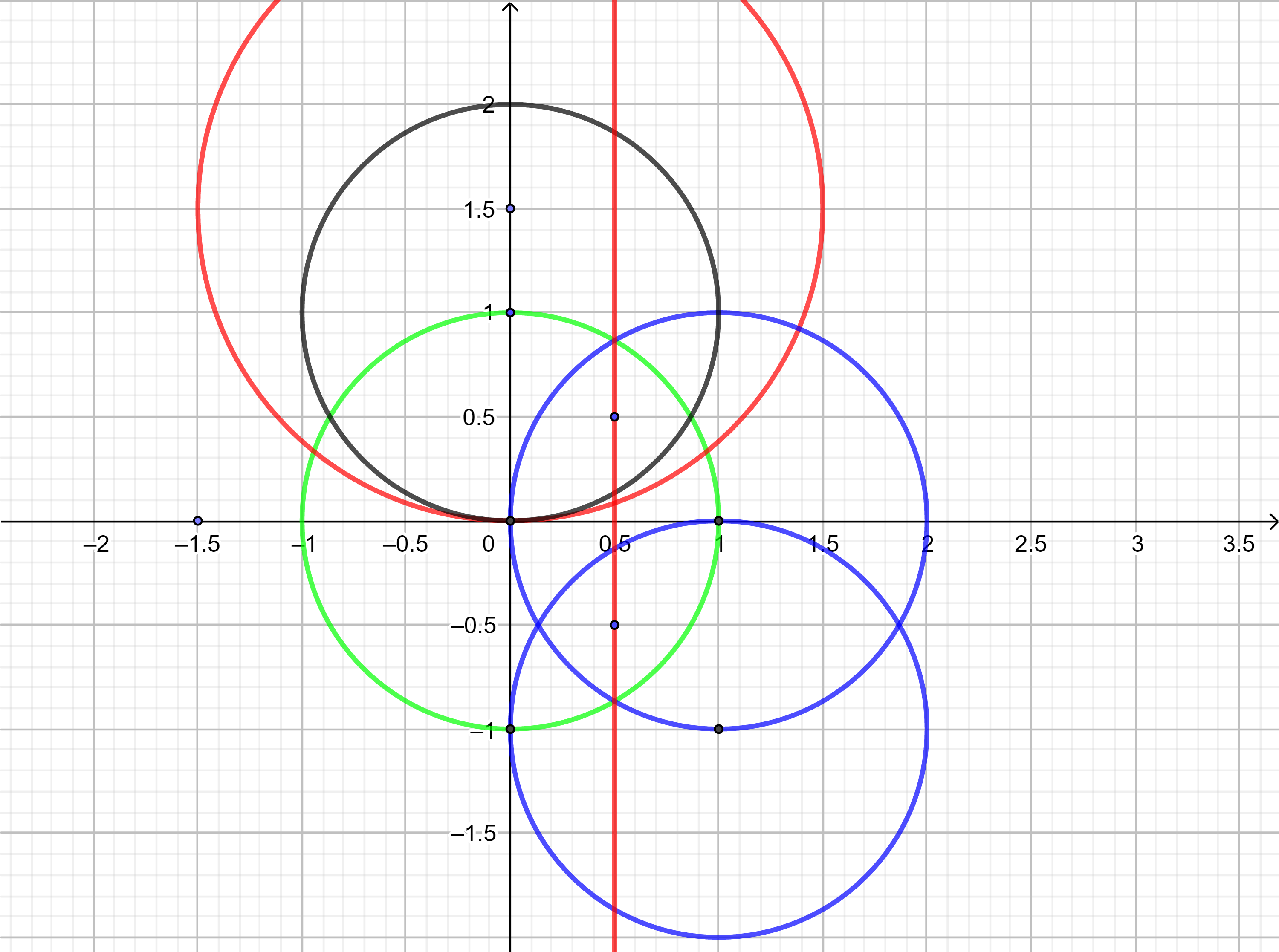}
		\caption{Proof of Lemma \ref{lem:33}, case 1}
		\end{figure}
		
		Case 2: $\dd(w_1,i)< 1$. We already know that $|z^{\prime}|\geq \tfrac34$ and that $z^{\prime}$
		is neither in the open disk $\D(-\tfrac32,\tfrac32)$ nor in the open disk
		$\D(\tfrac32i,\tfrac32)$
		
		As in case (1), making use of lemma \ref{lem:formule} with $a=1$ and $b=1$ we see that
		$\operatorname{d}(w_{2},1)\geq1$. Since $\dd(w_1,i)<1$ (case(2)), again with $a=i$ and $b=1$ we see that
		$\operatorname{d}(w_{2},-i)\geq 1$.
		
		It follows that $\operatorname{d}(w_{2},z^{\prime})>\tfrac13$ (see Figure 2), a contradiction.
		\begin{figure}
		\includegraphics[width=10cm]{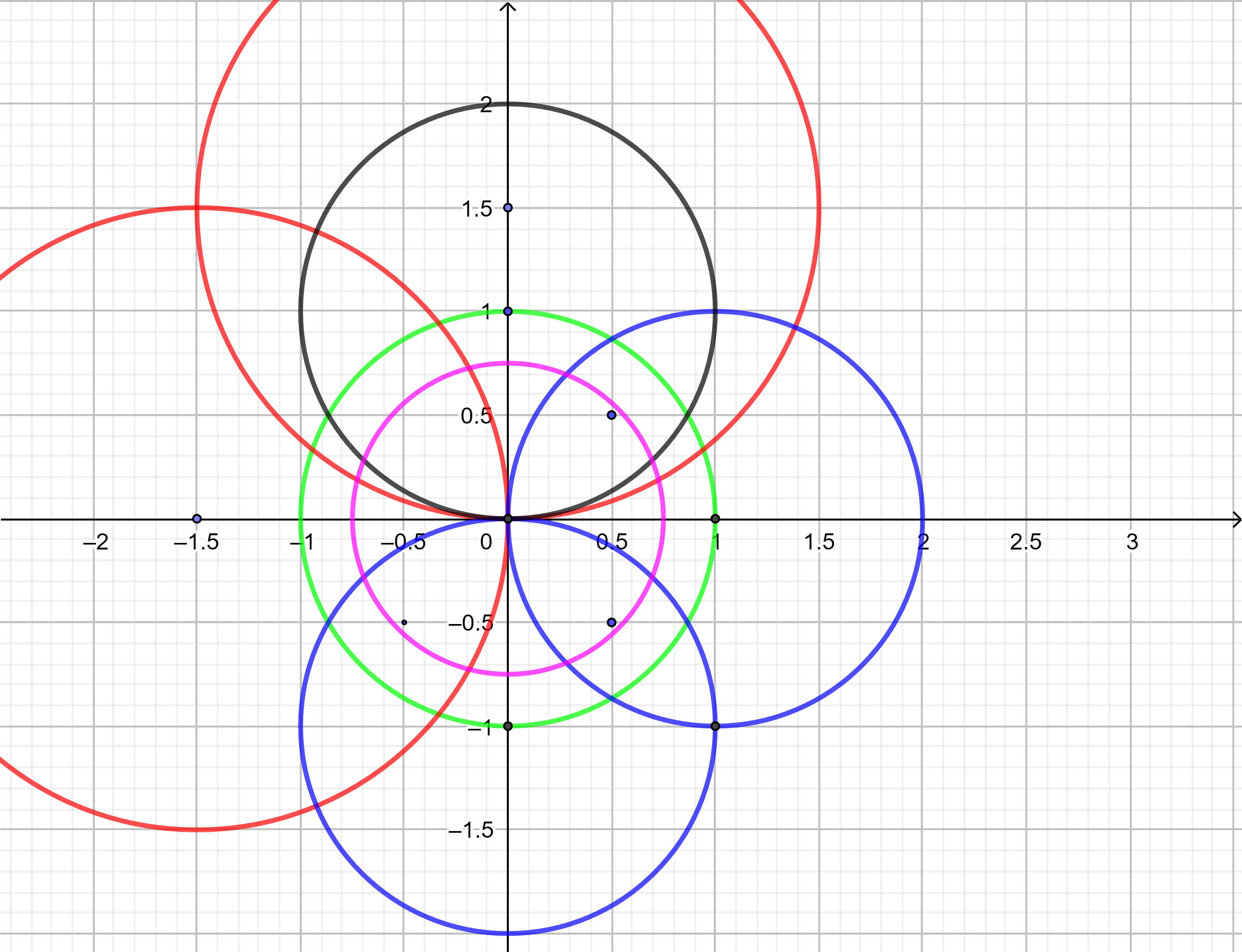}
		\caption{Proof of Lemma \ref{lem:33}, case 2}
		\end{figure}
	\end{proof}
	6)
	It remains to study the case $|g|$ and $|h|\leq6$. Indeed, if $g$ and $h$
	are nonzero complex numbers such that $|g|$ or $|h|>6$, say $|g|>6$, and taking into account that $g$ and $h$ are both Gaussian integers or both in $J$, we have
	\begin{itemize}
		\item either, $|h|\geq 1$ and  
		\begin{itemize}
			\item either, $\tfrac{|g|}{|h|}>2$, and point 4 with $a=1$ implies $|gu-hv|_{u,v}> 1$,
			\item or, $\tfrac{|g|}{|h|}\leq 2$ and $|h|\geq3$, and point 5 implies
			$|gu-hv|_{u,v}> 1$.
		\end{itemize}
		\item or, $|h|=\tfrac1{\sqrt 2}$ and $\tfrac{|g|}{|h|}>1+\sqrt 2$, and  4) with $a=\sqrt 2$ implies $|gu-hv|_{u,v}>1$. 
	\end{itemize}
	
	Observe that up to now we have only used the hypothesis: $w_1\in \overline{\mathcal{C}}\setminus\{0,1\}$, $w_2\in D(0,1)$ and $|gu-hv|_{\infty
	}\geq 1$ for  $(g,h)\in\{(1,1),(i,1),(1,1-i)\}$.
	
	We now use a computer to prove two lemmas.
	\begin{lemma}[First set of critical pairs]\label{lem:computer1}
		For all nonzero Gaussian integers $g$ and $h$ with $|g|$ and $|h|\leq6$, if
		the pair $(g,h)$ is not in $\Z[i]G_1$ where 
		\begin{align*}
			G_1=&\{(1,-i),(1,1),(1,i),(1,-1),(1,1+i),(1,1-i),(1,-1+i),(1,-1-i),\\
			&(1,2i),(1,-2i),(1,-2),(1+i,1),(1+i,i),(1+i,2-i),(2,1),(2,1-2i),\\
			&(2-i,-2i),(2+i,2-2i)
			\}
		\end{align*}
		then
		\[
		|h|\operatorname{d}(\tfrac{g}{h},\CC) \text{ or } |g|\operatorname{d}%
		(\tfrac{h}{g},\DD)> 1.
		\]
		$G_1$ is called the first set of critical pairs.
	\end{lemma}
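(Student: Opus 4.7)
The plan is to exploit the fact that $w_1 \in \overline{\CC}$ and $w_2 \in \overline{\DD}$ to replace the dependence on $(w_1, w_2)$ by a worst-case analysis involving only the geometry of these two regions. By the distance formula (Lemma~\ref{lem:formule}), $|gu - hv|_{u,v} = \max(|h|\,\dd(w_1, g/h),\ |g|\,\dd(w_2, h/g))$, and since $w_1 \in \overline{\CC}$ and $w_2 \in \overline{\DD}$ we have $\dd(w_1, g/h) \geq \dd(g/h, \overline{\CC})$ and $\dd(w_2, h/g) \geq \dd(h/g, \overline{\DD})$. Thus establishing, for each pair $(g,h) \notin \ZZ\, G_1$ with $|g|, |h| \leq 6$, that one of $|h|\dd(g/h, \overline{\CC})$, $|g|\dd(h/g, \overline{\DD})$ exceeds~$1$ will immediately yield the lemma's conclusion uniformly over all admissible $(u,v)$.

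What remains is a finite check, and the plan is to shrink its size before handing it to a computer. First, multiplying $(g,h)$ by a unit $\alpha \in \UU_4$ changes neither $|g|$, $|h|$, $g/h$, nor $h/g$, so both quantities $|h|\dd(g/h, \overline{\CC})$ and $|g|\dd(h/g, \overline{\DD})$ are invariant; I would therefore pick one representative per $\UU_4$-orbit on pairs. Second, scaling by a non-unit Gaussian integer $z$ (so $|z| \geq \sqrt 2$) multiplies both distances by $|z|$, so if the inequality holds for the ``primitive'' representative then it holds for every proper $\ZZ$-multiple as well. After these reductions only a moderate number of primitive pairs in the ball of radius~$6$ remain to be inspected.

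For each such pair the two distances $\dd(g/h, \overline{\CC})$ and $\dd(h/g, \overline{\DD})$ can be computed in closed form: the boundary of $\overline{\CC}$ consists of two line segments on the real axis and on the $e^{i\pi/4}$ ray together with an arc of the unit circle, while $\overline{\DD}$ is cut out by three circular arcs (the unit circle and the two circles of radius~$1$ centred at $1$ and at $1-i$). A piecewise formula tells us which feature is nearest, so each individual verification is routine. The main obstacle is the sheer combinatorial size of the enumeration, which is precisely why the author appeals to a computer; the referee's remark quoted in the Acknowledgements about ``missing pairs in the critical set'' indicates that the genuinely delicate step is not any single computation but the bookkeeping required to ensure that $G_1$ is exhaustive. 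I would therefore cross-check the machine output by independently exhibiting, for each pair listed in $G_1$, a witness $(w_1, w_2) \in \overline{\CC} \times \overline{\DD}$ for which both distance quantities are simultaneously at most~$1$, thereby confirming both that every pair in $G_1$ is genuinely critical and that no pair has been omitted.
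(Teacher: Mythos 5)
The proposal is correct and follows essentially the same computer-assisted strategy as the paper: reduce to a finite enumeration of Gaussian pairs of modulus at most $6$, quotient by the $\UU_4$-action and by primitivity (both reductions are valid for exactly the reasons you give), compute $|h|\dd(g/h,\CC)$ and $|g|\dd(h/g,\DD)$ for each survivor, and check that any pair failing both bounds lies in $\ZZ G_1$ — the paper does the same thing, using floating-point evaluation with a safety margin $\eps$ and then hand-validating the marginal cases, which is merely an implementation variant of your closed-form distance computation (the paper's Appendix~\ref{sec:distance} gives precisely such piecewise formulas). One small caveat: your proposed cross-check of exhibiting, for each pair in $G_1$, a witness certifying that both quantities can be $\leq 1$ establishes that $G_1$ is not over-inclusive, but it cannot by itself confirm that ``no pair has been omitted''; exhaustiveness still rests entirely on the main enumeration and on the rigor of the distance evaluation, which is the precise point the referee's remark (and the paper's safety margin) is guarding against.
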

	\begin{proof}[Proof using a computer]
		\begin{itemize}
		\item Let $L_1$ be the set of pairs of nonzero Gaussian integers with  moduli $\leq 6$. The set $L_1$ is finite with less than $(6+1+6)^4=28501$ elements and can be generated using a simple computer code (we use Python code).
		\item One can write two functions that calculate for any complex number $z$, the two distances $\dd(z,\CC)$ and $\dd(z,\DD)$. See Appendix Section \ref{sec:distance} where it is  explained how to calculate $\dd(z,\DD)$. This calculation can be performed with standard floating point arithmetic. The distance to $\CC$ can be calculated the same way. 
		\item Using these two functions one can obtain the set $L'_1$ of pairs $(g,h)\in L_1$ such that
		\[		
		|h|\operatorname{d}(\CC,\tfrac{g}{h})\leq 1+\eps \text{ and } |g|\operatorname{d}%
		(\DD,\tfrac{h}{g})\leq 1+\eps,
		\]
		with $\eps=0.001$, a numerical safety margin. The set $L'_1$ certainly contains all the pairs such that $|h|\operatorname{d}(\CC,\tfrac{g}{h})\leq 1$ and $|g|\operatorname{d}
		(\DD,\tfrac{h}{g})\leq 1$. 
		\item Finally extract from $L'_1$, a minimal subset $G'_1$ such that for each pair $(a,b)\in L'_1$ there exist $z\in\ZZ$ and $(g,h)\in G'_1$ such that $(a,b)=z(g,h)$. For this step observe that if $(a,b)\in L'_1$ then, there exists a primitive pair $(g,h)\in\ZZ^2$ which is in the line $\C(a,b)$ and which  is also in $L'_1$ because $|au-bv|_{u,v}\geq|gu-bv|_{u,v}$. 
		\item The pairs added in $G'_1$ due to the numerical margin are validated using calculation by hand. This lead to the set $G_1$ (actually, with the margin $\eps=0.001$, $G_1=G'_1$).
		\end{itemize}
	Suppose now that $(a,b)$ is a pair of nonzero Gaussian integers such that $|h|,|g|\leq 6$ and $|au-bv|_{u,v}\leq 1$. There exists a primitive pair $(g,h)\in\ZZ^2$ such that $(a,b)=z(g,h)$ with $|z|\geq 1$. Since $|z|\geq 1$, $1\geq|au-bv|_{u,v}\geq|gu-bv|_{u,v}$. Therefore $(g,h)\in L'_1$. Since $(g,h)$ is primitive, on the one hand, one of the pairs $\alpha(g,h)$, $\alpha\in\UU_{4}$ must be in $G'_1$, and on the other hand, $z\in\ZZ$. It follows that $(a,b)\in\ZZ G_1$.
	\end{proof}
\begin{remark}
	Without the  safety margin $\eps$ in the above proof, some pairs may be missing from the set $G_1$  as the referee pointed out.
\end{remark}	
	\begin{lemma}[Second set of critical pairs]\label{lem:computer2}
		For all $(g,h)\in J^{2}$ with $|g|$ and $|h|\leq 6$, and $(g,h)\notin
		G_2=\{(\frac{a}{1+i},\frac{b}{1+i}):a,b\in\mathbb{U}_{4}\}$, we have
		\[
		|h|\operatorname{d}(\tfrac{g}{h}, \overline{\CC}\setminus \D(-i,\sqrt2)) \text{ or }
		|g|\operatorname{d}(\tfrac{h}{g},\mathcal{T})>1.
		\]	
	\end{lemma}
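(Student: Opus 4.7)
The plan is to follow exactly the same computer-assisted strategy as in Lemma \ref{lem:computer1}, adapted to the lattice $J$ instead of $\ZZ$ and to the regions $\overline{\CC}\setminus \D(-i,\sqrt 2)$ and $\mathcal{T}$ instead of $\overline{\CC}$ and $\overline{\DD}$.

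First, I would enumerate the finite search set. Writing $g=\tfrac{a}{1+i}$ and $h=\tfrac{b}{1+i}$ with $a,b\in\ZZ\setminus I$, the constraints $|g|,|h|\leq 6$ become $|a|,|b|\leq 6\sqrt 2$, a finite list easily produced with a short loop (at most a few thousand pairs). Next, I would implement two elementary distance functions: one returning $\dd(z,\overline{\CC}\setminus \D(-i,\sqrt 2))$ and one returning $\dd(z,\mathcal{T})$. Both are computable from the boundary pieces (a segment of the unit circle, a segment of the circle $\mathbf C(-i,\sqrt 2)$, and two radial segments for the first region; the unit circle plus arcs of $\mathbf C(1,\sqrt 2)$ and $\mathbf C(-i,\sqrt 2)$ for $\mathcal T$), exactly as in the appendix section \ref{sec:distance} that treats $\dd(z,\DD)$. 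Standard floating point arithmetic is sufficient, but to be safe I would test the condition
\[
|h|\dd(\tfrac{g}{h},\overline{\CC}\setminus \D(-i,\sqrt 2))\leq 1+\eps \text{ and } |g|\dd(\tfrac{h}{g},\mathcal{T})\leq 1+\eps
\]
with a small margin $\eps>0$ (e.g.\ $\eps=0.001$), collecting the pairs $(g,h)$ that survive into a list $L'_2$.

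The second step is to reduce $L'_2$ modulo the obvious symmetries and extract a minimal set of representatives. If $(a,b)\in L'_2$ with $a,b\in J$ and $(a,b)=z(g_0,h_0)$ for some $z\in\ZZ$ and some $(g_0,h_0)\in J^2$ primitive in the sense that no further $\ZZ$-factor can be pulled out, then $|au-bv|_{u,v}\geq|g_0u-h_0v|_{u,v}$ so $(g_0,h_0)\in L'_2$ as well. Factoring in this way reduces $L'_2$ to a small list of primitive representatives; any representative outside $G_2=\{(\tfrac{a}{1+i},\tfrac{b}{1+i}):a,b\in\UU_4\}$ introduced by the numerical margin $\eps$ must then be ruled out rigorously by hand (exact arithmetic in $\Q[i]$, using that all the relevant radii and center coordinates are in $\Q[i]$ or involve only $\sqrt 2$).

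Finally, I would conclude as in the previous lemma: if $(a,b)\in J^2$ has $|a|,|b|\leq 6$ and violates the conclusion, then its primitive form $(g_0,h_0)$ lies in $L'_2$, hence after the hand verification necessarily in $G_2$, and then $(a,b)$ itself must be a $\UU_4$-multiple of an element of $G_2$, i.e.\ again in $G_2$. The main obstacle is not conceptual but bookkeeping: one must be sure that the numerical margin $\eps$ is large enough to capture every true violator (so that no critical pair is missed, as happened in Lemma \ref{lem:computer1} before the referee's remark) while being able to dismiss the spurious survivors by a short exact computation. Choosing $\eps$ of order $10^{-3}$ and doing an exact check on the handful of surviving pairs is the safe route.
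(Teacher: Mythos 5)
Your overall strategy — finite enumeration of $J^2$ pairs with $|g|,|h|\leq 6$, computer evaluation of the two weighted distances with a small safety margin $\eps$, and exact hand verification of any survivors — is the same as the paper's. There are, however, two points worth noting, one cosmetic and one more substantial.

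First, the paper sidesteps implementing a second distance function: it observes that $\overline{\CC}\setminus\D(-i,\sqrt 2)\subset(-i)\overline{\mathcal T}$, so that $\dd(\tfrac{g}{h},\overline{\CC}\setminus\D(-i,\sqrt 2))\leq\dd(i\tfrac{g}{h},\mathcal T)$, and therefore only the single function $z\mapsto\dd(z,\mathcal T)$ needs to be coded. Bounding the first distance by the (possibly larger) $\dd(i\tfrac gh,\mathcal T)$ can only \emph{enlarge} the candidate set, so no violator is missed. Your plan to implement $\dd(z,\overline{\CC}\setminus\D(-i,\sqrt 2))$ directly is perfectly fine, just more work.

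Second, and more importantly, your primitive-reduction step imports a structure from Lemma \ref{lem:computer1} that does not fit the present statement and, as written, leaves a gap. In Lemma \ref{lem:computer1} the conclusion is phrased in terms of $\ZZ G_1$ (pairs up to $\ZZ$-multiplication), so reducing to primitive representatives and then invoking multiplicativity of $|\cdot|_{u,v}$ is exactly what is needed. Here the conclusion is that violators lie in the \emph{literal} set $G_2$, not in $\ZZ G_2$. Your reduction shows only that if $(a,b)$ violates then its primitive form $(g_0,h_0)$ lies in $G_2$; you then assert ``and then $(a,b)$ itself must be a $\UU_4$-multiple of an element of $G_2$,'' but this does not follow: $(a,b)$ could a priori equal $z(g_0,h_0)$ for a non-unit $z\in\ZZ\setminus I$. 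To close the gap you must either (a) skip the reduction and enumerate all pairs in $J^2$ of moduli $\leq 6$ directly — which is what the paper does, finding $L_2'=G_2$ outright — or (b) argue separately that if $(g_0,h_0)\in G_2$ and $z\in\ZZ\setminus I$ with $|z|>1$ then $(a,b)=z(g_0,h_0)$ is not a violator. The latter is indeed true, since the smallest non-unit modulus in $\ZZ\setminus I$ is $\sqrt 5$, so $|a|,|b|\geq\sqrt{5/2}>1$, and for every $\tfrac gh\in\UU_4$ one checks that $\max\bigl(\dd(\tfrac gh,\overline{\CC}\setminus\D(-i,\sqrt 2)),\,\dd(\tfrac hg,\mathcal T)\bigr)\geq 1$, giving $\max(|b|\dd(\cdot),|a|\dd(\cdot))\geq\sqrt{5/2}>1$; but this must be stated, not assumed. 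The simplest fix is just to drop the reduction step.
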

\begin{proof}[Proof using a computer]
	\begin{itemize}
	\item Let $L_2$ be the set of pairs of nonzero elements in $J$ with  moduli $\leq 6$. The set $L_2$ is finite with less than $(8+1+8)^4=83521$ and can be generated a using simple computer code (we use Python code).
	\item One can write a function that calculates for any complex number $z$, the distance $\dd(z,\mathcal T)$ from $z$ to $\mathcal T$. See Appendix Section \ref{sec:distance} where it is explained how to calculate $\dd(z,\DD)$.  The distance to $\mathcal T$ can be calculated the same way. 
	\item Since $\overline{\CC}\setminus \D(-i,\sqrt2))\subset (-i)\overline{\mathcal T}$, for each nonzero $g$, $h$,
	\[
	\dd(\tfrac{g}{h},\overline{\CC}\setminus \D(-i,\sqrt2)))\leq \dd(i\tfrac{g}{h},\mathcal T).
	\]
	\item Using the function $\dd(z,\mathcal T)$, one can obtain the set $L'_2$ of pairs $(g,h)\in L_2$ such that
	\[		
	|h|\dd(i\tfrac{g}{h},\mathcal T)\leq 1+\eps \text{ and } |g|\dd
	(\tfrac{h}{g},\mathcal T)\leq 1+\eps,
	\]
	with $\eps=0.001$, a numerical safety margin. We obtain
	\[
	L'_2=\{(\tfrac{a}{1+i},\tfrac{b}{1+i}):a,b\in\UU_{4}\}.
	\]
\end{itemize}		
\end{proof}
	
	{\bf End of proof  of Part 1 in Proposition \ref{prop:conditions}.}
	Recall that we suppose $w_1\neq 0,1$ and $w_2\neq -i,-1$. By 2), we already know that $w_2\in \overline{\DD}$.
	 It remains to prove that, if  
	\[
	|gu-hv|_{u,v}\geq 1 \text{ (resp. $>1$) for all }(g,h)\in F=    \{(1,1),(1,-i),(1,1-i),(1,1+i),(1+i,1)\},
	\]
	then $|gu-hv|_{u,v}\geq 1 $ (resp. $>1$) 
	for all nonzero $g,h$ in $\mathbb{Z}[i]$.	\smallskip 
	
	 By Lemma \ref{lem:computer1},  if $|gu-hv|_{u,v}\geq 1 $ (resp. $>1$)  for all $(g,h)\in G_1$, then $|gu-hv|_{u,v}\geq 1 $ (resp. $>1$) for all pairs $(g,h)$ of nonzero Gaussian integers. We prove that we can remove the pairs in $G_1\setminus F$ and get the same conclusion.

	When $(g,h)\in\{(1,i),(1,-1),(1,-1+i),(1,-1-i), (1,2i),(1,-2)\}$, we have
	\begin{align*}
		&|h|\dd(\tfrac{g}{h},\CC)=1,\\
		&\forall w_1\in\overline{\CC}\setminus\{0\},\,|h|\dd(\tfrac{g}{h},w_1)>1.
	\end{align*}		
	So by the distance formula, these six pairs can be removed from $G_1$ when dealing with the large inequality or the strict inequality.

	When $(g,h)\in\{(1+i,i),(2,1)\}$, we have
	\begin{align*}
		&|h|\dd(\tfrac{g}{h},\CC)=1,\\
		&\forall w_1\in\overline{\CC}\setminus\{1\},\,|h|\dd(\tfrac{g}{h},w_1)>1.
	\end{align*}
So these two pairs can be removed.
	
	When $(g,h)\in\{(1,-2i),(1+i,2-i),(2,1-2i),(2+i,2-2i)\}$, we have
	\begin{align*}
		&|g|\dd(\tfrac{h}{g},\DD)=1,\\
		&\forall w_2\in\overline{\DD}\setminus\{-i\},\,|g|\dd(\tfrac{h}{g},w_2)>1.
	\end{align*}	 
So  these four pairs can be removed from $G_1$.

	Finally consider the pair $(g,h)=(-2+i,2i)$.  Since the disk $D(\tfrac{g
	}{h},\tfrac{1}{|h|})$ is included in the disk $D(\tfrac{g'}{h'},\tfrac{1}{|h'|})$ where $(g',h')=(1,1+i)$ and
	since the portion of the disk $D(\tfrac{h}{g},\tfrac{1}{|g|})$
	lying in the unit disk is included in the disk $D(\tfrac{h'}{g'},\tfrac{1}{|g'|})$, the
	inequality $|g'u-h'v|_{u,v}\geq 1$ (resp. $>1$) implies the inequality $|g
	u-hv|_{u,v}\geq 1$ (resp. $>1$) which means that we can remove the pair
	$(-2+i,2i)$.
	
	It follows that for $w_{1}\in\overline{\mathcal{C}}\setminus\{0,1\}$ and $w_2\in D(0,1)\setminus\{-i\}$, if for all $(g,h)\in F$ 
	$|gu-hv|_{u,v}\geq 1,$ (resp. $>1$) then $|gu-hv|_{u,v}\geq 1$ (resp. $>1$) for all nonzero Gaussian integers $g$ and $h$. \bigskip

	{\bf End of proof of Part 2 in Proposition \ref{prop:conditions}.} 
	By 3), we already know  $w_{2}\in\overline{\mathcal{T}}$. It remains to prove that if  $|gu-hv|_{u,v} \geq 1 $ (resp. $>1$) for all $(g,h)\in\{(\frac{a}{1+i},\frac{b}{1+i}):a,b\in\mathbb{U}_{4}\}$ then $w_{1}\in\overline{\CC}\setminus \D(-i,\sqrt2)$  and
	$|gu-hv|_{u,v}\geq 1 $ (resp. $>1$) for all nonzero $g,h$ both in $\Z[i]$ or both in $J$

	Since $w_2\in\overline{\mathcal{T}}$ and since $w_2\neq -1$, $\dd(w_2,i)<\sqrt 2$. Now by assumption, $|\tfrac1{1+i}u-\tfrac{i}{1+i}v|_{u,v}\geq 1$, hence $\dd(w_1,-i)\geq \sqrt 2$ by the distance formula. This means that $w_{1}\in\mathcal{C}\setminus \D(-i,\sqrt2)$.

	Since $w_{1}\in\mathcal{C}\setminus \D(-i,\sqrt2)$ and $w_2\in\overline{\mathcal T}$, we can use Lemma \ref{lem:computer2}, we obtain that  $|gu-hv|_{u,v}\geq 1$ (resp. $>1$) for all $g,h$ both in $J$. 
	
	It remains to see what is happening when $g$ and $h$ are Gaussian integers.
	
	Since $\mathcal T\subset \DD$, using the first set of critical pairs and the proof of Part 1, we see that  only the  pairs $(g,h)$ in $F$ must be examined. For each of these pairs, we have $|g|\dd(\tfrac{h}{g},\mathcal T)> 1$ except for $(g,h)=(1,1+i)$, and for this latter pair $\dd(w_2,\tfrac{h}{g})>1$ for all $w_2\in\mathcal T\setminus\{i\}$, so we are done.
\end{proof}

\subsection{Constraints on the pairs $(w_1,w_2)$  when  $w_{1}\in\mathcal{C}$}
\label{subsec:constraints}
Recall the sets we need 
\begin{align*}
	\mathcal{C}=  &  \{z\in\mathbb{C}: |z|<1,\,\arg z\in[0,\tfrac{\pi}4 ]\},\\
	\mathcal{D}=  &  \{z\in\mathbb{C}:|z|<1,\, \operatorname{d}(z,1)>
	1,\,\operatorname{d}(z,1-i)> 1 \},\\
	\mathcal{T}=  &  \{z\in\mathbb{C}:|z|<1,\, \operatorname{d}(z%
	,1)>\sqrt2,\,\operatorname{d}(z,-i)> \sqrt2 \},\\
	F=  &  \{(1,1),(1,-i),(1,1-i),(1,1+i),(1+i,1)\}.\\
\end{align*}

\begin{figure}[H]
	\label{fig:3}
	\begin{center}
		\includegraphics[width=10cm]{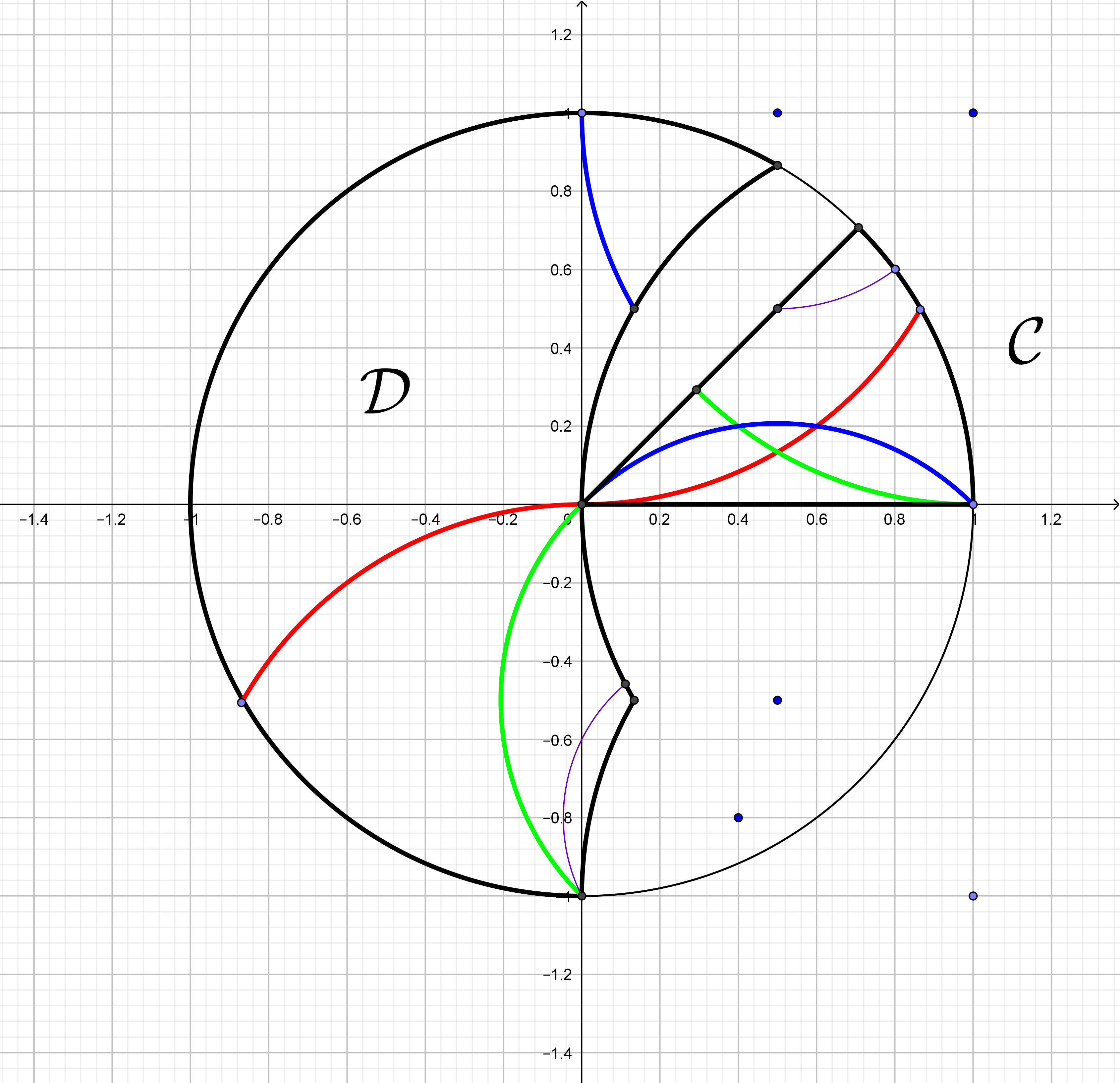}
		\caption{The constraints on consecutive minimal vectors.}
	\end{center}
\end{figure}

Consider the following pairs of open disks in $\mathbb{C}$
\begin{align*}
	&  Blue_{1}=\D(\tfrac{1-i}{2},\tfrac1{\sqrt2}),\,Blue_{2}=\D(1+i,1)\\
	&  Red_{1}=\D(i,1),\, \,Red_{2}=\D(-i,1)\\	
	&  Green_{1}=\D(1+i,1),\,Green_{2}=\D(\tfrac{1-i}{2},\tfrac1{\sqrt2}).
\end{align*}

\begin{corollary}\label{cor:ConstraintsLarge}
	\label{thm:T} Let  $u=(u_{1},v_{2}w_{2})$ and
	$v=(u_{1}w_{1},v_{2})$ be two vectors in $\C^2$ with $|u_{1}|,|v_{2}|>0$, $|w_1|,|w_2|<1$, and  $w_{1}\in\mathcal{C}\setminus\{0\}$. 
	
	\begin{itemize}
		\item No nonzero vector in $\ZZ u+\ZZ v$ are in $\overset{o}C(u,v)$ iff $w_{2}\in\overline{\mathcal{D}}$ and one of the four conditions
		
		\begin{enumerate}
			\item $w_{2}\in Green_{2}$ and $w_{1}\notin Red_{1}\cup Green_{1}$,
			
			\item $w_{2}\in Red_{2}\setminus Green_{2}$ and $w_{1}\notin Red_{1}$,
			
			\item $w_{2}\notin Red_{2}\cup Blue_{2}$,
			
			\item $w_{2}\in Blue_{2}$ and $w_{1}\notin Blue_{1}$,
		\end{enumerate}
		
		holds.
		
		\item No nonzero vector in $\langle u,v\rangle_J$ are in $\overset{o}C(u,v)$ iff $(w_{1},w_{2})\in(\mathcal{C}\setminus \D(-i,\sqrt2))\times
		\overline{\mathcal{T}}$.
	\end{itemize}
\end{corollary}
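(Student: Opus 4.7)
The plan is to deduce both bullets of the corollary from Proposition \ref{prop:conditions} by translating each inequality $|gu-hv|_{u,v}\ge 1$ into an explicit geometric condition on $(w_1,w_2)$ via the distance formula (Lemma \ref{lem:formule}): $|gu-hv|_{u,v}=\max(|h|\dd(w_1,g/h),\,|g|\dd(w_2,h/g))$.

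For the second bullet, the hypothesis $|w_1|,|w_2|<1$ forces $w_1\neq 1$ and $w_2\neq -1$, so Proposition \ref{prop:conditions}(2) directly gives that ``no nonzero element of $\langle u,v\rangle_J$ lies in $\overset{o}C(u,v)$'' implies $w_1\in\overline\CC\setminus\D(-i,\sqrt 2)$ and $w_2\in\overline{\mathcal T}$. For the converse I verify the four inequalities indexed by $(g,h)=(\tfrac{1}{1+i},\tfrac{\alpha}{1+i})$, $\alpha\in\UU_4$: the cases $\alpha=1$ and $\alpha=-i$ are immediate from $w_2\in\overline{\mathcal T}$, the case $\alpha=i$ is immediate from $w_1\notin\D(-i,\sqrt 2)$, and for $\alpha=-1$ I observe that on $\overline\CC$ one has $\Re w_1\ge\Im w_1\ge 0$, hence
\[
\dd(w_1,-1)^2=|w_1|^2+2\Re w_1+1\ge |w_1|^2+2\Im w_1+1=\dd(w_1,-i)^2\ge 2,
\]
so the bound is automatic. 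Proposition \ref{prop:conditions}(2) then closes the equivalence.

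For the first bullet, Proposition \ref{prop:conditions}(1) reduces the condition to the five inequalities indexed by $F$. Applying the distance formula, the pairs $(1,1)$ and $(1,1-i)$ yield respectively $\dd(w_2,1)\ge 1$ and $\dd(w_2,1-i)\ge 1$, because the inclusions $\overline\CC\subset\overline{D(1,1)}$ and $\overline\CC\subset\overline{D(\tfrac{1+i}{2},\tfrac{1}{\sqrt 2})}$ (with boundary contact only at $0$ and $1$) make the $w_1$-distances strictly less than the relevant thresholds on $\CC\setminus\{0\}$. Combined with $|w_2|<1$, these two inequalities are exactly $w_2\in\overline\DD$. The remaining pairs $(1,-i)$, $(1,1+i)$, $(1+i,1)$ produce the three disjunctive conditions
\begin{gather*}
(\mathrm{A})\ w_1\notin Red_1\ \text{or}\ w_2\notin Red_2,\quad (\mathrm{B})\ w_1\notin Blue_1\ \text{or}\ w_2\notin Blue_2,\\
(\mathrm{C})\ w_1\notin Green_1\ \text{or}\ w_2\notin Green_2.
\end{gather*}

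The remaining task, which is the main obstacle, is to show that (A)$\wedge$(B)$\wedge$(C) together with $w_2\in\overline\DD$ is equivalent to the stated disjunction of four cases. I rewrite each of (A), (B), (C) as an implication (``if $w_2$ lies in the $w_2$-disk, then $w_1$ lies outside the corresponding $w_1$-disk'') and case-split on which of $Green_2$, $Red_2$, $Blue_2$ contain $w_2$. This becomes a clean partition of $\overline\DD$ once I verify three geometric facts: $Red_2\cap Blue_2=\emptyset$ (the centers $-i$ and $1+i$ are at distance $\sqrt 5>2$); $Green_2\cap Blue_2\cap\overline\DD=\emptyset$ (the two circles meet at $1$ and $\tfrac{2}{5}+\tfrac{i}{5}$, both lying in $\D(1,1)$, so the lens $Green_2\cap Blue_2$ is contained in $\D(1,1)$ and hence outside $\overline\DD$); and $Green_2\cap\overline\DD\subset Red_2$ (the part of $Green_2$ outside $Red_2$ clusters near $z=1$, where $\dd(z,1)<1$). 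These three inclusions make the four listed cases exhaustive and show that in each of them (A), (B), (C) reduce precisely to the stated exclusion on $w_1$, completing the equivalence.
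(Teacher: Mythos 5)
Your argument is correct and follows the paper's route: reduce via Proposition \ref{prop:conditions} to the finite list of pairs, translate with the distance formula, extract $w_2\in\overline\DD$ from $(1,1)$ and $(1,1-i)$, and then turn the three remaining disjunctions into the four listed cases. You spell out the three geometric facts more explicitly than the paper does (the paper merely says ``taking the position of the disks into account''), which is welcome; the verification of the second bullet for $\alpha=-1$ via $\dd(w_1,-1)\ge\dd(w_1,-i)$ on $\overline\CC$ is a clean touch.

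One step is under-justified: for the second fact you write that the circles $\partial Green_2$ and $\partial Blue_2$ meet at $1$ and $\tfrac25+\tfrac{i}{5}$, ``both lying in $\D(1,1)$, so the lens $Green_2\cap Blue_2$ is contained in $\D(1,1)$.'' Membership of the two vertices in $\D(1,1)$ does not by itself force the lens into $\D(1,1)$; two large disks meeting in a thin lens can bulge far from the chord. What saves the argument is that the lens is a compact convex set and $\dd(\cdot,1)$ is convex, so its maximum over the closed lens is attained on the boundary arcs; one must check that each of the two bounding arcs stays in $\D(1,1)$ (e.g.\ by noting that $\partial Green_2$ and $\partial Blue_2$ each meet $\mathbf C(1,1)$ only at points not on the lens arcs, with the relevant sub-arcs on the inside). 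The same remark applies to the third fact, where ``clusters near $z=1$'' should be replaced by an actual check that the bounding arcs of $Green_2\setminus Red_2$ lie in $\D(1,1)$. Both facts are true, so nothing breaks, but these steps need a sentence each to be airtight.
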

\begin{remark}
	We do not consider the particular cases $w_1=0$ or $|w_i|=1$ because we will not use them in the sequel. But clearly, it is not difficult to find the constraints on $(w_1,w_2)$ in these cases using Proposition \ref{prop:conditions}. In fact when $w_1=0$ then  $(\ZZ u+\ZZ v)\cap\overset{o}C(u,v)=\{0\}$ whatever the value of $w_2\in\DD$, and $\langle u,v\rangle_J\cap \overset{o}C(u,v)\neq \{0\}$   whatever the value of $w_2\in\DD$.
\end{remark}
\begin{proof}
	By Proposition \ref{prop:conditions}, no nonzero vector in $\ZZ u+\ZZ v$ are in $\overset{o}C(u,v)$
	iff for all $(g,h)$ in
	\[
	F=\{(1,1),(1,-i),(1,1-i),(1,1+i),(1+i,1)\},
	\]
	$|gu-hv|_{u,v}\geq 1$. By the distance formula (Lemma \ref{lem:formule}), it means that for all
	$(g,h)\in F$,
	\[
	\operatorname{d}(w_{1},\tfrac{g}{h})\geq\tfrac1{|h|} \text{ or } \operatorname{d}%
	(w_{2},\tfrac{h}{g})\geq\tfrac1{|g|}.
	\]
	Taking $(g,h)=(1,1)$ and $(g,h)=(1,1-i)$, we obtain that $w_{2}\notin \D(1,1)$
	and $w_{2}\notin \D(1-i,1)$ which implies $w_{2}\in\overline{\mathcal{D}}$.
	
	Taking $(g,h)=(1,-i)$, we obtain $w_{1}\notin \D(i,1)=Red_{1}$ or $w_{2}\notin
	\D(-i,1)=Red_{2}$.
	
	Taking $(g,h)=(1,1+i)$, we obtain $w_{1}\notin \D(\tfrac{(1-i)}{2}
	,\tfrac1{\sqrt2})=Blue_{1}$ or $w_{2}\notin \D(i+i,1)=Blue_{2}$.
	
	Taking $(g,h)=(1+i,1)$, we obtain $w_{1}\notin \D(1+i,1)=Green_{1}$ or
	$w_{2}\notin \D(\tfrac{1-i}{\sqrt2},\tfrac1{\sqrt2})=Green_{2}$.
	
	Taking the position of the disks $Red_{2}$, $Blue_{2}$ and $Green_{2}$ in
	$\overline{\mathcal{D}}$ into account, we obtain that $|gu-hv|_{u,v}\geq 1$ for all $(g,h)\in F$ iff one of the conditions (1) or (2) or (3) or (4) holds.
	
	Again by Proposition \ref{prop:geonumber}, no nonzero vector in $\langle u,v\rangle_J$ are in $\overset{o}C(u,v)$
	iff for all  $(g,h)\in\{(\tfrac1{1+i},\tfrac1{1+i}),(\tfrac1{1+i},\tfrac{i}{1+i})\}$. The distance formula shows that it is equivalent to	
	$(w_{1},w_{2})\in(\mathcal{C}\setminus D(-i,\sqrt2))\times\overline{\mathcal{T}}$.	
\end{proof}

\subsection{An example of a lattice with linearly independent equivalent minimal vectors}\label{subsec:example2}
We give an example of two vectors $u$ and $v$ such that
\begin{itemize}
	\item $u$ and $v$ are consecutive minimal vectors of $\LL=\ZZ u+ \ZZ v$,
	\item $u-(1-i)v$ is a minimal vector equivalent to $v$,
	\item $(1+i)u-v$ is a minimal vector equivalent to $u$.
\end{itemize}	 
Let  $s\in]\tfrac43\pi,\tfrac32 \pi[$, $t\in]\tfrac56 \pi,\pi[$, $w_1=1+i+e^{is}$, $w_2=1-i+e^{it}$, $u=r(1,e^{i\alpha}w_2)$ and $v=r(w_1,e^{i\alpha})$ where $r>0$ and $\alpha\in\R$ are such that
\[
\operatorname{det}_{\C}(u,v)=r^2e^{i\alpha}(1-w_1w_2)=1.
\]
Consider the lattice $\LL=\ZZ u+\ZZ v$. We have $w_1\in \CC\cap\partial Green_1\setminus \overline{Red_1}$  and $w_2\in \overline{ \DD}\cap\mathbf C(1-i,1)$, so that thanks to Corollary \ref{cor:ConstraintsLarge},
\[
\overset{o}C(u,v)\cap\LL=\{0\}.
\]
However, since $w_1\in \partial Green_1$ and  $w_2\in \mathbf C(1-i,1)$, $\partial C(u,v)$ contains not only the subsets of $\LL$, $\UU_4u$ and $\UU_4v$ but also  the subsets of $\LL$
\[
\UU_4((1+i)u-v) \text{ and } \UU_4(u-(1-i)v).
\] 
By Lemma \ref{lem:computer1} about the first set of critical pairs, the only others $(g,h)$ such that $|gu-hv|_{u,v}=1$ are  in $\UU_4 G_1$. Furthermore since $w_1\neq 0,1$ and $w_2\neq -1,-i$, we see as in the end of the proof of Proposition \ref{prop:geonumber} part 1, that if $(g,h)\in\UU_4(G_1\setminus F)$, then $|gu-hv|_{u,v}>1$.  By checking the values of $|gu-hv|_{u,v}$ for the pairs $(g,h)\in F$, we see that
\[
\partial C(u,v)\cap \LL=\UU_4u\cup\UU_4v\cup\UU_4((1+i)u-v)\cup\UU_4(u-(1-i)v).
\]
Now
\begin{align*}
	u-(1-i)v&=r(1-(1-i)w_1,e^{i\alpha}(w_2-(1-i)))\\
	&=r(1-(1-i)(1+i+e^{is}), e^{i\alpha}(1-i+e^{it}-(1-i)))\\
	&=r(-1-(1-i)e^{is}, e^{i\alpha}e^{it})\\
	&=r(x_1,x_2)\\
	(1+i)u-v&=r(1+i-w_1,e^{i\alpha}((1+i)w_2-1))\\
	&=r(1+i-(1+i+e^{is}), e^{i\alpha}((1+i)(1-i+e^{it})-1))\\
	&=r(-e^{is}, e^{i\alpha}((1+i)e^{it}+1)\\
	&=r(y_1,y_2)
\end{align*}
and one check that
\begin{align*}
	&|x_1|^2=3+2(\cos s+\sin s)=|w_1|^2 \text{ and } |x_2|=1\\
	&|y_1|^2=1 \text{ and } |y_2|=3+2(\cos t-\sin t)=|w_2|\\
\end{align*}
Hence  $v$ and $u-(1-i)v$ are two equivalent minimal vectors and $u$ and $(1+i)u-v$ as well.

\section{No consecutive pairs of index 2}
In Proposition \ref{prop:index} we have seen that two consecutive minimal vectors has index one or two. We show now that the case of index two cannot occur twice consecutively.
\begin{proposition}\label{prop:index2}
	Suppose  that $u=(u_{1},v_{2}w_{2})$ and
	$v=(u_{1}w_{1},v_{2})$ are two consecutive minimal vectors of index $2$ in a lattice $\LL$ in $\C^2$. Let $w$ be a minimal vector such that $v$ and $w$ are two consecutive minimal vectors. Then
	$\mathbb{Z}[i]v+\mathbb{Z}[i]w=\Lambda$.
\end{proposition}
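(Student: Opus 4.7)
The plan is to argue by contradiction: assume both $(u,v)$ and $(v,w)$ have index~$2$, then exhibit a nonzero lattice vector $T\in\Lambda\cap \overset{o}C(v,w)$, contradicting Lemma~\ref{lem:consecutive}. First I would normalize: scale so that $\det_{\C}(\Lambda)=1$ and $\det_{\C}(u,v)=1+i$, and apply Proposition~\ref{prop:sym} to bring the parameters $(w_1,w_2)=(v_1/u_1,\,u_2/v_2)$ into the standard index-$2$ region given by Corollary~\ref{cor:ConstraintsLarge}: $(w_1,w_2)\in(\CC\setminus D(-i,\sqrt 2))\times\overline{\mathcal T}$. Next I would classify $w$ algebraically. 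Since $\Lambda=\langle u,v\rangle_J$ by Proposition~\ref{prop:index}(3), we have $w=gu+hv$ with $(g,h)\in\ZZ^2\cup J^2$; the hypothesis that $(v,w)$ has index~$2$ gives $|\det_{\C}(v,w)|=\sqrt 2$, and since $\det_{\C}(v,w)=-g(1+i)$, this forces $|g|=1$, i.e.\ $g=\gamma\in\UU_4$ and $h=\delta\in\ZZ$. Applying Proposition~\ref{prop:index}(3) to $(v,w)$ gives $\tfrac{v+w}{1+i}\in\Lambda$; rewriting $\tfrac{v+w}{1+i}=\tfrac{\gamma}{1+i}u+\tfrac{1+\delta}{1+i}v$ and imposing membership in $\langle u,v\rangle_J$, the isomorphism $\ZZ/I\cong\Z/2$ forces $\delta\in I$. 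Hence $w=\gamma u+(1+i)k\,v$ for some $\gamma\in\UU_4$ and $k\in\ZZ\setminus\{0\}$.

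I would then introduce the $J$-candidates
\[
t=\frac{u-v}{1+i}\in\Lambda,\qquad t'=\frac{iu-v}{1+i}\in\Lambda,
\]
with coordinates $\bigl(\tfrac{u_1(1-w_1)}{1+i},\,\tfrac{v_2(w_2-1)}{1+i}\bigr)$ and $\bigl(\tfrac{u_1(i-w_1)}{1+i},\,\tfrac{v_2(iw_2-1)}{1+i}\bigr)$ respectively. Direct computation gives $|t_1|<|v_1|\iff|w_1+1|>\sqrt 2$ and $|t'_1|<|v_1|\iff|w_1+i|>\sqrt 2$: the second is immediate from $w_1\in\CC\setminus D(-i,\sqrt 2)$, and the first then follows from $|w_1+1|\geq|w_1+i|$ valid on $\CC$ (their squared difference is $2(\Re w_1-\Im w_1)\geq 0$). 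Moreover $w_2\in\overline{\mathcal T}$ gives $|t_2|\geq|v_2|$ (equivalent to $|w_2-1|\geq\sqrt 2$) and $|t'_2|\geq|v_2|$ (equivalent to $|w_2+i|\geq\sqrt 2$), with at least one strict except on a codimension-$2$ locus. Let $T\in\{t,t'\}$ be the candidate with $|T_2|>|v_2|$.

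The hard part is the remaining comparison of $|T_2|$ with the second-coordinate modulus $|v_2|\cdot|\gamma w_2+(1+i)k|$ of the hypothesized $w$. This reduces to the geometric inequality
\[
\min\bigl(|w_2-1|,\,|w_2+i|\bigr)/\sqrt 2\ <\ |\gamma w_2+(1+i)k|
\]
for every $\gamma\in\UU_4$ and $k\in\ZZ$ satisfying $|\gamma w_2+(1+i)k|>1$ (the constraint ensuring the second-coordinate modulus of $w$ exceeds $|v_2|$). Geometrically, the right-hand side is the distance from $-\gamma w_2$ to an $I$-point outside the Voronoi cell of the nearest $I$-point(s); with $w_2\in\overline{\mathcal T}$ confined to the second quadrant at controlled distance from $1$ and $-i$, this second-nearest distance must exceed $\min(|w_2-1|,|w_2+i|)/\sqrt 2$. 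I would verify this by a finite enumeration of the second-nearest $I$-points to $\gamma w_2$ as $\gamma$ ranges over $\UU_4$, in the spirit of Lemma~\ref{lem:computer2}, combined with the explicit description of $\overline{\mathcal T}$ arising from Theorem~\ref{thm:geonumber1}. Once established, $T\in\overset{o}C(v,w)$, contradicting the consecutiveness of $v,w$; therefore $(v,w)$ has index~$1$ and $\ZZ v+\ZZ w=\Lambda$.
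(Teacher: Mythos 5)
Your plan shares the skeleton of the paper's proof — argue by contradiction, classify $w$ as $w=\gamma u+(1+i)k\,v$ with $\gamma\in\UU_4$, $k\in\ZZ\setminus\{0\}$, and use the $J$-vectors $\tfrac{1}{1+i}(v-\beta u)$ as test vectors — but it drops a constraint that is essential to close the case analysis. If $w$ is a minimal vector following $v$, then besides $|\gamma w_2+(1+i)k|>1$ (your second-coordinate condition), the first coordinate of $w$ gives $|\gamma+(1+i)k\,w_1|<|w_1|$. The paper uses this twice: to establish $|k|=|a|\le 2$ in its preliminary lemma, and to rule out those $(\gamma,k)$ pairs for which no good $\beta$ exists (the cases $t\in\{-1\pm i,\,2i,\,-2,\,-2i,\,2(1+i),\,-2(1+i),\,-2i(1+i)\}$ in its case analysis). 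Without it, the inequality you announce,
\[
\min\bigl(|w_2-1|,|w_2+i|\bigr)/\sqrt 2 \;<\; |\gamma w_2+(1+i)k|,
\]
is false. Take $\gamma=1$, $k=-(1+i)$, so the right side is $|w_2-2i|$, and take $w_2\approx -0.0995+0.99\,i$, which lies in $\overline{\mathcal T}$ near its corner at $i$: then $|w_2-2i|\approx 1.015>1$, yet $\min(|w_2-1|,|w_2+i|)/\sqrt 2=|w_2-1|/\sqrt 2\approx 1.046>|w_2-2i|$. This $(\gamma,k)$ is in fact impossible, but only because the first-coordinate constraint $|1-2iw_1|<|w_1|$ reads $1+4\Im w_1+3|w_1|^2<0$, which fails for every $w_1\in\CC$. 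So the reduction to a purely $w_2$-based inequality does not hold as stated; the first-coordinate condition must be carried through the enumeration, exactly as the paper does.

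Two smaller points. First, you never prove $|k|\le 2$; one can recover a finite range (for $|k|\ge 2$, $|\gamma w_2+(1+i)k|>2\sqrt 2-1>\sqrt 2$ and the inequality is automatic), but as written the enumeration is not a priori finite. Second, Corollary~\ref{cor:ConstraintsLarge} only gives $w_1\in\CC\setminus\D(-i,\sqrt 2)$, i.e.\ $|w_1+i|\ge\sqrt 2$; at the corner $w_1=\tfrac{\sqrt 3-1}{2}(1+i)$ one has $|t_1|=|t'_1|=|v_1|$ rather than the strict inequalities you invoke, and this degenerate case (along with the codimension-two locus in $w_2$ you flag) is not addressed.
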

\begin{remark}
	In fact, with the assumptions of the proposition, it is possible to prove that $\mathbb{Z}[i]u+\mathbb{Z}[i]w=\Lambda$ also holds. The proof of this latter fact goes as the proof of the proposition but is slightly more difficult. We shall not do it. 
\end{remark}

The proof of the proposition will use two lemmas. The first lemma is well known and its proof  is a straightforward calculation we omit.
\begin{lemma}\label{lem:circle}
	Let $w$ and $z$ be two complex numbers and let $k<1$ be
	a nonnegative real number. Then
	\begin{align*}
		|z-w|<k|w|\Leftrightarrow \dd(w,\frac{z}{1-k^2})<\frac{k|z|}{1-k^2},\\
		|z-w|=k|w|\Leftrightarrow \dd(w,\frac{z}{1-k^2})=\frac{k|z|}{1-k^2}.
	\end{align*}
\end{lemma}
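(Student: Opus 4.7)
\textbf{Proof plan for Lemma \ref{lem:circle}.} The plan is to reduce both sides of each equivalence to the same quadratic inequality (resp.\ equality) in $w$ via a completion-of-the-square argument. This is essentially the standard derivation that identifies an Apollonius locus as a disk, and the hypothesis $k<1$ enters precisely to guarantee that the coefficient of $|w|^{2}$ in the underlying quadratic has the ``right'' sign, so that the locus is an honest disk rather than its exterior or the whole plane. Working with squared moduli throughout will keep every step algebraic, hence each implication is automatically reversible and the biconditional follows at once; the equality case will come from the identical computation with $<$ replaced by $=$.

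Concretely, I would start from $|z-w|^{2}<k^{2}|w|^{2}$ and expand the left-hand side as $|z|^{2}-2\Re(z\bar w)+|w|^{2}$, rewriting the inequality as
\[
(1-k^{2})|w|^{2} - 2\Re(z\bar w) + |z|^{2} < 0.
\]
Since $k<1$, division by the positive number $1-k^{2}$ preserves the direction and yields
\[
|w|^{2} - 2\Re\!\left(\tfrac{z}{1-k^{2}}\,\overline{w}\right) + \tfrac{|z|^{2}}{1-k^{2}} < 0.
\]
Completing the square in $w$ rewrites the left-hand side as
\[
\left|w-\tfrac{z}{1-k^{2}}\right|^{2} - \left|\tfrac{z}{1-k^{2}}\right|^{2} + \tfrac{|z|^{2}}{1-k^{2}},
\]
and a one-line simplification collapses the last two terms to $-\tfrac{k^{2}|z|^{2}}{(1-k^{2})^{2}}$. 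Taking square roots (legitimate, since both sides are non-negative) gives $\bigl|w-\tfrac{z}{1-k^{2}}\bigr|<\tfrac{k|z|}{1-k^{2}}$, which is exactly $\dd\!\left(w,\tfrac{z}{1-k^{2}}\right)<\tfrac{k|z|}{1-k^{2}}$.

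Every step in the chain above is an equivalence: expansion, the division by $1-k^{2}$ (reversible because $1-k^{2}>0$), completing the square (an identity), and squaring/extracting square roots on non-negative quantities. So I obtain the claimed biconditional in the strict case, and replaying the computation with $<$ replaced by $=$ throughout delivers the equality case. I do not anticipate any genuine obstacle here; the only place where the hypothesis $k<1$ plays a role is the sign of $1-k^{2}$, which is exactly what forces the locus $\{w:|z-w|<k|w|\}$ to be a bounded disk and allows one to read off its center $\tfrac{z}{1-k^{2}}$ and radius $\tfrac{k|z|}{1-k^{2}}$.
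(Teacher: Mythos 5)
Your proof is correct, and the computation checks out (the constant term indeed collapses to $-\tfrac{k^{2}|z|^{2}}{(1-k^{2})^{2}}$, and every step is reversible since $1-k^{2}>0$). The paper explicitly omits the proof as "a straightforward calculation," and your completion-of-the-square argument is exactly that standard calculation, so there is nothing to compare.
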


\begin{lemma}
	Suppose  that $u=(u_{1},v_{2}w_{2})$ and
	$v=(u_{1}w_{1},v_{2})$ are two consecutive minimal vectors of index $2$ in a lattice $\LL$. Suppose that $w$ is a minimal vector such that  $v$ and $w$ are two consecutive minimal vectors
	and that $v$ and $w$ has index $2$ in $\Lambda$. Then
	$w=a(1+i)v+\alpha u$ where $a$ and $\alpha$ are Gaussian integers such that $0<|a|\leq
	2$ and $|\alpha|=1$.
\end{lemma}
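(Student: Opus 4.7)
The plan is to write $w$ in the basis structure coming from the index-$2$ decomposition $\Lambda=\langle u,v\rangle_J$, then use the hypothesis that $v,w$ is also an index-$2$ pair to pin down the coefficients.

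Since $u,v$ have index $2$, Proposition \ref{prop:index} gives $\Lambda=\langle u,v\rangle_J$ and $|\det_{\C}(u,v)|^{2}=2|\det_{\C}(\Lambda)|^{2}$, so I may write $w=gu+hv$ with $(g,h)\in\ZZ^{2}\cup J^{2}$. The index-$2$ hypothesis on $v,w$ similarly gives $|\det_{\C}(v,w)|^{2}=2|\det_{\C}(\Lambda)|^{2}$; since $\det_{\C}(v,w)=-g\det_{\C}(u,v)$, this forces $|g|=1$. The case $g\in J$ is excluded because it would give $g=z/(1+i)$ with $z\in\ZZ\setminus(1+i)\ZZ$ and $|z|^{2}=2$, hence $z\in\{\pm 1\pm i\}\subset(1+i)\ZZ$, a contradiction. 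Consequently $g=\alpha\in\UU_{4}$, and hence also $h\in\ZZ$.

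Next I would show $h\in(1+i)\ZZ$. Substituting $u=\alpha^{-1}(w-hv)$ into the lattice element $\tfrac{1}{1+i}(u+v)\in\langle u,v\rangle_J$ rewrites it as $\tfrac{\alpha^{-1}}{1+i}w+\tfrac{1-\alpha^{-1}h}{1+i}v$, which must again represent an element of $\langle v,w\rangle_J=\Lambda$. Because $\tfrac{\alpha^{-1}}{1+i}\in J$, the other coefficient must also lie in $J$, so $1-\alpha^{-1}h\in\ZZ\setminus(1+i)\ZZ$; since $1\notin(1+i)\ZZ$, this forces $\alpha^{-1}h\in(1+i)\ZZ$, hence $h=a(1+i)$ for some $a\in\ZZ$. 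The value $a=0$ is excluded because $w=\alpha u$ would then be equivalent to $u$, contradicting the chain $|u_{2}|<|v_{2}|<|w_{2}|$ coming from the two consecutiveness assumptions.

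The delicate step is the bound $|a|\leq 2$. Consider the nonzero lattice vector
\[
w'=\tfrac{1}{1+i}(v+\alpha^{-1}w)=\tfrac{1}{1+i}u+\bigl(\tfrac{1}{1+i}+a\bigr)v\in\Lambda.
\]
By Lemma \ref{lem:consecutive}, $w'$ cannot lie in the interior of $C(v,w)$. Writing $y=w_{2}+\alpha^{-1}a(1+i)$, the second-coordinate comparison $|(w')_{2}|\geq|v_{2}||y|$ simplifies to $|y-1|\leq\sqrt{2}$; combined with $|1-w_{2}|\geq\sqrt{2}$, which follows from Corollary \ref{cor:ConstraintsLarge} once $w_{1}$ is reduced to $\overline{\CC}$ by the $\D_{8}$-symmetry of Proposition \ref{prop:sym} (so that $w_{2}\in\overline{\mathcal{T}}$), the triangle inequality yields $|a|\sqrt{2}\leq\sqrt{2}+|1-w_{2}|$, hence $|a|^{2}\leq 5$. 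The main obstacle is ruling out the borderline $|a|^{2}=5$ to reach $|a|\leq 2$; one would apply the same analysis to the three other nonzero vectors $\tfrac{1}{1+i}(v+j\alpha^{-1}w)$, $j\in\{-1,\pm i\}$, which yield the four alternatives $|y\pm 1|\leq\sqrt{2}$ and $|y\pm i|\leq\sqrt{2}$, and combine them with the explicit geometry of $\overline{\mathcal{T}}$, the constraint $w_{1}\in\overline{\CC}\setminus\D(-i,\sqrt{2})$, and the minimality of $w$ (which supplies $|\alpha+a(1+i)w_{1}|<1$) to discard that case.
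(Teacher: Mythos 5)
Your derivation of the form $w=\alpha u+a(1+i)v$ with $\alpha\in\UU_{4}$ and $a\in\ZZ^{*}$ is correct, and it parallels the paper's: where the paper introduces the auxiliary bases $(U,V)=(\tfrac{1}{1+i}(u+v),v)$ and $(V,W)=(v,\tfrac{1}{1+i}(v+w))$ coming from Proposition~\ref{prop:index}, you work directly with the coset decompositions $\langle u,v\rangle_{J}$ and $\langle v,w\rangle_{J}$. Both routes are fine, and the determinant argument forcing $|g|=1$ is a valid way to get $g\in\UU_{4}$ (the paper extracts this instead from the transition matrix between the two bases being in $\SL(2,\ZZ)$).

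The bound $|a|\leq 2$, however, is not established; this is a genuine gap, not just a presentation issue. First, you derive only the dichotomy ``$w'\notin\overset{o}{C}(v,w)$,'' which means $|(w')_{1}|\geq|v_{1}|$ \emph{or} $|(w')_{2}|\geq\max(|v_2|,|w_2|)$, but you use only the second-coordinate alternative without ruling out the first (and it is not automatic: from $|W_{1}|<|v_{1}|$ one only gets $|(w')_{1}|<\sqrt{2}\,|v_{1}|$, not $<|v_{1}|$). Second, and more importantly, even granting your estimate you reach $|a|^{2}\leq 5$ and explicitly leave $|a|^{2}=5$ unresolved, offering only a sketch of how ``one would'' dispose of it. The proof as written therefore stops short of the claim. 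The argument the paper uses is more direct and avoids the auxiliary vector $w'$ entirely: since $w$ is the minimal vector following $v$, its first coordinate must shrink, i.e.\ $|a(1+i)w_{1}+\alpha|<|w_{1}|$; the triangle inequality then gives $\sqrt{2}\,|a|\,|w_{1}|<|w_{1}|+1$, i.e.\ $|a|<\tfrac{1}{\sqrt2}\bigl(1+\tfrac{1}{|w_{1}|}\bigr)$, and the constraint $w_{1}\in\varphi(\CC\setminus\D(-i,\sqrt2))$ for some $\varphi\in\D_{8}$ (Corollary~\ref{cor:ConstraintsLarge} together with Proposition~\ref{prop:sym}) supplies $|w_{1}|>\tfrac{\sqrt3-1}{\sqrt2}$, whence $|a|<\tfrac{1}{\sqrt2}+\tfrac{1}{\sqrt3-1}<\sqrt5$ and hence $|a|\leq 2$. (You do invoke the constraint $|\alpha+a(1+i)w_{1}|<1$ at the very end of your sketch, but only as one ingredient among several for the unfinished $|a|^{2}=5$ case; in the paper it is the whole bound.) Also note a small slip in the expansion of $w'$: the coefficient of $v$ should be $\tfrac{1}{1+i}+\alpha^{-1}a$, not $\tfrac{1}{1+i}+a$.
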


\begin{proof}
	By Proposition \ref{prop:index}, $(U=\tfrac{1}{1+i}(u+v),V=v)$ is a basis of $\LL$ and  $(V=v,W=\tfrac{1}{1+i}(v+w))$ as well. Therefore, 
	\[
	\left\{
	\begin{array}
		[c]{l}%
		u=-V+(1+i)U\\
		v=V\\
		w=-V+(1+i)W\\
		W=bV-\alpha U
	\end{array}
	\right. 
	\]
	where  $b\in\mathbb{Z}[i]$ and $\alpha\in\mathbb{U}_{4}$ because the determinant of the coordinates of the vectors $V$ and $W$ in the basis $(U,V)$ is a unit of $\ZZ$. It follows that
	\begin{align*}
		w  &  =-v+(1+i)(bv-\alpha\tfrac{1}{1+i
		}(u+ v))\\
		&=(-1-\alpha+(1+i)b)v-\alpha u\\
		&  =a(1+i)v-\alpha u
	\end{align*}
	where $a\in\mathbb{Z}[i]$ because $-1-\alpha\in (1+i)\ZZ$. In coordinates this
	gives
	\[
	w=(u_{1}(a(1+i)w_{1}-\alpha),v_{2}(a(1+i)-\alpha w_{2}))
	\]
	and since $w$ follows $v$,
	\[
	|a(1+i)w_{1}-\alpha|<|w_{1}|.
	\]
	Therefore, $|w_{1}|>|a(1+i)||w_{1}|-|\alpha| $. Making use of Corollary \ref{cor:ConstraintsLarge} and of Proposition \ref{prop:sym}, we know that $w_{1}$ is in $\varphi(\mathcal{C}\setminus
	D(-i,\sqrt2))$ for some $\varphi\in\mathbb{D}_{8}$, hence $|w_{1}|>\frac
	{\sqrt3 -1}{\sqrt2}$. The two last inequalities imply that $|a|<\frac
	{1}{|1+i|}(1+\frac{1}{|w_{1}|})<\frac{1}{\sqrt2}+\frac{1}{\sqrt3-1}<\sqrt5$.
	It follows that $|a|\leq 2$. Finally, $a$ cannot be $0$ because $w$ and
	$u$ are not proportional.
\end{proof}

\begin{proof}
	[Proof of the proposition] We proceed by contradiction and suppose that $\mathbb{Z}%
	[i]v+\mathbb{Z}[i]w$ has index two. By the above lemma we have
	$w=(1+i)av-\alpha u$ where $a$ and $\alpha$ are Gaussian integers with
	$|\alpha|=1$ and $0<|a|\leq 2$. We have
	\[
	w=(u_{1}((1+i)aw_{1}-\alpha),v_{2}((1+i)a-\alpha w_{2})).
	\]

	Since the minimal vector $w$ follows $v$ we have $|(1+i)aw_{1}-\alpha
	|<|w_{1}|$ which is equivalent to
	\[
	|w_{1}-z|^{2}<|z|^{2}|w_{1}|^{2}
	\]
	where $z=c+id=\frac{\alpha}{(1+i)a}$. With the above lemma, we see that
	the latter inequality is equivalent to
	\[
	\operatorname{d}(w_{1},\frac{z}{1-|z|^{2}})<\frac{|z|^{2}}{1-|z|^{2}%
	}.
	\]

	Given $\beta\in\mathbb{U}_{4}$, consider the complex numbers $x=-\tfrac{1}{1+i}+(1+i)a$ and $y=-\tfrac{\beta}{1+i}+\alpha $; they are both in $J$. So that by Proposition \ref{prop:index}, the vector
	\[
	w^{\prime}=w-xv+yu=\tfrac{1}{1+i}(v-\beta u)=(w^{\prime}_{1},w^{\prime}%
	_{2})
	\]
	is in $\LL$.
	If we can choose $\beta$ in order that
	\[
	\left\{
	\begin{array}
		[c]{l}%
		|w^{\prime}_{1}|\leq |u_1w_{1}|\\
		|w^{\prime}_{2}|\leq |v_{2}((1+i)a-\alpha w_{2})|
	\end{array}
	\right.
	\]
	with one strict inequality at least, we have a contradiction with the fact that $v$ and  $w$ are two consecutive minimal vectors. The strategy is now to prove that either we can choose $\beta$ or
	that the inequality $\operatorname{d}(w_{1},\tfrac{z}{1-|z|^{2}})^{2}%
	<\tfrac{|z|^{4}}{(1-|z|^2)^{2}}$ does not hold.
	
	Using the symmetries and Proposition \ref{prop:sym} as in section \ref{sec:geonumber}, we can suppose that $w_{1}%
	\in\mathcal{C}$. By Corollary \ref{cor:ConstraintsLarge}, 
	\[
	w_{1}\in\mathcal{C}\setminus \D(-i,\sqrt2) \text{ and } w_{2}\in\mathbb{D}%
	\setminus(\D(1,\sqrt2)\cup \D(-i,\sqrt2)).
	\]
	With $t=\tfrac{1}{z}=\frac{(1+i)a}{\alpha}$, the above inequalities about
	$w^{\prime}_{1}$ and $w^{\prime}_{2}$ are equivalent to
	\[
	\left\{
	\begin{array}
		[c]{l}%
		\tfrac1{\sqrt2}|w_{1}-\beta|\leq|w_{1}|\\
		\tfrac1{\sqrt2}|w_{2}-\bar{\beta}|\leq|t- w_{2}|
	\end{array}
	\right.  .
	\]
	A short calculation shows that the latter  inequalities are equivalent to
	\[
	\left\{
	\begin{array}
		[c]{l}%
		|w_{1}+\beta|^{2}\geq 2\\
		|w_{2}-(2t-\bar{\beta})|^{2}\geq 2|t- \bar{\beta}|^{2}%
	\end{array}
	\right.  .
	\]
	Since $w_1\in\CC\setminus\D(-i,\sqrt 2)$, the first inequality holds when $\beta=1$ or $i$.
	
	Suppose first that $|a|=1$. We have $|t|^{2}=2$ hence $t=\pm1\pm i$.
	
	If $t=1+i$, choose $\beta=1$. We have $t-\bar{\beta}=i$ and $2t-\bar{\beta
	}=1+2i$, hence the second inequality is equivalent to $|w_{2}-(1+2i)|^{2}>2$
	which holds because $\Re w_{2}<0$ and $\Im w_{2}<1$.
	
	If $t=1-i$, choose $\beta=1$. We have $t-\bar{\beta}=-i$ and $2t-\bar{\beta
	}=1-2i$, hence the second inequality is equivalent to $|w_{2}-(1-2i)|^{2}>2$
	which holds because $\Im w_{2}>0$.
	
	If $t=-1+i$, then $z=-\tfrac{1+i}{2}$. Therefore, $\operatorname{d}%
	(w_{1},\tfrac{z}{1-|z|^{2}})^{2}>\tfrac{|z|^{4}}{(1-|z|^{2})^{2}}=1$ a contradiction.
	
	If $t=-1-i$, then $z=\tfrac{-1+i}{2}$. Therefore, $\operatorname{d}%
	(w_{1},\tfrac{z}{1-|z|^{2}})^{2}>\tfrac{|z|^{4}}{(1-|z|^{2})^{2}}=1$ a contradiction.
	
	Suppose that $|a|=\sqrt2$. We have $|t|=2$ hence $t=\pm2$ or $\pm2i$.
	Therefore, $\tfrac{z}{1-|z|^{2}}=\tfrac43 z$, and the information $w_{1}%
	\in\mathcal{C}\setminus D(-i,\sqrt2)$ implies that if $t=2i$ or $-2$ or $-2i$
	then  $\operatorname{d}(w_{1},\tfrac43 z)^{2}>\tfrac{|z|^{4}}{(1-|z|^{2})^{2}%
	}=\tfrac{1}{9}$. 
	
	If $t=2$, choose $\beta=1$. We have $t-\bar{\beta}=1$ and
	$2t-\bar{\beta}=3$, hence the second inequality becomes $|w_{2}-3|^{2}>2$
	which holds because $\Re w_{2}<0$.
	
	Suppose that $|a|=2$. We have $|t|=2\sqrt 2$ hence $t=\pm2(1 + i)$ or $\pm2i(1 + i)$.
	Therefore, $\tfrac{z}{1-|z|^{2}}=\tfrac87 z$, and the information $w_{1}%
	\in\mathcal{C}\setminus D(-i,\sqrt2)$ implies that if $t=2(1+i)$ or $-2(1+i)$ or $-2i(1+i)$ then $\operatorname{d}(w_{1},\tfrac87 z)^{2}>\tfrac{|z|^{4}}{(1-|z|^{2})^{2}%
	}=\tfrac{1}{49}$.
	
	If $t=2(1-i)$, choose $\beta=1$. We have $t-\bar{\beta}=1-2i$ and
	$2t-\bar{\beta}=3-4i$, hence the second inequality becomes $|w_{2}-(3-4i)|^{2}>2\times 5$
	which holds because $|\Re(w_{2}-(3-4i))|\geq 2$ and $|\Im(w_{2}-(3-4i))|\geq 3$.	
\end{proof}

\section{Definitions and parametrization of the Transversals}\label{sec:transversal}

\subsection{The open transversal}\label{subsec:opentrans}

Let $\mathbb{U}_{4}$ be the group of units in $\mathbb{Z}[i]$. The \textit{open transversal} $T$ is
the set of Gauss lattices $\Lambda$ in $\mathbb{C}^{2}$ such that
$\det_{\mathbb{C}}\Lambda\in\UU_4$ and such that there exist two vectors
$u=(u_{1},u_{2})$ and $v=(v_{1},v_{2})$ in $\Lambda$ such that 

\begin{enumerate}
	\item $|u_{2}|,|v_{1}|<|u_{1}|=|v_{2}|=r$,
	
	\item the only nonzero vectors of $\Lambda$ in the ball $B_{\infty}(0,r)$ are in
	$\UU_{4} u\cup\UU_{4} v$.
\end{enumerate}
Observe that the two vectors $u$ and $v$ are  minimal vectors in $\LL$ and that by Lemma \ref{lem:consecutive} they are consecutive.
The vectors $u$ and $v$ are the vectors \textit{associated} with $\Lambda$.
They are unique up to multiplicative factors in $\mathbb{U}_{4}$:
\begin{lemma}
	\label{lem:transversal1} Let $\Lambda$ be a lattice in $T$ and $u,v$ vectors
	in $\Lambda$ satisfying (1) and (2) in the above definition. If $u^{\prime}$
	and $v^{\prime}$ are two vectors in $\Lambda$ such that (1) and (2) hold then
	$u^{\prime}\in\mathbb{U}_{4} u$ and $v^{\prime}\in\mathbb{U}_{4} v$.
\end{lemma}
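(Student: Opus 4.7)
The plan is to first show that the two pairs $(u,v)$ and $(u',v')$ define the same radius $r$, and then to use conditions (1) and (2) to pin down which of the two $\mathbb{U}_4$-orbits each of $u'$ and $v'$ lies in.

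Let $r=|u_1|=|v_2|$ and $r'=|u'_1|=|v'_2|$. By condition (2) applied to $(u,v)$, every nonzero $w\in\Lambda$ with $|w|_\infty<r$ must lie in $\mathbb{U}_4 u\cup\mathbb{U}_4 v$; but elements of this union have sup norm exactly $r$, so in fact $\Lambda\cap B_\infty(0,r)\setminus\{0\}=\mathbb{U}_4 u\cup\mathbb{U}_4 v$, all of sup norm $r$. Now $u'\in\Lambda$ has sup norm $r'$. If $r'<r$ then $u'\in B_\infty(0,r)$ would be a nonzero vector of sup norm strictly less than $r$, which is impossible. Hence $r'\geq r$; by the symmetric argument applied to $(u',v')$, we also get $r\geq r'$, so $r=r'$.

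Since $r=r'$, both $u'$ and $v'$ are nonzero elements of $\Lambda\cap B_\infty(0,r)$, so each of them lies in $\mathbb{U}_4 u\cup\mathbb{U}_4 v$. It remains to distinguish the two orbits. Condition (1) for $(u',v')$ gives $|u'_2|<r$ and $|v'_1|<r$. If $u'=\alpha v$ for some $\alpha\in\mathbb{U}_4$, then $|u'_2|=|v_2|=r$, contradicting $|u'_2|<r$; therefore $u'\in\mathbb{U}_4 u$. Likewise, if $v'=\alpha u$, then $|v'_1|=|u_1|=r$, contradicting $|v'_1|<r$; therefore $v'\in\mathbb{U}_4 v$.

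The only real subtlety is the first step, showing $r=r'$; once the common radius is established, the classification follows immediately from the asymmetry between the first- and second-coordinate inequalities in condition (1). I don't foresee any serious obstacle.
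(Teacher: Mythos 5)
Your proof is correct and follows essentially the same route as the paper's: first establish $r=r'$ by comparing the nested balls $B_\infty(0,r)$ and $B_\infty(0,r')$ via condition (2), then use the strict inequalities in condition (1) to sort $u'$ and $v'$ into their respective $\mathbb{U}_4$-orbits. Your explicit treatment of the final orbit-distinguishing step (ruling out $u'\in\mathbb{U}_4 v$ via $|u'_2|<r'$) is spelled out a bit more fully than in the paper, which states that conclusion directly.
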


\begin{proof}
	Set $r=|u|_{\infty}$ and $r^{\prime}=|u^{\prime}|_{\infty}$. The balls
	$B_{\infty}(0,r)$ and $B_{\infty}(0,r^{\prime})$ are nested, therefore by (2) they are
	equal. So that by (1), $|u'_1|=|u_1|=|v'_2|=|v_2|$. Hence  $u'$ and  $v'\in B_{\infty}(0,r)$. Again by (2), this imply that $u^{\prime}\in\mathbb{U}_{4} u$ and $v^{\prime}%
	\in\mathbb{U}_{4} v$.
\end{proof}

\subsection{The full transversal}\label{subsec:fulltrans}
The \textit{full transversal} $T'$ is
the set of Gauss lattices $\Lambda$ in $\mathbb{C}^{2}$ such that
$\det_{\mathbb{C}}\Lambda\in\UU_4$ and such that there exist two minimal vectors
$u=(u_{1},u_{2})$ and $v=(v_{1},v_{2})$ in $\Lambda$ such that 

\begin{enumerate}
	\item[(1')] $|u_{2}|,|v_{1}|<|u_{1}|=|v_{2}|=r$,
	
	\item[(2')] the only nonzero vector of $\Lambda$ in the open ball $\overset{o}B_{\infty}(0,r)$ is $0$.
\end{enumerate}
\medskip
Clearly 
\[
T\subset T'\subset \left\{\LL\in\SL(2,\C)/\SL(2,\ZZ):\lambda_1(\LL,|.|_{\infty},\C)=\lambda_2(\LL,|.|_{\infty},\C)\right\} .
\]

The vectors $u$ and $v$ are the vectors \textit{associated} with $\Lambda$.
They are no longer  unique up to multiplicative factors in $\mathbb{U}_{4}$, see the example subsection \ref{subsec:example2}. By Lemma \ref{lem:consecutive}, they are consecutive.

By Proposition
\ref{prop:index}, the lattice $L=\mathbb{Z}[i]u+\mathbb{Z}[i]v$ has index $1$
or $2$ in $\Lambda$. Therefore, the transversal $T$ (resp. T') is the union of two pieces
$T_{1}$ and $T_{2}$ (resp. $T'_1$ and $T'_2$) according to the index of $L$. The above lemma implies
that $T_{1}$ and $T_{2}$ are disjoint but as the example of subsection \ref{subsec:example2} shows
\[
T'_1\cap T'_2\neq \emptyset.
\]
However, $T'_1\cap T'_2$ is a small set. It is a consequence of the following Lemma.\\

Let $\mathcal N$ be the set of unimodular lattices $\LL\subset \C^2$ such that either there exists a nonzero vector $(u_1,u_2)\in \LL$ with $u_1u_2=0$ or there exist two linearly independent vectors $u=(u_1,u_2)$ and $v=(v_1,v_2)$ in $\LL$ such that $|u_1|=|v_1|$ or $|u_2|=|v_2|$.

\begin{lemma}\label{lem:negligible} The following properties hold 
	\begin{enumerate}
		\item
		$\mathcal N$ contains the set
		\[
		\left\{\LL\in\SL(2,\C)/\SL(2,\ZZ):\lambda_1(\LL,|.|_{\infty},\C)=\lambda_2(\LL,|.|_{\infty},\C)\right\}\setminus T.
		\]
		\item $\mathcal N$ is stable under the action of the flow $g_t$, $t\in\R$.
		\item 
		$\mathcal N$ has zero Haar measure.
	\end{enumerate}
\end{lemma}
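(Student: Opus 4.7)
For assertion (1) I reason by contraposition: fix $\LL$ with $\lambda_1(\LL,|.|_{\infty},\C)=\lambda_2(\LL,|.|_{\infty},\C)=r$ and $\LL\notin\mathcal N$, and show $\LL\in T$. Since $\lambda_2=r$, one can choose two $\C$-linearly independent vectors $u,v\in\LL$ of sup-norm $\leq r$, and because $\lambda_1=r$ forces every nonzero lattice vector to have sup-norm $\geq r$, in fact $|u|_{\infty}=|v|_{\infty}=r$. The assumption $\LL\notin\mathcal N$ then yields $u_1,u_2,v_1,v_2\neq 0$ together with $|u_1|\neq|v_1|$ and $|u_2|\neq|v_2|$. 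A case analysis on which of $|u_1|,|u_2|$ equals $r$ rules out the possibility $|u_1|=|u_2|=r$ (it would force $|v_1|=r$ or $|v_2|=r$, contradicting the previous inequalities), and shows that, up to swapping $u$ and $v$, one has $|u_2|,|v_1|<|u_1|=|v_2|=r$, giving condition (1) in the definition of $T$.

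To verify condition (2) of $T$, take any $w\in\LL\cap B_\infty(0,r)\setminus\{0\}$. By $\lambda_1=r$, $|w|_\infty=r$, and $w_1w_2\neq 0$ because $\LL\notin\mathcal N$. If $|w_1|=r=|u_1|$, then $w$ must lie in $\C u$, for otherwise $\{u,w\}$ would be a linearly independent pair sharing a first-coordinate modulus, forcing $\LL\in\mathcal N$. Writing $w=\alpha u$, the identity $|w_1|=|u_1|$ gives $|\alpha|=1$; if in addition $\alpha\notin\UU_4$, then some $\beta\in\UU_4$ satisfies $|\alpha-\beta|<1$ (the four open disks of radius $1$ centred at the fourth roots of unity cover the unit circle), so $(\alpha-\beta)u$ is a nonzero lattice vector of sup-norm $<r$, contradicting $\lambda_1=r$. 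Hence $w\in\UU_4 u$. The case $|w_2|=r>|w_1|$ is symmetric and gives $w\in\UU_4 v$, while $|w_1|=|w_2|=r$ is impossible since it would place $w$ in $\C u\cap\C v=\{0\}$. This completes (1).

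Assertion (2) is immediate from the explicit form of $g_t=\operatorname{diag}(e^t,e^{-t})$: it sends the union of coordinate axes into itself, and multiplies all first (respectively second) coordinates by the same scalar $e^t$ (respectively $e^{-t}$), so both defining clauses of $\mathcal N$ are preserved by $g_t$.

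For (3), I write $\mathcal N=A\cup B$, where $A$ is the set of lattices containing a nonzero vector with a zero coordinate and $B$ the set of lattices containing two $\C$-linearly independent vectors sharing a first- or second-coordinate modulus. Parametrising $\LL=g\ZZ^2$ by $g\in\SL(2,\C)$, for each fixed nonzero $(p,q)\in\ZZ^2$ the vanishing of a given coordinate of $g(p,q)^T$ is one complex equation on the real six-dimensional group $\SL(2,\C)$, defining a real-codimension-two analytic subvariety; and for each fixed pair of $\C$-linearly independent elements of $\ZZ^2$, the equality of first- or second-coordinate moduli of their images is one real equation on $\SL(2,\C)$, defining a real-codimension-one analytic subvariety. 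Hence $A$ and $B$ are countable unions of proper real-analytic subsets of $\SL(2,\C)$ and therefore of Haar measure zero; since the defining conditions depend only on the right $\SL(2,\ZZ)$-orbit, this descends to null sets of $\Omega_1$. The only mildly subtle step of the whole proof is the $\UU_4$-argument in (1); the rest is routine bookkeeping.
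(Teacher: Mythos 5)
Your proof is correct and follows essentially the same path as the paper's: contraposition for (1) with the $\UU_4$-disk covering argument to pin $w$ to $\UU_4 u\cup\UU_4 v$, the trivial observation for (2), and a countable union of analytic subvarieties for (3). The one place where you are slightly less explicit than the paper is in (3): you assert that the locus $\{|{(gX)_1}|=|{(gY)_1}|\}$ is a \emph{proper} real-analytic subvariety of $\SL(2,\C)$, but you do not exhibit a point where the defining function is nonzero; the paper does this by observing that the polynomial $P(a,b)=|ax_1+bx_2|^2-|ay_1+by_2|^2$ satisfies $P(y_2,-y_1)=|\det_{\C}(x,y)|^2\neq 0$, and any nonzero row can be completed to a matrix in $\SL(2,\C)$. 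Working directly in $\SL(2,\C)$ as you do (rather than in $M_2(\C)$ as the paper does) is, if anything, the cleaner route, since it avoids having to relate Lebesgue-null sets of $M_2(\C)$ to Haar-null sets of the determinant-one slice.
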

\begin{remark}  $T'\setminus T\subset \mathcal N$.
\end{remark}
\begin{proof}
	1. Let $\LL$ be a unimodular lattice not in the open transersal $T$ such that $r=\lambda_1(\LL,|.|_{\infty},\C)=\lambda_2(\LL,|.|_{\infty},\C)$. The equality of the two minima implies that there exist two linearly independent vectors $u=(u_1,u_2)$ and $v=(v_1,v_2)$ such that $|u|_{\infty}=|v|_{\infty}=r$ and $|u_1|\geq |v_1|$. 
	If $\LL$ were not in $\mathcal N$, we would have $r\geq|u_1|>|v_1|$ and therefore $r=|v_2|$. Since $|v_2|\neq |u_2|$, we would have $|v_2|> |u_2|$. Since $\LL$ is not in $T$, this implies that there exists a nonzero vector $w=(w_1,w_2)\in\LL\cap B_{\infty}(0,r)$ not in $\UU_{4} u\cup\UU_{4}v$. For this vector we have either $|w_1|=r=|u_1|$ or $|w_2|=r=|v_2|$, a contradiction.
	
	2. Clear.
	
	3.	It suffices to prove that the set $\mathcal M$ of $ 2 \times 2 $ matrices $ M $, with inputs in $ \C $ such that either there exists $X\in\ZZ^*$ such that the product of the coordinates of $MX$ is zero or there are two linearly independent vectors $ X $ and $ Y $ in $ \ZZ $ such that either the moduli of the first coordinates of $ MX $ and $ MY $ are equal, or the moduli of the second coordinates of $ MX $ and $ MY $ are equal. By definition, the set  $\mathcal M$ is the union of two sets $\mathcal M_1$ and $\mathcal M_2$. The first set is countable union of hyperplanes in $M_2(\C)$ and thus is of zero Lebesgue.  Let us deal now with $\mathcal M_2$.
	
	Since $\ZZ$ is countable, considering each row of the matrix $M$, we are reduced to prove that given two linearly independent vectors $x=(x_1,x_2)$ and $y=(y_1,y_2)\in\C^2$ the set of $(a,b)\in\C^2$ such that   $P(a,b)=|ax_1+bx_2|^2-|ay_1+by_2|^2=0$, is of zero Lebesgue measure in $\C^2$.  
	
	Now  $P$ can be considered as a real polynomial of four variable. Since $P(y_2,-y_1)=|\det_{\C}(x,y)|\neq 0$, the polynomial $P$ is not zero and the set of $(a,b)$ such that $P(a,b)=0$ has measure zero.
\end{proof}

\subsection{Properties of the open transversal}

\begin{lemma}
	\begin{itemize}
		\item The open transversal $T$ is a real submanifold of $\operatorname{SL}(2,\mathbb{C}%
		)/\operatorname{SL}(2,\mathbb{Z}[i])$.
		\item The flow $(g_{t})_{t\in\mathbb{R}}$ is transverse to $T$.
	\end{itemize}
\end{lemma}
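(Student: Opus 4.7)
The plan is to cut out $T$ locally, inside the ambient manifold $\operatorname{SL}(2,\mathbb{C})$, as the level set of a single real-valued function, and then to read off transversality of the diagonal flow from the non-degeneracy of that same function along the flow direction. Fix $\Lambda_0\in T$ with an associated pair $(u_0,v_0)$ of consecutive minimal vectors, normalized so that $\det_{\mathbb{C}}(u_0,v_0)=1$. Since $\operatorname{SL}(2,\mathbb{Z}[i])$ is discrete and acts properly on $\operatorname{SL}(2,\mathbb{C})$, the basis-to-lattice map $(u,v)\mapsto\mathbb{Z}[i]u+\mathbb{Z}[i]v$ is a local diffeomorphism from a neighborhood of $(u_0,v_0)$ in $\operatorname{SL}(2,\mathbb{C})\cong\{(u,v)\in\mathbb{C}^4:u_1v_2-u_2v_1=1\}$ onto a neighborhood of $\Lambda_0$ in $\operatorname{SL}(2,\mathbb{C})/\operatorname{SL}(2,\mathbb{Z}[i])$.

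On this chart the condition $|u_1|=|v_2|$ is the single real equation $\phi(u,v):=|u_1|^2-|v_2|^2=0$. The strict inequalities $|u_2|,|v_1|<|u_1|$ are obviously open, and the uniqueness clause in condition (2) (no nonzero lattice vector in $B_{\infty}(0,|u_1|)$ outside $\mathbb{U}_4u\cup\mathbb{U}_4v$) is also open: only finitely many combinations $au+bv$ with $(a,b)\in\mathbb{Z}[i]^2\cup J^2$ can possibly enter a fixed bounded ball (Minkowski-type bounds exclude large $|a|,|b|$), and each of the remaining pairs satisfies a strict outer inequality at $(u_0,v_0)$, so the inequality survives in a small neighborhood. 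Hence locally $T$ is cut out of an open subset of $\operatorname{SL}(2,\mathbb{C})$ by the single equation $\phi=0$.

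Everything then rests on one short computation: I would test $d\phi$ on the vector $\xi:=(u_1,-u_2,v_1,-v_2)\in T_{(u,v)}\mathbb{C}^4$. The tangency equation to $\operatorname{SL}(2,\mathbb{C})$ is $v_2\,du_1-v_1\,du_2-u_2\,dv_1+u_1\,dv_2=0$, and substituting $\xi$ gives $v_2u_1+v_1u_2-u_2v_1-u_1v_2=0$, so $\xi$ is tangent to $\operatorname{SL}(2,\mathbb{C})$. Moreover
\[
d\phi(\xi)=2\operatorname{Re}(\overline{u_1}\cdot u_1)+2\operatorname{Re}(\overline{v_2}\cdot v_2)=2(|u_1|^2+|v_2|^2)>0.
\]
By the implicit function theorem, $T$ is a smooth real hypersurface in $\operatorname{SL}(2,\mathbb{C})/\operatorname{SL}(2,\mathbb{Z}[i])$. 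For the second item, $\xi$ is exactly the velocity at $t=0$ of the orbit $t\mapsto g_t\Lambda$, since $g_tu=(e^tu_1,e^{-t}u_2)$ and similarly for $v$. The inequality $d\phi(\xi)\neq 0$ therefore says the flow is nowhere tangent to $T$, which is the desired transversality.

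The only delicate point is the openness check in the second paragraph: one needs a quantitative argument explaining why no new ``small'' vector of the perturbed lattice can slip into $B_{\infty}(0,r)$, which requires bounding the coefficients $a,b$ that must be tested. Everything else reduces to the single algebraic identity above, and transversality of $g_t$ drops out for free once the defining equation $|u_1|^2=|v_2|^2$ has been identified.
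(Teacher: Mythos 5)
Your approach is essentially the same as the paper's: parametrize a neighborhood of a transversal point by pairs of column vectors, cut out $T$ locally by the single real defining function $\phi(u,v)=|u_1|^2-|v_2|^2$, observe that the strict inequalities and the ``no unwanted lattice point'' condition are open, and then read off both the submanifold structure and the flow-transversality from the single computation $d\phi(\xi)=2(|u_1|^2+|v_2|^2)>0$ with $\xi$ the velocity of $g_t$. The only cosmetic difference is that you absorb the determinant constraint and work directly inside $\operatorname{SL}(2,\mathbb C)$ (one real equation), whereas the paper works in $M_2(\mathbb C)\cong\mathbb C^4$ with a $\mathbb C\times\mathbb R$-valued defining function $(\det M/\det(u_0,v_0),\ |u_1|^2-|v_2|^2)$; these are equivalent.

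There is, however, one genuine gap. Your normalization $\det_{\mathbb C}(u_0,v_0)=1$ and the chart $(u,v)\mapsto\mathbb Z[i]u+\mathbb Z[i]v$ implicitly assume that the pair of consecutive minimal vectors $(u_0,v_0)$ is a $\mathbb Z[i]$-basis of $\Lambda_0$, that is, that $\Lambda_0\in T_1$. By Theorem~\ref{thm:index} (Proposition~\ref{prop:index}), consecutive minimal vectors may generate an \emph{index-two} sublattice, in which case $|\det_{\mathbb C}(u_0,v_0)|=\sqrt2$, cannot be normalized to $1$, and $\mathbb Z[i]u_0+\mathbb Z[i]v_0\subsetneq\Lambda_0$, so your basis-to-lattice map does not parametrize a neighborhood of $\Lambda_0$. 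The paper handles this by noting that $\bigl(u_0,\tfrac{1}{1+i}(u_0+v_0)\bigr)$ is then a genuine basis and using the chart $(u,v)\mapsto\mathbb Z[i]u+\mathbb Z[i]\tfrac{1}{1+i}(u+v)$, while keeping $\det_{\mathbb C}(u_0,v_0)=1+i$. Your differential computation is unchanged under this adjustment (the defining function $\phi$ is the same and $g_t$ still preserves the determinant), so the fix is routine, but as written your argument only covers the component $T_1$ of the open transversal.
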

\begin{proof}
	Let $\Lambda_{0}$ be in $T$ and let $u_{0}$ and $v_{0}$
	be the two vectors associated with $\LL_0$ by the definition of $T$. By Proposition \ref{prop:index} either $(u_0,v_0)$ form a basis of $\LL_0$ and we can suppose $\det(u_0,v_0)=1$ w.l.o.g. (case of index $1$) or $U_0=u_0$ and $V_0=\tfrac{1}{1+i}(u_0+v_0)$ form a basis of $\LL_0$ and we can suppose $\det(u_0,v_0)=(1+i)$ w.l.o.g. (case of index $2$).
	We can find a small enough positive real number $\varepsilon$ such that for
	any $(u,v)$ in the open set
	\[
	W=B_{\mathbb{C}^{2}}(u_0,\varepsilon)\times B_{\mathbb{C}^{2}%
	}(v_0,\varepsilon),
	\]
	\begin{itemize}
		\item the matrix $M=M(u,v)$ the columns of which are $u=(u_1,u_2)$ and $v=(v_1,v_2)$
		is in $\GL (2,\mathbb C)$ and the sets $WP$, $P\in\SL (2,\mathbb  Z[i])$ are disjoint,
		\item the vectors in $\UU_4 u$ and $\UU_4v$ are the only nonzero vectors of
		the lattice $\Lambda=M\mathbb Z[i]^2$ in the cylinder $C(u,v)$ in case index $1$, or of the lattice $\Z[i]u+\Z[i]\frac{1}{1+i}(u+v)$ in case of index $2$,
		\item $|u_1|>|u_2|$ and $|v_1|<|v_2|$,
		\item for all $M\in W$, $|\det M-\det(u_0,v_0)|\leq \tfrac{1}{10}$. 
		
	\end{itemize}
	Consider the map%
	\begin{align*}
		\phi  &  :W\rightarrow\C\times \R\\
		&  :M=(u,v)\rightarrow(\phi_{1}(M)=\frac{\det M}{\det(u_0,v_0)},\phi_{2}(M)=\left\vert
		u_1\right\vert^{2}-\left\vert v_2\right\vert^{2}).
	\end{align*}
	In case of index $1$, a lattice $\Lambda=M\mathbb{Z}[i]^{2}$ with $M\in W$, is  in $T$ iff
	$\phi(M)=(1,0)$. In case of index $2$, a lattice $\LL=\Z[i]u+\Z[i]\frac{1}{1+i}(u+v)$ is in $T$ iff $\phi(M)=(1,0)$. Hence, to prove that $T$ is a submanifold, it is enough to show that the differential
	$D\phi(M)$ is onto at every point $M$ in $W$. The
	differential of $\phi_{1}$ is $\C$-linear and is given by
	\[
	D\phi_{1}(M).(x,y)=\frac{1}{\det(u_0,v_0)}(x_1v_2+u_1y_2-x_2v_1-u_2y_1)
	\]
	where $x=(x_1,x_2)$ and $y=(y_1,y_2)$. 
	The differential of $\phi_{2}$ is 
	given by
	\[
	D\phi_{2}(M).(x,y)=u_1\bar x_1+\bar u_1x_1-v_2\bar y_2-\bar v_2y_2.
	\]
	Call $\gamma_M$ the $\C$-linear map defined by $\gamma_M(x,y)=\bar u_1x_1-\bar v_2y_2$. On the one hand, $D\phi_2(M)=\gamma_M+\bar {\gamma}_M$. On the other hand, $\gamma_M$ and $D\phi_2(M)$ are $\C$-linearly independent because $u_1\neq 0$ (or $v_2\neq 0$). Therefore, the three $\R$-linear maps $\gamma_M$, $\Re D\phi_1(M)$ and $\Im D\phi_1(M)$ are $\R$-linearly independent. It follows that $D\phi(M)$ is onto 
	which implies that $T$ is a submanifold of $\Omega_1$.
	
	To show that the flow is transverse to $T$, we have to check that for a matrix
	$M=M(u,v)$ in $W$ such that $\phi(M)=(1,0)$ we have $D\phi(M).((u_1,-u_2),(v_1,-v_2))\neq 0$. Now,  $D\phi_{2}%
	(M).((u_1,-u_2),(v_1,-v_2))=2|u_1|^2+2|v_2|^2$, hence $D\phi(M).((u_1,-u_2),(v_1,-v_2))$ is not zero.
\end{proof}

\section{First return map, proof of Theorem \ref{thm:continuedfraction}}

\begin{lemma}\label{lem:consecutivetransversal}
	Let $\LL$ be a unimodular lattice that is in the full transversal $T'$ and let $u=(u_1,u_2)$ and $v=(v_1,v_2)$ be two consecutive minimal vectors associated with $\LL$. Let $t=\inf\{s>0:g_s(\LL)\in T'\}$. Then $t<+\infty$ if and only if $v_1\neq 0$. Moreover, in the latter case, there exists a minimal vector $z=(z_1,z_2)$ such that $v$ and $z$ are two consecutive minimal vectors and
	\[
	t=\frac12 \ln \frac{|z_2|}{|v_1|}.
	\]
	Consequently the first return map applied to $\LL$ is $g_t\LL$.
\end{lemma}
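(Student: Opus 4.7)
The plan is to exploit that the flow $g_s$ interacts compatibly with minimal vectors: since $g_s$ maps cylinders to cylinders and scales second coordinates by $e^{-s}$, a vector $w$ is minimal in $\LL$ iff $g_s w$ is minimal in $g_s\LL$, and the ordering by $|\cdot|_2$ is preserved, so the consecutive-pair structure is transported by $g_s$. Writing the sequence of minimal vectors of $\LL$ as $\ldots, u_0 = u, u_1 = v, u_2, \ldots$, the condition $g_s\LL \in T'$ (together with the commentary after the definition of $T'$ and Lemma \ref{lem:consecutive}) translates into the existence of a consecutive pair $(u_k, u_{k+1})$ whose $g_s$-images satisfy the balancing equation $e^s|(u_k)_1| = e^{-s}|(u_{k+1})_2|$, together with the associated emptiness condition.

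I would first dispose of the case $v_1 = 0$. Here minimality of $v$ forces $v$ to be the last minimal vector of $\LL$, since any successor would require a first coordinate of modulus strictly less than $|v_1|=0$. For any $s > 0$, $|g_s v|_\infty = e^{-s}|v_2|$, while for $k \le 0$, $|(u_k)_1| \ge r$ gives $|g_s u_k|_\infty \ge e^s r > e^{-s}r$. So $g_s v$ is the unique minimum up to $\UU_4$, which forbids two distinct $\UU_4$-orbits of associated vectors; hence $g_s\LL \notin T'$ and $t = +\infty$.

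If $v_1 \neq 0$, I would first construct a successor $z$ of $v$ using Minkowski applied to the infinite open cylinder $\{|w_1| < |v_1|\}$, plus a discreteness argument to pick the lattice point with smallest $|w_2|$: minimality of $v$ forces $|z_2| > |v_2|$, and $z$ is itself a minimal vector (as in Lemma \ref{lem:lexico}). Set $t = \tfrac12\ln(|z_2|/|v_1|) > 0$. A direct computation yields $|(g_t v)_1| = |(g_t z)_2| = \sqrt{|v_1||z_2|}$, with the off-diagonal coordinates strictly smaller (since $|v_2| < |z_2|$ and $|z_1| < |v_1|$). The key observation is that $C(v,z)$ has dimensions exactly $|v_1| \times |z_2|$, so $g_t(C(v,z)) = B_\infty(0, \sqrt{|v_1||z_2|})$; combined with $\overset{o}C(v,z) \cap \LL = \{0\}$ from Lemma \ref{lem:consecutive}, this yields both conditions (1') and (2'), so $g_t \LL \in T'$.

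To conclude that $t$ is the infimum, I would observe that any $s > 0$ with $g_s\LL \in T'$ corresponds to some consecutive pair $(u_k, u_{k+1})$ with $s = s_k := \tfrac12 \ln(|(u_{k+1})_2|/|(u_k)_1|)$. The sequence $(s_k)$ satisfies $s_0 = 0$, $s_1 = t$, and is strictly increasing because $|(u_k)_1|$ is strictly decreasing in $k$ and $|(u_k)_2|$ strictly increasing. Hence no $s \in (0,t)$ satisfies the balancing equation, so $g_s\LL \notin T'$ on this interval. The main subtlety is the identification $g_t(C(v,z)) = B_\infty(0, \sqrt{|v_1||z_2|})$ used to verify (2'): this is the geometric content that motivates the definition of $t$ and transfers the consecutive-minimality of $(v,z)$ into the transversal condition for $g_t\LL$.
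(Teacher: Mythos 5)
Your proof is correct and takes a genuinely different route from the paper's. The paper verifies the two conditions directly: after constructing $z$ via Lemma~\ref{lem:lexico} and showing $g_t\LL\in T'$, it rules out $g_s\LL\in T'$ for $0<s<t$ by a two-case analysis depending on whether $e^{-s}|v_2|\leq e^s|v_1|$ or not, in each case exhibiting a geometric obstruction (no lattice vector of the required shape on the boundary of the ball). Your argument instead sets up a global correspondence: since $g_s$ carries minimal vectors to minimal vectors and preserves the consecutive-pair structure, the return times to $T'$ are exactly the numbers $s_k=\tfrac12\ln\bigl(|(u_{k+1})_2|/|(u_k)_1|\bigr)$ attached to each consecutive pair $(u_k,u_{k+1})$, and the strict monotonicity of $|(u_k)_1|$ and $|(u_k)_2|$ in $k$ makes $(s_k)$ strictly increasing with $s_0=0$, hence $t=s_1$. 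This is structurally more informative (it identifies \emph{all} return times, not just the first) and avoids the case split, at the modest cost of verifying that the balancing equation plus Lemma~\ref{lem:consecutive} does capture membership in $T'$, including the automatic validity of condition (1') which you leave implicit. You also make the $v_1=0$ direction explicit, which the paper's proof passes over; one small caveat there is that you compare $|g_sv|_\infty$ only against the other \emph{minimal} vectors $g_su_k$, $k\leq 0$, which suffices because the two associated vectors in the definition of $T'$ are required to be minimal.
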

\begin{proof}
	If $v_1\neq 0$ then by Lemma \ref{lem:lexico} a  minimal element $z=(z_1,z_2)\in\LL$ for the lexicographic preoder $\prec$ in the cylinder $\overset{o}C_1(v)$ is a minimal vector. By definition  $v$ and $z$ are consecutive. So, by Lemma \ref{lem:consecutive}, $\overset{o}C(v,z)\cap\LL\setminus\{0\} =\emptyset$, hence $\overset{o}C(g_tv,g_t z)\cap g_t(\LL\setminus\{0\}) =\emptyset$, thus $g_t\LL$ is in $T'$ when $t=\frac12 \ln \frac{|z_2|}{|v_1|}$. Let $0<s<t$. We want to show that $g_s\LL\notin T'$. 
	\begin{itemize}
		\item If $r_2=e^{-s}|v_2|\leq r_1=e^s|v_1|$, since $r_1=e^s|v_1|<e^t|v_1|=e^{-t}|z_2|<e^{-s}|z_2|$, we have  $B_{\infty}(0,r_1=|g_sv|_{\infty})\subset C(g_sv, g_sz)$. Now, there is no vector in $g_s(\LL\setminus\{0\}) $ of the shape $(x_1,x_2)$ with $|x_1|<r_1$ and $|x_2|=r_1<e^{-s}|z_2|$ because such vector would be in $\overset{o}C(g_sv,g_s z)=g_{s-t}\overset{o}C(g_tv,g_t z)$. Therefore, $g_s\LL$ is not in the full transversal $T'$. 
		\item If $r_2=e^{-s}|v_2|> r_1=e^s|v_1|$, since $r_2=e^{-s}|v_2|=e^{-s}|u_1|<e^s|u_1|$, we have $B_{\infty}(0,r_2=|g_sv|_{\infty})\subset C(g_su, g_sv)$ and there is no vector in $g_s(\LL\setminus\{0\})$ of the shape $(x_1,x_2)$ with $|x_1|=r_2<e^s|u_1|$ and $|x_2|<r_2$. Therefore, $g_s\LL$ is not in the full transversal $T'$.
	\end{itemize}	
\end{proof}

Let $u=(u_1,v_2w_2)$ and $v=(u_1w_1,v_2)$ be the two consecutive minimal  vectors associated with a lattice $\LL$ that is in the full transversal $T'$.  By the above lemma, the computation of the first return map is reduced to the computation of the minimal vector $ v'\in\LL $ such that  $v$ and $v'$ are two consecutive minimal vectors. This is the purpose of Theorem \ref{thm:continuedfraction} that we recall below. To perform this calculation, we must take into account the component of the transversal which contains $\LL $.

Recall that the lexicographic preoder on $\C^2$  is defined by
\[
(x_1,x_2)\prec (y_1,y_2) 
\]
iff $|x_2|<|y_2|$ or $|x_2|=|y_2|$ and $|x_1|\leq |y_1|$.
We recall the statement of Theorem \ref{thm:continuedfraction}  for the convenience of the reader.\\

\noindent
{\bf Theorem \ref{thm:continuedfraction}.}
{\sl
	Let $u=(u_1,v_2w_2)$ and $v=(u_1w_1,v_2)$ be the two minimal consecutive vectors associated with a lattice $\LL$ that is in the full transversal $T'$. If $w_1\neq 0$ then there exists  $v'\in\LL$  a minimal vector such that $v$ and $v'$ are two consecutive minimal vectors and
	\begin{itemize}
		\item if $\det_{\C}(u,v)=1$, then $v'$ is  any vectors that is minimal for the preoder $\prec$ in the set
		\begin{align*}
			\left\{z=-au+gv: a\in\{1,1+i\},\, g\in\Z[i],\,|\tfrac{a}{w_1}-g|<1\right\}.
		\end{align*} 
		Moreover with $u'=v=(u'_1,v'_2w'_2)$ and $v'=-au+gv=(u'_1w'_1,v'_2)$ we have 
		\begin{align}
			w'_1=g-\frac{a}{w_1}, \hspace{1cm} w'_2=\frac{1}{g-aw_2}.	
		\end{align}
		\item If $\det_{\C}(u,v)=1+i$, then $v'$ is  any vectors that is minimal in the set
		\begin{align*}
			\left\{z=-\tfrac{1}{1+i}(u+v)+gv:  g\in\Z[i],\,|\tfrac1{(1+i)w_1}+\tfrac1{(1+i)}-g|<1\right\}.
		\end{align*}  
		Moreover with $u'=v=(u'_1,v'_2w'_2)$ and $v'=-au+gv=(u'_1w'_1,v'_2)$ we have 
		\begin{align}
			w'_1=g-\tfrac1{(1+i)w_1}-\tfrac1{(1+i)}, \hspace{1cm} w'_2=\frac{1}{g-\tfrac1{(1+i)}w_2-\tfrac1{(1+i)}}.	
		\end{align} 
	\end{itemize}	
}

\begin{proof}
	If $w_1\neq 0$ then $v_1=u_1w_1\neq 0$ and by Minkowski convex body theorem, the cylinder $\overset{o}C_1(|v_1|)\{(x,y)\in\C^2:|x|<|v_1|\}$ contains at least one nonzero vector of $\LL$. A vector of $\LL$ in this cylinder which is  minimal for the preorder $\prec$ is a minimal vector $v'$ that follows $v$.
	
	{\bf Case 1:} 
	$\det_{\C}(u,v)=1$. 
	Let  $L=\Z[i]v+\Z[i]v'$ be the lattice generated by $v$ and $v'$. Since $L$ has index $1$ or $2$ in $\LL$, the determinant of $v$ and $v'$ in the basis $u,v$ is a unit of $\Z[i]$ or $(1+i)$ times a unit. This implies that $v'=-au+gv$ with $g\in\Z[i]$ and $a\in\UU_4$ or $a\in(1+i)\UU_4$.  We can suppose that  $a\in \{1,1+i\} $ w.l.o.g.. 
	The condition $v'\in \overset{o}C_1(|v_1|)$ is equivalent to,
	\[
	|au_1-gu_1w_1|<|u_1w_1|
	\] 
	which in turn is equivalent to 
	\[
	|\frac{a}{w_1}-g|<1.
	\] 
	By definition $v'$ is minimal for the preorder $\prec$ among the  vectors $-au+gv$ such that the latter inequality holds. An easy calculation leads to the formula for $w'_1$ and $w'_2$. 
	
	{\bf Case 2:} $\det_{\C}(u,v)=1+i$.
	By Proposition \ref{prop:index2}, $\Z[i]v+\Z[i]v'$ has index one in $\LL$ and  by Proposition \ref{prop:index}, $\frac1{1+i}(u+v)$, $v$  is a basis of $\LL$, therefore, $z'$ is of the shape
	\[
	v'=a\tfrac{1}{1+i}(u+v)+gv
	\]
	with $g\in\Z[i]$ and $a\in\UU_4$. We can suppose that $a=-1$ w.l.o.g.
	As before, we have
	\[
	|\tfrac1{1+i}(u_1+u_1w_1)-gu_1w_1|<|u_1w_1|,
	\]
	hence,
	\[
	|\tfrac1{(1+i)w_1}+\tfrac1{(1+i)}-g|<1.
	\]
	We conclude as in the first case.
\end{proof}

\section{Parametrization of the open transversal and the first return map}
We first give a parametrization of the open transversal with coordinates $(\ttt,w_1,w_2)\in\R\times\D^2$.
Next we want to describe the open transversal with the $(\ttt,w_1,w_2)$ coordinates. For this purpose, we first write the symmetries of the transversal with the coordinates. Finally we give explicit formulas for the  first return map as a function of the coordinates $(\ttt,w_1,w_2)$.

\subsection{Parametrization of the open transversal $T$}\label{subsec: param}

\begin{proposition}
	\label{prop:para} Let $\Psi_{k}:\mathbb{R}\times\mathbb{D}^{2}\rightarrow
	\Omega_{1}$, $k=1,2$ be the maps defined by
	\begin{align*}
		&  \Psi_{1}(\theta,w_{1},w_{2})=\mathbb{Z}[i]u+\mathbb{Z}[i]v\\
		&  \Psi_{2}(\theta,w_{1},w_{2}) =\mathbb{Z}[i]u+\tfrac{1}{1+i}\mathbb{Z}%
		[i](u+v)
	\end{align*}
	where
	\begin{align*}
		&  u=u(\theta,w_{1},w_{2})=r(u_{1},v_{2}w_{2}),\\
		&  v=v(\theta,w_{1},w_{2})=r(u_{1}w_{1},v_{2}),\\
		&  r=\frac{k^{1/4}}{\sqrt{|1-w_{1}w_{2}|}},\\
		&  u_{1}=\exp i\theta,\\
		&  v_{2}=\exp i\theta^{\prime}=\exp i((k-1)\tfrac{\pi}{4}-\theta-\arg
		(1-w_{1}w_{2})).
	\end{align*}
	For $k=1,2$, let $C_k(\theta,w_1,w_2)=C(u(\theta,w_1,w_2),v(\theta,w_1,w_2)$.)
	Then for all $\Lambda$ in $T_{k}$ there exists exactly one element
	$(\theta,w_{1},w_{2})\in[0,\tfrac{\pi}{2}[\times\mathbb{D}^{2}$ such that
	$\LL=\Psi_k(\theta,w_1,w_2)$ and  $\LL\cap\overset{o}{C_k}(\theta,w_1,w_2)=\{0\}$.
\end{proposition}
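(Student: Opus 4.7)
The plan is to read off the triple $(\theta,w_1,w_2)$ from the pair $(u,v)$ of minimal vectors attached to $\LL\in T_k$ by the definition of the open transversal, and then to kill the residual $\UU_4$-ambiguity by demanding $\theta\in[0,\pi/2[$.

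\textbf{Extracting the triple.} Given $\LL\in T_k$, take the associated pair $(u,v)$, unique up to $\UU_4$ by Lemma \ref{lem:transversal1}. Set $r=|u_1|=|v_2|$; the inequalities $|u_2|,|v_1|<r$ allow me to write $u_1=re^{i\theta}$, $v_2=re^{i\theta'}$, $w_2=u_2/v_2\in\D$, $w_1=v_1/u_1\in\D$. A direct computation gives $\det_\C(u,v)=r^2e^{i(\theta+\theta')}(1-w_1 w_2)$. For $k=1$, $(u,v)$ is a basis of $\LL$; for $k=2$, Theorem \ref{thm:index} provides the basis $(u,(u+v)/(1+i))$ of determinant $\det_\C(u,v)/(1+i)$. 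Requiring the canonical basis of $\LL$ to have determinant $1$ yields
\[
r^2|1-w_1w_2|=\sqrt{k}\quad\text{and}\quad \theta+\theta'+\arg(1-w_1w_2)\equiv (k-1)\tfrac{\pi}{4}\pmod{\pi/2},
\]
which matches the formulas defining $\Psi_k$ up to a residual choice of unit.

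\textbf{Pinning down $\theta$.} The transformation $(u,v)\mapsto(\alpha u,\beta v)$ with $\alpha,\beta\in\UU_4$ sends $(\theta,\theta',w_1,w_2,r)$ to $(\theta+\arg\alpha,\theta'+\arg\beta,(\beta/\alpha)w_1,(\alpha/\beta)w_2,r)$ and multiplies $\det_\C(u,v)$ by $\alpha\beta$. Preservation of the normalised determinant forces $\beta=\alpha^{-1}$, leaving four choices of $\alpha\in\UU_4$ that shift $\theta$ by $0,\pi/2,\pi,3\pi/2$; exactly one puts $\theta$ into $[0,\pi/2[$. For $k=2$ I must additionally check that the four choices define the same $\Psi_2$-lattice; this reduces to the arithmetic fact $\UU_4\subset 1+(1+i)\ZZ$, which gives $(\alpha u+\alpha^{-1}v)/(1+i)-(u+v)/(1+i)\in\ZZ u+\ZZ v$ and hence equality of the two sublattices.

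\textbf{Closing the loop.} The above builds a well-defined map $\LL\mapsto(\theta,w_1,w_2)\in[0,\pi/2[\times\D^2$ whose composition with $\Psi_k$ is the identity on $T_k$. Conversely, given $(\theta,w_1,w_2)$ such that $\Psi_k(\theta,w_1,w_2)\cap\overset{o}{C}_k(\theta,w_1,w_2)=\{0\}$, the vectors $u,v$ produced by the formulas satisfy $|u_1|=|v_2|=r$ and $|u_2|,|v_1|<r$; since $\overset{o}{C}_k(\theta,w_1,w_2)=\overset{o}{B}_\infty(0,r)$, the cylinder condition is exactly condition (2) in the definition of $T$, and Lemma \ref{lem:consecutive} identifies $(u,v)$ as the associated pair of consecutive minimal vectors, placing $\Psi_k(\theta,w_1,w_2)$ in $T_k$. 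The main difficulty is the verification in case $k=2$ that the $\UU_4$-action descends to the $\Psi_2$-lattice; once this is in hand, uniqueness of the representative with $\theta\in[0,\pi/2[$ is immediate, since any nontrivial unit shift moves $\theta$ out of $[0,\pi/2[$.
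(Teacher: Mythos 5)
Your extraction-and-normalisation strategy matches the paper's, and the existence part is carried out correctly; in particular your direct arithmetic verification (using $\alpha-1,\ \alpha^{-1}-1\in(1+i)\ZZ$ for $\alpha\in\UU_4$) that the normalising $\UU_4$-action fixes $\Psi_2(\theta,w_1,w_2)$ is a nice, self-contained replacement for the paper's appeal to Proposition~\ref{prop:index} Part~3.

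There is, however, a gap in the uniqueness argument. You assert that $\LL\cap\overset{o}{C}_k(\theta,w_1,w_2)=\{0\}$ ``is exactly condition~(2) in the definition of $T$.'' It is not: condition~(2) of $T$ constrains the \emph{closed} ball $B_\infty(0,r)$, requiring that all its nonzero lattice points lie in $\UU_4 u\cup\UU_4 v$, whereas the cylinder condition only controls the \emph{open} ball and says nothing about the boundary sphere; that weaker statement is condition~(2') of $T'$, not of $T$. In particular, Lemma~\ref{lem:consecutive} by itself does not yet identify the formulaic pair produced by an arbitrary valid parameter with the associated pair of $\LL$. To close the gap you must first compare radii: if two parameters both map to $\LL$ with their respective cylinder conditions and produce radii $r$ and $r'$, then $r\neq r'$ is impossible, since the vectors of smaller norm would sit inside the larger open ball, violating the cylinder condition; and since $\LL\in T_k$, the associated pair $(u_0,v_0)$ has the same radius, so the formulaic $(u,v)$ lies on $\partial B_\infty(0,r)$ and condition~(2) for $(u_0,v_0)$ forces $u\in\UU_4 u_0$, $v\in\UU_4 v_0$. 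Only at that point does your ``any nontrivial unit shift moves $\theta$ out of $[0,\pi/2[$'' observation finish the uniqueness. This radius comparison is exactly what the paper's proof supplies when it argues that the two cylinders must coincide.
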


\begin{proof}
	{\sc Existence.} Let $\Lambda$ be a unimodular lattice in $\mathbb{C}^{2}$ that belongs to
	$T_{k}$ and call $u$ and $v$ the two minimal vectors associated with $\Lambda
	$. Denoting $r=|u|_{\infty}$, $u$ and $v$ can be written $u=r(u_{1}%
	,v_{2}w_{2})$ and $v=r(u_{1}w_{1},v_{2})$ with
	\[
	|w_{1}|,|w_{2}|<1=|u_{1}|=|v_{2}|.
	\]
	The unimodularity implies that
	\[
	\operatorname{\det}_{\mathbb{C}}(u,v)=r^{2}u_{1}v_{2}(1-w_{1}w_{2}%
	)\in\mathbb{U}_{4} \text{ or } \in(1+i)\mathbb{U}_{4}
	\]
	according to $\Lambda\in T_{1}$ or $T_{2}$. On the one hand this implies that
	$r=\tfrac{ k^{1/4}}{\sqrt{|1-w_{1}w_{2}|}}$. On the other hand, since $u$ and
	$v$ can be changed in $\omega u$ and $\omega^{\prime}v$ with $\omega
	,\omega^{\prime}\in\mathbb{U}_{4}$, we can impose  $u_{1}=\exp i\theta$
	with $\theta\in[0,\tfrac{\pi}{2}[$ and $v_{2}=\exp i\theta^{\prime}$ where
	\[
	\theta^{\prime}=(k-1)\tfrac{\pi}{4}-\theta-\arg(1-w_{1}w_{2}).
	\]
	With our choices, the triple $(\theta,w_1,w_2)$ belongs to $U=[0,\tfrac{\pi}{2}[\times\mathbb{D}^{2}$. 
	
	If $k=1$, we have $\LL=\Psi_1(\theta,w_1,w_2)\in \Psi_1(U)$. If $k=2$, thanks to Proposition \ref{prop:index} Part 3, we have $\LL=\Psi_2(\theta,w_1,w_2)\in \Psi_2(U)$.

	{\sc Uniqueness.} Let $\LL$ be in $T_k$. Suppose that $\LL=\Psi_k(\alpha,w_1,w_2)=\Psi_k(\alpha',w'_1,w'_2)$ with $(\alpha,w_1,w_2)$ and $(\alpha',w'_1,w'_2)\in U$. Let $u=u(\alpha,w_{1},w_{2})$, $v=v(\alpha,w_{1},w_{2})$, $u^{\prime}=u(\alpha^{\prime
	},w^{\prime}_{1},w^{\prime}_{2})$ and $v^{\prime}=v(\alpha^{\prime},w^{\prime
	}_{1},w^{\prime}_{2})$. The two cylinders $\overset{o}{C_k}(\alpha,w_1,w_2)$ and $\overset{o}{C_k}(\alpha,w_1,w_2)$ must be equal, otherwise one of them would contains nonzero elements of $\LL$. Hence $r=r'$ and by Lemma \ref{lem:transversal1}, $u'\in\UU_{4}u$. 
	Since $\alpha$ and $\alpha^{\prime}$ are both in
	$[0,\tfrac{\pi}{2}[$, it follows that $u=u'$ and $\alpha=\alpha'$. Hence
	$v_{2}w_{2}=v^{\prime}_{2}w^{\prime}_{2}$. Again $v^{\prime}=\omega v$ with
	$\omega\in\mathbb{U}_{4}$, therefore $w^{\prime}_{1}u_{1}=\omega w_{1}u_{1}$
	and $v^{\prime}_{2}=\omega v_{2}$. It follows that $w^{\prime}_{1}=\omega
	w_{1}$ and $w^{\prime}_{2}\omega=w_{2}$ which in turn imply $w^{\prime}%
	_{1}w^{\prime}_{2}=w_{1}w_{2}$. Now by definition of $\Psi_{k}$,
	$\det_{\mathbb{C}}(u,v)=\det_{\mathbb{C}}(u^{\prime},v^{\prime})=1$ or $1+i$, hence
	$u_{1}v_{2}(1-w_{1}w_{2})=u^{\prime}_{1}v^{\prime}_{2}(1-w^{\prime}%
	_{1}w^{\prime}_{2})$ and taking into account the relations $u^{\prime}_{1}
	=u_{1}$ and $w^{\prime}_{1}w^{\prime}_{2}=w_{1}w_{2}$ we obtain $v_{2}
	=v^{\prime}_{2}$. Finally, this implies, $\omega=1$ and $(\alpha^{\prime
	},w^{\prime}_{1},w^{\prime}_{2})=(\alpha,w_{1},w_{2})$.
\end{proof}

\subsection{Symmetries of the transversal}

Given $(\theta,w_{1},w_{2})\in \R\times \D^2$, we would like to know what are the
condition in order that $\Psi_{1}(\theta,w_{1},w_{2})\in T_{1}$ and $\Psi
_{2}(\theta,w_{1},w_{2})\in T_{2}$. Thanks to Theorem \ref{thm:geonumber3} and to the distance formula (Lemma \ref{lem:formule}), these conditions are given by a finite set
of inequalities on $w_1$ and $w_2$ and do not depend on $\ttt$. 
As for Theorem \ref{thm:geonumber3}, the use of transversal symmetry properties will simplify the statement. In fact, it can be reduced to the case
\[
w_{1}\in\mathcal{C}=\{z\in\mathbb{D}: \arg z\in[0,\tfrac{\pi}4 ]\}
\]
and the other cases, $w_{1}\in\{z\in\mathbb{D}:\arg z\in[\tfrac{k\pi}4
,\tfrac{(k+1)\pi}4 ]\}$, $k=1,\dots,7$, will be obtained through simple transformations.

Let $T_{1}^{0}$ and $T_{2}^{0}$ be the subset of $T_{1}$ and $T_{2}$ by
\begin{align*}
	T_{1}^{0}=T_{1}\cap\{\LL=\Psi_{1}(\theta,w_1,w_2):(\theta,w_1,w_2)\in[0,\tfrac{\pi}{2}[\times\CC\times
	\DD),\, \LL\cap\overset{o} {C}_1(\theta,w_1,w_2)=\{0\}\},\\
	T_{2}^{0}=T_{2}\cap\{\LL=\Psi_{2}(\theta,w_1,w_2):(\theta,w_1,w_2)\in[0,\tfrac{\pi}{2}[\times\CC\times
	\DD),\, \LL\cap\overset{o}{ C}_1(\theta,w_1,w_2)=\{0\}\}.
\end{align*}
Recall that $\mathbb{D}%
_{8}$ is the group of isometries acting on $\mathbb{C}$ generated by the
multiplications by elements in $\mathbb{U}_{4}$ and by the conjugation.
For $\varphi\in\mathbb{D}_{8}$ consider the map $F_{k,\varphi
}:T_{k}\rightarrow\Omega_{1}$ defined by
\[
F_{k,\varphi}(\Psi_{k}(\theta,w_{1},w_{2}))=\Psi_{k}(\theta,\varphi(w_{1}%
),\tfrac{1}{\varphi(1)^{2}}\varphi(w_{2})).
\]

This map is well defined because by Proposition \ref{prop:para}, for each $\LL\in T_k$, there exists $(\ttt,w_1,w_2)$ unique in $[0,\tfrac{\pi}{2}[\times \D^2$ such that $\psi_k(\ttt,w_1,w_2)=\LL$ and $\LL\cap \overset{o}{C}_k(\theta,w_1,w_2)=\{0\}$.

Our aim is to prove:

\begin{proposition}\label{prop:sym2}
	For $k=1,2$,
	\[
	T_{k}=\bigcup_{\varphi\in\mathbb{D}_{8}}F_{k,\varphi}(T_{k}^{0}).
	\]
	
\end{proposition}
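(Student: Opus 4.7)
The plan is to establish two things: first, that each $F_{k,\varphi}$ is a bijection of $T_{k}$ onto itself; second, that every lattice $\Lambda\in T_{k}$ can be transported by a suitable $F_{k,\varphi^{-1}}$ to a lattice whose first parameter lies in $\mathcal{C}$, i.e., into $T_{k}^{0}$. The first point rests on Proposition \ref{prop:sym}(2), which is precisely the statement that the ``consecutive minimal vector'' structure is preserved under the change of parameters $(w_{1},w_{2})\mapsto(\varphi(w_{1}),\varphi(w_{2})/\varphi(1)^{2})$. The second point rests on the observation that the eight images $\varphi(\overline{\mathcal{C}})$, $\varphi\in\mathbb{D}_{8}$, cover $\overline{\mathbb{D}}$.

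Concretely, suppose $\Lambda=\Psi_{k}(\theta,w_{1},w_{2})\in T_{k}$ with associated minimal vectors $u=r(u_{1},v_{2}w_{2})$ and $v=r(u_{1}w_{1},v_{2})$. Put $w_{1}'=\varphi(w_{1})$, $w_{2}'=\varphi(w_{2})/\varphi(1)^{2}$ and let $u',v'$ be the corresponding vectors built from $(\theta,w_{1}',w_{2}')$. Because $\varphi$ is an isometry, $|u_{1}'|=|v_{2}'|$ automatically, and a direct check shows that the determinant condition that distinguishes the cases $k=1$ and $k=2$ is unchanged (the parametrization in Proposition \ref{prop:para} absorbs the relevant phase into the $\arg(1-w_{1}w_{2})$ factor). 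Proposition \ref{prop:sym}(2) then asserts that $u,v$ are consecutive minimal vectors in $\mathbb{Z}[i]u+\mathbb{Z}[i]v$ (resp.\ $\langle u,v\rangle_{J}$) if and only if $u',v'$ are consecutive minimal vectors in the analogous lattice built on $(u',v')$, i.e., in $F_{k,\varphi}(\Lambda)$. Invoking Proposition \ref{prop:index} to match the index to $k$, we conclude $F_{k,\varphi}(\Lambda)\in T_{k}$, and since $F_{k,\varphi^{-1}}=F_{k,\varphi}^{-1}$, the map is a bijection of $T_{k}$.

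For the covering step, the images $\varphi(\overline{\mathcal{C}})$, $\varphi\in\mathbb{D}_{8}$, are the eight closed sectors $\{z\in\overline{\mathbb{D}}:\arg z\in[j\pi/4,(j+1)\pi/4]\}$ for $j=0,\dots,7$, whose union is $\overline{\mathbb{D}}$. Given $\Lambda\in T_{k}$ with parameter $w_{1}\in\mathbb{D}$, choose $\varphi\in\mathbb{D}_{8}$ with $\varphi^{-1}(w_{1})\in\overline{\mathcal{C}}$ (for $w_{1}=0$ any $\varphi$ works); since $\mathcal{C}$ already includes its two angular boundary rays, we actually have $\varphi^{-1}(w_{1})\in\mathcal{C}$. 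Set $\Lambda_{0}=F_{k,\varphi^{-1}}(\Lambda)$. By the first step $\Lambda_{0}\in T_{k}$, and its first parameter lies in $\mathcal{C}$, so $\Lambda_{0}\in T_{k}^{0}$; consequently $\Lambda=F_{k,\varphi}(\Lambda_{0})\in F_{k,\varphi}(T_{k}^{0})$. The reverse inclusion $\bigcup_{\varphi}F_{k,\varphi}(T_{k}^{0})\subset T_{k}$ is immediate from the first step together with $T_{k}^{0}\subset T_{k}$.

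The main obstacle is the verification that $F_{k,\varphi}$ is a self-map of $T_{k}$, which is why Proposition \ref{prop:sym} is the workhorse: everything else is a bookkeeping consequence of the formulas defining $\Psi_{k}$ and the fact that $\mathbb{D}_{8}$ acts on $\overline{\mathbb{D}}$ by isometries permuting the eight closed fundamental sectors.
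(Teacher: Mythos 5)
Your overall architecture matches the paper's: first show that each $F_{k,\varphi}$ maps $T_k$ bijectively onto itself (this is Lemma \ref{lem:sym}), then use the fact that the $\mathbb{D}_8$-translates of $\mathcal{C}$ cover $\mathbb{D}$ to transport any $\Lambda\in T_k$ into $T_k^0$. The covering step and the reduction via $F_{k,\varphi^{-1}}=F_{k,\varphi}^{-1}$ are fine.

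The gap is in the invariance step. You justify $F_{k,\varphi}(\Lambda)\in T_k$ by citing Proposition \ref{prop:sym}(2), which only says that $u,v$ being \emph{consecutive minimal vectors} is preserved, i.e. that the open cylinder $\overset{o}{C}(u',v')$ contains no nonzero point of the image lattice. That is precisely the defining condition of the \emph{full} transversal $T'_k$, not of the \emph{open} transversal $T_k$. Membership in $T_k$ requires strictly more: the only nonzero lattice points in the \emph{closed} ball $B_\infty(0,r)$ must lie in $\mathbb{U}_4 u\cup\mathbb{U}_4 v$ (equivalently, $|au-bv|_{u,v}>1$ for every $(a,b)\in E_k$, with strict inequality). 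The paper even exhibits lattices in $T'\setminus T$ (subsection \ref{subsec:example2}), so the two conditions genuinely differ, and "consecutive minimal $\Rightarrow$ in $T_k$" is a non sequitur. The correct tool is Proposition \ref{prop:sym}(1), the exact norm identity $|au-bv|_{u,v}=|\varphi(1)\varphi(a)u'-\varphi(b)v'|_{u',v'}$, combined with the observation that $(a,b)\mapsto(\varphi(1)\varphi(a),\varphi(b))$ is a bijection of $E_k$; this transfers the strict inequality $>1$ directly and is exactly how the paper's Lemma \ref{lem:sym} proceeds. Your proof would also be cleaner if, in the covering step, you noted explicitly that the companion condition $w_2\in\mathcal{D}$ (resp. $\mathcal{T}$) needed for $T_k^0$ is automatic once $w_1\in\mathcal{C}$ and $\Lambda_0\in T_k$, by Proposition \ref{prop:conditions}.
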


Since 
\[
\D=\cup_{\varphi\in\D_8}\varphi(\CC),
\]
the proposition is an obvious consequence of the following lemma.

\begin{lemma}
	\label{lem:sym} For $k=1,2$ and $\varphi\in\mathbb{D}_{8}$
	\[
	F_{k,\varphi}(T_{k})=T_{k}.
	\]
	
\end{lemma}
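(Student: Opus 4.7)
The plan is to pick $\Lambda \in T_k$ with its unique parameters $(\theta, w_1, w_2) \in [0,\pi/2[ \times \mathbb{D}^2$ from Proposition \ref{prop:para} and associated vectors $u = r(u_1, v_2 w_2)$, $v = r(u_1 w_1, v_2)$, and to show that $\Lambda' := F_{k,\varphi}(\Lambda) = \Psi_k(\theta, w_1', w_2')$, where $w_1' = \varphi(w_1)$ and $w_2' = \varphi(w_2)/\varphi(1)^2$, is again in $T_k$. The reverse inclusion $T_k \subset F_{k,\varphi}(T_k)$ then follows by applying the forward direction to $\varphi^{-1} \in \mathbb{D}_8$, after a short case distinction on $\varphi(z)=\alpha z$ and $\varphi(z) = \alpha \bar z$ checking that $F_{k,\varphi^{-1}}$ is the two-sided inverse of $F_{k,\varphi}$.

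First I would verify that the scaling factor and the sup-norms of the associated vectors coincide. Applying Lemma \ref{lem:sym2},
\[
w_1' w_2' = \frac{\varphi(w_1)\varphi(w_2)}{\varphi(1)^2} = \frac{\varphi(w_1 w_2)}{\varphi(1)},
\]
and since $\varphi$ is an isometry with $\varphi(1) \in \mathbb{U}_4$ one gets $|1 - w_1'w_2'| = |\varphi(1) - \varphi(w_1 w_2)| = |1 - w_1 w_2|$. Hence $r' = r$, the associated vectors $u', v'$ of $\Lambda'$ satisfy $|u_1'| = |v_2'| = r$, and $|u_2'|, |v_1'| < r$ because $|w_j'| = |w_j| < 1$ for $j=1,2$.

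Next I would invoke Proposition \ref{prop:sym}, whose hypotheses all hold: its part 1 gives the isometry identity
\[
|au - bv|_{u,v} = |\varphi(1)\varphi(a)\, u' - \varphi(b)\, v'|_{u',v'}
\]
for all nonzero $a,b \in \C$. A short case check on $\varphi(z) = \alpha z$ and $\varphi(z) = \alpha \bar z$ shows that both maps $\psi : a \mapsto \varphi(1)\varphi(a)$ and $\varphi$ act as bijections on $\Z[i]$ and on $J$ (each is a unit multiple of $z$ or of $\bar z$, and such operations preserve $\Z[i]$ together with the ideal $I = (1+i)\Z[i]$, hence also $J$). Using Proposition \ref{prop:index} to write $\Lambda = \Z[i]u + \Z[i]v$ when $k=1$ and $\Lambda = \langle u,v\rangle_J$ when $k=2$, the correspondence $au - bv \leftrightarrow \psi(a)u' - \varphi(b)v'$ is therefore a norm-preserving bijection $\Lambda \to \Lambda'$ carrying $\mathbb{U}_4 u \cup \mathbb{U}_4 v$ onto $\mathbb{U}_4 u' \cup \mathbb{U}_4 v'$.

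Because $|u_1|=|v_2|=r$ with $|u_2|, |v_1| < r$, the cylinder $C(u,v)$ equals the ball $B_\infty(0,r)$, and similarly $C(u',v') = B_\infty(0,r)$; hence $|z|_{u,v}\leq 1$ is equivalent to $z \in B_\infty(0,r)$. The bijection above therefore transports the defining condition of $T_k$, namely that the only nonzero elements of $\Lambda \cap B_\infty(0,r)$ lie in $\mathbb{U}_4 u \cup \mathbb{U}_4 v$, verbatim onto $\Lambda'$, yielding $\Lambda' \in T_k$. The only point that really requires care is the bookkeeping of the factor $\varphi(1)^2$ in the antiholomorphic case $\varphi(z) = \alpha \bar z$, which is exactly what Lemma \ref{lem:sym2} is designed to handle.
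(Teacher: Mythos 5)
Your proof is correct and follows essentially the same route as the paper's: reduce to the inclusion $F_{k,\varphi}(T_k)\subset T_k$ via $F_{k,\varphi^{-1}}$ being the inverse, invoke Proposition \ref{prop:sym} for the norm identity $|au-bv|_{u,v}=|\varphi(1)\varphi(a)u'-\varphi(b)v'|_{u',v'}$, observe that $\psi$ and $\varphi$ biject $\Z[i]$ and $J$, and transport the defining condition of $T_k$. The one place where you add detail the paper leaves implicit is the explicit check that $|1-w_1'w_2'|=|1-w_1w_2|$ so $r'=r$; this is fine and harmless, though it is already built into the well-definedness of $F_{k,\varphi}$ and Proposition \ref{prop:para}.
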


\begin{proof}
	[Proof of Lemma \ref{lem:sym}]

	1. It is enough to prove that $F_{k,\varphi
	}(T_{k})\subset T_{k}$ for all $\varphi\in\mathbb{D}_{8}$. Indeed, if so, we
	have $F_{k,\varphi^{-1}}(F_{k,\varphi}(T_{k}))\subset F_{k,\varphi^{-1}}%
	(T_{k})\subset T_{k}$ and since the elements in $\mathbb{D}_{8}$ are
	$\mathbb{R}$-linear, for $\Lambda=\Psi_{k}(\theta,w_{1},w_{2})\in T_{k}$, we
	have
	\begin{align*}
		F_{k,\varphi^{-1}}(F_{k,\varphi}(\Psi_{k}(\theta,w_{1},w_{2})))  &
		=F_{k,\varphi^{-1}}(\Psi_{k}(\theta,\varphi(w_{1}),\tfrac{1}{\varphi(1)^{2}%
		}\varphi(w_{2}))\\
		&  =\Psi_{k}(\theta,\varphi^{-1}(\varphi(w_{1})),\tfrac{1}{\varphi^{-1}%
			(1)^{2}}\varphi^{-1}(\tfrac{1}{\varphi(1)^{2}}\varphi(w_{2})))\\
		&  =\Psi_{k}(\theta,w_{1},\tfrac{1}{\varphi^{-1}(1)^{2}}\tfrac{1}%
		{\varphi(1)^{2}}w_{2})\\
		&  =\Psi_{k}(\theta,w_{1},w_{2}),
	\end{align*}
	which implies that $F_{k,\varphi^{-1}}(F_{k,\varphi}(T_{k}))=T_{k}$.\medskip
	
	2. Call $E_{1}=(\mathbb{Z}[i]\setminus\{0\})^{2}$ and $E_{2}=(\mathbb{Z}[i]\setminus\{0\})^2\cup J^{2}$. For
	each $\varphi,\psi\in\mathbb{D}_{8}$, the maps $f(a,b)=(\psi(a),\varphi(b))$
	induces a bijection of $E_{k}$ in itself. It is an immediate consequence of  $\varphi
	(\mathbb{Z}[i])=\psi(\mathbb{Z}[i])=\mathbb{Z}[i]$ and $\varphi(J)=\psi(J)=J$.
	\medskip
	
	3. Let  $\ttt\in[0,\tfrac{\pi}{2}[$, $w_1,w_2\in \D$, $w'_1=\varphi(w_1)$, and $w'_2=\tfrac{1}{\varphi(1)^2}\varphi(w_2)$ 
	\begin{align*}
		&u=u(\ttt,w_1,w_2)=(u_1,v_2w_2),\ v=v(\ttt,w_1,w_2)=(u_1w_2,v_2)\\	
		&u'=u(\ttt,w'_1,w'_2)=(u'_1,v'_2w'_2),\ v'=v(\ttt,w'_1,w'_2)=(u'_1w'_2,v'_2).
	\end{align*}
	Suppose that $\LL=\ZZ u+\ZZ v\in T_k$ and $\LL\cap C_k(\theta,w_1,w_2)=\{0\}$. Consider $\LL'=\ZZ u'+\ZZ v'$. By definition $\LL'=F_{k,\varphi}(\LL)$. We want to show that $\LL'\in T_k$.
	By Proposition \ref{prop:sym} about symmetries, for all nonzero complex numbers $a,b$, 
	\begin{align*}
		|au-bv|_{u,v}  &  =|\varphi(1)\varphi(a)u^{\prime}-\varphi(b)v^{\prime}|_{u',v'}.
	\end{align*}
	By 2, it follows that $|a^{\prime}u'-b^{\prime}v'|_{u',v'}>1$ for all
	$(a^{\prime},b^{\prime})\in E_{k} $ iff
	$|au-bv|_{u,v}>1$ for all $(a,b)\in E_{k} $. Since
	$\Lambda$ is in $T_{k}$, $|au-bv|_{u',v'}>1$ for all $(a,b)\in E_{k}
	$ which implies that $|a^{\prime}u'-b^{\prime
	}v'|_{u',v'}>1$ for all $(a^{\prime},b^{\prime})\in E_{k}
	$ and we are done.
\end{proof}

\subsection{Determination of the open transversals $T_{1}$ and $T_2$ in the $(\ttt,w_1,w_2)$-coordinates 
}\label{subsec:detopentrans}

Recall that
\begin{align*}
	\mathcal{C}=  &  \{w\in\mathbb{C}: |w|<1,\,\arg w\in[0,\tfrac{\pi}4 ]\},\\
	\mathcal{D}=  &  \{w\in\mathbb{C}:|w|<1,\, \operatorname{d}(w,1)>
	1,\,\operatorname{d}(w,1-i)> 1 \},\\
	\mathcal{T}=  &  \{w\in\mathbb{C}:|w|<1,\, \operatorname{d}(w%
	,1)>\sqrt2,\,\operatorname{d}(w,-i)> \sqrt2 \}
\end{align*}
and that the parametrizations $\Psi_1$ and $\Psi_2$ have been defined Proposition \ref{prop:para}.

Consider the following pairs of closed disks in $\mathbb{C}$: 
\begin{align*}
	&  \bar Red_{1}=D(i,1),\,\bar Red_{2}=D(-i,1)\\
	&  \bar Blue_{1}=D(\tfrac{1-i}{2},\tfrac1{\sqrt2}),\,\bar Blue_{2}=D(1+i,1)\\
	&  \bar  Green_{1}=D(1+i,1),\, \bar Green_{2}=D(\tfrac{1-i}{2},\tfrac1{\sqrt2}).
\end{align*}
see the Figure 3 in subsection \ref{subsec:constraints}.

Let us define $W_1^0$ to be the set of $(w_1,w_2)$ in $\CC\times\DD$ such that  one of the four conditions
\begin{enumerate}
	\item $w_{2}\in \bar Green_{2}$ and $w_{1}\notin \bar Red_{1}\cup \bar Green_{1}$,
	
	\item $w_{2}\in \bar Red_{2}\setminus \bar Green_{2}$ and $w_{1}\notin \bar Red_{1}$,
	
	\item $w_{2}\notin \bar Red_{2}\cup \bar Blue_{2}$ and $w_1\neq 0$,
	
	\item $w_{2}\in \bar Blue_{2}$ and $w_{1}\notin \bar Blue_{1}$,
\end{enumerate}
holds, and let
\[
W_1=\{(\varphi(w_1),\tfrac{1}{\varphi(1)^2}\varphi(w_{2})):\varphi\in\D_8,\,(w_1,w_2)\in W_1^0\}.
\]
Let us define 
\[
W_2^0=\mathcal{C}\setminus D(-i,\sqrt2)\times \mathcal T
\]
and let
\[
W_2=\{(\varphi(w_1),\tfrac{1}{\varphi(1)^2}\varphi(w_{2})):\varphi\in\D_8,\,(w_1,w_2)\in W_2^0\}.
\]

\begin{theorem}
	\label{thm:T} Let $(\ttt,w_1,w_2)$ be in $[0,\tfrac{\pi}{2}[\times \D^2$. Then	
	\begin{itemize}
		\item $\Psi_1(\ttt,w_1,w_2)\in T_{1}$ iff $(w_1,w_2)\in W_1$, 
		
		\item $\Psi_2(\ttt,w_1,w_2)\in T_{2}$ iff   $(w_{1},w_{2})\in W_2$.
	\end{itemize}
\end{theorem}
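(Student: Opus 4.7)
The plan is to use the $\mathbb{D}_8$-symmetry of the transversal to reduce to the sector $w_1 \in \mathcal{C}$, and then translate the strict-inequality characterization of $T_k$-membership into the explicit disk conditions defining $W_k^0$ by applying the distance formula.

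First I would carry out the symmetry reduction. By Lemma \ref{lem:sym} each $F_{k,\varphi}$, $\varphi\in\mathbb{D}_8$, is a bijection of $T_k$ onto itself, and by definition $W_k$ is the image of $W_k^0$ under the corresponding action $(w_1,w_2)\mapsto(\varphi(w_1),\varphi(1)^{-2}\varphi(w_2))$ on parameter space. Since the eight $\mathbb{D}_8$-translates of $\overline{\mathcal{C}}$ cover $\overline{\mathbb{D}}$ and $F_{k,\varphi}(\Psi_k(\ttt,w_1,w_2))=\Psi_k(\ttt,\varphi(w_1),\varphi(1)^{-2}\varphi(w_2))$, the theorem reduces to the case $w_1\in\mathcal{C}$: one needs only show $\Psi_k(\ttt,w_1,w_2)\in T_k$ iff $(w_1,w_2)\in W_k^0$.

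Next I would reformulate $T_k$-membership as a finite family of strict inequalities and translate them via the distance formula. With $\Lambda=\Psi_k(\ttt,w_1,w_2)$ and parametrized basis $u=(u_1,v_2 w_2)$, $v=(u_1 w_1,v_2)$, we have $|u_1|=|v_2|=r$ and $|w_1|,|w_2|<1$, so $C(u,v)=B_\infty(0,r)$. Hence $\Lambda\in T_k$ is equivalent to $|au-bv|_{u,v}>1$ for every $(a,b)\in E_k\setminus\{0\}$ with $au-bv\notin\mathbb{U}_4 u\cup\mathbb{U}_4 v$, where $E_1=\mathbb{Z}[i]^2$ and $E_2=\mathbb{Z}[i]^2\cup J^2$. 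By the strict version of Proposition \ref{prop:conditions}, for $w_1\in\mathcal{C}\setminus\{0\}$ this infinite family reduces to the five pairs of $F$ when $k=1$, and to the four pairs $(\tfrac{1}{1+i},\tfrac{\alpha}{1+i})$, $\alpha\in\mathbb{U}_4$, when $k=2$. Applying Lemma \ref{lem:formule}, each strict inequality becomes $w_1\notin D(g/h,1/|h|)$ or $w_2\notin D(h/g,1/|g|)$ (closed disks). For $k=1$, the pairs $(1,1),(1,1-i)$ force $w_2\in\mathcal{D}$, while $(1,-i),(1,1+i),(1+i,1)$ produce the closed disks $\bar Red_{1,2},\bar Blue_{1,2},\bar Green_{1,2}$; partitioning according to which of $\bar Red_2,\bar Blue_2,\bar Green_2$ contains $w_2$ yields exactly conditions (1)--(4) of $W_1^0$. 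For $k=2$, the four $J^2$ pairs give $w_1\in\mathcal{C}\setminus\D(-i,\sqrt2)$ together with $w_2\in\mathcal{T}$, which is $W_2^0$; the Gaussian-integer constraints from $F$ are automatic because $\mathcal{T}\subset\mathcal{D}$ and the only residual pair $(1,1+i)$ would fail only at $w_2=i\notin\mathcal{T}$, as observed at the end of the proof of Proposition \ref{prop:conditions}.

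The degenerate case $w_1=0$ would be treated separately using part (0) of Proposition \ref{prop:conditions}: a unit-modulus vector $u-\beta v$, $\beta\in\mathbb{U}_4$, always lies on $\partial C(u,v)$ and is outside $\mathbb{U}_4 u\cup\mathbb{U}_4 v$ by linear independence of $u,v$, so $\Psi_k(\ttt,0,w_2)\notin T_k$; this is consistent with $W_k^0$, since $0$ lies on the boundary of $\bar Red_1,\bar Blue_1,$ and $\overline{\D(-i,\sqrt2)}$, and condition (3) of $W_1^0$ explicitly demands $w_1\neq 0$. The hard part will be the boundary matching: a coincidence such as $w_1\in\partial Green_1$ with $w_2\in\partial \bar Green_2$ places the vector $(1+i)u-v$ exactly on $\partial C(u,v)$, and one must verify both that it is not in $\mathbb{U}_4 u\cup\mathbb{U}_4 v$ (which holds by linear independence) and that the auxiliary critical pairs from $G_1\setminus F$ of Lemma \ref{lem:computer1} are eliminated exactly as at the end of the proof of Proposition \ref{prop:conditions}. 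This careful bookkeeping of strict versus weak inequalities is precisely what forces $W_k^0$ to be defined using the closed disks $\bar Red_i,\bar Blue_i,\bar Green_i$, in contrast to the open disks of Corollary \ref{cor:ConstraintsLarge}.
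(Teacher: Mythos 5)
Your proposal is correct and follows essentially the same route as the paper's (abridged) proof: a $\mathbb{D}_8$-symmetry reduction to $w_1\in\mathcal{C}$ via Lemma \ref{lem:sym}, followed by the strict-inequality version of Proposition \ref{prop:geonumber} and the distance formula, so that the derivation is word-for-word that of Corollary \ref{cor:ConstraintsLarge} with open disks replaced by closed ones and $w_1=0$ excluded. Your explicit handling of the degenerate case $w_1=0$ and the bookkeeping of the auxiliary critical pairs from $G_1\setminus F$ is exactly the content the paper compresses into ``follow the proof of Corollary \ref{cor:ConstraintsLarge} using Proposition \ref{prop:geonumber} with strict inequalities.''
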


\begin{proof}
	[Abridge proof of Theorem \ref{thm:T}] With $(w'_1,w'_2)= (\varphi(w_1),\tfrac{1}{\varphi(1)^2}\varphi(w_{2}))$, $\Psi_k(\ttt,w_1,w_2)\in T_k$ iff $\Psi_k(\ttt,w'_1,w'_2)\in T_k$  by Proposition \ref{prop:sym} about  symmetries of the transversal. Then, we follow the proof of Corollary   \ref{cor:ConstraintsLarge} using the Proposition \ref{prop:geonumber} of the geometry of numbers with strict inequalities.
\end{proof}

\begin{remark}
	The conditions for $\Psi_k(\ttt,w_1,w_2)$ to be in the full transversal are similar, just replace the closed disks by open disks and take care of the particular case $w_1=0$. In this latter case $\Psi_1(\ttt,w_1,w_2)$ is in the transversal whatever $w_2$.
\end{remark}

\subsection{The first return map in the $(\ttt,w_1,w_2)$-coordinates}\label{subsec:firstreturn}
Denote by $R:T\fff T'$ be the first return map in the full transversal $T'$. We want to find the formula 
\[
R(\ttt,w_1,w_2)=(\ttt',w'_1,w'_2)
\]
in the coordinates $(\ttt,w_1,w_2)$. As the example of subsection \ref{subsec:example2}, shows the minimal vectors following one minimal vectors is not necessarily unique up to a multiplicative factor in $\UU_4$.  This makes the map $T_G$ multi-valued. In order to avoid this drawback we restrict the first return map to $T'\setminus \mathcal N=T\setminus \mathcal N$ (see the definition \ref{subsec:fulltrans} where $\mathcal N$ is defined).

Recall that the set $\mathcal N$ is negligible, is invariant by the flow and contains the lattices with nonzero vectors on the axes. Therefore, the restriction of the first return map 
\[
R:T\setminus \mathcal N\fff T\setminus \mathcal N
\] is a bijection. 

For $k=1,2$, let $W'_k$  be the set of $(\ttt,w_1,w_2)\in W_k$ such that $\Psi_k(\ttt,w_1,w_2)\notin \mathcal N$. 
Let $T_G$ be the map defined on the disjoint union of $W'_1$ and  $W'_2$ according to Theorem \ref{thm:continuedfraction}:
\begin{itemize}
	\item If $(w_1,w_2)\in W_1$, let $a\in\{1,1+i\}$ and $g\in\ZZ$ such that  $-a(1,w_2)+g(w_1,1)$ is minimal for the preoder $\prec$ in the set
	\begin{align*}
		\left\{-a(1,w_2)+g(w_1,1): a\in\{1,1+i\},\, g\in\Z[i],\,|\tfrac{a}{w_1}-g|<1\right\},
	\end{align*} 
	then
	\begin{align*}
		T_G(w_1,w_2)=\left(g-\frac{a}{w_1},\frac{1}{g-aw_2}\right)	
	\end{align*}
	and $T_G(w_1,w_2)\in W_1$ or $W_2$ according to $a=a_1(w_1,w_2)=1$ or $a=a_1(w_1,w_2)=1+i$.
	\item If $(w_1,w_2)\in W_2$, let $g\in\ZZ$ such that $-\tfrac1{1+i}(1+w_1,w_2+1)+g(w_1,1)$ is minimal for the preorder $\prec$ in the set
	\begin{align*}
		\left\{-\tfrac1{1+i}(1+w_1,w_2+1)+g(w_1,1):  g\in\Z[i],\,|\tfrac1{(1+i)w_1}+\tfrac1{(1+i)}-g|<1\right\},
	\end{align*}  
	then
	\begin{align*}
		T_G(w_1,w_2)=\left(g-\frac1{(1+i)w_1}-\frac1{(1+i)}, \frac{1}{g-\tfrac1{(1+i)}w_2-\tfrac1{(1+i)}}\right)	
	\end{align*} 
	and $T_G(w_1,w_2)$ is always in $W_1$. In that case we set $a=a_2(w_1,w_2)=\tfrac{1}{1+i}$.
\end{itemize}

Formally the map $T_G$ should be defined on $(\{1\}\times W'_1)\cup(\{2\}\times W'_2)$ with values in the same set.

Now we are able to compute the first return map in $(\ttt,w_1,w_2)$ coordinates.
\begin{theorem}\label{thm:firstreturn}
	Let $k=1$ or $2$. Let $(\ttt,w_1,w_2)$ be in $[0,\tfrac{\pi}{2}[\times W'_k$. Then 
	\[
	R\circ\Psi_k(\ttt,w_1,w_2)=\Psi_j(\ttt', \alpha^2T_G(w_1,w_2))
	\]
	where
	\begin{itemize} 
		\item $j=2$ when $a_k(w_1,w_2)=1+i$ and $j=1$ otherwise,
		\item $\alpha=\alpha(\ttt,w_1)$ is the only element in $\UU_{4}$ such that $ \ttt'=\ttt+\arg w_1+\arg \alpha\in [0,\tfrac{\pi}{2}[$.
	\end{itemize}
\end{theorem}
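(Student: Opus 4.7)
The plan is to exhibit explicit representatives and directly verify $\Psi_{j}(\ttt',\alpha^{2}T_{G}(w_{1},w_{2}))=g_{t}\LL=R(\Psi_{k}(\ttt,w_{1},w_{2}))$. By Lemma \ref{lem:consecutivetransversal}, the new pair of consecutive minimal vectors in $g_{t}\LL$ is $(g_{t}v,g_{t}v')$, where $(u,v)$ are the associated vectors of $\LL$ via $\Psi_{k}$, $v'=-au+gv$ is given by Theorem \ref{thm:continuedfraction} (with $a\in\{1,1+i\}$ for $k=1$ and $a=\tfrac{1}{1+i}$ when reading the $k=2$ formula $v'=-\tfrac{1}{1+i}(u+v)+gv$ as $v'=-au+(g-\tfrac{1}{1+i})v$), and $t=\tfrac{1}{2}\ln(|v'_{2}|/|v_{1}|)$ so that $r^{\mathrm{new}}:=e^{t}|v_{1}|=e^{-t}|v'_{2}|=r\sqrt{|w_{1}||g-aw_{2}|}$. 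Writing $(\tilde w_{1},\tilde w_{2}):=T_{G}(w_{1},w_{2})=(g-a/w_{1},\,1/(g-aw_{2}))$, the first step is the key identity
\[ 1-\tilde w_{1}\tilde w_{2}\;=\;\frac{a(1-w_{1}w_{2})}{w_{1}(g-aw_{2})}, \]
which together with $j=k|a|^{2}$ (verified in each admissible $(k,a,j)$-case) and $\alpha^{4}=1$ yields $r^{\mathrm{new}}=j^{1/4}/\sqrt{|1-\alpha^{4}\tilde w_{1}\tilde w_{2}|}$, matching the modulus required by $\Psi_{j}(\ttt',\alpha^{2}\tilde w_{1},\alpha^{2}\tilde w_{2})$.

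Next I will show that $u^{\star}:=\alpha\,g_{t}v$ and $v^{\star}:=\bar\alpha\,g_{t}v'$ are the associated vectors of $\Psi_{j}(\ttt',\alpha^{2}\tilde w_{1},\alpha^{2}\tilde w_{2})$. Direct computation gives $u^{\star}_{1}/r^{\mathrm{new}}=\alpha u_{1}w_{1}/|w_{1}|$, which forces $\exp(i\ttt')=\alpha u_{1}w_{1}/|w_{1}|$, i.e.\ $\ttt'=\ttt+\arg w_{1}+\arg\alpha$ — exactly the theorem's prescription of $\alpha$. Similarly $v^{\star}_{1}/r^{\mathrm{new}}=\bar\alpha\,u_{1}(w_{1}/|w_{1}|)\tilde w_{1}=\exp(i\ttt')\alpha^{2}\tilde w_{1}$, using $\bar\alpha=\alpha^{3}$ in $\UU_{4}$, so the first new coordinate is $\alpha^{2}\tilde w_{1}$. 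For the second coordinates, the identity above supplies $\arg(1-\tilde w_{1}\tilde w_{2})$; combined with the formula $\ttt'_{0}=(j-1)\pi/4-\ttt'-\arg(1-\alpha^{4}\tilde w_{1}\tilde w_{2})$ from Proposition \ref{prop:para}, the phase check reduces to the congruence $(j-k)\pi/4\equiv\arg a\pmod{2\pi}$, which holds by inspection in each of the three configurations $(k,a,j)\in\{(1,1,1),(1,1+i,2),(2,\tfrac{1}{1+i},1)\}$. This in turn forces $u^{\star}_{2}/r^{\mathrm{new}}=\exp(i\ttt'_{0})\alpha^{2}\tilde w_{2}$, confirming the second new coordinate.

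To identify the full $\ZZ$-modules, observe that $\alpha,\bar\alpha\in\UU_{4}$, so multiplication by either stabilises both $\ZZ$ and $J$; in particular the bijection $(p,q)\mapsto(p\alpha,q\bar\alpha)$ maps $\ZZ^{2}\cup J^{2}$ onto itself. For $j=1$ (cases $k=1,a=1$ and $k=2$), one computes $\det_{\C}(v,v')=a\det_{\C}(u,v)\in\UU_{4}$, so $(v,v')$ is a $\ZZ$-basis of $\LL$; hence $\ZZ u^{\star}+\ZZ v^{\star}=\ZZ g_{t}v+\ZZ g_{t}v'=g_{t}\LL$. For $j=2$ (the case $k=1,a=1+i$), the pair $(v,v')$ spans a sublattice of $\LL$ of index $2$, and Proposition \ref{prop:index}(3) gives $\LL=\{pv+qv':(p,q)\in\ZZ^{2}\cup J^{2}\}$; the same rewriting yields $\Psi_{2}(\ttt',\alpha^{2}\tilde w_{1},\alpha^{2}\tilde w_{2})=g_{t}\LL$. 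The main obstacle will be the symmetric appearance of $\alpha^{2}$ on both $\tilde w_{1}$ and $\tilde w_{2}$: the $\tilde w_{1}$-side drops out of the first-coordinate modulus normalisation, but the $\tilde w_{2}$-side requires combining the identity for $1-\tilde w_{1}\tilde w_{2}$ with the case-by-case verification of $(j-k)\pi/4\equiv\arg a\pmod{2\pi}$, which is where the specific admissible values of $a$ are essential.
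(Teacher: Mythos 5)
Your proof is correct and follows essentially the same route as the paper's: compute the next consecutive pair $(v,v')$ via Theorem \ref{thm:continuedfraction}, flow to the transversal with $t=\tfrac12\ln(|v'_2|/|v_1|)$ (Lemma \ref{lem:consecutivetransversal}), and absorb the phase-wrapping of $\ttt+\arg w_1$ into a unit $\alpha$, using $\tfrac{1}{\alpha^2}=\alpha^2$ so that $\alpha^2$ lands on both components of $T_G(w_1,w_2)$. Where the paper simply asserts $R\circ\Psi_k(\ttt,w_1,w_2)=\Psi_j(\ttt+\arg w_1,w'_1,w'_2)$ ``by Theorem \ref{thm:continuedfraction} and the definitions'' and then performs only the $\alpha$-adjustment, you make the bookkeeping explicit — the identity $1-w'_1w'_2=a(1-w_1w_2)/\bigl(w_1(g-aw_2)\bigr)$, the relation $j=k|a|^2$ that produces the correct radius $j^{1/4}/\sqrt{|1-w'_1w'_2|}$, and the phase congruence $(j-k)\tfrac{\pi}{4}\equiv\arg a\pmod{2\pi}$ — which are exactly the checks the paper leaves implicit.
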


\begin{proof}
	Set $(w'_1,w'_2)=T_G(w_1,w_2)$.	
	By Theorem \ref{thm:continuedfraction} and the definitions of  the parametrizations $\Psi_1$ and $\Psi_2$ and of the map $T_G$, 
	\[
	R\circ \Psi_k(\ttt,w_1,w_2)=\Psi_j(\ttt+\arg w_1,w'_1,w'_2)
	\]	
	where $j=2$ iff $a_k(w_1,w_2)=1+i$.
	The only thing we have to worry about is that $\ttt+\arg w_1$ could be outside the interval $[0,\tfrac{\pi}{2}[$. In any cases there exists $\alpha \in \UU_{4}$ unique such that  $ \ttt'=\ttt+\arg w_1+\arg \alpha\in [0,\tfrac{\pi}{2}[$. So, if we change the vectors $u(\ttt+\arg w_1,w'_1,w'_2)$
	and $v(\ttt+\arg w_1,w'_1,w'_2)$ in 
	$\alpha u(\ttt+\arg w_1,w'_1,w'_2)$ and 
	$\tfrac{1}{\alpha}v(\ttt+\arg w_1,w'_1,w'_2)$ we obtain the same lattice and we do not change the determinant (see Proposition \ref{prop:para} the definition of $u(.)$ and $v(.)$).
	Now, 
	\begin{align*}
		\alpha u(\ttt+\arg w_1,w'_1,w'_2)= u(\ttt+\arg w_1+\arg\alpha,\tfrac{1}{\alpha^2}w'_1,\alpha^2 w'_2)\\
		\tfrac{1}{\alpha} v(\ttt+\arg w_1,w'_1, w'_2)=v(\ttt+\arg w_1+\arg\alpha,\tfrac{1}{\alpha^2}w'_1,\alpha^2 w'_2)	
	\end{align*}
	and since $\tfrac{1}{\alpha^2}=\alpha^2$, we are done.
\end{proof}

\begin{figure}

	\includegraphics[width=15cm]{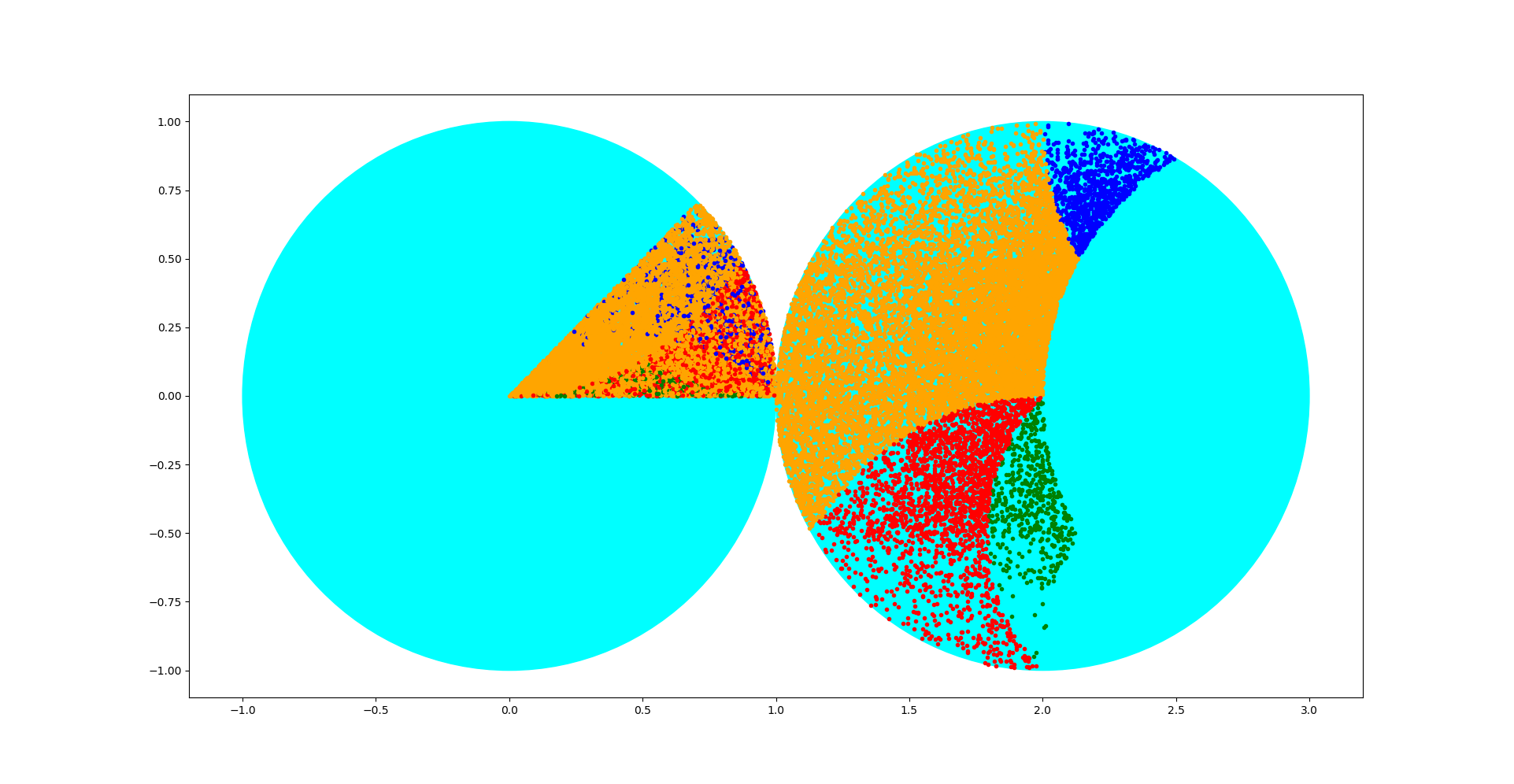}
	
	\includegraphics[width=15cm]{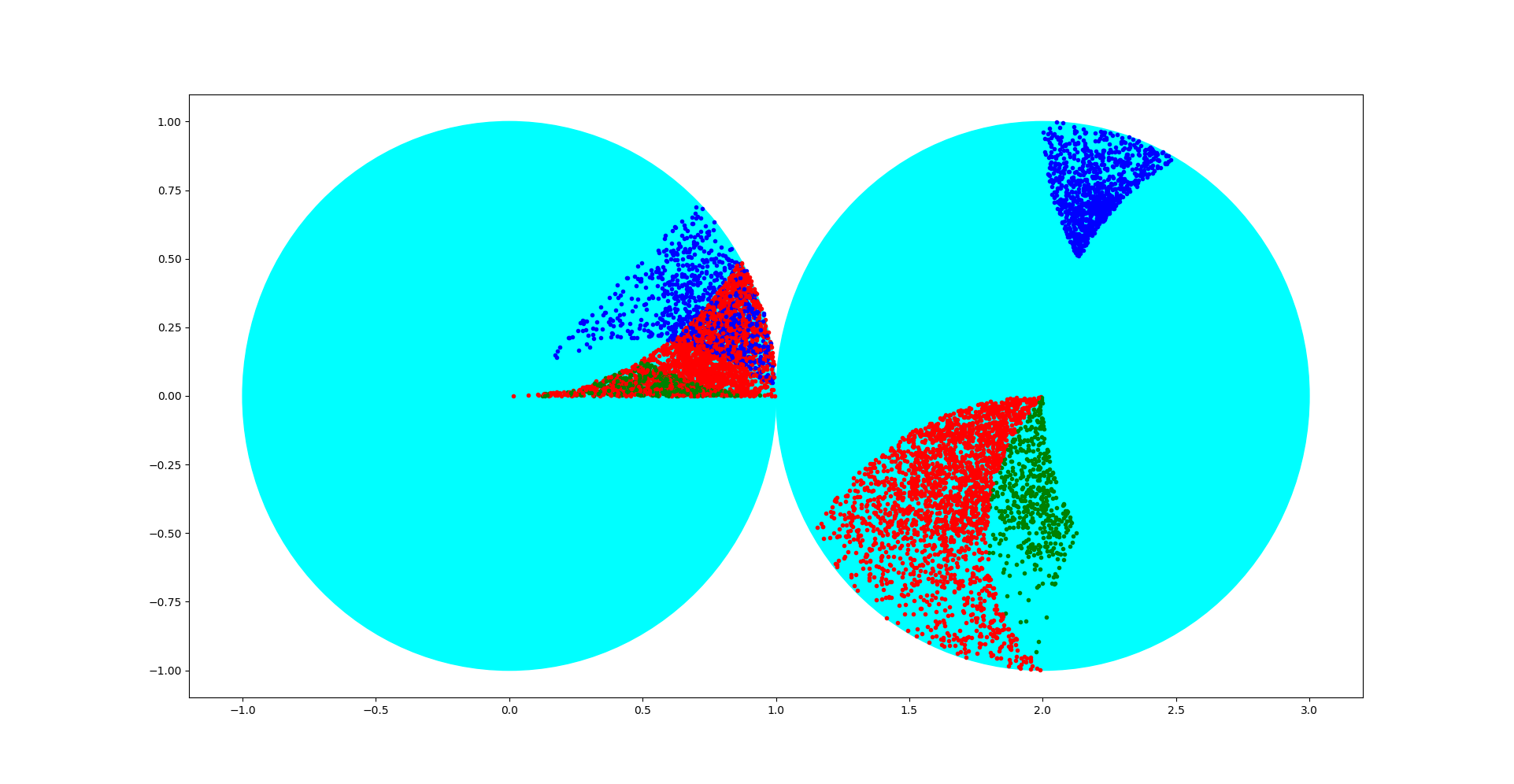}
	
	\caption{Iterates of the map $T_G$ with one initial point. Only  the couple $(w_1,w_2)$ in $T_1$ with $w_1 \in\CC$ are plotted, $w_1$ in the left disk and $w_2$ in the right disk. The color is chosen according to the regions in Corollary \ref{cor:ConstraintsLarge}.  In the second rectangle, the orange points have been suppressed.}

	\end{figure}
	
	\begin{figure}\label{fig:T2}
	
	\includegraphics[width=15cm]{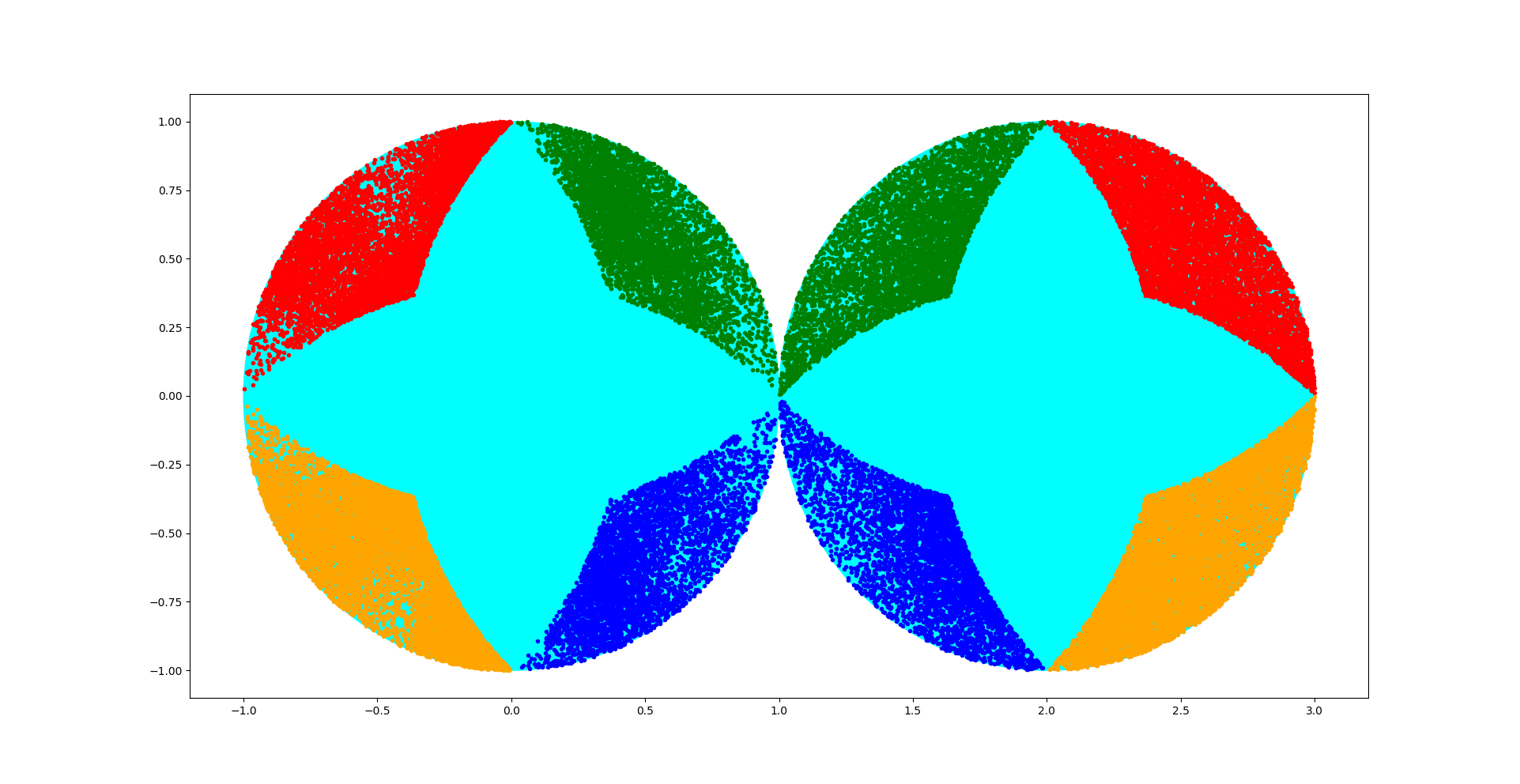}
	
	\caption{Iterates of the map $T_G$: the points in $T_2$.}
\end{figure}

\section{Invariant measures, Proof of Theorem \ref{thm:densityinvariantmeasure1}}

We want to find the measure induced by the Haar measure and the flow
$(g_{t})_{t}$ on the transversal $T$ in the coordinates systems $\Psi
_{1}(\theta,w_{1},w_{2})$ and $\Psi_{2}(\theta,w_{1},w_{2})$ (see Proposition
\ref{prop:para} the definitions of the parametrizations $\Psi_{k}$). The Haar
measure is defined up to a multiplicative constant and can be defined with an
invariant volume form $\alpha$ on $\operatorname{SL}(2,\mathbb{C})$. To take
advantage of the $\mathbb{C}$-linearity of the differential of $\Psi_{k}$ with
respect to $w_{1},w_2$, we use the following volume form. Let
$\omega$ be the differential form of degree $3$ defined on $M_{2}(\mathbb{C})$
by
\[
\omega_{M}(M_{1},M_{2},M_{3})=\operatorname{det}(M,M_{1},M_{2},M_{3})
\]
where $M_{2}(\mathbb{C})$ is identified with $\mathbb{C}^{4}$. Since for every matrix $A\in M_{2}(\mathbb{C})$,
\[
\det(AM,AM_{1},AM_{2},AM_{3})=\det(MA,M_{1}A,M_{2}A,M_{3}A)=(\det A)^{2}%
\det(M,M_{1},M_{2},M_{3}),
\]
the form $\omega$ is $\operatorname{SL}(2,\mathbb{C})$-invariant
and
\[
\alpha=-i\omega\wedge\overline{\omega}
\]
is a volume form on $\operatorname{SL}(2,\mathbb{C})$ and defines a Haar
measure $\mu_{1}$ on $\SL(2,\C)$ (see below  the definition of $\overline\omega$). We restate Theorem \ref{thm:densityinvariantmeasure1}.

\begin{theorem}\label{thm:densityinvariantmeasure2}
	Using the parametrization $\Psi_{k}$, $k=1,2$ of the transversal $T_{k}$, the
	Haar measure $\mu_1$ associated with the volume form $\alpha$ and the flow $g_{t}$
	induce a measure $\nu$ with density
	\[
	h(\theta,w_{1},w_{2})=\frac{32}{|1-w_{1}w_{2}|^{4}}
	\]
	with respect of the Lebesgue measure of $[0,\pi/2]\times\mathbb{D}^{2}$.
\end{theorem}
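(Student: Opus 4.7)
The plan is to pull back the Haar volume form $\alpha=-i\omega\wedge\bar\omega$ through the flow-box map
\[
\Phi_k(t,\theta,w_1,w_2)=g_t\,\hat M_k(\theta,w_1,w_2),
\]
where $\hat M_k\in\SL(2,\C)$ is the matrix representing the lattice $\Psi_k(\theta,w_1,w_2)$: for $k=1$ the columns of $\hat M_1$ are the vectors $u,v$ of Proposition \ref{prop:para}, while for $k=2$ I take the basis $u,\tfrac{1}{1+i}(u+v)$, so that $\det\hat M_2=1$ as well. Since $g_t$ acts by shifting only the $t$-coordinate, in this chart $\alpha$ must decompose as $dt\wedge d\nu$, so the density $h$ is read off as the coefficient of $dt\wedge d\theta\wedge dA(w_1)\wedge dA(w_2)$ in $\Phi_k^{*}\alpha$.

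The core step is computing $\omega$ in a well-chosen holomorphic chart on $\SL(2,\C)$. I introduce coordinates $(a,w_1,w_2)$ in which
\[
\Phi=\begin{pmatrix} a & a w_1\\ b w_2 & b\end{pmatrix},\qquad b=\frac{1}{a(1-w_1w_2)},
\]
so that the diagonal flow orbit is carried by $a$. Writing $D=1-w_1w_2$, the partial derivatives $\partial_a\Phi,\partial_{w_1}\Phi,\partial_{w_2}\Phi$ are rational in $(a,w_1,w_2)$, and a direct $4\times4$ determinant expansion for $\omega(\partial_a,\partial_{w_1},\partial_{w_2})=\det(\Phi,\partial_a\Phi,\partial_{w_1}\Phi,\partial_{w_2}\Phi)$, after factoring $1/(aD^{2})^{2}$ from the bottom two rows and performing the column operation $C_2\leftarrow C_2-\tfrac{1}{a}C_1$ (which wipes out the top two entries of the new $C_2$), collapses to
\[
\omega=-\frac{2}{a\,D^{2}}\,da\wedge dw_1\wedge dw_2.
\]
The analogous calculation with $\hat M_2$ yields the same expression: the extra $1/(1+i)$ factors are absorbed by the determinant constraint $\det\hat M_2=1$.

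From this formula, converting $da\wedge d\bar a=-2i\,dA(a)$ and tracking signs through the reordering in six real dimensions gives
\[
\alpha=\frac{32}{|a|^{2}|D|^{4}}\,dA(a)\wedge dA(w_1)\wedge dA(w_2).
\]
To finish, change variables from $a$ to $(t,\theta)$: from Proposition \ref{prop:para}, $a$ equals a unit complex factor times $e^{t+i\theta}/\sqrt{|D|}$, and the real Jacobian of $(t,\theta)\mapsto(\Re a,\Im a)$ at fixed $(w_1,w_2)$ is $|a|^{2}$, so $dA(a)=|a|^{2}\,dt\,d\theta$. The two $|a|^{2}$ factors cancel exactly and
\[
\alpha=\frac{32}{|1-w_1w_2|^{4}}\,dt\wedge d\theta\wedge dA(w_1)\wedge dA(w_2),
\]
which identifies the density $h$ on each component $T_k$ and completes the proof.

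The main obstacle is the $4\times4$ determinant: its six entries are rational in $(a,w_1,w_2)$ and nothing obvious suggests the clean answer $-2/(aD^{2})$ until the column operation combines with the determinant constraint $abD=1$. Once that formula is secured, the exact cancellation of $|a|^{2}$ between the Haar density and the flow Jacobian is forced by the bi-invariance of $\omega$ under $\SL(2,\C)$, which is also the reason the final density depends only on $(w_1,w_2)$ and not on $\theta$. The constant $32$ is, as noted in the statement, an artifact of the normalization $\alpha=-i\omega\wedge\bar\omega$.
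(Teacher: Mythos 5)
Your proof is correct and follows essentially the same route as the paper: the paper factors $\Psi_k=\Phi_k\circ F_k$, computes $(\Phi_{k*}\omega)_{p}=\tfrac{-2}{u_1(1-w_1w_2)^2}\,du_1\wedge dw_1\wedge dw_2$ via the same $4\times4$ determinant (with $u_1$ in the role of your $a$), handles $\bar\omega$ by conjugation to get the factor $32/(|u_1|^2|1-w_1w_2|^4)$, and then cancels $|u_1|^2$ against the flow Jacobian $r^2$. Your flow-box reorganization (treating $a=u_1$ as a single complex coordinate absorbing both $\theta$ and $t$, with $dA(a)=|a|^2\,dt\,d\theta$) is a slightly tidier packaging of the same computation, but the underlying steps — the determinant, the conjugation argument, the cancellation of the $(1+i)$ factors for $k=2$, and the $|u_1|^2$ cancellation — coincide with the paper's.
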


Before we prove the above theorem, we wish to compute the volume of the space of lattices $\operatorname{SL}%
(2,\mathbb{C})/\operatorname{SL}(2,\mathbb{Z}[i])$. This volume can be deduced from the
volume of $\operatorname{SL}(2,\mathbb{Z}[i]))\backslash\mathbb{H}_{3}$ where
$\mathbb{H}_{3}=\mathbb{C}+j\mathbb{R}_{>0}$ is the three dimensional
hyperbolic space.  Indeed,
consider the left action of $\operatorname{SL}(2,\mathbb{C})$ on the three
dimensional hyperbolic space $\mathbb{H}_{3}$ defined by
\[
M.(z+rj)=\frac{(az+b)(\bar c\bar z+\bar d)+a\bar cr^{2}+jr}{|cz+d|^{2}%
	+|c|^{2}r^{2}}
\]
for $M=%
\begin{pmatrix}
	a & b\\
	c & d
\end{pmatrix}
\in\operatorname{SL}(2,\mathbb{C})$ and $z+rj\in\mathbb{H}_{3}$. The
stabilizer of $j$ is $K=\operatorname{SU}(2,\mathbb{C})$. Choosing an
appropriate Haar measure $\mu_{2}$ on $\operatorname{SL}(2,\mathbb{C})$, we
have
\[
\operatorname{Vol}(\operatorname{SL}(2,\mathbb{C})/\operatorname{SL}%
(2,\mathbb{Z}[i]))=\operatorname{Vol}(\operatorname{SL}(2,\mathbb{Z}%
[i]))\backslash\mathbb{H}_{3})\times\operatorname{Vol}(\operatorname{SU}%
(2,\mathbb{C})).
\]
The measure $\mu_{2}$ can be chosen in order that in the above
formula,
\[
\operatorname{Vol}(\operatorname{SU}(2,\mathbb{C}))=\operatorname{Vol}%
(S_{3})=2\pi^{2}
\]
is the volume of the unit sphere $S_{3}$ with respect to the standard
Euclidean distance and $\operatorname{Vol}(\operatorname{SL}(2,\mathbb{Z}%
[i]))\backslash\mathbb{H}_{3})$ is computed with respect to the hyperbolic
metric on $\mathbb{H}_{3}$. In that case
\[
\operatorname{Vol}(\operatorname{SL}(2,\mathbb{Z}[i]))\backslash\mathbb{H}%
_{3})=\frac{|d|^{\tfrac32}}{4\pi^{2}}\zeta_{K}(2)
\]
where $d=-4$ is the discriminant of the quadratic field $K=\mathbb{Q}%
+i\mathbb{Q}=\mathbb{Q}[\sqrt{-1}]$ and $\zeta_{K}(s)=\sum\frac{1}%
{(a^{2}+b^{2})^{s}}$ where the sum is computed over all the nonzero ideals
$(a+ib)\mathbb{Z}[i]$ in $\mathbb{Z}[i]$ (see \cite{ElGrMe} p. 311-312).
Actually, $\zeta_{K}(2)=\frac{\pi^{2}}{6}C$ where $C=\sum_{n\geq0}%
\frac{(-1)^{n}}{(2n+1)^{2}}$ is the Catalan number.

So we have two Haar measures on $\operatorname{SL}(2,\mathbb{C}%
)/\operatorname{SL}(2,\mathbb{Z}[i])$, one obtained with the volume form
$\alpha$ and a second obtained from $\mathbb{H}_{3}$ and $\operatorname{SU}%
(2,\mathbb{C})$. It is possible to compute the
normalization factor between the two Haar measures $\mu_1$ and $\mu_2$, in fact 
\[
\mu_1=16 \mu_2,
\]
which leads to 
\begin{align*}
	\operatorname{Vol}_{\alpha}(\operatorname{SL}(2,\mathbb{C})/\operatorname{SL}%
	(2,\mathbb{Z}[i]))&=16\times(2\pi^2)\times(\frac{|-4|^{\tfrac32}}{4\pi^{2}}\zeta_{K}(2))\\
	&=64\zeta_{K}(2)=\frac{32\pi^{2}}{3}\sum_{n\geq0}%
	\frac{(-1)^{n}}{(2n+1)^{2}}.
\end{align*}

\begin{proof}[Proof of Theorem \ref{thm:densityinvariantmeasure2}]
	The parametrizations $\Psi_{k}$, $k=1,2$ factor in compositions $\Psi_{k}%
	=\Phi_{k}\circ F_{k}$. Indeed, let $\Phi_{k}:\mathbb{C}^{*}\times
	\mathbb{D}^{2}\rightarrow\operatorname{SL}(2,\mathbb{C})$, $k=1,2$, be the
	maps defined by
	\[
	\Phi_{1}(u_{1},w_{1},w_{2})=
	\begin{pmatrix}
		u_{1} & u_{1}w_{1}\\
		v_{2}w_{2} & v_{2}%
	\end{pmatrix}
	\]
	where $v_{2}=v_{2}(u_{1},w_{1},w_{2})=\frac{1}{u_{1}(1-w_{1}w_{2})}$ and
	\[
	\Phi_{2}(u_{1},w_{1},w_{2})=
	\begin{pmatrix}
		u_{1} & u_{1}w_{1}\\
		v^{\prime}_{2}w_{2} & v^{\prime}_{2}%
	\end{pmatrix}
	\begin{pmatrix}
		1 & \frac{1}{1+i}\\
		0 & \frac{1}{1+i}%
	\end{pmatrix}
	\]
	where $v^{\prime}_{2}=v^{\prime}_{2}(u_{1},w_{1},w_{2})=(1+i)v_{2}(u_{1}%
	,w_{1},w_{2})$. Let $F_{k}:\mathbb{R}\times\mathbb{D}^{2}\rightarrow
	\mathbb{C}^{*}\times\mathbb{D}^{2}$, $k=1,2$, be the maps defined by
	$F_{k}(\theta,w_{1},w_{2})=(u_{1}=re^{i\theta},w_{1},w_{2})$ where
	$r=\frac{k^{1/4}}{\sqrt{|1-w_{1}w_{2}|}}$. By definition, $\Psi_{k}=\Phi
	_{k}\circ F_{k}$.
	
	The first step is to compute the pull back $\Phi_{k*}\omega$. Let
	$p=(u_{1},w_{1},w_{2})$. Straightforward calculations lead to
	\begin{align*}
		(\Phi_{1*}\omega)_{p}(\tfrac{\partial}{\partial u_{1}},\tfrac{\partial
		}{\partial w_{1}},\tfrac{\partial}{\partial w_{2}})  &  = \det%
		\begin{pmatrix}
			u_{1} & 1 & 0 & 0\\
			u_{1}w_{1} & w_{1} & u_{1} & 0\\
			v_{2}w_{2} & w_{2}\frac{\partial v_{2}}{\partial u_{1}} & w_{2}\frac{\partial
				v_{2}}{\partial w_{1}} & v_{2}+w_{2}\frac{\partial v_{2}}{\partial w_{2}}\\
			v_{2} & \frac{\partial v_{2}}{\partial u_{1}} & \frac{\partial v_{2}}{\partial
				w_{1}} & \frac{\partial v_{2}}{\partial w_{2}}\\
			&  &  &
		\end{pmatrix}
		\\
		&  =\frac{-2}{u_{1}(1-w_{1}w_{2})^{2}},
	\end{align*}
	hence
	\[
	(\Phi_{1*}\omega)_{p}=\frac{-2}{u_{1}(1-w_{1}w_{2})^{2}}du_{1}\wedge dw_{1}\wedge
	dw_{2}.
	\]
	Using that $v^{\prime}_{2}=(1+i)v_{2}$, we obtain
	\begin{align*}
		(\Phi_{2*}\omega)_{p}  &  =\det%
		\begin{pmatrix}
			1 & \frac{1}{1+i}\\
			0 & \frac{1}{1+i}%
		\end{pmatrix}
		^{2}\frac{-2(1+i)^{2}}{u_{1}(1-w_{1}w_{2})^{2}}du_{1}\wedge dw_{1}\wedge
		dw_{2}\\
		&  =\frac{-2}{u_{1}(1-w_{1}w_{2})^{2}}du_{1}\wedge dw_{1}\wedge dw_{2}.
	\end{align*}
	
	Let us now use the conjugation. Consider the maps $c:\C^*\times \D^2\fff \C^*\times \D^2$ and $C:\SL(2,\C)\fff\SL(2,\C)$ defined by
	\begin{align*}
		c(u_1,w_1,w_2)&=(\overline u_1,\overline w_1,\overline w_2),\\
		C(\begin{pmatrix}
			a&b\\
			c&d
		\end{pmatrix}
		)&=
		\begin{pmatrix}
			\overline a&\overline b\\
			\overline c&\overline d
		\end{pmatrix}.
	\end{align*} 
	The form $\overline\omega$ is defined by $\overline \omega=C_*\omega$.
	Since $C\circ\Phi_k=\Phi_k\circ c$, we have
	\begin{align*}
		\Phi_{k*}\overline \omega&=(C\circ\Phi_k)_*\omega\\
		&=(\Phi_k\circ c)_*\omega\\
		&=c_*\Phi_{k*}\omega.
	\end{align*}
	With  $u_{1}=u_{11}+iu_{12},\,w_{1}=w_{11}+iw_{12}$ and $w_{2}=w_{21}+iw_{22}$, we have
	\[
	du_{1}=du_{11}+idu_{12} \text{ and } \overline{du_{1}}=c_*du_1=du_{11}-idu_{12}.
	\]
	Hence, 
	\begin{align*}
		c_*(\frac{-2}{u_{1}(1-w_{1}w_{2})^{2}}du_{1}\wedge dw_{1}\wedge
		dw_{2})&=\frac{-2}{c(u_{1})(1-c(w_{1})c(w_{2}))^{2}}c_*du_{1}\wedge c_*dw_{1}\wedge
		c_*dw_{2}\\
		&=\overline{\frac{-2}{u_{1}(1-w_{1}w_{2})^{2}}}\overline{du_{1}}\wedge \overline{dw_{1}}\wedge
		\overline{dw_{2}}
	\end{align*}
	Therefore, in coordinates $(u_{1},w_{1},w_{2})$, the Haar measure is
	associated with the differential form
	\begin{align*}
		(\Phi_{k*}\alpha)_{p}  &  =-i\frac{4}{|u_{1}|^{2}|1-w_{1}w_{2}|^{4}}%
		du_{1}\wedge dw_{1}\wedge dw_{2}\wedge\overline{du_{1}}\wedge\overline{dw_{1}%
		}\wedge\overline{dw_{2}}\\
		&  =\frac{32}{|u_{1}|^{2}|1-w_{1}w_{2}|^{4}}du_{11}\wedge du_{12}\wedge
		dw_{11}\wedge dw_{12}\wedge dw_{21}\wedge dw_{21}.
	\end{align*}

	In coordinates $(u_{1},w_{1},w_{2})$, the diagonal flow $g_{t}$ writes
	\[
	g_{t}(u_{1},w_{1},w_{2})=(e^{t}u_{1},w_{1},w_{2})
	\]
	and is associated with the vector field $X(p)=(u_{1},0,0)$.
	
	In order to compute the measure induced by the Haar measure and the flow
	$g_{t}$, it is enough to compute the Jacobian of the map
	\[
	(t,\theta,w_{1},w_{2})\rightarrow g_{t}\circ F_{k}(\theta,w_{1},w_{2}%
	)=(re^{t+i\theta},w_{1},w_{2})
	\]
	at $(0,\theta,w_{1},w_{2})$. It is the $6\times6$ determinent
	\[
	\det%
	\begin{pmatrix}
		r\cos\theta & -r\sin\theta & . & . & . & .\\
		r\sin\theta & r\cos\theta & . & . & . & .\\
		0 & 0 & 1 & 0 & 0 & 0\\
		0 & 0 & 0 & 1 & 0 & 0\\
		0 & 0 & 0 & 0 & 1 & 0\\
		0 & 0 & 0 & 0 & 0 & 1\\
	\end{pmatrix}
	=r^{2}.
	\]
	Finally, we obtain the density
	\begin{align*}
		h(\theta,w_{1},w_{2})=\frac{32 r^{2}}{|u_{1}|^{2}|1-w_{1}w_{2}|^{4}}=\frac{32
		}{|1-w_{1}w_{2}|^{4}}.
	\end{align*}
	
\end{proof}

\section{Dirichlet best constant, proof of Theorem \ref{thm:Dirichlet}}
Let $\ttt\in\C$. The Dirichlet constant associated with $\ttt$ is the infimum $C(\ttt)$ of  constants $C$ such that for all real number $Q\geq 1$ there exist $p,q\in\ZZ$ such that
\[
\left\{
\begin{array}{l}
	0<|q|< Q\\
	|q\ttt-p|\leq \frac{C}{Q}
\end{array}
\right..
\]
The best constant in Theorem \ref{thm:Dirichlet} is then $C_D=\sup\{C(\ttt):\ttt\in\C\}$.

Let $(p_n,q_n)\in\ZZ^2$, $n\in I_{\ttt}\subset\N$, be a sequence of best approximations vectors of $\ttt$ such that
\[
1=|q_0|<|q_1|<\dots<|q_n|<\dots, 
\]  
and including all the denominators: if $(p,q)$ is a best approximation vector then there is an $n$ such that $|q|=|q_n|$. The sequence is infinite, i.e. $I_{\ttt}=\N$, iff $\ttt\notin\Q[i]$.

Then, it is clear that
\[
C(\ttt)=\sup\{|q_{n+1}||q_n\ttt-p_n|:n,n+1\in I_{\ttt}\}.
\]
If we want to study the best Dirichlet constant for all large enough $Q$ when $\ttt\notin \Q[i]$, we have to use the constant 
\[
C'(\ttt)=\limsup_{n\fff\infty}|q_{n+1}||q_n\ttt-p_n|.
\]
instead of the constant $C(\ttt)$.
By Proposition \ref{prop:best}, the sequence of best approximation of $\ttt$ is the sequence of minimal vectors of the lattice
\[
\LL_{\ttt}=\begin{pmatrix}
	1&-\ttt \\
	0&1
\end{pmatrix}\Z[i]^2=M_{\ttt}\Z[i]^2.
\]
More precisely, the sequence $M_{\ttt}\begin{pmatrix}p_n\\q_n\end{pmatrix}$ contains  exactly one element equivalent to any minimal vector of $\LL_{\ttt}$ with nonzero second coordinate. It follows that the best Dirichlet constant $C_D$ is bounded above by 
\[
C_S=\sup|u_1||v_2|
\]
where the supremum is taken over all Gauss unimodular lattices $\LL$ in $\C^2$ and all the pairs  $u=(u_1,u_2), v=(v_1,v_2)$ of consecutive minimal vectors in $\LL$ with $0<|u_2|<|v_2|$.    The proof now goes in two steps :
\begin{enumerate}
	\item We prove that $C_S=\frac{1}{\sqrt{6-3\sqrt 3}}=\frac{\sqrt 2}{ 3-\sqrt 3}$,
	\item We prove that for almost all $\ttt\in\C$, $C'(\ttt)=C_S$.
\end{enumerate}
\subsection{Step 1}
\subsubsection{ A first reduction to compute $C_S$} 
Let $u=(u_1,u_2)$ and $v=(v_1,v_2)$ be two consecutive minimal vectors of a unimodular Gauss lattice $\LL$ in $\C^2$.  Then $|u_1|> |v_1|$ and $|v_2|>|u_2|$ and by Theorem \ref{thm:index}, the index of the sublattice $\ZZ u+\ZZ v$ in $\LL$, is one or two. Thus we can write
\[
\left\{
\begin{array}{ll}
	u=(u_1,v_2w_2),&|w_2|<1\\
	v=(u_1w_1,v_2),&|w_1|<1
\end{array}
\right.
\]   
and $|\det_{\C}(\ZZ u+\ZZ v)|=|u_1v_2(1-w_1w_2)|=1$ or $\sqrt 2$ according to  index $1$ or $2$. 
Set
\[
C_1=\sup u_1v_2=\sup\frac{1}{|1-w_1w_2|}
\]
where the supremum is taken over all unimodular lattice $\LL$ and all pairs of consecutive minimal vectors $u,v$ of index $1$ in $\LL$ and set \[
C_2=\sup u_1v_2=\sup\frac{\sqrt 2}{|1-w_1w_2|}
\]
where the supremum is taken over all unimodular lattice $\LL$ and all pairs of consecutive minimal vectors $u,v$ of index $2$ in $\LL$. Then 
\[
C_S=\max(C_1,C_2).
\]
Thanks to Proposition \ref{prop:sym}, using the symmetries associated with $\varphi\in \D_8$, we can suppose that $w_1\in \CC$. We can now evaluate $C_1$ and $C_2$ using Corollary \ref{cor:ConstraintsLarge} that give necessary and sufficient conditions on $w_1$ and $w_2$ in order that $u$ and $v$ are consecutive minimal vectors. 
\subsubsection{We show that $C_1=\frac{1}{\sqrt{6-3\sqrt 3}}$}
We want to bound above the function $f(w_1,w_2)=\frac{1}{|1-w_1w_2|}$.
In the particular case $w_1=0$, $f(w_1,w_2)=1$ so that $f(w_1,w_2)\leq \frac{1}{\sqrt{6-3\sqrt 3}}$. From now on, we suppose $w_1\neq 0$.

Recall the notations \begin{align*}
	&  Red_{1}=\D(i,1),\, \,Red_{2}=\D(-i,1)\\
	&  Blue_{1}=\D(\tfrac{1-i}{2},\tfrac1{\sqrt2}),\,Blue_{2}=\D(1+i,1)\\
	&  Green_{1}=\D(1+i,1),\,Green_{2}=\D(\tfrac{1-i}{2},\tfrac1{\sqrt2}).
\end{align*}
Let  $u=(u_{1},v_{2}w_{2})$ and
$v=(u_{1}w_{1},v_{2})$ be two vectors in $\C^2$ with $|u_{1}|,|v_{2}|>0$, $|w_1|,|w_2|<1$, and  $w_{1}\in\mathcal{C}\setminus\{0\}$. 

By Corollary \ref{cor:ConstraintsLarge},		zero is the only vector of $\ZZ u+\ZZ v$ that is in $\overset{o}C(u,v)$ iff $w_{2}\in\overline{\mathcal{D}}$ and one of the four conditions

\begin{enumerate}
	\item $w_{2}\in Green_{2}$ and $w_{1}\notin Red_{1}\cup Green_{1}$,
	
	\item $w_{2}\in Red_{2}\setminus Green_{2}$ and $w_{1}\notin Red_{1}$,
	
	\item $w_{2}\notin Red_{2}\cup Blue_{2}$,
	
	\item $w_{2}\in Blue_{2}$ and $w_{1}\notin Blue_{1}$,
\end{enumerate}
holds.

So we have to compute the supremum of the function $f(w_1,w_2)=\frac{1}{|1-w_1w_2|}$ over the four regions defined by (1), (2), (3) and (4).

In the following we assume the arguments of  complex numbers are in $[0,2\pi[$.\medskip

{\sc Case 1: $(w_1,w_2)\in \CC\setminus \{0\}\times \overline\DD$, $w_{2}\in Green_{2}$ and $w_{1}\notin Red_{1}\cup Green_{1}$.} 

We have to minimize the distance from $w_1w_2$ to $1$ when $(w_1,w_2)$ is in this region. 
If $w_1$ is on the circle of radius $r_1$ centered at $0$ and $w_2$ on the circle of radius $r_2$ centered at $0$, the point $w_1w_2$ is on the circle of radius $r_1r_2$ and will be closest to $1$ when the arguments of $w_1$ and $w_2$ are  maximal.
It follows that the infimum is reached when $w_1$ and $w_2$ are in the following arcs of circle (see Figure 3 in subsection \ref{subsec:constraints})
\begin{enumerate}
	\item[(a)] $w_1$ is in the arc $\cc_a$ of the circle $\cc(i,1)$ from  $z_0=0$ to $z_1=1/2+(1-\sqrt 3/2)i$ (positive orientation), 
	\item[(b)] $w_1$ is in the arc $\cc_b$ of the circle $\cc(1+i,1)$ from  $z_1$ to $z_2=1$
\end{enumerate} 
and
\begin{itemize}
	\item[(c)] $w_2$ is in the arc $\cc_c$ of the circle $\cc(1,1)$ from  $z_0$ to $z_3=-iz_1$,
	\item[(d)] $w_2$ is in the arc $\cc_d$ of the circle $\cc(1-i,1)$ from  $z_3$ to $z_4=-i$.
\end{itemize}
We are going to show that the infimum of $|1-w_1w_2|$ is 
\[r=|1-z_1z_3|=\sqrt{6-3\sqrt 3}.
\]

{\sc Sub-case $w_1\in\cc_a$ and $w_2\in\cc_c$.}
Since $|w_1w_2|\leq |z_1z_3|<1$ and $3\pi/2\leq \arg w_1w_2\leq \arg z_1z_3$. Now  $z_1z_3=1-\sqrt 3/2-i(\sqrt 3-3/2)$, $|z_1z_3|=2-\sqrt 3$ and $\arg z_1z_3=2\pi-\pi/3$, hence $w_1w_2$ is in the sector
\[
S=\{z\in \C: 3\pi/2\leq \arg w_1w_2\leq  2\pi-\pi/3,\, |z|\leq 2-\sqrt 3 \}.
\]
Since $z_1z_3\in \cc(1,r)$ and $2-\sqrt 3<1/2$, this sector doesn't intersect the open disk $\D(1,r)$, hence $|1-w_1w_2|\leq r$. \medskip

{\sc Sub-case $w_1\in\cc_a$ and $w_2\in\cc_d$.} Suppose first that $w_1\notin D(1,r)$.
It is enough to prove that $\Re( w_1w_2)\leq 1-r$. We have $\Re( w_1w_2)=|w_1w_2|\cos(\arg w_1w_2)$. As before $\arg w_1w_2\in[3\pi/2, \arg z_1z_3]$, so that 
\[
\cos(\arg w_1w_2)\leq \cos(\arg z_1z_3)=\frac12.
\] 
Let $w_0=x+iy$ be the point in $\cc_a$ on the circle $\cc(1,r)$. We have that $|w_1|\leq |w_0|$ so that 
$\Re(w_1w_2)=|w_1w_2|\cos(\arg w_1w_2)\leq |w_0|/2$. $w_0=x+iy$ can easily be computed because its real part is solution of an equation of degree $2$. We find that $x\leq 0.103...$ and $y\leq 0.0054...$ so that $|w_0|\leq 0.11$. Since $1-r\geq 0.10$ we are done.\\

When $w_1\in D(1,r)$ and in the remaining cases below, we shall use the following simple lemma. It is an easy consequence of the fact that two circles meet in  two points at most.
\begin{lemma}\label{lem:circle}
	Let $\cc_1$ and $\cc_2$ be two circles in the plane and $p_1,p_2$ and $p_3$ be three distinct points in $\cc_1$. If $p_1$ and $p_2$ are not in the interior of $\cc_2$ while $p_3$ is in the closed disk associated with $\cc_2$, then the closed arc of the circle $\cc_1$ between $p_1$ and $p_2$ that doesn't contain $p_3$, doesn't intersect the interior of $\cc_2$.	
\end{lemma}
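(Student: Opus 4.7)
The plan is to reduce the lemma to the elementary fact that two distinct circles in the plane meet in at most two points, combined with the intermediate value theorem applied to a continuous ``signed distance'' function on $\cc_1$.

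First I would dispose of the trivial case $\cc_1 = \cc_2$: then no point of $\cc_1$ lies in the interior of $\cc_2$, and the conclusion is automatic. So assume $\cc_1 \neq \cc_2$, which gives the key bound $|\cc_1 \cap \cc_2| \leq 2$. Let $c_2$ and $R_2$ denote the center and radius of $\cc_2$, and define the continuous function $d : \cc_1 \to \R$ by $d(p) = |p - c_2| - R_2$. A point $p \in \cc_1$ lies in the open disk bounded by $\cc_2$ iff $d(p) < 0$, on $\cc_2$ iff $d(p) = 0$, and outside the closed disk iff $d(p) > 0$.

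Let $\alpha$ be the closed arc of $\cc_1$ from $p_1$ to $p_2$ not containing $p_3$, parametrized by $\alpha : [0,1] \to \cc_1$ injectively with $\alpha(0) = p_1$ and $\alpha(1) = p_2$. By hypothesis $d(\alpha(0)), d(\alpha(1)) \geq 0$ and $d(p_3) \leq 0$. Suppose for contradiction that $d(\alpha(t_0)) < 0$ for some $t_0 \in (0,1)$. Then the open set $\{t \in [0,1] : d(\alpha(t)) < 0\}$ contains $t_0$ and has a connected component $(a,b) \subset (0,1)$ with $d(\alpha(a)) = d(\alpha(b)) = 0$, so $\alpha(a)$ and $\alpha(b)$ are two distinct points of $\cc_1 \cap \cc_2$ lying in the interior of $\alpha$.

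Now consider the complementary arc $\beta$ from $p_2$ to $p_1$ through $p_3$. If $d(p_3) < 0$, the same intermediate value argument applied to $\beta$ produces at least two further points of $\cc_1 \cap \cc_2$ on $\beta$, giving $|\cc_1 \cap \cc_2| \geq 4$. If $d(p_3) = 0$, then $p_3 \in \cc_1 \cap \cc_2$ and $p_3$ is distinct from $\alpha(a), \alpha(b)$ (since $p_3 \notin \alpha$ and $p_3 \neq p_1, p_2$), giving $|\cc_1 \cap \cc_2| \geq 3$. Either way we contradict $|\cc_1 \cap \cc_2| \leq 2$, so no such $t_0$ exists, which is the desired conclusion. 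The only real subtlety is checking that the two endpoints $\alpha(a), \alpha(b)$ of the sign-change interval are actually distinct and actually lie strictly inside $\alpha$, so they are not already counted among $p_1, p_2, p_3$; this is immediate from $0 < a < b < 1$ and injectivity of $\alpha$.
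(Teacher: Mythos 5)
Your approach—reduce to the bound $|\cc_1 \cap \cc_2| \le 2$ for distinct circles and use the intermediate value theorem on the signed distance $d$—is exactly what the paper has in mind (the paper gives no proof, only the remark that the lemma ``is an easy consequence of the fact that two circles meet in two points at most''). However, there is a genuine gap at the step you yourself flag as ``the only real subtlety.'' You assert $0 < a < b < 1$, but nothing in the hypotheses forces $a > 0$ or $b < 1$: if $d(p_1) = 0$ (i.e.\ $p_1 \in \cc_2$, which is allowed since $p_1$ is only assumed not to be in the \emph{interior} of $\cc_2$) and $d \circ \alpha < 0$ on a right neighborhood of $0$, then $a = 0$ and $\alpha(a) = p_1$; symmetrically $b = 1$ and $\alpha(b) = p_2$ is possible. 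In that extreme case ($a=0$, $b=1$, $d(p_3)<0$), the two IVT zeros you find on $\beta$ can likewise be $p_1$ and $p_2$, so the four points you count collapse to just $\{p_1, p_2\}$ and you get $|\cc_1 \cap \cc_2| \geq 2$, which is no contradiction.

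This scenario is in fact geometrically impossible—if $d \le 0$ on all of $\cc_1$ then $\cc_1$ lies inside the closed disk bounded by $\cc_2$, and a circle distinct from $\cc_2$ contained in that closed disk can touch $\cc_2$ at most once, contradicting $p_1 \ne p_2$—but ruling it out requires exactly this extra geometric input (or, equivalently, the fact that when two circles cross at two points the two arcs of $\cc_1$ lie on opposite sides of $\cc_2$), which your counting argument does not supply. Note that your handling of the $d(p_3) = 0$ sub-case is fine as written, since $\alpha(a), \alpha(b) \in \alpha$ while $p_3 \notin \alpha$ makes $p_3$ a new point regardless of whether $a=0$ or $b=1$; the problem is only with the $d(p_3) < 0$ sub-case. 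So: right strategy, but the endpoint edge case needs to be closed rather than dismissed.
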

We now use the lemma with $\cc_1=w_1\cc(1-i,1)$ and $\cc_2=\cc(1,r)$. Since $w_1\in \cc_a$ and $z_3\in\cc_c$, the products $w_1z_3$ is not in the interior of $\cc_2$. Next   $w_1z_4=-iw_1\in\cc(1,1)$ is not in the interior of $\cc_2$ while the product $w_1\times 1$ is in the closed disk associated with $\cc_2$, therefore, by Lemma \ref{lem:circle}, $w_1\cc_d$ doesn't meet the interior of $\cc_2$.\medskip 

{\sc Sub-case $w_1\in\cc_b$ and $w_2\in\cc_c$.} 
We have $w_1w_2=w'_1w'_2$ with $w'_1=iw_2\in \cc_a$ and $w'_2=-iw_1\in\cc_d$ so we are done thanks to the above case.\medskip

{\sc Sub-case $w_1\in\cc_b$ and $w_2\in\cc_d$.} We now show that $|1-w_1w_2|\geq r$ when $w_1\in\cc_b$  and $w_2\in \cc_d$.
We fix $w_2$. The points $w_2z_1$ and $w_2z_2=w_2$ are not in the interior of $\cc_2$ while $w_2\times i$ is in the interior of $\cc_2$, therefore, by Lemma \ref{lem:circle}, $w_2\cc_b$ doesn't meet the interior of $\cc_2$. \medskip

{\sc Case 2: $(w_1,w_2)\in \CC\setminus \{0\}\times \overline\DD$, $w_{2}\in Red_2\setminus Green_{2}$ and $w_{1}\notin Red_{1}$.}	

As before, if $w_1$ is on the circle of radius $r_1$ and $w_2$ on the circle of radius $r_2$, the point $w_1w_2$ is on the circle of radius $r_1r_2$ and will be closest to $1$ when the arguments of $w_1$ and $w_2$ are  maximal.
It follows that the infimum is reached when $w_1$ and $w_2$ are in the following arcs of circle (see Figure 3)
\begin{enumerate}
	\item[(a)] $w_1$ is in the arc $\in\cc_a$ of the circle $\cc(i,1)$ from the point $z_0=0$ to the point $z_1=z_1=i+e^{i\pi/3}=e^{i\pi/6}$ (positive orientation).
	\item[(b)] $w_2$ is in the arc $\cc_b$ of the circle $ \cc(\tfrac{1-i}{2},\tfrac{1}{\sqrt 2})$ from the point $z_0=0$ to the point $z_2=-i$ (positive orientation).
\end{enumerate}

The points three points $z_0=0$, $z_2=-i$ et $z_3=1-i$ are on  the circle $\cc_1=\cc(\tfrac{1-i}{2},\tfrac{1}{\sqrt 2})$. 
The products $z_1z_0$ and $z_1z_2=e^{-i\pi/3}$ are one the circle $\cc_2=\cc(1,1)$ and the product $z_1z_3=\sqrt 2 e^{-i\pi/12}$ is in the interior of $\cc_2$. Therefore, by Lemma \ref{lem:circle}, the arc $z_1\cc_b$ doesn't meet the interior of $\cc_2$ which means that
\[
|1-z_1w_2|\geq 1
\]
for all $w_2\in\cc_b$.

Next fix $w_2\in\cc_b$. The three points $z'_1=w_2z_0$, $z'_2=w_2z_1$ and $z'_3=w_2(1+i)$ are on the circle $\cc_3=w_2\cc(i,1)$. The point $z'_1$ is not in the interior of the circle $\cc_2$ and we just proved that $z'_2$ is not in the interior of the circle $\cc_2$ either, while $z'_3$ is on the circle $(1+i)\cc(\tfrac{1-i}{2},\tfrac{1}{\sqrt 2})=\cc_2$. Therefore, by Lemma \ref{lem:circle}, the arc $w_2\cc_a$ doesn't meet the interior of the circle $\cc_2$ which means that
\[
|1-w_2w_1|\geq 1
\]
for all $w_1\in\cc_a$.\medskip  

{\sc Case 3: $(w_1,w_2)\in \CC\setminus \{0\}\times \overline\DD$, $w_{2}\notin Red_2\cup Blue_{2}$.}

If $w_{2}\notin Red_2\cup Blue_{2}$ then either $\arg w_2\leq \pi/2$ or 
\[
\pi/2\leq \arg w_2 \leq  7\pi/6.
\] 
In the latter case,
since $w_1\in \CC$, we have
\begin{align*}
	w_1w_2\in&\{z:\pi/2\leq \arg z\leq 7\pi/6+\pi/4\}\\
	&\subset\{z:\Re z\leq 0\}.
\end{align*}
Therefore, $|1-w_1w_2|\geq 1$. In the former case, $\arg w_1w_2\geq \arg ((1-\sqrt 3/2)+i/2)=\pi/2-\pi/12$. Therefore, $|1-w_1w_2|\geq \cos(\pi/12)>r=\sqrt{6-3\sqrt 3}$ for all $w_1\in\CC$ and $w_2\notin Red_2\cup Blue_{2}$.	\medskip

{\sc Case 4: $(w_1,w_2)\in \CC\setminus \{0\}\times \overline\DD$, $w_2\in Blue_1$ and $w_{1}\notin Blue_{2}$.}

Again, if $w_1$ is on the circle of radius $r_1$ and $w_2$ on the circle of radius $r_2$, the point $w_1w_2$ is on the circle of radius $r_1r_2$ and will be closest to $1$ when the arguments of $w_1$ and $w_2$ are minimal.
It follows that the infimum is reached when $w_1$ and $w_2$ are in the following arcs of circle (see Figure 3)
\begin{enumerate}
	\item[(a)] $w_1$ in the arc $\cc_a$ of the circle $\cc(\tfrac{1-i}{2},\tfrac{1}{\sqrt 2})$ from the point $z_1=1$ to the point $z_0=0$.
	\item[(b)] $w_2$ in the arc $\cc_b$ of the circle $\cc(1,1)$ from the point $z_2=1/2+i\sqrt 3/2$ to the point $z_3=(1-\sqrt 3/2)+i/2$.
\end{enumerate}
Fix $w_2$ in $\cc_b$. Then the extremties of $w_2\cc_a$ are $0$ and $w_2$ and they are not inside the circle $\cc(1,1)$. While the point $w_2(-i)\in w_2\cc(\tfrac{1-i}{2},\tfrac{1}{\sqrt 2})$ is inside the circle $\cc(1,1)$. Therefore, the arc $w_2\cc_a$ is outside the circle $\cc(1,1)$ which means that $|1-w_1w_2|\geq 1$ for all $w_1\in\cc_a$
\subsubsection{We show that $C_2=\frac{1}{\sqrt{6-3\sqrt 3}}$}

We suppose that $u$ and $v$ are of index $2$. Thanks to Proposition \ref{prop:sym} we can suppose $w_1\in\CC$ and thanks to Corollary \ref{cor:ConstraintsLarge}, we know that,
zero  is the only vector of $\langle u,v\rangle_J$  in $\overset{o}C(u,v)$, iff $(w_{1},w_{2})\in(\mathcal{C}\setminus \D(-i,\sqrt2))\times
\overline{\mathcal{T}}$. We want to show that   $\sup|1-w_1w_2|= 3-\sqrt 3.$

Once again, if $w_1$ is on the circle of radius $r_1$ and $w_2$ on the circle of radius $r_2$, the point $w_1w_2$ is on the circle of radius $r_1r_2$ and will be closest to $1$ when the arguments of $w_1$ and $w_2$ are minimal.
It follows that the infimum is reached when $w_1$ and $w_2$ are in the following arcs of circle (see Figures 5 and 6)
\begin{enumerate}
	\item[(a)] $w_1$ is in the arc $\cc_a$ of the circle $\cc(-i,\sqrt 2)$ with extremities  $z_0=1$ and $z_1=\tfrac{\sqrt 3-1}{2}(1+i)$,
	\item[(b)] $w_2$ is in the arc $\cc_b$ of the circle $\cc(1,\sqrt 2)$ with extremities   and $z_2=i$ and $z_3=\tfrac{\sqrt 3-1}{2}(-1+i)$.
\end{enumerate}
\begin{figure}[H]
\includegraphics[width=15cm]{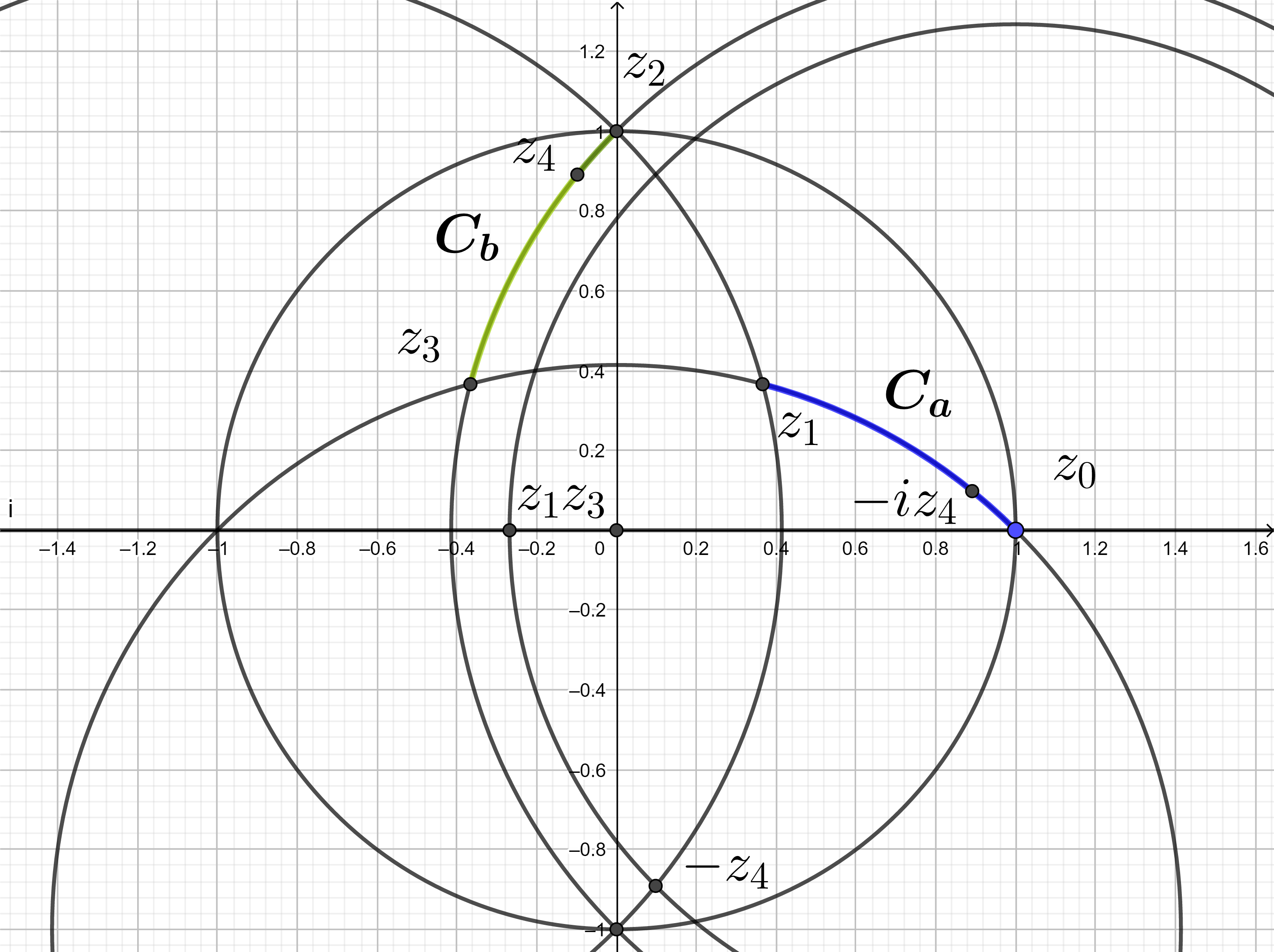}
\caption{The infimum of $|1-w_1w_2|$ on $T_2$.}
\end{figure}
A short computation shows that $\rho=|1-z_1z_3|=3-\sqrt 3=\sqrt 2 \sqrt{6-3\sqrt 3}$. 

Our objective is to show that if $w_1\in\cc_a$ and $w_2\in\cc_b$ then 
\[
|1-w_1w_2|\geq 3-\sqrt 3.
\]
It will implies that $C_2=\tfrac{\sqrt 2}{3-\sqrt 3}$.

When $w_1=z_1$, the points $w_1z_2$ and $w_1z_3$ are not inside  the circle $\cc=\cc(1,\rho)$ and the point $w_1(-i)$ is inside $\cc$. Therefore, by Lemma \ref{lem:circle}, the arc $w_1\cc_b$ doesn't meet the interior of $\cc$.

Fix $w_2\in\cc_b$. The points $w_2z_0=w_2$  and  $w_2z_1$ 
are not inside the circle $\cc$. If  the point $w_2(-1)$ were inside $\cc$, by Lemma \ref{lem:circle}, $w_2\cc_a$ would be  outside $\cc$ which means that 
\[
|1-w_1w_2|\geq 3-\sqrt 3.
\]
However, $w_2(-1)$ could be outside the circle $\cc$. Let us determine the points $w_2=x+iy\in\cc_b$ such that $w_2(-1)$ is inside the circle $\cc$: $w_2(-1)$ is inside $\cc$ iff
\begin{align*}
	&|1-w_2(-1)|^2<\rho^2\\
	\Leftrightarrow&\, (1+x)^2+y^2<(3-\sqrt 3)^2\\
	\Leftrightarrow&\,x^2+y^2+2x+1<12-6\sqrt 3.
\end{align*}
Now since $w_2\in \cc(1,\sqrt 2)$, $x^2+y^2=1+2x$, hence $w_2(-1)$ is  inside $\cc$ iff $x<\tfrac{5-3\sqrt 3}{2}$. 
So by Lemma \ref{lem:circle}, if $x\leq   \tfrac{5-3\sqrt 3}{2}$ then $|1-w_1w_2|\geq 3-\sqrt 3$. 

Call $x_0=\tfrac{5-3\sqrt 3}{2}$, $y_0=\sqrt{2-(x_0-1)^2}$ and $z_4=x_0+iy_0$.
Let $\cc'_b$ and $\cc''_b$ be the portions of the arc $\cc_b$ from $z_2=i$ to $z_4$ and from $z_4$ to $z_3$. 
Let $\cc'_a$ and $\cc''_a$ be the portions of the arc $\cc_a$ from $z_0=1$ to $(-i)z_4$ and from $(-i)z_4$ to $z_1=(-i)z_3$.

We already know that if $w_1\in \cc_a$ and $w_2\in \cc''_b$ then $|1-w_1w_2|\geq 3-\sqrt 3$. Since $w_2\in\cc_b$ and $w_1\in\cc''_a$ implies $(-i)w_2\in\cc_a$ and $iw_1\in\cc''_b$, it follows that if $w_2\in\cc_b$ and $w_1\in\cc''_b$ then
\[
|1-w_1w_2|=|1-iw_1(-1)w_2|\geq 3-\sqrt 3.
\]

So we are left with the case $w_1\in\cc'_a$ and $w_2\in\cc'_b$.
Since $|1-iw_1|=\sqrt 2$,
\begin{align*}
	|1-w_1w_2|^2&=|1-w_1(w_2-i+i)|^2=|(1-iw_1)-w_1(w_2-i)|^2\\	
	&=2+|w_1(w_2-i)|^2-2\Re((1-iw_1)\overline{w_1(w_2-i)}).	
\end{align*}
Furthermore, when $w_1\in\cc'_a$ and $w_2\in\cc'_b$,
\begin{align*}
	7\pi/4\leq \arg(1-iw_1)&\leq \arg(1-z_4)\\
	2\pi-\arg(-iz_4)\leq \arg \overline w_1&\leq  2\pi\\
	2\pi-\arg(z_4-i)\leq \arg(\overline{w_2-i})&\leq 2\pi-5\pi/4, 
\end{align*}
and   $\arg(z_4-i)=5\pi/4+\eps_1$, $\arg(-iz_4)=\eps_2$ and $\arg(1-z_4)=7\pi/4+\eps_3$ where $\eps_1,\eps_2$ and $\eps_3$ are positive and  small.
It follows that, modulo $2\pi$, we have
\begin{align*}
	7\pi/4-\arg(-iz_4)-\arg(z_4-i)&\leq \arg((1-iw_1)\overline{w_1(w_2-i)})\leq \arg(1-z_4)-5\pi/4\\
	\pi/2-\eps_1-\eps_2&\leq \arg((1-iw_1)\overline{w_1(w_2-i)})\leq \pi/2+\eps_3.
\end{align*}
Now $\eps_1=\arg(z_4-i)-5\pi/4=0.0.0518\dots\leq 0.06$, and $\eps_2=\arg(-iz_4)=0.109\dots\leq 0.11$
therefore,
\begin{align*}
	2\Re((1-iw_1)\overline{w_1(w_2-i)})&\leq 2\sqrt{2}|w_2-i|\cos(\pi/2-\eps_1-\eps_2)\\
	&\leq2\sqrt{2}\times0.15 \times 0.17 \\
	&\leq 0.08.
\end{align*}
Finally, we obtain
\[
|1-w_1w_2|\geq \sqrt{2-0.08}\geq 1.38\geq 3-\sqrt{3}
\]
and we are done.

\subsection{Step 2, $C(\ttt)=C_S$ for almost all $\ttt\in\C$.}
Consider the lattice $\LL_0=\ZZ u+\ZZ v$ defined by the vectors $u=(u_1,u_2)=r_0(1,e^{i\alpha}w_2)$ and $v=(v_1,v_1)=r_0(w_1,e^{i\alpha})$ where 
\begin{align*}
	w_1&=1-\tfrac{\sqrt3}{2}+\tfrac12 i,\\
	w_2&=(-i)w_1,\\
	r_0&=\frac{1}{\sqrt{|1-w_1w_2|}},\\
	\alpha&=-\arg(1-w_1w_2).	
\end{align*} 
The lattice $\LL_0$ is unimodular and by Theorem \ref{thm:geonumber1} (more precisely by Corollary \ref{cor:ConstraintsLarge}), the open cylinder $\overset{o}C(u,v)=\overset{o}B_{\infty}(0,r_0)$. The vectors $u$ and $v$ has been chosen in order that 
\[
|u_1v_2|=r_0^2=C_S=\frac{\sqrt 2}{3-\sqrt 3}.
\]

\begin{lemma}\label{lem:dirichletproof}
	Let $\LL$ be a lattice in $\C^2$ and let $r$ be a positive real number. Suppose that for some real number $t$, $g_t\LL\cap B_{\infty}(0,r)=\{0\}$. Then there exist two consecutive minimal vectors $u=(u_1,u_2)$ and $v=(u_2,v_2)$ in $\LL$ with $|v_2|>|u_2|$ and such that $|u_1v_2|\geq r^2$ and $|v_2|\geq re^{t}$. 
\end{lemma}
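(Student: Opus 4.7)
The plan is to translate the hypothesis into a forbidden-cylinder condition on $\LL$, and then to locate two consecutive minimal vectors straddling the threshold $re^{-t}$ by tracking the strictly decreasing sequence $(|z_{1,n}|)_{n\in I_\LL}$ of first coordinates of the minimal vectors.

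First I would rewrite the hypothesis: $g_t\LL\cap B_\infty(0,r)=\{0\}$ is equivalent to the assertion that no nonzero $(y_1,y_2)\in\LL$ simultaneously satisfies $|y_1|\leq re^{-t}$ and $|y_2|\leq re^t$, i.e.\ $\LL\cap C(re^{-t},re^t)=\{0\}$ in the cylinder notation of the paper.

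Next I would apply Lemma \ref{lem:lexico} to the infinite cylinder $C_1(re^{-t})$. It produces a minimal vector $v=(v_1,v_2)\in\LL$ obtained as a $\prec$-minimum of $\LL\cap C_1(re^{-t})\setminus\{0\}$; by construction $|v_1|\leq re^{-t}$ and, since $v\neq 0$, the forbidden-cylinder property from Step~1 forces $|v_2|>re^t$ (otherwise $v$ would lie in $C(re^{-t},re^t)$). Locating $v=X_{n^*}(\LL)$ in the sequence of minimal vectors, the $\prec$-minimality translates into $n^*=\min\{n\in I_\LL:|z_{1,n}|\leq re^{-t}\}$, because any earlier minimal vector $X_{n^{**}}$ with $n^{**}<n^*$ and $|z_{1,n^{**}}|\leq re^{-t}$ would satisfy $X_{n^{**}}\prec v$, contradicting lex-minimality.

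Then I would take $u=X_{n^*-1}(\LL)$, the predecessor of $v$ in the sequence of minimal vectors. By the minimality of $n^*$, $|u_1|>re^{-t}$, and by the strictly increasing ordering of $(|z_{2,n}|)$, $|u_2|<|v_2|$. The pair $(u,v)$ consists of two consecutive minimal vectors of $\LL$ with $|v_2|>|u_2|$, and combining $|u_1|>re^{-t}$ with $|v_2|>re^t$ yields $|u_1||v_2|>r^2$ and $|v_2|>re^t$, establishing both required inequalities.

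The main obstacle will be justifying the existence of the predecessor $u=X_{n^*-1}$: this fails only when $v$ happens to be the very first term of the sequence of minimal vectors, a degenerate case that does not arise in the application to Dirichlet's theorem since the lattices $\LL_\ttt$, and their $g_s$-orbits, always contain an axis vector $(z_1,0)$ which provides a minimal vector at $|z_2|=0$ strictly preceding any candidate $v$ with $|v_2|>re^t>0$.
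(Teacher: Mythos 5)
Your argument mirrors the paper's proof, only conjugated by $g_t$: the paper selects, inside $g_t\LL$, a minimal vector $u$ with $|u_1|\ge r$ and $|u_1|$ smallest, together with a lex-minimal $v$ in $g_t\LL\cap\overset{o}C_1(r)$, and transports by $g_{-t}$; after translation your $X_{n^*-1}(\LL)$ and $X_{n^*}(\LL)$ play exactly those roles (the only discrepancy, whether the boundary case $|v_1|=re^{-t}$ is absorbed by $u$ or by $v$, yields the same inequalities). The one place to push back is your final paragraph. You flag the possible non-existence of the predecessor $X_{n^*-1}$ and then argue that the degenerate case ``does not arise in the application,'' which leaves the lemma, as a statement about a general Gauss lattice, unproved. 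The retreat is unnecessary because the hypothesis itself excludes the bad case. If $I_\LL$ has a smallest element $n_0$, the first minimal vector $X_{n_0}$ necessarily has $z_{2,n_0}=0$: otherwise the argument of Lemma~\ref{lem:lexico}, applied to the cylinder $\{|z_2|\le |z_{2,n_0}|/2\}$, produces a minimal vector with strictly smaller $|z_2|$, contradicting the minimality of $n_0$. But a nonzero vector $(z_{1,n_0},0)\in\LL$ with $|z_{1,n_0}|\le re^{-t}$ would lie in $C(re^{-t},re^{t})=g_{-t}B_{\infty}(0,r)$, contradicting $g_t\LL\cap B_{\infty}(0,r)=\{0\}$. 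Hence $|z_{1,n_0}|>re^{-t}$, so $n^*>n_0$ and the predecessor exists for any lattice satisfying the hypothesis; no appeal to the particular form of $\LL_{\ttt}$ is needed.
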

\begin{proof}
	Let $u=(u_1,u_2)$ be a minimal vector in $g_t\LL$ with $|u_1|\geq r$ and $|u_1|$ minimal. Such a minimal vector exists because by Lemma \ref{lem:lexico}, there exist minimal vectors in $\overset{o}C_2(r)$ and such minimal vectors  have a first coordinate with modulus $\geq r$ because $g_t\LL\cap B_{\infty}(0,r)=\{0\}$. 
	Let $v=(v_1,v_2)$ be a minimal element for the lexicographic preorder $\prec$ in $g_t\LL\cap \overset{o}C_1(r)$. By Lemma \ref{lem:lexico}, $v$ is a minimal vector and again $|v_2|> r$ because $g_t\LL\cap B_{\infty}(0,r)=\{0\}$.
	If $w=(w_1,w_2)$ is a minimal vector in $g_t\LL$ with $|w_2|>|u_2|$ then $|w_1|<|u_1|$. By definition of $u$ this implies $|w_1|<r$ which implies that $w\in \overset {o}C_1(r)$ which in turn implies $v\prec w$ and therefore $|w_2|\geq |v_2|$. It follows that $u$ and $v$ are consecutive minimal vectors in $g_t\LL$  and that $B_{\infty}(0,r)\subset C(u,v)$.   It follows that  $g_{-t}u$ and $g_{-t}v$ are consecutive minimal vectors in $\LL$  and since $|v_2|>r$, we have $|e^tv_2|>re^t$. Moreover $|e^{-t}u_1e^tv_2|=|u_1||v_2|\geq r\times r$.
\end{proof}

\begin{lemma}\label{lem:open}
	Let $r$ be positive real number. The set $F$ of unimodular lattices $\LL$ in $\C^2$ such that $\LL\cap B_{\infty}(0,r)$ contains a nonzero vector, is closed in $\SL(2,\C)/\SL(2,\ZZ)$. 
\end{lemma}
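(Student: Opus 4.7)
The plan is to show that $F$ is sequentially closed by a standard lift-and-compactness argument. Take a sequence $(\LL_n)$ in $F$ converging to some $\LL \in \SL(2,\C)/\SL(2,\ZZ)$. By the definition of the topology on the quotient, one can choose representatives $M_n, M \in \SL(2,\C)$ with $\LL_n = M_n \ZZ[i]^2$, $\LL = M\ZZ[i]^2$, and $M_n \to M$ in $\SL(2,\C)$. Because each $\LL_n$ belongs to $F$, for every $n$ there is a nonzero vector $v_n = M_n x_n \in \LL_n$ with $x_n \in \ZZ[i]^2 \setminus \{0\}$ and $|v_n|_\infty \leq r$.

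Next I would argue that the coordinate vectors $x_n$ lie in a bounded subset of $\C^2$, hence (since they are Gaussian integer vectors) take only finitely many values. Indeed, $M_n \to M$ in $\SL(2,\C)$ gives a uniform upper bound on $\|M_n^{-1}\|$ for the operator norm, and from $x_n = M_n^{-1} v_n$ together with $|v_n|_\infty \leq r$ we get a uniform bound on $|x_n|_\infty$. Since $\ZZ[i]^2$ is discrete in $\C^2$, only finitely many $x_n$ occur. Passing to a subsequence, we may assume $x_n = x$ is constant with $x \in \ZZ[i]^2 \setminus \{0\}$.

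Along this subsequence, $v_n = M_n x \to M x =: v \in \LL$. The sup-norm is continuous, so $|v|_\infty \leq r$; and $v \neq 0$ because $M \in \SL(2,\C)$ is invertible and $x \neq 0$. Thus $v$ is a nonzero element of $\LL \cap B_\infty(0,r)$, which means $\LL \in F$, completing the proof that $F$ is closed.

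The only mildly delicate point is the passage from convergence in the quotient $\SL(2,\C)/\SL(2,\ZZ)$ to convergence of actual matrix representatives; this is a standard feature of the quotient topology (for $n$ large, a neighborhood of $\LL$ in the quotient is covered by a neighborhood of a fixed lift $M$ under the covering map, giving the required lifts $M_n$). Once this is in place, everything else is routine use of the discreteness of $\ZZ[i]^2$ and continuity of the $\SL(2,\C)$ action on $\C^2$.
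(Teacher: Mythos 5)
Your proof is correct and follows essentially the same route as the paper: lift to matrix representatives $M_n \to M$, bound $\|M_n^{-1}\|$ to get a uniform bound on the Gaussian integer vectors $x_n$, extract a constant subsequence by discreteness of $\ZZ[i]^2$, and conclude by continuity. Your treatment is in fact slightly cleaner than the paper's in two respects: you correctly phrase the needed estimate as an upper bound on $\|M_n^{-1}\|$ (the paper writes a lower bound on $\|M_n\|$, which as stated does not literally give the conclusion and is best read as a bound on the smallest singular value), and you dispense with the paper's preliminary renormalization of $X_n$ to $2^k X_n$ (so as to force $|Y_n|_\infty \geq r/2$), since the nonvanishing of the limit $Mx$ already follows from invertibility of $M$ and $x\neq 0$.
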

\begin{proof}
	Let $(\LL_n=M_n\ZZ^2)_n$ be a sequence of lattices in $F$. Suppose that the sequence converges to a lattice $\LL=M\ZZ^2$. We want to show that $\LL\in F$. We can suppose that the sequence of matrices $(M_n)_n$ converges to $M$ w.l.o.g..  For each $n$, there exists a nonzero vector $X_n\in\ZZ^2$ such that $Y_n=M_nX_n\in B_{\infty}(0,r)$. Changing $X_n$ into $2^kX_n$ for some non-negative integer $k$, we can suppose that $r/2\leq |Y_n|_{\infty}\leq r$. Since the matrices $M_n$ are all invertible and since the sequence $(M_n)$ is convergent, there exists $\delta>0$ such that $\|M_n\|\geq \delta>0$ for all $n$, where $\|A\|$ is the operator norm of the matrix $A$ associated with the sup norm on $\C^2$. Therefore, $|X_n|_{\infty}\leq \frac{r}{\delta}$ for $n$ large enough. Thus there exist a vector $X\in\ZZ^2$ and an increasing sequence of integers $n_k$ such that  $X_{n_k}=X$ for all $k$. 
	Since $|M_{n_k}X_{n_k}|_{\infty}=|Y_{n_k}|_{\infty}\geq r/2$, $MX=\lim_{k\fff\infty}M_{n_k}X=\lim_{k\fff\infty}Y_{n_k}$ is a nonzero vector of $\LL=M\ZZ^2$ in the ball $B_{\infty}(0,r)$, which means that $\LL\in F$.  
\end{proof}

\begin{proof}[End of proof of Theorem \ref{thm:Dirichlet} and Theorem 5 bis] In order to prove that $C'_S(\ttt)\geq r_0$ for almost all $\ttt$, by Lemma \ref{lem:dirichletproof}, it is enough to prove that the set
	\[
	\{\ttt\in [0,1]+i[0,1]:\forall T\geq 0,\forall \eps>0, \exists t\geq T, g_t\LL_{\ttt}\cap B_{\infty}(0,r_0-\eps)=\{0\}\}
	\]
	has full Lebesgue measure in $[0,1]+i[0,1]$. Suppose on the contrary, that the set
	\[
	\{\ttt\in [0,1]+i[0,1]:\exists T\geq 0,\exists \eps>0, \forall t\geq T, g_t\LL_{\ttt}\cap B_{\infty}(0,r_0-\eps)\neq\{0\}\}
	\]
	has positive Lebesgue measure. Then there exist $T\geq 0$ and $\eps>0$ such that the set 
	\[
	N=\{\LL_{\ttt}:\ttt\in [0,1]+i[0,1] \text{ and } \forall t\geq T, g_t\LL_{\ttt}\cap B_{\infty}(0,r_0-\eps)\neq\{0\}\}
	\]
	has positive measure. By definition of $N$, for all $\LL_{\ttt}\in N$ and all $t\geq T$, there exists a nonzero vector $X(\ttt,t)\in \ZZ^{2}$ such that 
	\[
	Y(\ttt,t)=g_tM_{\ttt}X(\ttt,t)\in B_{\infty}(0,r_0-\eps).
	\]
	Let  $\mathcal{H}_{\leq}$ be the subgroup of $\SL(d+1,\C)$
	defined by%
	\[
	\mathcal{H}_{\leq}=\left\{h=\left(
	\begin{array}
		[c]{cc}%
		a & 0\\
		b & a^{-1}
	\end{array}
	\right)\in\SL(2,\C): a\in\C^*,\ b\in\C \right\}.
	\]
	There exists $\delta>0$ such that for all $A\in\mathcal H_{\leq}$
	\[
	|A-Id|_{\infty}\leq \delta\Rightarrow \forall t\geq 0,\|g_tAg_t^{-1}-Id\|\leq \frac{\eps}{2r_0} 
	\]
	where $|M|_{\infty}$ is the sup norm of the matrix $M$ and $\|M\|$ is its operator norm associated with the sup norm. For all $\LL_{\ttt}\in N$,  all $t\geq T$ and all $A\in \mathcal H_{\leq}$ with $|A-Id|_{\infty}\leq \delta$, we have
	\begin{align*}
		|g_tAM_{\ttt}X(\ttt,t)|_{\infty}&=|g_tAg_t^{-1}g_tM_{\ttt}X(\ttt,t)|_{\infty}\\
		&=|(g_tAg_t^{-1}-Id)g_tM_{\ttt}X(\ttt,t)+g_tM_{\ttt}X(\ttt,t)|_{\infty}\\
		&\leq|(g_tAg_t^{-1}-Id)g_tM_{\ttt}X(\ttt,t)|_{\infty}+|g_tM_{\ttt}X(\ttt,t)|_{\infty}\\
		&\leq (\|g_tAg_t^{-1}-Id\|+1)|g_tM_{\ttt}X(\ttt,t)|_{\infty}\\
		&\leq (\frac{\eps}{2r_0}+1)(r_0-\eps)\\
		&\leq r_0-\frac{\eps}{2}.
	\end{align*}
	Therefore,
	\[
	g_t\LL\cap B_{\infty}(0,r_0-\tfrac{\eps}{2})\neq \{0\}
	\]
	for all $\LL\in B_{\mathcal H_{\leq}}(Id,\delta)N$ where $B_{\mathcal H_{\leq}}(Id,\delta)$ is the set of matrices $M$ in the subgroup $\mathcal H_{\leq}$ such that $|M-Id|_{\infty}\leq \delta$ and $B_{\mathcal H_{\leq}}(Id,\delta)N$ is the set of lattices of the shape $A\LL$ with $A\in B_{\mathcal H_{\leq}}(Id,\delta)$ and $\LL\in N$.
	
	Let $U$ be the set of unimodular lattices $\LL$ such that $\LL\cap B_{\infty}(0,r_0-\tfrac{\eps}{2})=\{0\}$. By the choice of $r_0$, the lattice $\LL_0$ is in $U$. By Lemma \ref{lem:open}, $U$ is open. So that $U$ is a nonempty open set and has a positive Haar measure  in $\SL(2,\C)/\SL(2,\ZZ)$.
	The action of the flow $g_t$, $t\in\R$, on $\SL(2,\C)/\SL(2,\ZZ)$ is ergodic, see \cite{BeMa} page 90 (it  is also a direct consequence of  Mautner's lemma and of the fact that $\SL(2,\C)$ is generated by the matrices of the form $\begin{pmatrix} 1&*\\0&1\end{pmatrix}$ and $\begin{pmatrix} 1&0\\ *&1\end{pmatrix}$).
	It follows, by Birkhoff ergodic theorem applied to the flow $g_t$ and to the function $f=1_U$ that for almost all lattices $\LL$, there exist arbitrarily large $t$ such that $g_t\LL\in U$. Now, the set $B_{\mathcal H}(Id,\delta)N$ has positive Haar measure and by construction for all lattice $\LL$ in this set and all $t\geq T$ 
	\[
	g_t\LL\cap B(r_0-\tfrac{\eps}{2})\neq \{0\},
	\]
	and therefore $g_t\LL\notin U$ for all $t\geq T$, a contradiction.
\end{proof}

\section{Search of minimal vectors in a Gauss lattice in $\C^2$}
In this subsection we address the problem of finding two consecutive minimal vectors in a Gauss lattice in $\C^2$. The first step is to find one minimal vector and the second step the next one.

Thanks to the Gauss reduction algorithm it can be done very efficiently.
\subsection{The Gauss reduction algorithm}
Given Gauss lattice in $\C^2$ we want to find a minimal vector in this lattice. This can be done with a Gauss reduction algorithm and the following observation:

\textsl{ If $\LL$ is Gauss lattice in $\C^2$ and if $u$ is a shortest vector of $\LL$ for the standard Hermitian norm then $u$ is minimal vector in $\LL$.}

Indeed, if $u=(u_1,u_2)$ is a shortest vector for the standard Hermitian norm then any vector $v=(v_1,v_2)$ in the cylinder $C(u)$ such that $|v_1|>|u_1|$ or $|v_2|<|u_2|$ has a strictly smaller Hermitian norm. 

Given a basis of a lattice in a two dimensional (real) Euclidean vector space, the Gauss reduction
algorithm provides a reduced basis of the lattice. This algorithm can be
adapted to the case of Gauss lattices in two dimensional $\mathbb{C}$-vector
space equipped with an Hermitian norm. See \cite{LPC} where Gauss reduction algorithm is
proved to work for lattices in $\mathbb{C}^{2}$ over an Euclidean ring of
integers of an imaginary quadratic field. We state their result for lattices over $\ZZ$ without proof.

\begin{definition}
	Let $E$ be a two-dimensional $\mathbb{C}$-vector space equipped with a norm
	$\|.\|$. A basis $(u,v)$ of a Gauss lattice $\Lambda=\mathbb{Z}[i]u+\mathbb{Z}%
	[i]v$ is reduced with respect to the norm $\|.\|$ if $\|u\|=\lambda
	_{1}(\Lambda,\|.\|,\C)$ and $\|v\|=\lambda_{2}(\Lambda
	,\|.\|,\C)$.
\end{definition}

Let $E$ be a two dimensional $\mathbb{C}$-vector space equipped
with an Hermitian norm $|.|_{E}$.

The Gauss reduction algorithm proceed as follows. \newline\textbf{Input:} A basis
$(u,v)$ of a Gauss lattice $\Lambda$ in $E$.

\begin{enumerate}
	\item If $|v|_{E}<|u|_{E}$, exchange $u\leftrightarrow v$.
	
	\item $A:=\operatorname{False}$
	
	\item Main loop: while $A=\operatorname{False}$
	
	\begin{enumerate}
		\item Compute $w=(a+ib)u$ the orthogonal projection of $v$ on the line
		$\mathbb{C }u$.
		
		\item Find the Gaussian integer $p$ closest to $a+ib$ and replace $v$ with $v-pu$.
		
		\item If $|u|_{E}\leq|v|_{E}$, $A:=\operatorname{True}$, else exchange
		$u\leftrightarrow v$.
	\end{enumerate}
\end{enumerate}

\textbf{Output} A reduced basis of $\Lambda$.

\begin{proposition}
	\label{prop:gauss} The above algorithm find a reduced basis of $\Lambda
	=\mathbb{Z}[i]u+\mathbb{Z}[i]v$ for the norm $|.|_{E}$ in finitely many steps.
\end{proposition}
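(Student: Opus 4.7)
The plan is to prove termination and correctness separately, using the standard toolkit of Gauss reduction adapted to Gauss lattices.

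\textbf{Termination.} I would track the quantity $|u|_E$ along iterations of the main loop. At the start of the loop the current basis is $(u,v)$ with $|u|_E\le|v|_E$. After step (b) we obtain $v':=v-pu$; if the algorithm does not terminate in step (c), then by definition $|v'|_E<|u|_E$, the vectors are swapped, and the new first basis vector has strictly smaller $|\cdot|_E$-norm than the old one. Thus the sequence $(|u_n|_E)$ of first basis vector norms across loop iterations is strictly decreasing. Because $\Lambda$ is a discrete subgroup of the real four-dimensional space underlying $E$, the set $\{x\in\Lambda:|x|_E\le|u_0|_E\}$ is finite, so only finitely many distinct values of $|\cdot|_E$ on $\Lambda^*$ lie below $|u_0|_E$. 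A strictly decreasing sequence in this finite set of values must terminate.

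\textbf{Correctness: reducedness.} Suppose the algorithm halts with basis $(u,v)$. Let $w=\alpha u$ with $\alpha=a'+ib'\in\C$ be the orthogonal projection of $v$ on $\C u$ with respect to the Hermitian form associated to $|\cdot|_E$. Because step (b) chose the Gaussian integer nearest to the projection coefficient, $\alpha$ is the representative of smallest modulus of its class in $\C/\Z[i]$, so
\[
|\alpha|\le \tfrac1{\sqrt2}\qquad\text{and}\qquad |\alpha+q|\ge|\alpha|\text{ for every }q\in\Z[i].
\]
Writing $v=\alpha u+(v-w)$ with $(v-w)\perp u$, a generic lattice vector $x=mu+nv$ ($m,n\in\Z[i]$) satisfies
\[
|x|_E^{2}=|m+n\alpha|^{2}\,|u|_E^{2}+|n|^{2}\bigl(|v|_E^{2}-|\alpha|^{2}|u|_E^{2}\bigr).
\]
Set $L=|u|_E^{2}$ and $M=|v|_E^{2}$, so that $L\le M$. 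I would now split into three cases on $n$:
\begin{itemize}
\item If $n=0$ and $m\neq 0$, then $|x|_E^{2}=|m|^{2}L\ge L$, with equality forcing $m\in\UU_4$.
\item If $|n|^{2}\ge 2$, then $|x|_E^{2}\ge|n|^{2}(M-\tfrac12 L)\ge 2M-L\ge M\ge L$.
\item If $|n|^{2}=1$ (hence $n\in\UU_4$), then $m+n\alpha=n(m/n+\alpha)$ with $m/n\in\Z[i]$, so by the nearest integer property $|m+n\alpha|\ge|\alpha|$; hence
\[
|x|_E^{2}\ge|\alpha|^{2}L+M-|\alpha|^{2}L=M\ge L.
\]
\end{itemize}
In all cases $|x|_E\ge|u|_E$, so $|u|_E=\lambda_1(\Lambda,|\cdot|_E,\C)$. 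Moreover, whenever $n\neq 0$ the same case split shows $|x|_E\ge|v|_E$, which together with the $\Z[i]$-independence of $u$ and $v$ yields $|v|_E=\lambda_2(\Lambda,|\cdot|_E,\C)$. Thus $(u,v)$ is reduced.

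The only delicate point is the inequality $|m+n\alpha|\ge|\alpha|$ in the case $|n|=1$: this is where the choice of nearest Gaussian integer in step (b) is genuinely used, and it is what distinguishes the complex Gauss reduction from its real counterpart (where the analogous bound is $|\alpha|\le\tfrac12$). The rest is bookkeeping via the orthogonal decomposition.
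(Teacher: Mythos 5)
Your proof is correct and complete. Note that the paper does not actually supply a proof of this proposition: it explicitly states the result without proof and refers to \cite{LPC}, so there is no internal argument to compare against. Your termination argument (tracking the strictly decreasing norm of the first basis vector through a discrete, hence finite, set of values below the initial bound) and your correctness argument (the orthogonal decomposition $|mu+nv|_E^2=|m+n\alpha|^2|u|_E^2+|n|^2(|v|_E^2-|\alpha|^2|u|_E^2)$ together with the case split on $|n|\in\{0,1\}$ versus $|n|^2\ge2$) are exactly the standard Gauss-reduction reasoning transported to the Gaussian-integer setting, and you correctly identify the one place where the complex case differs from the real one, namely that the nearest-Gaussian-integer step yields $|\alpha|\le1/\sqrt2$ and the stronger minimality $|\alpha+q|\ge|\alpha|$ for all $q\in\Z[i]$, which you then use in the $|n|=1$ case. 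The computation of $\lambda_1$ and $\lambda_2$ from the case analysis is sound: every nonzero $x=mu+nv$ has $|x|_E\ge|u|_E$, every such $x$ with $n\ne0$ has $|x|_E\ge|v|_E$, and any $\C$-independent pair must contain a vector with $n\ne0$.
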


\subsection{Algorithm finding consecutive minimal vectors}

Let $\Lambda$ be a Gauss lattice in $\mathbb{C}^{2}$ and let $u$ be a minimal
vector in $\Lambda$. How do we find a minimal vector $v$ in
$\Lambda$ such that $u$ and $v$ are consecutive ?

This problem can be solved with next proposition we state without proof.

\textbf{Notation.} For a positive real number $t$ denote $|.|_{t}$ the
Hermitian norm on $\mathbb{C}^{2}$ defined by
\[
|(z_{1},z_{2})|_{t}^{2}= |tz_{1}|^{2}+|\tfrac1tz_{2}|^{2}.
\]

\begin{proposition}
	\label{prop:fouille} Let $\Lambda$ be a Gauss lattice in $\mathbb{C}^{2}$ and let $u=(u_{1},u_{2})$ be a minimal vector in $\LL$. Set
	\[
	s=\sqrt{\frac{4}{\pi}|\operatorname{det}_{\mathbb{C}}(\Lambda)|}\, \text{ and
	} t=\frac{s}{|u_{1}|}.
	\]
	Let $(w,w^{\prime})$ be a reduced basis of $\Lambda$ with respect to the norm
	$|.|_{t}$. Then the minimal vectors $v$ such that $u$ and $v$ are consecutive minimal vectors, belong to the set of vectors $zw+z^{\prime}w^{\prime}$ with $z,z'\in\Z[i]$ and $(|z|^{2}+|z^{\prime2}|)< 23$.
\end{proposition}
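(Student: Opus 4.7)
The plan is to bound $|v|_t^{2}$ from above and $|w|_t^{2}$ from below, and then use Gauss reducedness of $(w,w')$ to translate these into a bound on $|z|^{2}+|z'|^{2}$. The product of the three estimates will land strictly below $23$ with a small numerical margin.

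First I would establish $|v|_t^{2}<2s^{2}$ for any minimal vector $v=(v_{1},v_{2})$ such that $u,v$ are consecutive with $|u_{2}|<|v_{2}|$. Minimality of both $u$ and $v$ forces $|v_{1}|<|u_{1}|$ (otherwise $v\in C(u)$ or $u\in C(v)$, either making $u,v$ equivalent and contradicting $|u_{2}|<|v_{2}|$), and Lemma~\ref{lem:pigeon}(1) gives $|u_{1}||v_{2}|\leq\frac{4}{\pi}|\det_{\C}\Lambda|=s^{2}$. With $t=s/|u_{1}|$,
\[
|v|_t^{2}=\tfrac{s^{2}|v_{1}|^{2}}{|u_{1}|^{2}}+\tfrac{|u_{1}|^{2}|v_{2}|^{2}}{s^{2}}<s^{2}+s^{2}=2s^{2}.
\]
The same computation yields $|u|_t^{2}\leq 2s^{2}$ after noting that $u$ minimal forces $|u_{1}||u_{2}|\leq s^{2}$ by Minkowski's theorem applied to the open cylinder $|x_{1}|<|u_{1}|,\,|x_{2}|<|u_{2}|$.

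Next I would derive a lower bound $|w|_t^{2}\geq\pi^{2}s^{2}/32$. Since $u$ and $v$ are $\ZZ$-linearly independent with $|u|_t,|v|_t\leq\sqrt{2}\,s$, the second complex minimum satisfies $|w'|_t=\lambda_{2}(\Lambda,|.|_t,\C)\leq\sqrt{2}\,s$. Hadamard's inequality applied to the Hermitian Gram matrix of $(w,w')$ gives $|w|_t\cdot|w'|_t\geq|\det_{\C}(w,w')|=|\det_{\C}\Lambda|=\tfrac{\pi}{4}s^{2}$, whence $|w|_t\geq\frac{\pi s}{4\sqrt{2}}$ and $|w|_t^{2}\geq\frac{\pi^{2}}{32}s^{2}$.

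Finally I would exploit reducedness. Writing $w'=cw+w''$ with $\langle w'',w\rangle_t=0$, the condition $|w'-gw|_t\geq|w'|_t$ for all $g\in\ZZ$ (forced by $|w'|_t=\lambda_{2}$) places $c$ in the Voronoi cell $S=[-\tfrac12,\tfrac12[+i[-\tfrac12,\tfrac12[$ of $\ZZ$, so $|c|^{2}\leq\tfrac12$. Expanding $v=zw+z'w'=(z+z'c)w+z'w''$ and using $|w''|_t^{2}\geq(1-|c|^{2})|w|_t^{2}$ (from $|w'|_t\geq|w|_t$), a direct calculation yields
\[
|v|_t^{2}\geq|w|_t^{2}\bigl(|z|^{2}+2\Re(cz'\bar z)+|z'|^{2}\bigr)\geq|w|_t^{2}\bigl(|z|^{2}-\sqrt{2}\,|z||z'|+|z'|^{2}\bigr)\geq(1-\tfrac{1}{\sqrt{2}})|w|_t^{2}(|z|^{2}+|z'|^{2}),
\]
the last step because the symmetric form on the right has smallest eigenvalue $1-1/\sqrt{2}$. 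Combining the three estimates,
\[
|z|^{2}+|z'|^{2}\leq\frac{|v|_t^{2}}{(1-1/\sqrt{2})|w|_t^{2}}<\frac{2s^{2}}{(1-1/\sqrt{2})\cdot\frac{\pi^{2}}{32}s^{2}}=\frac{64(2+\sqrt{2})}{\pi^{2}}=\frac{128+64\sqrt{2}}{\pi^{2}}<23,
\]
the final numerical check being $23\pi^{2}\approx 227>218.5\approx 128+64\sqrt{2}$. The hardest aspect is not any single step but the tightness of the entire chain: each of the three estimates is essentially best possible, and the constant $23$ is plainly chosen with exactly this much slack.
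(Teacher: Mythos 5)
Your argument is correct and is essentially the paper's own proof, streamlined: the paper carries a free parameter $a$ through the computation and optimizes it at the end to find $a=4/\pi$, whereas you plug in $a=4/\pi$ from the start, arriving at the same constant $\frac{64(2+\sqrt{2})}{\pi^{2}}\approx 22.14<23$. The three ingredients — the upper bound $|v|_t^{2}<2s^{2}$ via Lemma~\ref{lem:pigeon}, the lower bound $|w|_t^{2}\geq\frac{\pi^{2}}{32}s^{2}$ via Hadamard, and the reduced-basis estimate $|zw+z'w'|_t^{2}\geq(1-\tfrac{1}{\sqrt{2}})|w|_t^{2}(|z|^{2}+|z'|^{2})$ — are the same three used in the paper, only the Gram-Schmidt/Voronoi-cell phrasing of the last one differs cosmetically from the paper's direct manipulation of $|w\pm w'|_t$, $|w\pm iw'|_t$.
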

\begin{proof}
Let $v=(v_{1},v_{2})$ be a minimal
vector in $\Lambda$ such that $u$ and $v$ are consecutive minimal vectors, let $s=\sqrt
{a|\operatorname{det}_{\mathbb{C}}(\Lambda)|}$ where $a$ is a positive
constant and let $t=\frac{s}{|u_1|}$. We will make the choice $a=\frac{4}{\pi}$ only at the end of the proof.
It is enough to prove that $a$ can be chosen so that for all $z,z'\in\C$, $zw+z'w'\in C(u,v)$ implies $|z|^2+|z'|^2<23$.

The sup norm defined by
\[
\|(z_{1},z_{2})\|_{t}=\max(|tz_{1}|,|\tfrac1tz_{2}|)
\]
is bounded below by $\tfrac1{\sqrt{2}}|(z_{1},z_{2})|_{t}$. By Lemma \ref{lem:pigeon}, $\tfrac12 |\det_{\mathbb{C}}(\Lambda)|\leq|u_{1}||v_{2}%
|\leq\tfrac{4}{\pi}|\det_{\mathbb{C}}(\Lambda)|=C|\det_{\mathbb{C}}(\Lambda
)|$. Since,
\begin{align*}
	\lambda_{2}(\Lambda,|.|_{t},\C)  &  \leq\sqrt2\,\lambda_{2}%
	(\Lambda,\|.\|_{t},\C)\\
	&  \leq\sqrt2\max(\|u\|_{t},\|v\|_{t})\\
	&  = \sqrt2\max(s,\frac1{s}|u_{1}||u_{2}|,s\tfrac{|v_{1}|}{|u_{1}|},\frac
	1{s}|u_{1}||v_{2}|)\\
	&  = \sqrt2\max(s,\frac1{s}|u_{1}||v_{2}|),
\end{align*}
with $s=\sqrt{a|\operatorname{det}_{\mathbb{C}}(\Lambda)|}$, we obtain
\begin{align*}
	|w^{\prime}|_{t}=\lambda_{2}(\Lambda,|.|_{t},\C)  &  \leq\sqrt{2}
	\max(s,\frac1{s}|u_{1}||v_{2}|)\\
	&  \leq\sqrt{2}\max(s,\frac{C}{s}|\operatorname{det}_{\mathbb{C}}(\Lambda)|)\\
	&  =\sqrt{2}\max(1,\frac{C}{a})s=bs.
\end{align*}
Now by Hadamard inequality,
\[
|\operatorname{det}_{\mathbb{C}}(\Lambda)|=|\det(w,w^{\prime}%
)|=|\operatorname{det}_{|.|_{t}}(w,w^{\prime})|\leq|w|_{t}|w^{\prime}|_{t}
\]
where $\operatorname{det}_{|.|_{t}}(w,w^{\prime})$ is the determinant computed
in a $|.|_{t}$-orthonormal basis, hence
\[
|w|_{t}\geq\frac{|\operatorname{det}_{\mathbb{C}}(\Lambda)|}{|w^{\prime}|_{t}%
}\geq\frac{s}{ab}.
\]

Again, since $|u_{1}||v_{2}|\leq C|\det_{\mathbb{C}}(\Lambda)|$ and
$s=\sqrt{a|\operatorname{det}_{\mathbb{C}}(\Lambda)|}$, the cylinder $C(|u_{1}%
|,|v_{2}|)$ is included in the closed ball of radius $\max(1,\frac{C}{a})s$
associated with this sup norm $\|.\|_{t}$. Therefore, it is enough to find a constant  $A$ such
that $|z|^{2}+|z^{\prime2}|> A$ implies $|zw+zw^{\prime}|_{t}>\sqrt{2}%
\max(1,\frac{C}{a})s=bs$. Now, since the basis $(w,w^{\prime})$ is reduced, $|w\pm w'|_t^2\geq |w'|_t^2$, which implies $|\Re\langle w,w'\rangle_t|\leq \tfrac1{2} |w|_t^2$, and $|w\pm iw'|_t^2\geq |w'|_t^2$ implies $|\Im\langle w,w'\rangle_t|\leq \tfrac1{2} |w|_t^2$ as well. Hence $|\langle w,w'\rangle_t|\leq \tfrac1{\sqrt2} |w|_t^2$, and 
\begin{align*}
	|zw+z^{\prime}w^{\prime}|_{t}^{2}  &  \geq|z|^{2}|w|_{t}^{2}+|z'|^2|w^{\prime}|_{t}^{2}-\sqrt{2}|z||z^{\prime}| |w|_{t}^{2}\\
	&  \geq(1-\tfrac{1}{\sqrt{2}})(|z|^{2}+|z'|^{2})|w|_{t}^{2}\\
	&  \geq(\tfrac{\sqrt{2}-1}{\sqrt2})(|z|^{2}+|z'|^{2})(\tfrac{s}{ab})^{2}.  
\end{align*}
Therefore, if $|z|^{2}+|z^{\prime2}|> A$, then $|zw+z^{\prime}w^{\prime}|_{t}^{2}\geq(\tfrac{\sqrt{2}-1}{\sqrt2})(\tfrac{1}{ab})^{2}s^{2}A.$
So we are done if $A\geq\tfrac{\sqrt{2}}{\sqrt2-1}a^{2}b^{4}$. The value $a=C$
minimizes $a^{2}b^{4}$ and gives (recall that $C=\tfrac4{\pi}$),
\[
\tfrac{\sqrt{2}}{\sqrt2-1}a^{2}b^{4}=\tfrac{4\sqrt{2}C^{2}}{\sqrt2-1}
=\tfrac{64\sqrt{2}}{(\sqrt2-1)\pi^{2}}=22.139...
\]
\end{proof}

Now, the algorithm to find two consecutive minimal vectors in a Gauss lattice $\LL\subset \C^2$ goes as follows:
\begin{itemize}
	\item Use the Gauss reduction algorithm to find a reduced base $(u,u')$ in $\LL$ with respect to the standard Hermitian norm. $u$ is the first minimal vector of the pair.
	\item Use once again the Gauss reduction algorithm to find a reduced base $(w,w')$ with respect to the Hermitian norm $|.|_t$ where $t$ is the parameter associated with $u$ defined in the proposition. 
	\item Find the minimal element for the lexicographic order among the vectors  $zw+z^{\prime}w^{\prime}$ with $z,z'\in\Z[i]$ and $(|z|^{2}+|z^{\prime2}|)< 23$ that are in the infinite cylinder $C_1(u)$.
\end{itemize}

\section{Miscellaneous questions and comments}
\subsection{The Hurwitz's algorithm in the space of bases of $\C^2$}
Consider a basis
$u=(u_{1},u_{2}),$ $v=(v_{1},v_{2})$ of the vector space $\C^2$. We want to define $H$ a map that associates a new basis $u'=(u'_1,u'_2)$, $v'=(v'_1,v'_2)$ to each basis $u$, $v$. This map is defined only when $u_1\neq 0$ and $v_1\neq 0$. The first vector $u'$ is defined by $u'=v$ and $v'$ is defined as follows.  Set $w_1=\tfrac{v_1}{u_1}$. We define $v^{\prime}=(v_{1}^{\prime},v_{2}%
^{\prime})=u-gv$  where $g$ is the Gaussian integer such that%
\[
\frac{v_{1}^{\prime}}{v_{1}}=\frac{u_{1}-gw_{1}u_{1}}{w_{1}u_{1}}=\frac
{1}{w_{1}}-a\in S=[-\tfrac{1}{2},\tfrac{1}{2}[+[-\tfrac{1}{2},\tfrac{1}{2}[i.
\]
For the new basis $u'$, $v'$, we have
\[
\frac{v_{1}^{\prime}}{u_{1}^{\prime}}=\frac{v_{1}^{\prime}}{v_{1}}=\frac
{1}{w_{1}}-g=w_{1}^{\prime}\in S
\]
and therefore $v_{1}^{\prime}=w_{1}^{\prime}u_{1}^{\prime}$ with
$w_{1}^{\prime}\in S$. We recognize the Hurwitz continued fraction algorithm  applied to $w_{1}$. The map $H$ is defined on the set of  pairs of independent vectors $(u,v)$ such that first coordinates of $u$ and $v$ are nonzero. Observe that $\ZZ u'+\ZZ v'=\ZZ u+\ZZ v$ and that $\det_{\C}H(u,v)=-\det_{\C}(u,v)$.

\begin{remark}In Theorem \ref{thm:continuedfraction}, the map $T_G$ was defined with a good choice of the Gaussian integer $g$ and of $a\in\{1,1+i\}$ such that 
	$|\tfrac{a}{w_1}-g|<1$ while in the Hurwitz algorithm there is a unique Gaussian integer $g$ such $\frac
	{1}{w_{1}}-g=w_{1}^{\prime}\in S$.
\end{remark}

There are two  simple questions: 
\begin{itemize}
	\item If $(u,v)$ is a pair of consecutive minimal vectors in a Gauss lattice $\LL\subset \C^2$ and $(u',v')=H(u,v)$ is defined, is it true that $v'$ is a minimal vector in $\LL$ ?\\
	\item Is it possible to continue the process : if $v'$ is still a minimal vector and $H(u',v')=(u'',v'')$, is $v''$ minimal  ?
\end{itemize}

\subsection{Ergodic theory and the first return map}
Let $\ttt$ be in $\C$ and let $X_n(\LL)=(x_n(\ttt),y_n(\ttt))$, $n\in\N$ be the sequence of minimal vectors of the lattice $\LL_{\ttt}$. We can ask several questions about the quantities $x_n(\ttt)$ and $y_n(\ttt)$.
\begin{itemize}
	\item (Levy-Khintchin theorem) Show that for almost all $\ttt\in\C$
	\[
	\lim_{n\fff\infty}\frac1{n}\ln|y_n(\ttt)|=C
	\]
	where $C$ is a constant that can be computed with the Haar measure of $\SL(2,\C)/\SL(2,\ZZ)$ and the induced measure $\nu$ (see Theorem \ref{thm:densityinvariantmeasure2}).
	\item (Bosma-Jager-Wiedijk theorem)
	Show that for almost all $\ttt\in\C$, the sequence of probabilities
	\[
	\frac1{n}\sum_{k=0}^{n-1}\delta_{x_n(\ttt)y_n(\ttt)}
	\]
	converges in measure to a probability $\lambda$ in $\C$. Show that $\lambda$ has a density with respect to the Lebesgue measure and compute this density. The question can be studied with $y_{n+1}(\ttt)x_n(\ttt)$ instead of $x_n(\ttt)y_n(\ttt)$.  
\end{itemize}
For these two questions the method in \cite{Che} should lead to the existence of the limit almost everywhere. The explicit computations of the limits $C$ and $\lambda$ could be more difficult.\\

As we have seen the core of the first return map is the map $T_G$ (see subsection \ref{subsec:firstreturn}). In the definition of the map $T_G$ two coefficients appear : $a\in\{1,1+i\}$ and $g\in\ZZ$.  By Proposition \ref{prop:index2}, if $a=1+i$ for some iterate of $T_G$, the next iterate should be with $a=1$.   
\begin{itemize}
	\item More generally, find the succession laws for the coefficients.
	\item What is the almost-sure frequency of $a=1$ when computing the sequence of iterates of $T_G$?
	\item Is there a Borel-Bernstein theorem for the coefficient $g$?
\end{itemize}

\section{Appendix 1, Gauss lattices}

\begin{definition}
	Let $E$ be a finite dimensional $\mathbb{C}$-vector space. A subset
	$\Lambda$ in $E$ is a Gauss lattice if it is a $\mathbb{Z}[i]$-submodule of $E$, if it is a discrete subset of
	$E$ and if it generates the vector space $E$.
\end{definition}

\begin{lemma}
	Let $E$ be a $\mathbb{C}$-vector space of dimension $n$ and let $\Lambda$ be a
	Gauss lattice in $E$. Then there exists a basis $u_{1},\dots,u_{n}$
	of $E$ such that
	\[
	\Lambda=\oplus_{j=1}^{n}\mathbb{Z}[i]u_{j}.
	\]
	
\end{lemma}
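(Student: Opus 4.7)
The plan is to use the standard structure theorem for finitely generated modules over a principal ideal domain, combined with the classical result that discrete subgroups of $\mathbb{R}^{N}$ are free $\mathbb{Z}$-modules of rank at most $N$. The ring $\mathbb{Z}[i]$ is a Euclidean domain, hence a PID, so every finitely generated torsion-free $\mathbb{Z}[i]$-module is free. The task will be to establish that $\Lambda$ is such a module and then to carry out a rank count that forces its $\mathbb{Z}[i]$-rank to equal $n=\dim_{\mathbb{C}}E$ and its basis to be a $\mathbb{C}$-basis of $E$.

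First I would view $E$ as a real vector space of dimension $2n$ via an $\mathbb{R}$-linear isomorphism $E\cong\mathbb{R}^{2n}$. Since $\Lambda$ is discrete in $E$ and spans $E$ as a $\mathbb{C}$-vector space, it a fortiori spans $\mathbb{R}^{2n}$ as a real vector space, so by the usual theorem about discrete subgroups of $\mathbb{R}^{2n}$, $\Lambda$ is a free $\mathbb{Z}$-module of rank exactly $2n$. In particular $\Lambda$ is finitely generated as a $\mathbb{Z}$-module, and since $\mathbb{Z}\subset\mathbb{Z}[i]$, it is finitely generated as a $\mathbb{Z}[i]$-module as well. It is clearly torsion-free, since it embeds in the $\mathbb{C}$-vector space $E$ and $\mathbb{Z}[i]$ sits inside $\mathbb{C}$.

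Because $\mathbb{Z}[i]$ is a PID, $\Lambda$ is therefore a free $\mathbb{Z}[i]$-module; let $u_{1},\dots,u_{m}$ be a $\mathbb{Z}[i]$-basis. As an abelian group $\Lambda\cong\mathbb{Z}[i]^{m}\cong\mathbb{Z}^{2m}$, so comparing with the $\mathbb{Z}$-rank computed in the previous step gives $2m=2n$, hence $m=n$.

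Finally I would check that $u_{1},\dots,u_{n}$ is a $\mathbb{C}$-basis of $E$. The subspace $V=\sum_{j=1}^{n}\mathbb{C}u_{j}$ has $\dim_{\mathbb{C}}V\le n$ and contains every $\mathbb{Z}[i]$-linear combination of the $u_{j}$, hence contains $\Lambda$; since $\Lambda$ spans $E$ over $\mathbb{C}$ by hypothesis, $V=E$, forcing $\dim_{\mathbb{C}}V=n$ and the $u_{j}$ to be $\mathbb{C}$-linearly independent. I do not anticipate a real obstacle here; the only mildly delicate step is the initial passage from ``discrete in $E$ and $\mathbb{C}$-spanning'' to ``free $\mathbb{Z}$-module of rank $2n$,'' which is standard once one fixes a real structure on $E$.
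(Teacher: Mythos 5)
Your proof is correct, and it takes a genuinely different route from the paper's. The paper argues by induction on $n$: having fixed an auxiliary Hermitian structure, it takes a codimension-one $\mathbb{C}$-subspace $F$ spanned by lattice vectors, applies the inductive hypothesis to $F\cap\Lambda$, shows that the orthogonal projection of $\Lambda$ onto the complementary line is again a discrete $\mathbb{Z}[i]$-module (a one-dimensional Gauss lattice), and assembles a basis of $\Lambda$ from a basis of $F\cap\Lambda$ together with a lift of a generator of the projection. You instead go purely algebraically: view $E$ as $\mathbb{R}^{2n}$, note that a $\mathbb{Z}[i]$-submodule that spans $E$ over $\mathbb{C}$ automatically spans it over $\mathbb{R}$ (closure under multiplication by $i$ makes the $\mathbb{R}$-span a $\mathbb{C}$-subspace), invoke the theorem on discrete subgroups of $\mathbb{R}^{2n}$ to get a free $\mathbb{Z}$-module of rank $2n$, hence a finitely generated torsion-free $\mathbb{Z}[i]$-module, hence free over the PID $\mathbb{Z}[i]$; a $\mathbb{Z}$-rank count forces the $\mathbb{Z}[i]$-rank to be $n$, and the spanning hypothesis upgrades the $\mathbb{Z}[i]$-basis to a $\mathbb{C}$-basis. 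What the paper's approach buys is a self-contained geometric construction in the spirit of the rest of the paper (lattice projections, shortest vectors), with no appeal to the structure theorem for modules over a PID. What your approach buys is brevity and conceptual transparency: two standard theorems (discrete subgroups of $\mathbb{R}^{N}$; finitely generated torsion-free modules over a PID are free) do all the work, with no auxiliary Hermitian metric and no induction. Both are fully rigorous.
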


\begin{proof}
	Denote $\|.\|$ a Hermitian norm in $E$ (an Hermitian structure is used only for convenience). We proceed by induction. If $n=1$, $E=\mathbb{C }u$ and $\Lambda
	=\mathbb{Z}[i]\lambda u$ where $\lambda u$ is a shortest vector in $\Lambda$.
	Indeed for all $zu\in\Lambda$ there exits $p\in\mathbb{Z}[i]$ such that
	$|\tfrac{z}{\lambda}-p|<1$, hence $\|zu-p\lambda u\|=|z-p\lambda|\|u\|<|\lambda
	|\|u\|$ and therefore $z=p\lambda$.
	
	Suppose the result holds for all $n-1$-dimensional vector space. Let $E$ be a
	$\mathbb{C}$-vector space with $\dim_{\mathbb{C}}E=n$ and let $\Lambda$ be a Gauss
	lattice in $E$. Since $\LL$ generates the vector space $E$, there is a basis $u_{1},\dots,u_{n}$ of $E$
	with $u_{1},\dots,u_{n}\in\Lambda$. Let $F$ be the vector space spanned by
	$u_{1},\dots,u_{n-1}$. By induction hypothesis there exists a basis
	$v_{1},\dots,v_{n-1}$ of $F$ such that $F\cap\Lambda=\oplus_{j=1}%
	^{n-1}\mathbb{Z}[i]v_{j}$. The orthogonal projection $\Lambda^{\prime}$ of
	$\Lambda$ on the line $D$ orthogonal to $F$ is discrete. Indeed suppose there
	is a sequence $w_{n}\in\Lambda^{\prime}$ of nonzero vectors which converges to
	zero. We can suppose
	that the vectors $w_{n}$, $n\in\mathbb{N}$, are distinct. For each $n$, let $w^{\prime}_{n}\in\Lambda$ be a vector whose  projection 
	is $w_{n}$. The vectors $w^{\prime}_{n}$ can be chosen in order that
	their orthogonal projection on $F$ is in the bounded set $\{\sum_{j=1}%
	^{n-1}z_{j}v_{j}\in F:(\Re z_{j},\Im z_{j})\in [0,1]^2  \}$ so that the $w^{\prime}%
	_{n}$ are distinct and in a bounded set, a contradiction. Therefore $\LL'$ is discrete. Let $v'_n$ be shortest vector nonzero vector in $\LL'$. Since $\LL'$ is a Gauss lattice, the step $n=1$ of the induction implies that $\LL'=\ZZ v'_n$. Finally choose any vector $v_n\in \LL$ whose projection on $\LL'$ is $v'_n$. If $v\in\LL$ then its projection $v'$ on $\LL'$ is in $\ZZ v'_n$. It follows that $v'=gv'_n$ for some $g\in\ZZ$. Therefore, the projection of $v-gv_n$ is $0$ which implies that $v-gv_n\in F\cap\LL$. We conclude that $v_1,\dots,v_n$ is a $\ZZ$ basis of $\LL$.
\end{proof}

A direct adaptation of Theorem I page 11 of Cassels' book, \cite{Ca},
An introduction to the geometry of numbers, shows that

\begin{theorem}\label{thm:base}
	Let $E$ be a $n$-dimensional $\C$-vector space, let $\Lambda$ be a Gauss lattice in $E$ and let $L\subset\Lambda$ be a lattice in $E$. 
	\newline A. To
	every basis $b_{1},\dots,b_{n}$ of $\Lambda$, there can be found a basis 
	$a_{1},\dots,a_{n}$ of $L$ of the shape
	\begin{align*}
		\left\{
		\begin{array}
			[l]{l}%
			a_{1}=z_{11}b_{1}\\
			a_{2}=z_{21}b_{1}+z_{21}b_{2}\\
			\hspace{0.6cm}\vdots\\
			a_{n}=z_{n1}b_{1}+\cdots+z_{nn}b_{n}%
		\end{array}
		\right.
	\end{align*}
	where the $z_{ij}$ are in $\mathbb{Z}[i]$ and $z_{ii}$ are nonzero for all
	$i$.\newline B. Conversely, to every basis $a_{1},\dots,a_{n}$ of $L$, there
	exists a basis $b_{1},\dots,b_{n}$ of $\Lambda$ such the above system holds.
\end{theorem}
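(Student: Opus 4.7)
My plan is to prove this as the standard structure theorem for submodules of free modules over a principal ideal domain, instantiated to the Euclidean ring $\ZZ$. The preceding lemma already gives that any Gauss lattice of rank $n$ has a $\ZZ$-basis of cardinality $n$, and the same argument shows that $L$ is a free $\ZZ$-module of rank $n$ (full rank because $L$ is a lattice in $E$). Both parts will be handled by induction on $n$.

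For \textbf{Part A}, I would first dispatch the case $n=1$ by the observation that nonzero discrete $\ZZ$-submodules of $\ZZ b_1$ are of the form $z_{11}\ZZ b_1$, which is essentially the one-dimensional case of the preceding lemma. For the inductive step, the key idea is to consider the $\ZZ$-linear coordinate projection $\pi_n:\Lambda\fff\ZZ$, $\sum_j z_j b_j\mapsto z_n$. Its restriction to $L$ has image of the form $z_{nn}\ZZ$ with $z_{nn}\neq 0$ (otherwise $L$ would lie in $\bigoplus_{i<n}\C b_i$ and could not span $E$); picking any $a_n\in L$ with $\pi_n(a_n)=z_{nn}$, one gets a splitting
\[
L=\ZZ a_n\oplus L',\qquad L'=L\cap\Bigl(\bigoplus_{i<n}\ZZ b_i\Bigr),
\]
after which the inductive hypothesis applied to $L'$ (a Gauss lattice in $\bigoplus_{i<n}\C b_i$) produces the remaining $a_1,\dots,a_{n-1}$, with the triangular shape automatic from $L'\subset\bigoplus_{i<n}\ZZ b_i$.

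For \textbf{Part B}, I plan to use the flag filtration $F_i=\C a_1+\dots+\C a_i$ and the intermediate lattices $\Lambda_i=\Lambda\cap F_i$. Since each $\Lambda_i$ contains the linearly independent set $a_1,\dots,a_i$ and is discrete in $F_i$, it is a Gauss lattice of rank $i$ in $F_i$. The quotient $\Lambda_i/\Lambda_{i-1}$ embeds $\ZZ$-linearly into the one-dimensional $\C$-space $F_i/F_{i-1}$ with discrete image and is therefore isomorphic to $\ZZ$; a generator lifts to a vector $b_i\in\Lambda_i$, and an easy induction then shows that $b_1,\dots,b_i$ is a $\ZZ$-basis of $\Lambda_i$. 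The triangular decomposition $a_i=z_{i1}b_1+\cdots+z_{ii}b_i$ then follows from $a_i\in\Lambda_i=\bigoplus_{j\leq i}\ZZ b_j$, with $z_{ii}\neq 0$ forced by $a_i\notin F_{i-1}$.

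The main point I would check carefully is the claim that the auxiliary lattices ($L'$ in Part A, $\Lambda_i$ in Part B) are genuine full-rank Gauss lattices in the appropriate subspaces: discreteness is inherited from the ambient lattice, while the full $\C$-rank claim requires a short dimension-counting argument (for $L'$, using that $a_n\notin\bigoplus_{i<n}\C b_i$ and $\operatorname{rank}_{\ZZ}L=n$; for $\Lambda_i$, using that $a_1,\dots,a_i\in\Lambda_i$ are $\C$-linearly independent). Everything else is a mild rephrasing of the fact that $\ZZ$ is a PID, so that short exact sequences of finitely generated free $\ZZ$-modules split and submodules of $\ZZ$ itself are principal.
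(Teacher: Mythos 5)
Your proof is correct but takes a genuinely different route from the paper's. The paper follows Cassels directly: for Part A it first observes that $D\Lambda\subset L$ where $D$ is the (Gaussian-integer) determinant of a basis of $L$ in a basis of $\Lambda$, then constructs each $a_i$ by minimizing the modulus $|z_{ii}|$ of the leading coefficient among elements of $L$ supported on $b_1,\dots,b_i$, and verifies these form a basis by a Euclidean-division contradiction (if some $c\in L$ were outside the span, dividing its top coefficient by $z_{kk}$ would produce a smaller nonzero leading coefficient). For Part B the paper again uses $D\Lambda\subset L$ to apply Part A to $D\Lambda\subset L$ and then inverts the triangular system, noting the coefficients must be Gaussian integers since the $a_i$ lie in $\Lambda$. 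Your approach instead runs the standard PID structure-theorem machinery: Part A splits $L$ off the top coordinate projection $\pi_n$ (whose image is a nonzero ideal, hence principal) and inducts on the kernel $L'=L\cap\bigoplus_{i<n}\ZZ b_i$; Part B builds $b_i$ from generators of the successive discrete rank-one quotients $\Lambda_i/\Lambda_{i-1}$. What the paper's method buys is a shorter, more self-contained argument that directly exploits the Euclidean property of $\ZZ$ and makes Part B almost trivial by reduction to Part A; what yours buys is conceptual clarity (it is literally the Smith/Hermite normal form argument over a PID) and it harmonizes with the projection argument already used in the paper's preceding lemma on the existence of $\ZZ$-bases. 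The one place you should supply a bit more detail is the discreteness of the image of $\Lambda_i$ in $F_i/F_{i-1}$: this is not automatic for quotients by arbitrary subspaces and rests on the inductive fact that $\Lambda_{i-1}$ is cocompact in $F_{i-1}$, so that elements of $\Lambda_i$ can be translated by $\Lambda_{i-1}$ into a bounded set and discreteness of $\Lambda_i$ then forces discreteness of the image. You flag that the full-rank claims need checking, which is the right instinct; that discreteness step is the precise point where the check is used.
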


\begin{proof}
	[Proof of A]Pick one basis of $\Lambda$ and one basis of $L$ and call $D$ the
	determinant of the second in the first. The determinant $D$ is a Gaussian integer and Cramer
	formula shows that $D\Lambda\subset L$.
	
	For each $i\in\{1,\dots,n\}$ there exist points $a_{i}$ in $L$ of the shape
	\[
	a_{i}=z_{i1}b_{1}+\dots+z_{ii}b_{i}
	\]
	where the $z_{ij}$ are Gaussian integers and $z_{ii}\neq 0$ for $Db_{i}\in L$. We choose for $a_{i}$
	such an element in $L$ for which $|z_{ii}|$ is as small as possible but zero.
	We are going to show that $a_{1},\dots,a_{n}$ is a basis of $L$. Since
	$a_{1},\dots,a_{n}$ are in $L$, so is every vector $w=w_{1}a_{1}+\dots
	+w_{n}a_{n}$ where $w_{1},\dots,w_{n}$ are Gaussian integers. Suppose by
	contradiction that there exists a vector $c$ of $L$ not of the latter shape.
	Since $c$ is $\Lambda$, $c=t_{1}b_{1}+\dots+t_{k}b_{k}$ where $1\leq k\leq n$,
	$t_{k}\neq0$ and $t_{1},\dots,t_{k}$ are Gaussian integers. If there are several
	such $c$, then we choose one for which $k$ is minimal. Now since $z_{kk}\neq
	0$, we may choose a Gaussian integer $s$ such that
	\[
	|t_{k}-sz_{kk}|<|z_{kk}|.
	\]
	The vector
	\[
	c-sa_{k}=(t_{k}-sz_{k1})b_{1}+\dots+(t_{k}-sz_{kk})b_{k}
	\]
	is in $L$ since $a_{k}$ and $c$ are; but it is not of the shape $w_{1}%
	a_{1}+\dots+w_{n}a_{n}$ since $c$ is not. Hence $t_{k}-sz_{kk}$ cannot be zero
	by assumption that $k$ was minimal. But then $|t_{k}-sz_{kk}|<|z_{kk}|$
	contradicts the assumption that the nonzero $|z_{kk}|$ was minimal.
\end{proof}

\begin{proof}
	[Proof of B]Let $a_{1},\dots,a_{n}$ be some basis of $L$. Since $D\Lambda
	\subset L$, by part A, there exists a basis $Db_{1},\dots,Db_{n}$ of
	$D\Lambda$ such that
	\begin{align*}
		\left\{
		\begin{array}
			[l]{l}%
			Db_{1}=z_{11}a_{1}\\
			Db_{2}=z_{21}a_{1}+z_{21}a_{2}\\
			\hspace{0.6cm}\vdots\\
			Db_{n}=z_{n1}a_{1}+\cdots+a_{nn}b_{n}%
		\end{array}
		\right.
	\end{align*}
	with $z_{i,j}$ Gaussian intergers and $z_{ii}\neq0$. Solving the above system we
	can express $a_{1},\dots,a_{n}$ in the basis $b_{1},\dots,b_{n}$, we obtain a
	triangular system with coefficients in the fields $\mathbb{Q}(i)$. But
	$b_{1},\dots,b_{n}$ is basis of $\Lambda$ and the $a_{i}$ are in $\Lambda$ so
	the coefficients must be Gaussian integers.
\end{proof}

The norm $\|.\|$ we consider over $\mathbb{C}$-vector space are suppose to
verify
\[
\|\lambda u\|=|\lambda|\| u\|
\]
for all vector $u$ of the vector space and all complex number $\lambda$.

\begin{definition}\label{def:minima}
	Let $E$ be a finite dimensional $\mathbb{C}$-vector space equipped with a norm
	$\|.\|$ and let $\Lambda$ be a discrete subset in $E$. For $\mathbb K=\R$ or $\C$ and for $1\leq j \leq \dim_{\mathbb K}E$, the $j$-th minimum of $\Lambda$ with respect to the norm $\|.\|$ and to the field $\mathbb K$ is the
	infimum of all the real numbers $\lambda$ such that there exist $j$ $\mathbb K$-linearly independent vectors in $\LL$ with norms $\leq \lambda$.
	It is denoted $\lambda_{j}(\Lambda,\|.\|,\mathbb K)$ or simply
	$\lambda_{j}(\Lambda,\mathbb{K})$ or even $\lambda_{j}$ when there is no ambiguity.
\end{definition}

\begin{lemma}
	Let $E$ be a $\mathbb{C}$-vector space of dimension $n$ equipped with a norm
	$\|.\|$ and let $\Lambda$ a Gauss lattice in $E$. Then for
	$j=1,\dots,n$
	\[
	\lambda_{j}(\Lambda,\|.\|,\C)=\lambda_{2j-1}(\Lambda
	,\|.\|,\mathbb{R})=\lambda_{2j}(\Lambda,\|.\|,\mathbb{R}).
	\]
	
\end{lemma}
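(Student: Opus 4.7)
The plan is to establish the three-way equality by proving two inequalities, exploiting the fact that multiplication by $i$ is an isometry of the norm $\|.\|$ (since norms are required to satisfy $\|\lambda u\|=|\lambda|\|u\|$) and that any $\mathbb{C}$-subspace of $E$ has even real dimension. Throughout, I take the infima in the definition of minima; note that since $\{v\in\Lambda:\|v\|\leq R\}$ is finite for any $R$, these infima are actually attained.

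First I would prove $\lambda_{2j}(\Lambda,\|.\|,\mathbb{R})\leq\lambda_{j}(\Lambda,\|.\|,\mathbb{C})$. Fix $\eps>0$ and pick $\mathbb{C}$-linearly independent vectors $u_1,\dots,u_j\in\Lambda$ with $\|u_k\|\leq\lambda_j(\Lambda,\|.\|,\mathbb{C})+\eps$. Since $\Lambda$ is a $\mathbb{Z}[i]$-module, the vectors $iu_1,\dots,iu_j$ also lie in $\Lambda$, and $\|iu_k\|=\|u_k\|$. A standard argument shows that the $\mathbb{C}$-linear independence of $u_1,\dots,u_j$ is equivalent to the $\mathbb{R}$-linear independence of the $2j$ vectors $u_1,iu_1,\dots,u_j,iu_j$, which therefore witness $\lambda_{2j}(\Lambda,\|.\|,\mathbb{R})\leq\lambda_{j}(\Lambda,\|.\|,\mathbb{C})+\eps$. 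Letting $\eps\to 0$ gives the inequality.

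The main step is the reverse inequality $\lambda_{j}(\Lambda,\|.\|,\mathbb{C})\leq\lambda_{2j-1}(\Lambda,\|.\|,\mathbb{R})$, which uses the parity trick. Fix $\eps>0$ and pick $\mathbb{R}$-linearly independent vectors $v_1,\dots,v_{2j-1}\in\Lambda$ with $\|v_k\|\leq\lambda_{2j-1}(\Lambda,\|.\|,\mathbb{R})+\eps$. Let $V=\mathrm{Span}_{\mathbb{C}}(v_1,\dots,v_{2j-1})\subset E$, and let $d=\dim_{\mathbb{C}}V$. Then $\dim_{\mathbb{R}}V=2d$, and since $V$ contains $2j-1$ $\mathbb{R}$-linearly independent vectors we have $2d\geq 2j-1$, forcing $d\geq j$. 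Hence among $v_1,\dots,v_{2j-1}$ one can extract $j$ vectors that are $\mathbb{C}$-linearly independent (a basis of $V$ can be chosen from any $\mathbb{R}$-spanning set), and these $j$ vectors have norms at most $\lambda_{2j-1}(\Lambda,\|.\|,\mathbb{R})+\eps$. Letting $\eps\to 0$ gives the claim.

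Finally, combining these with the trivial monotonicity $\lambda_{2j-1}(\Lambda,\|.\|,\mathbb{R})\leq\lambda_{2j}(\Lambda,\|.\|,\mathbb{R})$ yields the chain
\[
\lambda_{2j-1}(\Lambda,\|.\|,\mathbb{R})\leq\lambda_{2j}(\Lambda,\|.\|,\mathbb{R})\leq\lambda_{j}(\Lambda,\|.\|,\mathbb{C})\leq\lambda_{2j-1}(\Lambda,\|.\|,\mathbb{R}),
\]
so all three quantities are equal. The only conceptually nontrivial ingredient is the observation that a complex subspace has even real dimension, which converts the gap of one between $2j-1$ and $2j$ into the desired equality; I do not anticipate any other obstacle.
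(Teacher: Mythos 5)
Your proof is correct and follows essentially the same route as the paper's: the inequality $\lambda_{2j}(\Lambda,\R)\le\lambda_j(\Lambda,\C)$ via the vectors $u_k,iu_k$, the reverse inequality $\lambda_j(\Lambda,\C)\le\lambda_{2j-1}(\Lambda,\R)$ via the parity of $\dim_{\R}$ for complex subspaces, and the trivial monotonicity of the real minima to close the chain. Your write-up is somewhat more explicit (stating the $\eps$-approximation and the extraction of a $\C$-basis from a spanning set), but the ideas are the same.
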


\begin{proof}
	If $u_1,\dots,u_j$ are $j$ $\C$-linearly independent vectors with norms $\leq \lambda$ then $u_1,iu_1,\dots,u_j,iu_j$ are $2j$ $\R$-linearly independent with norms $\leq \lambda$, therefore $\lambda_{j}(\Lambda,\|.\|,\C)\geq\lambda_{2j}%
	(\Lambda,\|.\|,\mathbb{R})$. Since a $\C$-vector space of $\C$-dimension $\leq j-1$ has a real dimension $\leq 2j-2$, $2j-1$ $\mathbb{R}$-linearly independent
	vectors $u_{1},\dots,u_{2j-1}$ in $\Lambda$ generate a $\mathbb{C}$-vector
	space of dimension $>j-1$, Therefore, $\lambda_{j}(\Lambda,\|.\|,\C)\leq\lambda_{2j-1}(\Lambda,\|.\|,\mathbb{R})$.
\end{proof}

\section{Appendix 2, computing the distance to $\DD$}\label{sec:distance}
There is a simple algorithm that calculate the distances from a complex number $z$ to the regions $\DD$, $\CC$ and $\mathcal T$. We explain it for the distance to the region  $	\DD=  \{z\in\C:|z|<1,\, \dd(z,1)>1,\,\dd(z,1-i)> 1 \}$. The distances to $\CC$ and to $\mathcal T$ can be calculated the same way. The complex plane is the union of seven regions $\mathcal D_0,\dots,\mathcal D_6$. For each of these regions, there is a simple formula giving the distance $\dd(z,\DD)$:
\begin{enumerate}
	\item If $z\in\mathcal D_0=\overline{\DD}$ then $\dd(z,\DD)=0$.
	\item If $z\in\mathcal D_1=\{z\in D(1,1):\arg (z-1)\in [\tfrac{2\pi}{3},\tfrac{7\pi}{6}]\}$ then $\dd(z,\DD)=1-|z-1|$.
	\item If $z\in\mathcal D_2=\{z\in \C:\arg z\leq \tfrac{\pi}{3},\,\arg(z-1)\in[-\tfrac{\pi}{12},\tfrac{2\pi}{3}]\}$ then $\dd(z,\DD)=\dd(z,z_2)$ where $z_2=\tfrac12+\tfrac{\sqrt 3}{2}i$. 
	\item If $z\in\mathcal D_3=\{z\in\C:|z|\geq 1,\arg z\in[\tfrac{\pi}{3},\tfrac{3\pi}{2}]\}$ then $\dd(z,\DD)=|z|-1$.
	\item If $z\in \mathcal D_4=\{z\in \C: \Re z\geq 0, \arg(z-1+i)\in[\pi,\tfrac{23}{12}\pi]\}$ then $\dd(z,\DD)=\dd(z,-i)$.
	\item If $z\in\mathcal D_5=\{z\in D(1-i,1):\arg(z-1+i)\in[\tfrac{5}{6}\pi,\pi]\}$ then $\dd(z,\DD)=1-|z-1+i|$.
	\item If $z\in\mathcal D_6=\{z\in\C:\arg(z-1)\in[\tfrac{7\pi}{6},2\pi],\arg(z-1+i)\in[-\tfrac{\pi}{12},\tfrac{5\pi}{6}]\}$ then $\dd(z,\DD)=\dd(z,z_1)$ where $z_1=1-\tfrac{\sqrt 3}{2}-\tfrac12 i$.
\end{enumerate}

\begin{figure}[H]
	\includegraphics[width=10cm]{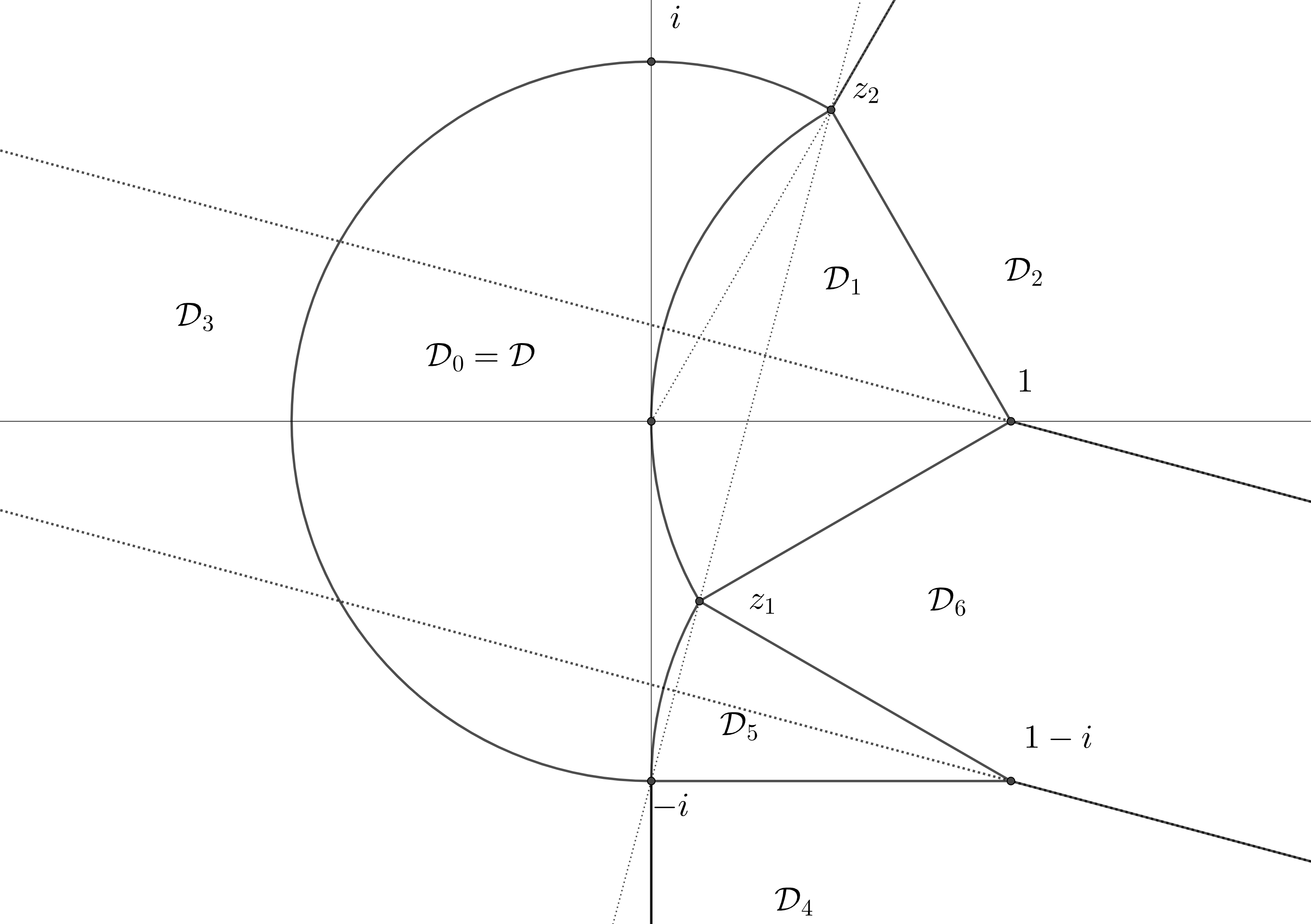}
	\caption{Distance to $\DD$.}
\end{figure}
Furthermore, it is easy to check whether a point $z$ belongs a region $\mathcal D_j$. For instance $z\in \mathcal D_1$ if and only if
\[
|z-1|\leq 1 \text{ and } \Im(\frac{z-1}{z_2-1})\geq 0 \text{ and } \Im(\frac{z-1}{z_1-1})\leq 0. 
\]

\end{document}